\address{Inria \& LIX, CNRS, École polytechnique, Institut Polytechnique de Paris, 91120 Palaiseau, France}
\author{Rubén Muñoz-\relax-Bertrand}
\email{ruben.munoz-bertrand@inria.fr}
\title{Local structure of the overconvergent de Rham--Witt complex}
\newcommand{\lef}{\mathopen{}\left}
\newcommand{\rig}{\mathclose{}\right}
\numberwithin{equation}{section}
\newtheorem{coro}[equation]{Corollary}
\newtheorem{deff}[equation]{Definition}
\newtheorem{lemm}[equation]{Lemma}
\newtheorem{prop}[equation]{Proposition}
\newtheorem{rema}[equation]{Remark}
\newtheorem{thrm}[equation]{Theorem}
\newtheorem*{thrm*}{Theorem}
\newtheorem{xmpl}[equation]{Example}
\begin{document}

\begin{abstract}
We give a general description of the structure of the relative de Rham--Witt complex on a polynomial ring, seen as an algebra over its integral part. After giving a control of the overconvergence of Lazard's morphism, we similarly give the structure of the overconvergent complex, locally on a smooth algebraic variety over a perfect field of positive characteristic.
\end{abstract}

\maketitle

\section*{Introduction}

In $p$-adic cohomology, two major theories have been introduced by Berthelot: crystalline and rigid cohomology. The former is an integral cohomology theory, with nice properties on a proper and smooth variety over a perfect field of positive characteristic. The latter is well-behaved on varieties that need not be proper, but at the cost of becoming a rational cohomology theory.

Another approach from Deligne is the de Rham--Witt complex, which was first studied by Illusie in \cite{complexedederhamwittetcohomologiecristalline}. In this article, he demonstrates how this complex computes the crystalline cohomology on a proper and smooth variety over a perfect field of positive characteristic. A part of the proof is devoted to the study of the structure of the complex in simple situations, such as the polynomial ring case. He often reduces to this explicit situation to demonstrate his results.

Thirty years later, Davis, Langer and Zink defined a subcomplex of the de Rham--Witt complex, and proved how it computes the rigid cohomology of a quasi-projective smooth variety over a perfect field of positive characteristic \cite{overconvergentderhamwittcohomology}.

Their approach is inspired by the work of Monsky and Washnitzer, who also had introduced a $p$-adic cohomology theory for affine varieties in positive characteristic \cite{formalcohomologyi}. This theory is now known to coincide with the rigid cohomology of a smooth affine variety over a perfect field of positive characteristic. The construction of this cohomology uses a notion of overconvergence: they remove all series which do not converge quickly enough for the $p$-adic topology because they cannot be integrated, so that one gets finite dimensional cohomology groups.

Similarly, Davis, Langer and Zink define a notion of overconvergence on the de Rham--Witt complex, and only keep the differentials which converge quickly enough for a specific topology on it. Thus, they called this subcomplex the overconvergent de Rham--Witt complex. For the proof of the comparison theorem with rigid cohomology, it is also essential to know the structure of the overconvergent de Rham--Witt complex.

Both crystalline and rigid cohomology have categories of coefficients. For crystalline cohomology, a result of Bloch gives an equivalence of categories between a subcategory of crystals, and a category of de Rham--Witt connections \cite{crystalsandderhamwittconnections}. Again, the proof of the comparison theorem needs a good understanding of the structure of the de Rham--Witt complex.

A preprint of Ertl gave reasons to believe that a similar result should hold for rigid cohomology and the overconvergent de Rham--Witt complex \cite{comparisonbetweenrigidandoverconvergentcohomologywithcoefficients}. More precisely, it appears that the category of overconvergent $F$-isocrystals should be equivalent to a category of overconvergent de Rham--Witt connections.

It turns out that the strategy of Bloch can be adapted to this context. However, many work is needed to ensure that the overconvergence is kept under control during this process, which includes taking the limit of a sequence given by a polynomial recurrence relation. In other terms, it is far from obvious that one stays within the overconvergent subcomplex of the de Rham--Witt complex after taking such a limit.

A first step in that direction was the previous article of the author \cite{pseudovaluationsonthederhamwittcomplex}, which gives a new definition of overconvergence on the de Rham--Witt complex which is well-behaved with addition and multiplication. This definition is given by maps on the complex which have the property of being a pseudovaluation. We will use this property many times, so we will recall all these results later on.

The second step is the current article. In order to prove the equivalence of categories of coefficients, which will appear in a follow-up paper, it is paramount to understand deeply the structure of the overconvergent de Rham--Witt complex.

However, the structure theorems that are currently known, for instance \cite[corollaire I. 2.15]{complexedederhamwittetcohomologiecristalline} or \cite[theorem 2.8]{derhamwittcohomologyforaproperandsmoothmorphism}, are too poor for our needs. Most of them give an explicit structure in the case polynomial rings, or modulo $p$. Moreover, the de Rham--Witt complex is studied as a module over the Witt vectors of the base field. In practice, this does not give enough information to do $p$-adic analysis.

Nonetheless, they still give valuable input on the complex. For instance, one of the main consequences of the currently known results is that the de Rham--Witt complex has a direct sum decomposition as two subcomplexes. The first one is sometimes called the integral part, and in the affine and overconvergent case it corresponds to the Monsky--Washnitzer complex; that is, the de Rham complex associated to a weakly complete finitely generated lift of the ring. The second subcomplex is acyclic, and is sometimes called the fractional part.

In order to have a more insightful knowledge of the algebraic behaviour of the de Rham--Witt complex, it is useful to understand how these two subcomplexes interact with each other. This is the prime motivation for the two main results of this article: theorem \ref{structuretheoremconvergent} which gives the structure of the de Rham--Witt complex, and theorem \ref{maintheorem} for the overconvergent de Rham--Witt complex. Both give a structure as a module over the zeroth degree of the integral part. Moreover, the overconvergent version of the result describes how overconvergence is encoded in that setting. Finally, we state the structure theorems not only for polynomial rings, but also locally on any smooth variety over a perfect field of positive characteristic, amongst other settings.\vfil

Let us now turn to the content of this article. We first recall the main properties of de Rham--Witt complex, and give pointers in the literature for an introduction. We also recall theorem \ref{structuretheorem}, which is one of the main structure theorems for the de Rham--Witt complex which are currently known.

We then study relatively perfect morphisms. They play an important role here because their definition behaves nicely with Witt vectors. We explain this relationship in the second section.

In the third section, we give the structure of the de Rham--Witt complex modulo $p$. We do so by first studying the de Rham complex of a polynomial ring. Then, we introduce the concept of rebar cages (which is a rough translation of its original French name ``\textit{armature}''). They allow us to study a wide variety of rings for which we can explicitly describe the structure of the de Rham--Witt complex in characteristic $p$.

This work allows us to give the structure of the de Rham--Witt complex in the fourth section. One of the main results, theorem \ref{structuretheoremconvergent}, gives under mild assumptions the local structure of the de Rham--Witt complex, as an algebra over the zeroth degree of its integral part, of an algebraic variety $X$ over a reduced ring $k$ of characteristic $p$.

\begin{thrm*}
There exists a covering of the variety $X$ by affines $\operatorname{Spec}\lef(R\rig)$, having a smooth lift $\operatorname{Spec}\lef(L\rig)$ over $W\lef(k\rig)$, such that for each of these affines there are subsets $H,H',G,G'\subset W\Omega_{R/k}$ of the de Rham--Witt complex of $R$ over $k$, such that for each $x\in W\Omega_{R/k}$ there is a unique map:
\begin{equation*}
l\colon\begin{array}{rl}H\sqcup H'\sqcup G\sqcup G'\to&\widehat{L}\\e\mapsto&l_{e}\end{array}\text{,}
\end{equation*}
such that for a morphism of differential graded algebras $t_{F}\colon\Omega^{\mathrm{sep}}_{\widehat{L}/W\lef(k\rig)}\to W\Omega_{R/k}$, depending on a choice of a Frobenius lift $F$ to $\widehat{L}$, and such that every $e\in H$ satisfies $e=t_{F}\lef(\widetilde{e}\rig)$ for some $\widetilde{e}\in\Omega^{\mathrm{sep}}_{\widehat{L}/W\lef(k\rig)}$, we have a unique writing:
\begin{equation*}
x=\sum_{e\in H}t_{F}\lef(l_{e}\widetilde{e}\rig)+\sum_{e\in H'}t_{F}\lef(l_{e}\rig)e+\sum_{e\in G}t_{F}\lef(l_{e}\rig)e+d\lef(\sum_{e\in G'}t_{F}\lef(l_{e}\rig)e\rig)\text{.}
\end{equation*}
\end{thrm*}

Moreover, the subsets $H$, $H'$, $G$ and $G'$ are given explicitly in concrete examples. With this writing, one can clearly see the integral part indexed by $H$ and $H'$, and the fractional acyclic part indexed by both $G$ and $G'$. Also, in some cases such as the perfect base one, we have $H'=\emptyset$, so that the integral part corresponds to the de Rham complex of a $p$-adically complete lift.

But the other results in this fourth section are more general, and are stated for a wider class of varieties than their overconvergent counterparts; some even in zero characteristic. Even though this requires more work, the author hopes that this will have various applications. For instance, the de Rham--Witt complex has been studied in the context of $p$-adic Hodge theory \cite{integral}. Again, in that paper, an explicit structure theorem was needed; we give here a new point of view on that structure. Also, if we want an overconvergent de Rham--Witt complex over Laurent series fields to compute Lazda--Pál's rigid cohomology \cite{rigidcohomologyoverlaurentseriesfields}, we would need a new definition of overconvergence which can only be made once the structure of the complex is well-known. Finally, but more speculatively, there might be a way to link this work with Scholze's tempered cohomology \cite{thetempereddiskandthetemperedcohomology}. We hope that the generality of this section will ease much future work in that direction.

Once that these results are stated, the remainder of the article is devoted to the verification that overconvergence is compatible with these structures. In the fifth section, we recall the definition of the overconvergent de Rham Witt complex and state a few facts.

Then, we return to relatively perfect algebras. We focus this time on the ones which are finite free as modules, and see how this notion not only behaves nicely with Witt vectors, but also with overconvergence.

In the antepenultimate section we study the Lazard morphism. This morphism gives a way to embed the de Rham complex of Monsky--Washnitzer cohomology into the overconvergent de Rham--Witt complex. We will study how overconvergence is preserved when we apply this morphism.

The results of the two previous sections are then used to give the structure of overconvergent Witt vectors; that is, we state the main structure theorem in degree zero.

Finally, in the last section we prove the main theorem of the article: the structure of the overconvergent de Rham--Witt complex as an algebra over the zeroth degree of its integral part. The statement of theorem \ref{maintheorem} is very similar to theorem \ref{structuretheoremconvergent}, except that all the sets we consider are overconvergent.

\begin{thrm*}
Let $X$ be an algebraic variety over a perfect Noetherian ring $k$. There exists a covering of the variety $X$ by affines $\operatorname{Spec}\lef(\overline{R}\rig)$, having a smooth lift $\operatorname{Spec}\lef(R\rig)$ over $W\lef(k\rig)$, such that for each of these affines there are subsets $H,G,G'\subset W^{\dagger}\Omega_{\overline{R}/k}$ of the de Rham--Witt complex of $\overline{R}$ over $k$, such that for each $x\in W^{\dagger}\Omega_{\overline{R}/k}$ there is a unique map:
\begin{equation*}
l\colon\begin{array}{rl}H\sqcup G\sqcup G'\to&R^{\dagger}\\e\mapsto&l_{e}\end{array}\text{,}
\end{equation*}
with image in the weak completion of $R$, such that for a morphism of differential graded algebras $t_{F}\colon\Omega^{\mathrm{sep}}_{R^{\dagger}/W\lef(k\rig)}\to W^{\dagger}\Omega_{\overline{R}/k}$, depending on a choice of a Frobenius lift $F$ to $R^{\dagger}$, and such that every $e\in H$ satisfies $e=t_{F}\lef(\widetilde{e}\rig)$ for some $\widetilde{e}\in\Omega^{\mathrm{sep}}_{R^{\dagger}/W\lef(k\rig)}$, we have a unique writing:
\begin{equation*}
x=\sum_{e\in H}t_{F}\lef(l_{e}\widetilde{e}\rig)+\sum_{e\in G}t_{F}\lef(l_{e}\rig)e+d\lef(\sum_{e\in G'}t_{F}\lef(l_{e}\rig)e\rig)\text{.}
\end{equation*}
\end{thrm*}

Again, the sets $H$, $G$ and $G'$ are explicit. But also, this theorem comes with formulae which describe the overconvergence pace of the value of the function $l$. In practice, this means that we can now use $p$-adic analysis tools locally on the overconvergent de Rham--Witt complex, which we shall use in the follow-up paper.

As a proof of concept of the strength of this result, we give a new and much shorter proof of the fact that the overconvergent de Rham--Witt complex is a sheaf for the étale topology.

\section*{Acknowledgments}

This work generalises results in my PhD thesis, so I take this opportunity to thank my advisor Daniel Caro. I am also grateful to all the members of my jury Andreas Langer, Tobias Schmidt, Christine Huyghe, Andrea Pulita and Jérôme Poineau. I also want to salute my colleagues in Toulouse for their invaluable support.

This work has been financed by ANR-21-CE39-0009-BARRACUDA.
It was also partially written during my position at Laboratoire de Mathématiques de Versailles, UVSQ, CNRS, Université Paris-Saclay, 45 avenue des États-Unis, 78035 Versailles Cedex, France.
The author was also under ``Contrat EDGAR-CNRS no 277952 UMR 6623, financé par la région Bourgogne-Franche-Comté.'' at Université Marie et Louis Pasteur, CNRS, LmB (UMR 6623), 25000 Besançon, France during the revision of this article.
This work was supported by the HYPERFORM consortium, funded by France through Bpifrance.

Further acknowledgments to be added after the referee process.

\section*{Notations}

In this article, $p$ denotes a prime number.

Let $n\in\mathbb{N}$. Write $k\lef[\underline{X}\rig]\coloneqq k\lef[X_{1},\ldots,X_{n}\rig]$ and $W\lef(k\rig)\lef[\underline{X}\rig]\coloneqq W\lef(k\rig)\lef[X_{1},\ldots,X_{n}\rig]$ for the polynomial ring over the ring of Witt vectors over $k$.

For any $J\subset\lef\llbracket1,n\rig\rrbracket$ and any $a\colon J\to\mathbb{N}$, we will write $\underline{X}^{a}\coloneqq\prod_{j\in J}{X_{j}}^{a\lef(j\rig)}$.

Throughout this article, for any group $G$, or any related algebraic structure, we will put $\overline{G}\coloneqq G/pG$; similarly, for any $g\in G$ its projection in $\overline{G}$ shall be denoted by $\overline{g}$, and we will use the same convention for maps and homomorphisms.

The function $\zeta_{\varepsilon}$ is introduced in \eqref{zetaepsilon}.

The Lazard morphism $t_{F}$ is introduced in definition \ref{lazardmorphism}.

\section{The de Rham--Witt complex for a polynomial ring}

This introductory section contains no new results. We shall define the main notations which are going to be used throughout this work. For a reference on the de Rham--Witt complex, please refer to \cite{derhamwittcohomologyforaproperandsmoothmorphism}.

Let $k$ be a commutative $\mathbb{Z}_{\lef\langle p\rig\rangle}$-algebra, and let $R$ be a commutative $k$-algebra.

We will denote by $W\Omega_{R/k}$ the de Rham--Witt complex of $R$, and by $d$ its differential. Recall that, in degree zero, $W\Omega^{0}_{R/k}$ is isomorphic as a $W\lef(k\rig)$-algebra to the ring of Witt vectors $W\lef(R\rig)$. Furthermore, there is a morphism of graded rings $F\colon W\Omega_{R/k}\to W\Omega_{R/k}$, a morphism of graded groups $V\colon W\Omega_{R/k}\to W\Omega_{R/k}$, as well as a morphism of multiplicative monoids $\lef[\bullet\rig]\colon R\to W\lef(R\rig)$ such that:
\begin{align}
\forall m\in\mathbb{N},\ &\forall r\in R,\ F^{m}\lef(\lef[r\rig]\rig)=\lef[r^{p^{m}}\rig]\text{,}\label{frobeniuswittvectors}\\
\forall m\in\mathbb{N},\ &\forall r\in R,\ F^{m}\lef(d\lef(\lef[r\rig]\rig)\rig)=\lef[r^{p^{m}-1}\rig]d\lef(\lef[r\rig]\rig)\text{,}\label{fdttdt}\\
\forall m\in\mathbb{N}^{*},\ &\forall x\in W\Omega_{R/k},\ F^{m}\lef(d\lef(V^{m}\lef(x\rig)\rig)\rig)=d\lef(x\rig)\text{,}\label{fdvisd}\\
\forall m\in\mathbb{N},\ &\forall x,y\in W\Omega_{R/k},\ V^{m}\lef(xF^{m}\lef(y\rig)\rig)=V^{m}\lef(x\rig)y\text{,}\label{vxfyvxy}\\
\forall m\in\mathbb{N},\ &\forall x\in W\Omega_{R/k},\ d\lef(F^{m}\lef(x\rig)\rig)=p^{m}F^{m}\lef(d\lef(x\rig)\rig)\text{,}\label{dfpfd}\\
\forall m\in\mathbb{N},\ \forall\lef(x_{i}\rig)_{i\in\lef\llbracket1,m\rig\rrbracket}\in&\lef(W\lef(k\lef[\underline{X}\rig]\rig)\rig)^{m},\ d\lef(\prod_{i=1}^{m}x_{i}\rig)=\sum_{i=1}^{m}\lef(\prod_{j\in\lef\llbracket1,m\rig\rrbracket\smallsetminus\lef\{i\rig\}}x_{j}\rig)d\lef(x_{i}\rig)\text{.}\label{dproducts}
\end{align}

For all $m\in\mathbb{N}$, we will denote by $W_{m}\lef(k\rig)$ the ring of truncated Witt vectors over $k$. When $m=0$, it is the zero ring. There also exists a truncated de Rham--Witt complex $W_{m}\Omega_{R/k}$. It is an alternating $W_{m}\lef(k\rig)$-dga, which is short for differential graded $W_{m}\lef(k\rig)$-algebra, such that there exists a surjective morphism of $W_{m+1}\lef(k\rig)$-dgas $W_{m+1}\Omega_{R/k}\to W_{m}\Omega_{R/k}$ such that:
\begin{equation*}
W\Omega_{R/k}\cong \varprojlim_{m\in\mathbb{N}^{*}} W_{m}\Omega_{R/k}\text{.}
\end{equation*}

We can therefore define the ideal of the surjective morphism:
\begin{equation*}
\operatorname{Fil}^{m}\lef(W\Omega_{R/k}\rig)\coloneqq\operatorname{Ker}\lef(W\Omega_{R/k}\to W_{m}\Omega_{R/k}\rig)\text{.}
\end{equation*}

\begin{deff}
The filtration $\lef(\operatorname{Fil}^{m}\lef(W\Omega_{R/k}\rig)\rig)_{m\in\mathbb{N}}$ of two-sided ideals of $W\Omega_{R/k}$ is called the \textbf{canonical filtration}. The associated topology, which is separated and complete, is called the \textbf{canonical topology}.
\end{deff}

In this section, we will introduce basic elements of the de Rham--Witt complex, and we will recall how any element in the de Rham--Witt complex over $k\lef[\underline{X}\rig]$ can be expressed as a series using these elements. We chiefly follow \cite{derhamwittcohomologyforaproperandsmoothmorphism}.

\begin{deff}
A \textbf{weight function} is a map $a\colon\lef\llbracket1,n\rig\rrbracket\to\mathbb{N}\lef[\frac{1}{p}\rig]$; for any $i\in\lef\llbracket1,n\rig\rrbracket$, its values will be written $a_{i}$. We define:
\begin{equation*}
\lef\lvert a\rig\rvert\coloneqq\sum_{i=1}^{n}a_{i}\text{.}
\end{equation*}
\end{deff}

Let $a$ be a weight function. Let $J\subset\lef\llbracket1,n\rig\rrbracket$. In this paper, we will denote by $a|_{J}$ the weight function which for any $i\in\lef\llbracket1,n\rig\rrbracket$ satisfies:
\begin{equation*}
a|_{J}\lef(i\rig)=\begin{cases}a_{i}&\text{ if }i\in J\text{,}\\0&\text{ otherwise.}\end{cases}
\end{equation*}

\begin{deff}
Let $a$ be a weight function. Its \textbf{support} is the set:
\begin{equation*}
\operatorname{Supp}\lef(a\rig)\coloneqq\lef\{i\in\lef\llbracket1,n\rig\rrbracket\mid a_{i}\neq0\rig\}\text{.}
\end{equation*}
\end{deff}

\begin{deff}
A \textbf{partition} of the weight function $a$ is a subset of $\operatorname{Supp}\lef(a\rig)$. Its \textbf{size} is its cardinal. We denote by $\mathcal{P}$ the set of all $\lef(a,I\rig)$, where $a$ is a weight function and $I$ is a partition of $a$. Throughout this paper, the $p$-adic valuation will be denoted by $\operatorname{v}_{p}$. We fix the total order $\preceq$ on $\operatorname{Supp}\lef(a\rig)$:
\begin{multline}\label{ordersupp}
\forall i,i'\in\operatorname{Supp}\lef(a\rig),\ i\preceq i'\\\iff\lef(\lef(\operatorname{v}_{p}\lef(a_{i}\rig)\leqslant\operatorname{v}_{p}\lef(a_{i'}\rig)\rig)\wedge\lef(\lef(\operatorname{v}_{p}\lef(a_{i}\rig)=\operatorname{v}_{p}\lef(a_{i'}\rig)\rig)\implies\lef(i\leqslant i'\rig)\rig)\rig)\text{.}
\end{multline}
\end{deff}

This order depends on the weight function $a$, but in practice no confusion will arise. We denote by $\prec$ the associated strict total order, and we will denote by $\min\lef(a\rig)\in\operatorname{Supp}\lef(a\rig)$ the only element such that $\min\lef(a\rig)\preceq i$ for any $i\in\operatorname{Supp}\lef(a\rig)$.

Let $I\coloneqq\lef\{i_{j}\rig\}_{j\in\lef\llbracket1,m\rig\rrbracket}$ be a partition of $a$. By convention, we will always assume that $i_{j}\prec i_{j'}$ for all $j,j'\in\lef\llbracket1,m\rig\rrbracket$ such that $j<j'$. We will also say that $i_{0}\preceq i$ and $i\prec i_{m+1}$ for any $i\in\operatorname{Supp}\lef(a\rig)$. For any $l\in\lef\llbracket0,m\rig\rrbracket$, we define the following subsets of $\operatorname{Supp}\lef(a\rig)$:
\begin{equation*}
I_{l}\coloneqq\lef\{i\in\operatorname{Supp}\lef(a\rig)\mid i_{l}\preceq i\prec i_{l+1}\rig\}\text{.}
\end{equation*}

We set:
\begin{align*}
\operatorname{v}_{p}\lef(a\rig)&\coloneqq\min\lef\{\operatorname{v}_{p}\lef(a_{i}\rig)\mid i\in\lef\llbracket1,n\rig\rrbracket\rig\}\text{,}\\
u\lef(a\rig)&\coloneqq\max\lef\{0,-\operatorname{v}_{p}\lef(a\rig)\rig\}\text{.}
\end{align*}

If $a$ is not the zero function, we put:
\begin{equation*}
g\lef(a\rig)\coloneqq F^{u\lef(a\rig)+\operatorname{v}_{p}\lef(a\rig)}\lef(d\lef(V^{u\lef(a\rig)}\lef(\lef[\underline{X}^{p^{-\operatorname{v}_{p}\lef(a\rig)}a}\rig]\rig)\rig)\rig)\text{.}
\end{equation*}

Furthermore, if $I$ is a partition of $a$, and for any $\eta\in W\lef(k\lef[\underline{X}\rig]\rig)$, we set:
\begin{equation*}
e\lef(\eta,a,I\rig)\coloneqq\begin{cases}V^{u\lef(a\rig)}\lef(\eta\lef[\underline{X}^{p^{u\lef(a\rig)}a|_{I_{0}}}\rig]\rig)\times\prod_{l=1}^{\#I}g\lef(a|_{I_{l}}\rig)&\text{if }I_{0}\neq\emptyset\text{ or }u\lef(a\rig)=0\text{,}\\d\lef(V^{u\lef(a\rig)}\lef(\eta\lef[\underline{X}^{p^{u\lef(a\rig)}a|_{I_{1}}}\rig]\rig)\rig)\times\prod_{l=2}^{\#I}g\lef(a|_{I_{l}}\rig)&\text{otherwise.}\end{cases}
\end{equation*}

\begin{prop}\label{zpcombination}
Let $\lef(a,I\rig),\lef(b,J\rig)\in\mathcal{P}$ such that $u\lef(a\rig)\geqslant u\lef(b\rig)$. Assume that either $u\lef(a\rig)=0$ or $I_{0}\neq\emptyset$. Denote by $P$ the set of partitions of size $\#I+\#J$ of $\operatorname{Supp}\lef(a+b\rig)$, and put:
\begin{equation*}
v\coloneqq\begin{cases}u\lef(b\rig)&\text{if }J_{0}\neq\emptyset\text{,}\\0&\text{otherwise.}\end{cases}
\end{equation*}

Then for any $\eta,\eta'\in W\lef(k\rig)$ there exists a function $s\colon P\to\mathbb{Z}_{\lef\langle p\rig\rangle}$ such that:
\begin{gather*}
\forall L\in P,\ \begin{cases}p^{v+u\lef(a+b\rig)}\mid s\lef(L\rig)&\text{if }L_{0}=\emptyset\text{,}\\p^{v}\mid s\lef(L'\rig)&\text{otherwise,}\end{cases}\\
e\lef(\eta,a,I\rig)e\lef(\eta',b,J\rig)=\sum_{L\in P}e\lef(s\lef(L\rig)V^{u\lef(a\rig)-u\lef(a+b\rig)}\lef(\eta F^{u\lef(a\rig)-u\lef(b\rig)}\lef(\eta'\rig)\rig),a+b,L\rig)\text{.}
\end{gather*}
\end{prop}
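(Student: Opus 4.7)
The plan is to unfold $e(\eta, a, I)$ and $e(\eta', b, J)$ according to their defining case split and reorganise the product into a sum of basic elements of the form $e(\cdot, a+b, L)$ using identities \ref{frobeniuswittvectors}--\ref{dproducts}. The hypothesis that either $u(a) = 0$ or $I_{0} \neq \emptyset$ forces $e(\eta, a, I)$ to take its ``non-$d$-leading'' form, while the auxiliary integer $v$ distinguishes the two branches for $e(\eta', b, J)$: it equals $u(b)$ precisely when the Teichmüller leading factor $[\underline{X}^{p^{u(b)} b|_{J_{0}}}]$ is present.

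The first manipulation is to fuse the outer $V$-operators into a single $V^{u(a)}$, absorbing the $b$-side material via $F^{u(a) - u(b)}$ thanks to identity \ref{vxfyvxy}; this produces the factor $F^{u(a) - u(b)}(\eta')$ appearing in the statement. Once every subterm is written with outermost $V^{u(a)}$, the $g$-factors coming from both $I$ and $J$ can be aligned to a common $F$-level, each alignment via identity \ref{dfpfd} accruing a power of $p$. In the $d$-leading case for $b$ (that is, $J_{0} = \emptyset$ and $u(b) > 0$), identity \ref{fdvisd} recasts the leading $d(V^{u(b)}(\cdot))$ against an $F$ at no extra $p$-cost, which is consistent with $v = 0$; in the non-$d$-leading case with $J_{0} \neq \emptyset$, pushing $[\underline{X}^{p^{u(b)} b|_{J_{0}}}]$ across the $d$-factors contributes $p^{v} = p^{u(b)}$ via \ref{dfpfd}. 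The full product of $d$-terms is then expanded by Leibniz (identity \ref{dproducts}); the canonical total order \ref{ordersupp} on $\operatorname{Supp}(a+b)$ makes each Leibniz summand correspond uniquely to a partition $L$ of $a+b$ of size exactly $\#I + \#J$, so the index set that arises is exactly $P$. A final use of \ref{fdttdt} re-internalises each $g$-factor at the level $u(a+b)$ and exhibits the outcome as $e(\cdot, a+b, L)$ with the claimed leading factor $s(L)\, V^{u(a) - u(a+b)}(\eta\, F^{u(a) - u(b)}(\eta'))$.

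The main obstacle is the sharp $p$-adic accounting that defines $s(L)$. Each use of \ref{dfpfd} to interchange $d$ with $F^{m}$ contributes $p^{m}$, and the total exponent depends on how the indices of $\operatorname{Supp}(a+b)$ distribute across the parts of $L$ relative to the valuation stratification. When $L_{0} \neq \emptyset$, the smallest-valuation index is absorbed into the Teichmüller leading factor and the accumulated cost is exactly $p^{v}$; when $L_{0} = \emptyset$, no such absorption is possible, so the leading factor must itself be converted into $d$-leading form by one further application of \ref{dfpfd} at level $u(a+b)$, supplying the additional factor $p^{u(a+b)}$ needed to reach $p^{v + u(a+b)} \mid s(L)$. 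Verifying both bounds are sharp and ruling out spurious terms at the degenerate boundaries $u(a) = 0$ or $u(b) = 0$ is the only genuinely delicate point; the rest is disciplined bookkeeping of exponents.
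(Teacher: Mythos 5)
The paper does not reprove this proposition: it invokes lemmas 2.5 and 2.6 of the author's earlier article on pseudovaluations, which carry out essentially the computation you sketch (unfold both basic differentials, merge the outer $V$'s through \eqref{vxfyvxy}, expand the product of $d$-factors, regroup into basic differentials for $a+b$ indexed by partitions). So your skeleton is the right one. However, two of your mechanisms are wrong, and the part you defer to ``disciplined bookkeeping'' is the entire content of the statement.

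First, the factor $p^{v}$ does not come from ``pushing $[\underline{X}^{p^{u\lef(b\rig)}b|_{J_{0}}}]$ across the $d$-factors via \eqref{dfpfd}'': a Teichmüller representative has degree zero and commutes past the $d$-factors at no cost. The factor arises from multiplying the two $V$-leading terms, $V^{u\lef(a\rig)}\lef(x\rig)V^{u\lef(b\rig)}\lef(y\rig)=p^{u\lef(b\rig)}V^{u\lef(a\rig)}\lef(xF^{u\lef(a\rig)-u\lef(b\rig)}\lef(y\rig)\rig)$, a consequence of \eqref{vxfyvxy} together with $F^{u\lef(b\rig)}V^{u\lef(b\rig)}=p^{u\lef(b\rig)}$. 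This is exactly why $v$ depends on whether $J_{0}\neq\emptyset$ (a $V$-leading factor is present on the $b$-side) and not on $I_{0}$, and why the $d$-leading case for $b$ is handled by \eqref{fdvisd} at no cost. Second, the Leibniz summands are not in bijection with $P$: several summands contribute to the same partition $L$ (which is why $s\lef(L\rig)$ is a sum of $p$-adic integers rather than a single power of $p$), and conversely a single product $g\lef(c\rig)g\lef(c'\rig)$ whose supports interleave for the order $\preceq$ is itself a $\mathbb{Z}_{\lef\langle p\rig\rangle}$-combination over several partitions. What one actually needs is the weaker claim that the regrouped expansion is indexed by $P$ with each coefficient carrying at least the stated power of $p$ — and establishing that lower bound, in particular $p^{v+u\lef(a+b\rig)}\mid s\lef(L\rig)$ when $L_{0}=\emptyset$, is precisely the non-trivial part. (Note also that only divisibility, i.e.\ a lower bound on $\operatorname{v}_{p}\lef(s\lef(L\rig)\rig)$, is asserted; you do not need sharpness.) As written, your proposal records the correct shape of the answer but not a proof of it: either reproduce the inductive expansion of the cited lemmas in full, or cite them as the paper does.
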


\begin{proof}
The author has demonstrated this proposition in \cite[lemma 2.5]{pseudovaluationsonthederhamwittcomplex} for the case where $u\lef(a\rig)=0$, and in \cite[lemma 2.6]{pseudovaluationsonthederhamwittcomplex} in the case where $I_{0}\neq\emptyset$.
\end{proof}

We recall the action of $F$, $V$ and $d$ on these elements.

\begin{prop}\label{vactionone}
For all $\lef(a,I\rig)\in\mathcal{P}$ and all $\eta\in W\lef(k\rig)$ we have:
\begin{equation*}
V\lef(e\lef(\eta,a,I\rig)\rig)=\begin{cases}e\lef(V\lef(\eta\rig),\frac{a}{p},I\rig)&\text{if }\operatorname{v}_{p}\lef(a\rig)>0\text{,}\\e\lef(p\eta,\frac{a}{p},I\rig)&\text{if }\operatorname{v}_{p}\lef(a\rig)\leqslant0\text{ and }I_{0}=\emptyset\text{,}\\e\lef(\eta,\frac{a}{p},I\rig)&\text{if }\operatorname{v}_{p}\lef(a\rig)\leqslant0\text{ and }I_{0}\neq\emptyset\text{.}\end{cases}
\end{equation*}
\end{prop}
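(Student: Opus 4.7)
The plan is a case analysis matching the three cases of the statement, built around one unifying ancillary identity. For every weight function $b$ with non-empty support, I claim $g\lef(b\rig)=F\lef(g\lef(b/p\rig)\rig)$: if $\operatorname{v}_{p}\lef(b\rig)\geqslant 1$ both $u\lef(b\rig)$ and $u\lef(b/p\rig)$ vanish and the identity reduces to a one-step shift in the power of $F$, while if $\operatorname{v}_{p}\lef(b\rig)\leqslant 0$ then $u\lef(b/p\rig)=u\lef(b\rig)+1$ and the identity is precisely (\ref{fdvisd}) applied to $V^{u\lef(b\rig)}\lef(\lef[\underline{X}^{p^{u\lef(b\rig)}b}\rig]\rig)$. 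Since $F$ is a morphism of graded rings, this gives $\prod_{l}g\lef(a|_{I_{l}}\rig)=F\lef(\prod_{l}g\lef(\lef(a/p\rig)|_{I_{l}}\rig)\rig)$.

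In case 1 ($\operatorname{v}_{p}\lef(a\rig)>0$) and case 3 ($\operatorname{v}_{p}\lef(a\rig)\leqslant 0$ with $I_{0}\neq\emptyset$), both sides use the first branch of $e$. In case 1, $u\lef(a\rig)=0$ and (\ref{frobeniuswittvectors}) gives $\lef[\underline{X}^{a|_{I_{0}}}\rig]=F\lef(\lef[\underline{X}^{\lef(a/p\rig)|_{I_{0}}}\rig]\rig)$ (legitimate since $a|_{I_{0}}/p$ has integer entries when $\operatorname{v}_{p}\lef(a\rig)\geqslant 1$), so the ancillary identity rewrites $e\lef(\eta,a,I\rig)=\eta\cdot F\lef(z\rig)$ for $z=\lef[\underline{X}^{\lef(a/p\rig)|_{I_{0}}}\rig]\prod_{l}g\lef(\lef(a/p\rig)|_{I_{l}}\rig)$, and one application of (\ref{vxfyvxy}) concludes. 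In case 3, the same factorisation yields $e\lef(\eta,a,I\rig)=V^{u\lef(a\rig)}\lef(\eta\lef[\underline{X}^{p^{u\lef(a\rig)}a|_{I_{0}}}\rig]\rig)\cdot F\lef(y'\rig)$ with $y'=\prod_{l}g\lef(\lef(a/p\rig)|_{I_{l}}\rig)$, and (\ref{vxfyvxy}) produces $V^{u\lef(a\rig)+1}\lef(\eta\lef[\underline{X}^{p^{u\lef(a\rig)}a|_{I_{0}}}\rig]\rig)\cdot y'$, matching $e\lef(\eta,a/p,I\rig)$ since $p^{u\lef(a/p\rig)}\lef(a/p\rig)|_{I_{0}}=p^{u\lef(a\rig)}a|_{I_{0}}$.

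Case 2 ($\operatorname{v}_{p}\lef(a\rig)\leqslant 0$ with $I_{0}=\emptyset$) splits according to the sign of $\operatorname{v}_{p}\lef(a\rig)$. If $\operatorname{v}_{p}\lef(a\rig)<0$, then $e\lef(\eta,a,I\rig)=dV^{u\lef(a\rig)}\lef(X\rig)\cdot F\lef(y'\rig)$, and (\ref{vxfyvxy}) reduces the computation to $V\lef(dV^{u\lef(a\rig)}\lef(X\rig)\rig)\cdot y'$; rewriting $dV^{u\lef(a\rig)}\lef(X\rig)=F\lef(dV^{u\lef(a\rig)+1}\lef(X\rig)\rig)$ via (\ref{fdvisd}) and applying (\ref{vxfyvxy}) once more gives $V\lef(1\rig)\cdot dV^{u\lef(a\rig)+1}\lef(X\rig)=p\cdot dV^{u\lef(a\rig)+1}\lef(X\rig)=dV^{u\lef(a\rig)+1}\lef(pX\rig)$, matching $e\lef(p\eta,a/p,I\rig)$. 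If instead $\operatorname{v}_{p}\lef(a\rig)=0$, then $e\lef(\eta,a,I\rig)$ uses the first branch while $e\lef(p\eta,a/p,I\rig)$ still uses the second; I would use the Leibniz rule together with the vanishing of $d$ on $W\lef(k\rig)$ and the identity $V\lef(\eta\rig)V\lef(\lef[\underline{X}^{a|_{I_{1}}}\rig]\rig)=V\lef(p\eta\lef[\underline{X}^{a|_{I_{1}}}\rig]\rig)$ (coming from $FV=p$) to convert the source into the required $dV$ expression.

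The hard part will be this boundary subcase $\operatorname{v}_{p}\lef(a\rig)=0$ of case 2, where the two sides use different branches of the definition of $e$ and one cannot simply propagate $V$ through the factorisation $y=F\lef(y'\rig)$; one has to argue indirectly through the Leibniz rule. Elsewhere the proof is bookkeeping of powers of $F$ and $V$ guided by (\ref{fdvisd}) and (\ref{vxfyvxy}).
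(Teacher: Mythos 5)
Your verification is correct, and it is genuinely more than what the paper does: the paper's proof of this proposition is a one-line citation of Langer--Zink, whereas you reprove it from the displayed formulas. The ancillary identity $g(b)=F(g(b/p))$ is the right organising device, your three cases correctly track which branch of the definition of $e$ is used on each side, and you have correctly isolated the only subtle point, namely the subcase $\operatorname{v}_{p}(a)=0$, $I_{0}=\emptyset$, where the source lies in the first branch and the target in the second. Two small points are used tacitly and deserve a sentence each: first, the blocks $I_{0},\ldots,I_{\#I}$ are the same for $a$ and for $a/p$, because the order \eqref{ordersupp} is unchanged when every $\operatorname{v}_{p}(a_{i})$ is shifted by the same integer; second, your case~1 and case~3 computations silently use that $F$ is multiplicative to pull the product of the $g$'s inside a single $F$, which you do state but should keep visible when $\#I=0$ or $I_{0}=\emptyset$ degenerates some factors to $1$.

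The one step that needs an actual justification is the equality $V(1)\cdot d(V^{u(a)+1}(X))=p\cdot d(V^{u(a)+1}(X))$ in case~2. Here $k$ is only a $\mathbb{Z}_{\langle p\rangle}$-algebra, so $k[\underline{X}]$ need not have characteristic $p$ and $V(1)\neq p$ in $W(k[\underline{X}])$ in general (their ghost vectors already differ in the first component); so this is not an equality of scalars. What makes the step true is the identity $V\circ d=p\,d\circ V$, which is not among \eqref{frobeniuswittvectors}--\eqref{dproducts} but follows from them together with $FV=p$: one has $V(x)V(y)=V(xF(V(y)))=pV(xy)$ by \eqref{vxfyvxy}, while \eqref{vxfyvxy} and \eqref{fdvisd} give $V(x)\,d(V(y))=V(x\,d(y))$, so the Leibniz rule \eqref{dproducts} yields $V(x\,d(y))+V(y\,d(x))=p\,d(V(xy))$, and $y=1$ gives $V(d(x))=p\,d(V(x))$. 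This is exactly the same Leibniz computation (with $d$ vanishing on $W(k)$, hence on $V(\eta)$) that closes your boundary subcase $\operatorname{v}_{p}(a)=0$, so once you write it out it settles both places at once.
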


\begin{proof}
This is a reformulation of \cite[proposition 2.5]{derhamwittcohomologyforaproperandsmoothmorphism}.
\end{proof}

\begin{prop}\label{dactionone}
For all $\lef(a,I\rig)\in\mathcal{P}$ and all $\eta\in W\lef(k\rig)$ we have:
\begin{equation*}
d\lef(e\lef(\eta,a,I\rig)\rig)=\begin{cases}0&\text{if }I_{0}=\emptyset\text{,}\\e\lef(\eta,a,I\cup\lef\{\min\lef(a\rig)\rig\}\rig)&\text{if }I_{0}\neq\emptyset\text{ and }\operatorname{v}_{p}\lef(a\rig)\leqslant0\text{,}\\p^{\operatorname{v}_{p}\lef(a\rig)}e\lef(\eta,a,I\cup\lef\{\min\lef(a\rig)\rig\}\rig)&\text{if }I_{0}\neq\emptyset\text{ and }\operatorname{v}_{p}\lef(a\rig)>0\text{.}\end{cases}
\end{equation*}
\end{prop}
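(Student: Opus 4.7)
My plan is to apply the Leibniz rule \ref{dproducts} to $e\lef(\eta,a,I\rig)$, viewed as the product of its leading factor with the $g\lef(a|_{I_{l}}\rig)$'s. The first step is to show that each $g\lef(b\rig)$ is $d$-closed: from the very definition $g\lef(b\rig)=F^{u\lef(b\rig)+\operatorname{v}_{p}\lef(b\rig)}\lef(d\lef(V^{u\lef(b\rig)}\lef(\cdots\rig)\rig)\rig)$ together with \ref{dfpfd}, one obtains $d\lef(g\lef(b\rig)\rig)=p^{u\lef(b\rig)+\operatorname{v}_{p}\lef(b\rig)}F^{u\lef(b\rig)+\operatorname{v}_{p}\lef(b\rig)}\lef(d^{2}\lef(\cdots\rig)\rig)=0$.

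When $I_{0}=\emptyset$, the leading factor of $e\lef(\eta,a,I\rig)$ is itself a coboundary, hence also closed, so Leibniz gives $d\lef(e\lef(\eta,a,I\rig)\rig)=0$. When $I_{0}\neq\emptyset$, Leibniz together with the closedness of the $g$'s reduces the computation to $d$ of the leading factor $V^{u\lef(a\rig)}\lef(\eta\lef[\underline{X}^{p^{u\lef(a\rig)}a|_{I_{0}}}\rig]\rig)$ multiplied by the unchanged tail. The crucial combinatorial remark is that $\min\lef(a\rig)\in I_{0}$: indeed, $\min\lef(a\rig)\preceq i$ for every $i\in\operatorname{Supp}\lef(a\rig)$, and $I_{0}\neq\emptyset$ forces $\min\lef(a\rig)\prec i_{1}$. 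Writing $I'\coloneqq I\cup\lef\{\min\lef(a\rig)\rig\}$, one then has $I'_{0}=\emptyset$, $I'_{1}=I_{0}$, $I'_{l}=I_{l-1}$ for $l\geqslant 2$, and in particular $\operatorname{v}_{p}\lef(a|_{I_{0}}\rig)=\operatorname{v}_{p}\lef(a\rig)$.

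It remains to split into two sub-cases according to the sign of $\operatorname{v}_{p}\lef(a\rig)$. When $\operatorname{v}_{p}\lef(a\rig)\leqslant 0$, the "otherwise" clause in the definition of $e\lef(\eta,a,I'\rig)$ reproduces verbatim the expression obtained for $d\lef(e\lef(\eta,a,I\rig)\rig)$. When $\operatorname{v}_{p}\lef(a\rig)>0$, we have $u\lef(a\rig)=0$ so $V^{u\lef(a\rig)}=\mathrm{id}$; using that $d$ kills $W\lef(k\rig)$, together with the identity $\lef[\underline{X}^{a|_{I_{0}}}\rig]=F^{\operatorname{v}_{p}\lef(a\rig)}\lef(\lef[\underline{X}^{p^{-\operatorname{v}_{p}\lef(a\rig)}a|_{I_{0}}}\rig]\rig)$ from \ref{frobeniuswittvectors} and then \ref{dfpfd}, one extracts the expected factor $p^{\operatorname{v}_{p}\lef(a\rig)}$ in front of $g\lef(a|_{I_{0}}\rig)$. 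The main obstacle will be the careful bookkeeping on partition indices under the enlargement $I\mapsto I'$, together with the identity $\operatorname{v}_{p}\lef(a|_{I_{0}}\rig)=\operatorname{v}_{p}\lef(a\rig)$, which is precisely what produces the correct power of $p$ in the third case.
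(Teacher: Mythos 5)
Your argument is correct, and it is genuinely a different route from the paper's: the paper disposes of this proposition by citing Langer--Zink (proposition 2.6 of \cite{derhamwittcohomologyforaproperandsmoothmorphism}), whereas you give a self-contained verification via the Leibniz rule \eqref{dproducts}, the closedness of each $g\lef(b\rig)$ (which follows from \eqref{dfpfd} since $u\lef(b\rig)+\operatorname{v}_{p}\lef(b\rig)=\max\lef\{\operatorname{v}_{p}\lef(b\rig),0\rig\}\geqslant0$), and the bookkeeping $I'_{0}=\emptyset$, $I'_{1}=I_{0}$, $I'_{l}=I_{l-1}$ together with $\operatorname{v}_{p}\lef(a|_{I_{0}}\rig)=\operatorname{v}_{p}\lef(a\rig)$. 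All the key computations check out, including the extraction of $p^{\operatorname{v}_{p}\lef(a\rig)}$ in the third case. Two harmless imprecisions are worth flagging. First, when $I_{0}=\emptyset$ and $u\lef(a\rig)=0$ the leading factor is $\eta\in W\lef(k\rig)$ itself (first clause of the definition of $e$), which is not a coboundary but is nevertheless killed by the $W\lef(k\rig)$-linear differential, so the conclusion $d\lef(e\rig)=0$ stands. Second, when $\operatorname{v}_{p}\lef(a\rig)=0$ one has $u\lef(a\rig)=0$, so it is the \emph{first} clause, not the ``otherwise'' clause, that formally governs $e\lef(\eta,a,I'\rig)$; since in that case $g\lef(a|_{I_{0}}\rig)=d\lef(\lef[\underline{X}^{a|_{I_{0}}}\rig]\rig)$, the two clauses produce the same element and your identification remains valid. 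The benefit of your approach is a proof readable entirely within the notation of this paper; the cost is redoing a computation the literature already records.
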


\begin{proof}
This is a reformulation of \cite[proposition 2.6]{derhamwittcohomologyforaproperandsmoothmorphism}.
\end{proof}

\begin{thrm}\label{structuretheorem}
Let $x\in W\Omega_{k\lef[\underline{X}\rig]/k}$. There is a unique map $\eta\colon\begin{array}{rl}\mathcal{P}\to&W\lef(k\rig)\\\lef(a,I\rig)\mapsto&\eta_{a,I}\end{array}$ such that:
\begin{equation*}
x=\sum_{\lef(a,I\rig)\in\mathcal{P}}e\lef(\eta_{a,I},a,I\rig)\text{.}
\end{equation*}

Moreover, any such series converges in the canonical topology of $W\Omega_{k\lef[\underline{X}\rig]/k}$ if and only if for all $m\in\mathbb{N}$ we have $V^{u\lef(a\rig)}\lef(\eta_{a,I}\rig)\in V^{m}\lef(W\lef(k\rig)\rig)$ except for finitely many $\lef(a,I\rig)\in\mathcal{P}$.
\end{thrm}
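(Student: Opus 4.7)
The plan is to reduce to the truncated complexes $W_{m}\Omega_{k\lef[\underline{X}\rig]/k}$, establish a finite decomposition there by induction on $m$, and then pass to the inverse limit. For $m=1$, the truncated complex is the ordinary de Rham complex $\Omega_{k\lef[\underline{X}\rig]/k}$, which admits the familiar monomial-wedge basis; unwinding the definition of $e\lef(\eta,a,I\rig)$ in the case $u\lef(a\rig)=0$ with $a$ integer-valued, these generators reduce modulo $p$ to standard monomials times wedge products of differentials, matching the classical basis.

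For the inductive step, I would propagate this description using propositions \ref{zpcombination}, \ref{vactionone} and \ref{dactionone}, which show that the family $\lef\{e\lef(\eta,a,I\rig)\rig\}$ is stable under products, under $V$ (with $a$ rescaled to $a/p$), and under $d$ (with $I$ enlarged by $\min\lef(a\rig)$). Combined with the generation of $W\Omega_{k\lef[\underline{X}\rig]/k}$ as a $W\lef(k\rig)$-dga by the Teichmüller lifts $\lef[X_{i}\rig]$ as established in \cite{derhamwittcohomologyforaproperandsmoothmorphism}, this shows that the family spans every $W_{m}\Omega_{k\lef[\underline{X}\rig]/k}$. Uniqueness is checked inductively by comparing the basis at level $m$ with the one at level $m-1$: the kernel of $W_{m}\Omega\to W_{m-1}\Omega$ is spanned precisely by the $e\lef(\eta,a,I\rig)$ with $V^{u\lef(a\rig)}\lef(\eta\rig)\in V^{m-1}W\lef(k\rig)$, giving the expected graded decomposition. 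The inverse limit $W\Omega_{k\lef[\underline{X}\rig]/k}\cong\varprojlim W_{m}\Omega_{k\lef[\underline{X}\rig]/k}$ then produces, for each $\lef(a,I\rig)$, a coherent sequence of Witt vector components assembling into a unique $\eta_{a,I}\in W\lef(k\rig)$.

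For the convergence criterion, iterating proposition \ref{vactionone} and using the very definition of $e\lef(\eta,a,I\rig)$ as a term beginning with $V^{u\lef(a\rig)}$ or $d\circ V^{u\lef(a\rig)}$, one shows that $e\lef(\eta,a,I\rig)$ vanishes in $W_{m}\Omega_{k\lef[\underline{X}\rig]/k}$ exactly when $V^{u\lef(a\rig)}\lef(\eta\rig)\in V^{m}W\lef(k\rig)$, since the remaining factors $g\lef(a|_{I_{l}}\rig)$ introduce no further $V$-terms that could cancel the leading one. The Cauchy condition for the series in the canonical topology is that all but finitely many terms lie in each $\operatorname{Fil}^{m}\lef(W\Omega_{k\lef[\underline{X}\rig]/k}\rig)$, which gives the stated criterion.

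The main obstacle I expect is the uniqueness half of the inductive step: one must show that no nontrivial relations hold among the $e\lef(\eta,a,I\rig)$ beyond those encoded in the action formulas, which requires a careful tracking of how the graded pieces of the canonical filtration decompose into summands indexed by $\mathcal{P}$. This combinatorial bookkeeping, including the verification that the order $\preceq$ and the choices of $I_{0}$-versus-non-$I_{0}$ branches in the definition of $e$ actually yield a basis rather than a merely spanning set, is the heart of the structure theory developed in \cite{derhamwittcohomologyforaproperandsmoothmorphism}, and I would invoke those computations rather than reproduce them.
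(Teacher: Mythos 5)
The paper's own ``proof'' of this theorem is a one-line citation of \cite[theorem 2.8]{derhamwittcohomologyforaproperandsmoothmorphism}, and your argument ultimately rests on the same reference for the decisive point (the linear independence of the $e\lef(\eta,a,I\rig)$ and the identification of the graded pieces of the canonical filtration), so the two are essentially the same. Your outline of the truncated-level induction, the passage to the inverse limit, and the convergence criterion via $\operatorname{Fil}^{m}$ is a faithful sketch of how the cited proof proceeds.
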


\begin{proof}
See \cite[theorem 2.8]{derhamwittcohomologyforaproperandsmoothmorphism}.
\end{proof}

\begin{deff}
A differential form $x=\sum_{\lef(a,I\rig)\in\mathcal{P}}e\lef(\eta_{a,I},a,I\rig)\in W\Omega_{k\lef[\underline{X}\rig]/k}$ is \textbf{integral} if $\eta_{a,I}=0$ whenever $u\lef(a\rig)\neq0$. We will denote by $W\Omega^{\mathrm{int}}_{k\lef[\underline{X}\rig]/k}$ the subcomplex of all integral elements of the de Rham--Witt complex.

The element $x$ is said to be \textbf{fractional} if $u\lef(a\rig)=0$ implies that $\eta_{a,I}=0$ for all $\lef(a,I\rig)\in\mathcal{P}$. We shall denote by $W\Omega^{\mathrm{frac}}_{k\lef[\underline{X}\rig]/k}$ the subset of all fractional elements of the de Rham--Witt complex.

The element $x$ is said to be \textbf{pure fractional} when $\eta_{a,I}=0$ if $u\lef(a\rig)=0$ or $I_{0}=\emptyset$. We denote by $W\Omega^{\mathrm{frp}}_{k\lef[\underline{X}\rig]/k}$ the subset of all pure fractional elements of the de Rham--Witt complex.
\end{deff}

We thus have two decompositions as graded $W\lef(k\rig)$-modules:
\begin{align}
W\Omega_{k\lef[\underline{X}\rig]/k}&=W\Omega^{\mathrm{int}}_{k\lef[\underline{X}\rig]/k}\oplus W\Omega^{\mathrm{frac}}_{k\lef[\underline{X}\rig]/k}\text{,}\label{intfracdecomposition}\\
W\Omega^{\mathrm{frac}}_{k\lef[\underline{X}\rig]/k}&=W\Omega^{\mathrm{frp}}_{k\lef[\underline{X}\rig]/k}\oplus d\lef(W\Omega^{\mathrm{frp}}_{k\lef[\underline{X}\rig]/k}\rig)\label{frpdfrpdecomposition}\text{.}
\end{align}

\section{Relative perfectness}

In this section, $\overline{R}$ shall denote a commutative ring of characteristic $p$. We will study the notion of relative perfectness, and see how it interacts nicely with Witt vectors.

\begin{deff}
A commutative $\overline{R}$-algebra $\overline{S}$ is said to be \textbf{relatively perfect} when the relative Frobenius $\operatorname{Frob}_{\overline{S}/\overline{R}}$ is an isomorphism. We shall also say that $\overline{S}$ is \textbf{relatively semiperfect} when $\operatorname{Frob}_{\overline{S}/\overline{R}}$ is surjective.
\end{deff}

At the end of this section we will see that, under some reasonable hypotheses, relative perfectness is a necessary condition for the freeness of some Witt vector rings.

\begin{lemm}\label{characteriserelativelysemiperfect}
Let $\overline{S}$ be a commutative $\overline{R}$-algebra. Suppose given a $\overline{R}$-generating family $\lef(s_{i}\rig)_{i\in I}$ of $\overline{S}$, where $I$ is a set. Then the following conditions are equivalent:
\begin{enumerate}
\item the $\overline{R}$-algebra $\overline{S}$ is relatively semiperfect;
\item for all $l\in\mathbb{N}$, the family $\lef({s_{i}}^{p^{l}}\rig)_{i\in I}$ is a $\overline{R}$-generating set of $\overline{S}$;
\item the family $\lef({s_{i}}^{p}\rig)_{i\in I}$ is a $\overline{R}$-generating set of $\overline{S}$.
\end{enumerate}
\end{lemm}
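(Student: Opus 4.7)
The plan is to prove the equivalence of $(1)$ and $(3)$ by identifying the image of the relative Frobenius explicitly in terms of the fixed generating family $(s_{i})_{i\in I}$, and to deduce the equivalence of $(2)$ and $(3)$ by a short induction on $l$. Everything rests on the following characteristic $p$ identity: for any finite families $(r_{\alpha})$ in $\overline{R}$ and $(s_{\alpha})$ in $\overline{S}$, one has $(\sum_{\alpha}r_{\alpha}s_{\alpha})^{p}=\sum_{\alpha}r_{\alpha}^{p}s_{\alpha}^{p}$. Writing an arbitrary $s\in\overline{S}$ as a polynomial in the generating family with coefficients in $\overline{R}$ and raising to the $p$-th power, this identity shows that $s^{p}$ lies in the $\overline{R}$-subalgebra of $\overline{S}$ generated by $(s_{i}^{p})_{i\in I}$.

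For the equivalence of $(1)$ and $(3)$, I would unwind the definition $\overline{S}^{(p)}=\overline{S}\otimes_{\overline{R},\operatorname{Frob}_{\overline{R}}}\overline{R}$: as an $\overline{R}$-algebra it is generated by the elements $s\otimes 1$ for $s\in\overline{S}$, which the relative Frobenius sends to $s^{p}$. Hence the image of $\operatorname{Frob}_{\overline{S}/\overline{R}}$ is the $\overline{R}$-subalgebra of $\overline{S}$ generated by $\{s^{p}\mid s\in\overline{S}\}$, and by the previous paragraph this subalgebra coincides with the one generated by $(s_{i}^{p})_{i\in I}$. Therefore surjectivity of the relative Frobenius is equivalent to condition $(3)$.

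The implication $(2)\Rightarrow(3)$ is the case $l=1$. For the converse, I would proceed by induction on $l$: the case $l=0$ is the starting hypothesis on $(s_{i})_{i\in I}$, and assuming that $(s_{i}^{p^{l}})_{i\in I}$ generates $\overline{S}$, each $s_{i}$ can be written as a polynomial in $(s_{j}^{p^{l}})_{j\in I}$ with coefficients in $\overline{R}$. Raising to the $p$-th power and applying the key identity expresses $s_{i}^{p}$ as a polynomial in $(s_{j}^{p^{l+1}})_{j\in I}$. Combined with $(3)$, which ensures that $(s_{i}^{p})_{i\in I}$ already generates $\overline{S}$, this gives that $(s_{j}^{p^{l+1}})_{j\in I}$ generates $\overline{S}$, closing the induction.

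No real obstacle is anticipated: the whole argument is formal once the characteristic $p$ identity is in place. The only slightly delicate point is the explicit description of the image of $\operatorname{Frob}_{\overline{S}/\overline{R}}$ as the $\overline{R}$-subalgebra generated by $p$-th powers of elements of $\overline{S}$, which follows directly from unwinding the construction of $\overline{S}^{(p)}$.
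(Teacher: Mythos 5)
Your proposal is correct and follows essentially the same route as the paper: both arguments rest on the identity $(\sum_{\alpha}r_{\alpha}s_{\alpha})^{p}=\sum_{\alpha}r_{\alpha}^{p}s_{\alpha}^{p}$ to identify the image of $\operatorname{Frob}_{\overline{S}/\overline{R}}$ in terms of the $s_{i}^{p}$, plus an induction on $l$ for condition $(2)$. The only adjustment needed is terminological: here ``generating family'' is meant in the module sense (compare the companion proposition on bases), so you should write $s$ as an $\overline{R}$-linear combination of the $s_{i}$ rather than a polynomial, whence the image of the relative Frobenius is exactly the $\overline{R}$-submodule spanned by the $s_{i}^{p}$ and its surjectivity is literally condition $(3)$.
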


\begin{proof}
Denote by $\psi\colon\overline{R}\to\overline{S}$ the structural morphism of $\overline{S}$. Assume that $\overline{S}$ is a relatively semiperfect $\overline{R}$-algebra. Then by definition we have a surjective morphism $\operatorname{Frob}_{\overline{S}/\overline{R}}\colon\begin{array}{rl}\overline{S}\otimes_{\psi,\operatorname{Frob}_{\overline{R}}}\overline{R}\to&\overline{S}\\s\otimes r\mapsto&s^{p}\psi\lef(r\rig)\end{array}$ where $r\in\overline{R}$ and $s\in\overline{S}$.

Therefore, any element in $\overline{S}$ has an antecedent $x\in\overline{S}\otimes_{\psi,\operatorname{Frob}_{\overline{R}}}\overline{R}$. But $x$ can be written as $\sum_{i\in I}s_{i}\otimes r_{i}$, where $r_{i}\in\overline{R}$ for all $i\in I$ are almost all naught. In particular $\operatorname{Frob}_{\overline{S}/\overline{R}}\lef(x\rig)=\sum_{i\in I}{s_{i}}^{p}\psi\lef(r_{i}\rig)$. We can repeat this process by induction on $l\in\mathbb{N}$ to prove that $\lef({s_{i}}^{p^{l}}\rig)_{i\in I}$ is a $\overline{R}$-generating set of $\overline{S}$.

Conversely, if $\lef({s_{i}}^{p}\rig)_{i\in I}$ is a $\overline{R}$-generating set of $\overline{S}$, then it is clear that $\operatorname{Frob}_{\overline{S}/\overline{R}}$ is surjective since $\lef(s_{i}\rig)_{i\in I}$ is also a $\overline{R}$-generating set of $\overline{S}$.
\end{proof}

The following result is a standard result in the theory of relatively perfect algebras. It was first stated in the context of $p$-bases and formally smooth morphisms. For clarity, we restate and reprove it in our context.

\begin{prop}\label{relativelyperfectimpliesformallyetale}
A relatively perfect commutative $\overline{R}$-algebra is formally étale.
\end{prop}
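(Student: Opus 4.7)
The plan is to prove the formal étaleness directly via the infinitesimal lifting criterion: given a square-zero extension $\overline{A}\twoheadrightarrow\overline{A}/\overline{I}$ of commutative $\overline{R}$-algebras and an $\overline{R}$-algebra map $f\colon\overline{S}\to\overline{A}/\overline{I}$, I will exhibit a unique $\overline{R}$-algebra lift $\tilde{f}\colon\overline{S}\to\overline{A}$.

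The crucial observation, which I would isolate first, is that in characteristic $p$ the $p$-th power map on $\overline{A}$ kills $\overline{I}$: for any $a\in\overline{A}$ and $i\in\overline{I}$, the binomial expansion of $(a+i)^{p}$ has all cross terms divisible by $p$ (hence zero) and all terms in $i^{k}$ with $k\geqslant 2$ zero because $\overline{I}^{2}=0$, so $(a+i)^{p}=a^{p}$. Consequently, the assignment $s\mapsto\widetilde{f(s)}^{p}$, where $\widetilde{f(s)}\in\overline{A}$ denotes any lift of $f(s)$, is well-defined independently of the chosen lift. Writing $\psi\colon\overline{R}\to\overline{S}$ and $\psi_{A}\colon\overline{R}\to\overline{A}$ for the structural morphisms, a direct verification shows that $s\mapsto\widetilde{f(s)}^{p}$ is additive, multiplicative, and satisfies $\widetilde{f(\psi(r)s)}^{p}=\psi_{A}(r)^{p}\widetilde{f(s)}^{p}$; hence it assembles into a genuine $\overline{R}$-algebra morphism
\begin{equation*}
\tilde{h}\colon\overline{S}\otimes_{\psi,\operatorname{Frob}_{\overline{R}}}\overline{R}\to\overline{A},\qquad s\otimes r\mapsto\widetilde{f(s)}^{p}\psi_{A}(r)\text{.}
\end{equation*}

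Now I would invoke the hypothesis: since $\overline{S}$ is relatively perfect over $\overline{R}$, the relative Frobenius $\operatorname{Frob}_{\overline{S}/\overline{R}}$ is an isomorphism of $\overline{R}$-algebras, so I set
\begin{equation*}
\tilde{f}\coloneqq\tilde{h}\circ\operatorname{Frob}_{\overline{S}/\overline{R}}^{-1}\colon\overline{S}\to\overline{A}\text{.}
\end{equation*}
To see that $\tilde{f}$ lifts $f$, I would write any $s\in\overline{S}$ as $\operatorname{Frob}_{\overline{S}/\overline{R}}\lef(\sum s_{i}\otimes r_{i}\rig)=\sum s_{i}^{p}\psi(r_{i})$; then $\tilde{f}(s)=\sum\widetilde{f(s_{i})}^{p}\psi_{A}(r_{i})$, whose image in $\overline{A}/\overline{I}$ equals $\sum f(s_{i})^{p}\psi_{A/\overline{I}}(r_{i})=f\lef(\sum s_{i}^{p}\psi(r_{i})\rig)=f(s)$.

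For uniqueness, suppose $\tilde{f}_{1},\tilde{f}_{2}$ are two such lifts. Their difference lies in $\overline{I}$ pointwise, and the initial observation applied to $\tilde{f}_{1}(s)-\tilde{f}_{2}(s)\in\overline{I}$ yields $\tilde{f}_{1}(s)^{p}=\tilde{f}_{2}(s)^{p}$, that is $\tilde{f}_{1}$ and $\tilde{f}_{2}$ coincide on the image of absolute Frobenius. Because the relative Frobenius is surjective, every element of $\overline{S}$ is an $\overline{R}$-linear combination of $p$-th powers (by lemma \ref{characteriserelativelysemiperfect} applied to any generating family), so $\tilde{f}_{1}=\tilde{f}_{2}$. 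The only delicate point—and the step that would require the most care in writing—is the verification that $s\mapsto\widetilde{f(s)}^{p}$ is additive and multiplicative independently of the chosen lifts, but this follows from the same square-zero trick applied to the ambiguity of the lift.
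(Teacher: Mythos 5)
Your proof is correct and is essentially the paper's argument made explicit: your observation that $\lef(a+i\rig)^{p}=a^{p}$ is precisely the factorisation $u$ of $\operatorname{Frob}_{\overline{A}}$ through $\overline{A}/I$ in the paper's diagram, and your map $\tilde{h}$ composed with $\operatorname{Frob}_{\overline{S}/\overline{R}}^{-1}$ is the dashed arrow the paper obtains from the cocartesian Frobenius square. The element-level verifications of well-definedness, lifting and uniqueness are all sound.
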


\begin{proof}
The classical reference is \cite[théorème 21.2.7]{elementsdegeometriealgebriqueivetudelocaledesschemasetdesmorphismesdeschemaspremierepartie}. However the use of all this formalism is useless in this setting, so for the impatient mathematician we indicate how this proof simplifies with contemporary terminology.

Let $f\colon\overline{R}\to\overline{S}$ be a morphism commutative rings making $\overline{S}$ a relatively perfect $\overline{R}$-algebra. Let $\varphi\colon\overline{R}\to\overline{A}$ be a morphism of commutative rings. Let $I$ be an ideal of square zero of $\overline{A}$. Let $\psi\colon\overline{S}\to\overline{A}/I$ be a morphism of $\overline{R}$-algebras. We thus have the following commutative diagram:
\begin{equation*}
\begin{tikzcd}[column sep=huge]
\overline{R}\arrow[r,"\operatorname{Frob}_{\overline{R}}"]\arrow[rrr,bend left,"\varphi"]\arrow[d,"f"']&\overline{R}\arrow[r,"\varphi"]\arrow[d,"f"']&\overline{A}\arrow[d,"\pi"]&\overline{A}\arrow[d,"\pi"]\arrow[l,"\operatorname{Frob}_{\overline{A}}"']\\
\overline{S}\arrow[r,"\operatorname{Frob}_{\overline{S}}"']\arrow[rrr,bend right,"\psi"']&\overline{S}\arrow[r,"\psi"']\arrow[ru,dashed,"g"]&\overline{A}/I&\overline{A}/I\text{.}\arrow[l,"\operatorname{Frob}_{\overline{A}/I}"]\arrow[lu,dotted,"u"']
\end{tikzcd}
\end{equation*}

The dotted arrow $u\colon\overline{A}/I\to\overline{A}$ comes from the universal property of the quotient: since $I$ has square zero, the Frobenius endomorphism must factor that way.

Our goal is to prove that there exists a unique morphism of rings $g\colon\overline{S}\to\overline{A}$ such that the central square commutes. If such a map were to exist, by a little bit of diagram chasing we see that we should have:
\begin{equation*}
g\circ\operatorname{Frob}_{\overline{S}}=\operatorname{Frob}_{\overline{A}}\circ g=u\circ\pi\circ g=u\circ\psi\text{.}
\end{equation*}

Now, since the leftmost square is cocartesian by hypothesis on $\overline{S}$, this gives us the existence and the unicity of the dashed arrow.
\end{proof}

Let us now state a first result characterising free relatively perfect algebras. Later on, with proposition \ref{decompositionfreewitt}, we will give even more equivalent conditions.

\begin{prop}\label{characteriserelativelyperfect}
Let $\overline{S}$ be a commutative $\overline{R}$-algebra. Suppose given a $\overline{R}$-basis $\lef(s_{i}\rig)_{i\in I}$ of $\overline{S}$, where $I$ is a set. Then the following conditions are equivalent:
\begin{enumerate}
\item the $\overline{R}$-algebra $\overline{S}$ is relatively perfect;
\item for all $l\in\mathbb{N}$, the family $\lef({s_{i}}^{p^{l}}\rig)_{i\in I}$ is a $\overline{R}$-basis of $\overline{S}$;
\item the family $\lef({s_{i}}^{p}\rig)_{i\in I}$ is a $\overline{R}$-basis of $\overline{S}$.
\end{enumerate}

Moreover, when $I$ is finite, these conditions are also equivalent to the following ones:
\begin{enumerate}\setcounter{enumi}{3}
\item the $\overline{R}$-algebra $\overline{S}$ is étale;
\item the $\overline{R}$-algebra $\overline{S}$ is formally étale;
\item the $\overline{R}$-algebra $\overline{S}$ is relatively semiperfect;
\item for all $l\in\mathbb{N}$, the family $\lef({s_{i}}^{p^{l}}\rig)_{i\in I}$ is a $\overline{R}$-generating set of $\overline{S}$;
\item the family $\lef({s_{i}}^{p}\rig)_{i\in I}$ is a $\overline{R}$-generating set of $\overline{S}$.
\end{enumerate}
\end{prop}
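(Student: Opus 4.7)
My plan is to treat the first three conditions in full generality and then tackle the remaining five under the finiteness hypothesis, exploiting the fact that $\operatorname{Frob}_{\overline{S}/\overline{R}}$ is $\overline{R}$-linear so that statements about basis/generating family for the $p$-th power family translate into statements about this map.

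For (1) $\Rightarrow$ (2): since $\operatorname{Frob}_{\overline{S}/\overline{R}}$ is an $\overline{R}$-linear isomorphism and $(s_{i}\otimes 1)_{i\in I}$ is an $\overline{R}$-basis of $\overline{S}\otimes_{\psi,\operatorname{Frob}_{\overline{R}}}\overline{R}$, its image $({s_{i}}^{p})_{i\in I}$ is an $\overline{R}$-basis of $\overline{S}$; iterating gives (2) for every $l$. The implication (2) $\Rightarrow$ (3) is trivial. For (3) $\Rightarrow$ (1), the same map $\operatorname{Frob}_{\overline{S}/\overline{R}}$ sends the basis $(s_{i}\otimes 1)_{i\in I}$ to the alleged basis $({s_{i}}^{p})_{i\in I}$, hence is an $\overline{R}$-linear bijection.

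Now assume $I$ is finite. The equivalences (6) $\iff$ (7) $\iff$ (8) are immediate from lemma \ref{characteriserelativelysemiperfect} since a basis is in particular a generating family. To link these with (3), I would argue (8) $\Rightarrow$ (3) directly: the $\overline{R}$-linear endomorphism $\varphi\colon\overline{S}\to\overline{S}$ defined on the basis by $\varphi(s_{i})\coloneqq{s_{i}}^{p}$ has image the $\overline{R}$-submodule generated by $({s_{i}}^{p})_{i\in I}$, which equals $\overline{S}$ by (8); a surjective $\overline{R}$-linear endomorphism of a finitely generated free module is bijective (for instance by Nakayama or a determinant argument), so $({s_{i}}^{p})_{i\in I}$ is a basis. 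Combined with the obvious implication (1) $\Rightarrow$ (6), this closes the loop (1) $\iff$ (3) $\iff$ (6) $\iff$ (7) $\iff$ (8).

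It remains to incorporate (4) and (5). Proposition \ref{relativelyperfectimpliesformallyetale} yields (1) $\Rightarrow$ (5), and since a finite free $\overline{R}$-module is automatically finitely presented as an $\overline{R}$-algebra, formal étaleness upgrades to étaleness, giving (5) $\Rightarrow$ (4). Trivially (4) $\Rightarrow$ (5). The only nontrivial implication is to return from (4) or (5) to one of the already equivalent conditions, and this is the main obstacle: I would invoke the classical characterisation of étale morphisms of $\mathbb{F}_{p}$-algebras as those finitely presented morphisms whose relative Frobenius is an isomorphism, to conclude (4) $\Rightarrow$ (1). Alternatively one can check (5) $\Rightarrow$ (6) using the lifting property of formally étale morphisms applied to the square-zero thickening coming from the absolute Frobenius, which forces $\operatorname{Frob}_{\overline{S}/\overline{R}}$ to be surjective; this avoids appealing to the full structure theory of étale extensions.
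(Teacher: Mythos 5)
Your proposal is correct and follows essentially the same route as the paper: the same closed loops of implications, the same appeal to Proposition \ref{relativelyperfectimpliesformallyetale} for $(1)\Rightarrow(5)$, to finite presentation for $(5)\Rightarrow(4)$, and to the characteristic-$p$ characterisation of \'etale maps for $(4)\Rightarrow(1)$. The one place where you genuinely streamline is the equivalence $(1)\Leftrightarrow(2)\Leftrightarrow(3)$: you observe that $\operatorname{Frob}_{\overline{S}/\overline{R}}$ is an $\overline{R}$-linear map between free modules carrying the basis $\lef(s_{i}\otimes1\rig)_{i\in I}$ to $\lef({s_{i}}^{p}\rig)_{i\in I}$, so that ``isomorphism'' and ``the image family is a basis'' become tautologically equivalent, whereas the paper handles generation (via Lemma \ref{characteriserelativelysemiperfect}) and linear independence in two separate inductive steps. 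Likewise your explicit determinant/Nakayama argument for $(8)\Rightarrow(3)$ is exactly the content of the reference the paper cites for that step. Both approaches are sound; yours packages the free-module bookkeeping slightly more transparently, while the paper's element-wise version has the advantage of reusing the semiperfect lemma verbatim.
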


\begin{proof}
Let $\psi\colon\overline{R}\to\overline{S}$ be the injective structural morphism of $\overline{S}$. Let us assume $\lef(1\rig)$; that is, that $\operatorname{Frob}_{\overline{S}/\overline{R}}$ is an isomorphism. By lemma \ref{characteriserelativelysemiperfect}, then $\lef({s_{i}}^{p^{l}}\rig)_{i\in I}$ is a $\overline{R}$-generating set of $\overline{S}$ for all $l\in\mathbb{N}$.

Let $\lef(r_{i}\rig)_{i\in I}\in\overline{R}^{I}$ be almost all naught elements such that $\sum_{i\in I}{s_{i}}^{p}\psi\lef(r_{i}\rig)=0$. Then, by the injectivity of the relative Frobenius we have $\sum_{i\in I}s_{i}\otimes r_{i}=0$, which in turn implies that $r_{i}=0$ for all $i\in I$. We can repeat this process by induction on $l\in\mathbb{N}$ and show that $\lef({s_{i}}^{p^{l}}\rig)_{i\in I}$ is a $\overline{R}$-basis of $\overline{S}$. So we have $\lef(1\rig)\implies\lef(2\rig)\implies\lef(3\rig)$.

For $\lef(3\rig)\implies\lef(1\rig)$, if $\lef({s_{i}}^{p}\rig)_{i\in I}$ is a $\overline{R}$-basis of $\overline{S}$, then we find that $\operatorname{Frob}_{\overline{S}/\overline{R}}$ is an isomorphism using analogous arguments.

Without any hypothesis on $I$, notice that we always have $\lef(4\rig)\implies\lef(1\rig)$ by \cite[0EBS]{stacksproject}, and $\lef(1\rig)\implies\lef(5\rig)$ by proposition \ref{relativelyperfectimpliesformallyetale}. Also, $\lef(1\rig)\implies\lef(6\rig)$ is easy, and we have the equivalences $\lef(6\rig)\iff\lef(7\rig)\iff\lef(8\rig)$ from lemma \ref{characteriserelativelysemiperfect}.

So we only have two implications left to prove in the case where $I$ is finite. Let us start with $\lef(5\rig)\implies\lef(4\rig)$. Note that since $\overline{S}$ is a finite free $\overline{R}$-module, it is of finite presentation. Hence $\overline{S}$ is also a finitely presented $\overline{R}$-algebra, and we can conclude by applying \cite[00UR]{stacksproject}.

Finally, $\lef(8\rig)\implies\lef(3\rig)$ is basic commutative algebra \cite[05G8]{stacksproject}.
\end{proof}

We now turn to the link between relatively perfect algebras and Witt vectors. We shall use the notion of $\delta$-rings as introduced by Joyal, which are commutative rings endowed with a $\delta$-structure mimicking a lift of a Frobenius. We refer to \cite{joyal} for details.

\begin{deff}\label{subwittfunctor}
Let $\mathsf{C}$ be a subcategory of the category of commutative rings $\mathsf{CRing}$. A \textbf{sub Witt functor} is a functor $\mathcal{W}\colon\mathsf{C}\to\delta\mathsf{Ring}$ from that subcategory to the category of $\delta$-rings, for which there exists a natural transformation $i\colon\mathcal{W}\to W$ with the functor of Witt vectors such that for any commutative ring $A$ in $\mathsf{C}$ we have:
\begin{itemize}
\item the morphism $i_{A}$ is injective;
\item the image of $F|_{\mathcal{W}\lef(A\rig)}$ is in $\mathcal{W}\lef(A\rig)$;
\item for any $w\in W\lef(A\rig)$, we have $w\in\mathcal{W}\lef(A\rig)$ if and only if $V\lef(w\rig)\in\mathcal{W}\lef(A\rig)$;
\item for any $a\in A$, the representative $\lef[a\rig]\in\mathcal{W}\lef(A\rig)$;
\item for every sequence $\lef(w\lef(i\rig)\rig)_{i\in\mathbb{N}}\in{\mathcal{W}\lef(A\rig)}^{\mathbb{N}}$, there is a sequence $\lef(n\lef(i\rig)\rig)_{i\in\mathbb{N}}\in\mathbb{N}^{\mathbb{N}}$ such that $\sum_{i\in\mathbb{N}}p^{n\lef(i\rig)}w\lef(i\rig)$ converges in $\mathcal{W}\lef(A\rig)$.
\end{itemize}
\end{deff}

Notice that the Witt vector functor $W$ is a sub Witt functor. Later in this article, we shall be interested in another sub Witt functor given by overconvergent Witt vectors.

\begin{lemm}\label{tensorwittissquarezero}
Let $\mathcal{W}\colon\mathsf{C}\to\delta\mathsf{Ring}$ be a sub Witt functor. Let $\overline{S}$ be a commutative $\overline{R}$-algebra. Assume that both $\overline{R}$ and $\overline{S}$ are objects in $\mathsf{C}$. Then, the kernel of the morphism of $\overline{R}$-algebras $\mathcal{W}\lef(\overline{S}\rig)\otimes_{\mathcal{W}\lef(\overline{R}\rig)}\overline{R}\to\overline{S}$ is square zero.
\end{lemm}

\begin{proof}
The kernel of this morphism is the image of $V\lef(\mathcal{W}\lef(\overline{S}\rig)\rig)$ through the natural map $\mathcal{W}\lef(\overline{S}\rig)\to\mathcal{W}\lef(\overline{S}\rig)\otimes_{\mathcal{W}\lef(\overline{R}\rig)}\overline{R}$. Let $x,y\in\mathcal{W}\lef(\overline{S}\rig)$. Then, we can apply \cite[IX. \textsection1 proposition 5]{algebrecommutativechapitres} to find that $V\lef(x\rig)V\lef(y\rig)=V^2\lef(F\lef(xy\rig)\rig)=pV\lef(xy\rig)$, which is sent to zero in the above kernel.
\end{proof}

\begin{lemm}\label{wittfreeimpliesfinitefree}
Let $\mathcal{W}\colon\mathsf{C}\to\delta\mathsf{Ring}$ be a sub Witt functor. Let $\overline{S}$ be a commutative $\overline{R}$-algebra. Assume that both $\overline{R}$ and $\overline{S}$ are reduced and are objects in $\mathsf{C}$. Let $I$ be a set. Assume that there exists a basis $\lef(s\lef(i\rig)\rig)_{i\in I}\in{\mathcal{W}\lef(\overline{S}\rig)}^{I}$ of $\mathcal{W}\lef(\overline{S}\rig)$ as a $\mathcal{W}\lef(\overline{R}\rig)$-module.

Then, $I$ is a finite set.
\end{lemm}

\begin{proof}
To get the decomposition of an element $w\in\mathcal{W}\lef(\overline{S}\rig)$, one can first reduce it to the free $\overline{\mathcal{W}\lef(\overline{R}\rig)}$-module $\overline{\mathcal{W}\lef(\overline{S}\rig)}$. We get an almost nought family of Witt vectors $\lef(w\lef(i\rig)\rig)_{i\in I}\in{\mathcal{W}\lef(\overline{R}\rig)}^{I}$ such that $w-\sum_{i\in I}s\lef(i\rig)w\lef(i\rig)$ is a multiple of $p$. As $\overline{R}$ is reduced, $\mathcal{W}\lef(\overline{R}\rig)$ is $p$-torsion free, so we can divide the result and restart the process. By $p$-adic convergence, we then get the unique decomposition.

If $I$ is infinite, we can view $\mathbb{N}$ as a subset of $I$. We can find a sequence $\lef(n\lef(i\rig)\rig)_{i\in\mathbb{N}}\in\mathbb{N}^{\mathbb{N}}$ such that $\sum_{i\in\mathbb{N}}p^{n\lef(i\rig)}s\lef(i\rig)$ is an element in $\mathcal{W}\lef(\overline{S}\rig)$. The above process shows that it cannot be written as a finite $\overline{\mathcal{W}\lef(\overline{R}\rig)}$-linear combination of elements in $\lef(s\lef(i\rig)\rig)_{i\in I}$, hence $I$ has to be finite.
\end{proof}

\begin{lemm}\label{wittfreeimpliesredfree}
Let $\mathcal{W}\colon\mathsf{C}\to\delta\mathsf{Ring}$ be a sub Witt functor. Let $\overline{S}$ be a commutative $\overline{R}$-algebra. Assume that both are objects in $\mathsf{C}$. Let $I$ be a set. Assume that there exists a basis $\lef(s\lef(i\rig)\rig)_{i\in I}\in{\mathcal{W}\lef(\overline{S}\rig)}^{I}$ of $\mathcal{W}\lef(\overline{S}\rig)$ as a $\mathcal{W}\lef(\overline{R}\rig)$-module such that $\lef(F^{n}\lef(s\lef(i\rig)\rig)\rig)_{i\in I}\in{\mathcal{W}\lef(\overline{S}\rig)}^{I}$ also is a $\mathcal{W}\lef(\overline{R}\rig)$-basis for all $n\in\mathbb{N}$.

Denote by $\mathcal{W}'\lef(\overline{R}_{\mathsf{red}}\rig)$ and $\mathcal{W}'\lef(\overline{S}_{\mathsf{red}}\rig)$ the respective images of $\mathcal{W}\lef(\overline{R}\rig)$ and $\mathcal{W}\lef(\overline{S}\rig)$ in $W\lef(\overline{R}_{\mathsf{red}}\rig)$ and $W\lef(\overline{S}_{\mathsf{red}}\rig)$. Then we have an isomorphism of $\mathcal{W}\lef(\overline{R}\rig)$-algebras:
\begin{equation*}
\mathcal{W}'\lef(\overline{S}_{\mathsf{red}}\rig)\cong\mathcal{W}\lef(\overline{S}\rig)\otimes_{\mathcal{W}\lef(\overline{R}\rig)}\mathcal{W}'\lef(\overline{R}_{\mathsf{red}}\rig)\text{.}
\end{equation*}
\end{lemm}

\begin{proof}
The surjective morphism of $\mathcal{W}\lef(\overline{R}\rig)$-algebras $\varphi\colon\mathcal{W}\lef(\overline{S}\rig)\to\mathcal{W}'\lef(\overline{S}_{\mathsf{red}}\rig)$ factors through $\mathcal{W}\lef(\overline{S}\rig)\otimes_{\mathcal{W}\lef(\overline{R}\rig)}\mathcal{W}'\lef(\overline{R}_{\mathsf{red}}\rig)$. Let us find the inverse of this factorisation by the universal property of the quotient. The kernel of $\varphi$ is the set of the $w\in\mathcal{W}\lef(\overline{S}\rig)$ such that for every $n\in\mathbb{N}$ there is an integer $m_{n}\in\mathbb{N}$ satisfying $F^{m_{n}}\lef(w\rig)\in V^{n}\lef(\mathcal{W}\lef(\overline{S}\rig)\rig)$.

Let $w\in\operatorname{Ker}\lef(\varphi\rig)$. For each $n\in\mathbb{N}$, denote by $w_{\leqslant n}\in\mathcal{W}\lef(\overline{S}\rig)$ the Witt vector having the same first $n$ components as $w$, and whose other components are all nought. It is indeed in $\mathcal{W}\lef(\overline{S}\rig)$ because $w_{\leqslant n}=\sum_{i=0}^{n-1}V^{i}\lef(\lef[w_i\rig]\rig)$ and $\mathcal{W}$ is a sub Witt functor. In particular, $F^{m_{n}}\lef(w_{\leqslant n}\rig)=0$ and $\lim_{n\to\infty}w_{\leqslant n}=w$ for the $V$-adic topology.

Notice that the image of these $w_{\leqslant n}$ in $\mathcal{W}\lef(\overline{S}\rig)\otimes_{\mathcal{W}\lef(\overline{R}\rig)}\mathcal{W}'\lef(\overline{R}_{\mathsf{red}}\rig)$ is $0$. Indeed, we have $p^{m_{n}}w_{\leqslant n}=0$, but the tensor product is $p$-torsion free because it is a free module over a $p$-torsion free ring. From this, we see that for all $n\in\mathbb{N}$, $w$ and $w-w_{\leqslant n}$ have the same image $x\in\mathcal{W}\lef(\overline{S}\rig)\otimes_{\mathcal{W}\lef(\overline{R}\rig)}\mathcal{W}'\lef(\overline{R}_{\mathsf{red}}\rig)$.

For every such $n\in\mathbb{N}$, let $\lef(r_{n,i}\rig)_{i\in I}\in{\mathcal{W}\lef(\overline{R}\rig)}^{I}$ be the unique family of Witt vectors satisfying $w-w_{\leqslant n}=V^{n}\lef(\sum_{i\in I}r_{n,i}F^{n}\lef(s\lef(i\rig)\rig)\rig)$. Thus, $w-w_{\leqslant n}=\sum_{i\in I}V^{n}\lef(r_{n,i}\rig)s\lef(i\rig)$. In particular, $x\in\bigcap_{n\in\mathbb N}\bigoplus_{i\in I}V^{n}\lef(\mathcal{W}'\lef(\overline{R}_{\mathsf{red}}\rig)\rig)s\lef(i\rig)$, so $x=0$.
\end{proof}

\begin{prop}\label{whencanitbewittfree}
Let $\mathcal{W}\colon\mathsf{C}\to\delta\mathsf{Ring}$ be a sub Witt functor. Let $\overline{S}$ be a commutative $\overline{R}$-algebra. Assume that both $\overline{R}$ and $\overline{S}$ are objects in $\mathsf{C}$. Let $I$ be a finite set. Assume that there exists a basis $\lef(s\lef(i\rig)\rig)_{i\in I}\in{\mathcal{W}\lef(\overline{S}\rig)}^{I}$ of $\mathcal{W}\lef(\overline{S}\rig)$ as a $\mathcal{W}\lef(\overline{R}\rig)$-module.

Then $\overline{S}$ is a relatively semiperfect $\overline{R}$-algebra.
\end{prop}

\begin{proof}
Let $\mathfrak{m}$ be a maximal ideal of $\overline{R}$. Denote by $\kappa\lef(\mathfrak{m}\rig)\coloneqq\overline{R}_{\mathfrak{m}}/\mathfrak{m}\overline{R}_{\mathfrak{m}}\cong\overline{R}/\mathfrak{m}$ the residue field at $\mathfrak{m}$. We thus have a surjective map $\mathcal{W}\lef(\overline{S}\rig)\to W_{\#I+1}\lef(\overline{S}\otimes_{\overline{R}}\kappa\lef(\mathfrak{m}\rig)\rig)$. Hence, we have the following commutative diagram of $\mathcal{W}\lef(\overline{R}\rig)$-algebras, in which all the arrows are surjective:
\begin{equation*}
\begin{tikzcd}
\mathcal{W}\lef(\overline{S}\rig)\otimes_{\mathcal{W}\lef(\overline{R}\rig)}W_{\#I+1}\lef(\kappa\lef(\mathfrak{m}\rig)\rig)\arrow[d]\arrow[r]&W_{\#I+1}\lef(\overline{S}\otimes_{\overline{R}}\kappa\lef(\mathfrak{m}\rig)\rig)\arrow[d]\\
\mathcal{W}\lef(\overline{S}\rig)\otimes_{\mathcal{W}\lef(\overline{R}\rig)}\kappa\lef(\mathfrak{m}\rig)\arrow[r]&\overline{S}\otimes_{\overline{R}}\kappa\lef(\mathfrak{m}\rig)\text{.}
\end{tikzcd}
\end{equation*}

Let $x\in\overline{S}\otimes_{\overline{R}}\kappa\lef(\mathfrak{m}\rig)$. For each integer $u\in\lef\llbracket1,\#I+1\rig\rrbracket$, let us choose an antecedent $\sum_{i\in I}s\lef(i\rig)\otimes w\lef(u,i\rig)\in\mathcal{W}\lef(\overline{S}\rig)\otimes_{\mathcal{W}\lef(\overline{R}\rig)}W_{\#I+1}\lef(\kappa\lef(\mathfrak{m}\rig)\rig)$ by the top horizontal arrow of $V^{u}\lef(x\rig)$, where all $w\lef(u,i\rig)\in W_{\#I+1}\lef(\kappa\lef(\mathfrak{m}\rig)\rig)$.

Projecting these elements in $\mathcal{W}\lef(\overline{S}\rig)\otimes_{\mathcal{W}\lef(\overline{R}\rig)}\kappa\lef(\mathfrak{m}\rig)$, which is a $\kappa\lef(\mathfrak{m}\rig)$-vector space of dimension $\#I$, we get a family $\lef\{\sum_{i\in I}s\lef(i\rig)\otimes{w\lef(u,i\rig)}_{0}\rig\}_{u\in\lef\llbracket1,\#I+1\rig\rrbracket}$ which is $\kappa\lef(\mathfrak{m}\rig)$-linearly dependent. That is, for each $u\in\lef\llbracket1,\#I+1\rig\rrbracket$ there is a scalar $k_{u}\in\kappa\lef(\mathfrak{m}\rig)$ such that we have a non trivial relation:
\begin{equation*}
\forall i\in I,\ \sum_{u=1}^{\#I+1}k_{u}{w\lef(u,i\rig)}_{0}=0\text{.}
\end{equation*}

For all $i\in I$, let $y_{i}\coloneqq\sum_{u=1}^{\#I+1}\lef[k_{u}\rig]w\lef(u,i\rig)\in V\lef(W_{\#I+1}\lef(\kappa\lef(\mathfrak{m}\rig)\rig)\rig)$. We thus get in $W_{\#I+1}\lef(\overline{S}\otimes_{\overline{R}}\kappa\lef(\mathfrak{m}\rig)\rig)$ the following formulae:
\begin{equation*}
\sum_{i\in I}y_{i}s\lef(i\rig)=\sum_{u=1}^{\#I+1}\lef[k_{u}\rig]V^{u}\lef(x\rig)\text{.}
\end{equation*}

Let $u\in\lef\llbracket1,\#I+1\rig\rrbracket$ be such that $k_{u}\neq0$. Then by \eqref{vxfyvxy} and \cite[IX. \textsection1 proposition 5]{algebrecommutativechapitres}, looking at the $u$-th coordinate in the above equality we find that $x$ can be written as a polynomial with coefficients in $\kappa\lef(\mathfrak{m}\rig)$ and indeterminates $\lef\{{s\lef(i\rig)_{v}}^{p}\rig\}_{\substack{i\in I\\v\in\lef\llbracket0,\#I\rig\rrbracket}}$.

By Nakayama's lemma, this implies that there is a finite set of generators of the $\overline{R}_{\mathfrak{m}}$-module $\overline{S}\otimes_{\overline{R}}\overline{R}_{\mathfrak{m}}$ containing $\lef\{{s\lef(i\rig)_{0}}^{p}\rig\}_{i\in I}$. Thus, lemma \ref{characteriserelativelysemiperfect} tells us that this is a relatively semiperfect $\overline{R}_{\mathfrak{m}}$-algebra. So, $\overline{S}$ is a relatively perfect $\overline{R}$-algebra.
\end{proof}

\begin{coro}\label{whencanitbewittfreecoro}
Let $\mathcal{W}\colon\mathsf{C}\to\delta\mathsf{Ring}$ be a sub Witt functor. Let $\overline{S}$ be a commutative $\overline{R}$-algebra. Assume that both $\overline{R}$ and $\overline{S}$ are objects in $\mathsf{C}$. Let $I$ be a set. Assume that there exists a basis $\lef(s\lef(i\rig)\rig)_{i\in I}\in{\mathcal{W}\lef(\overline{S}\rig)}^{I}$ of $\mathcal{W}\lef(\overline{S}\rig)$ as a $\mathcal{W}\lef(\overline{R}\rig)$-module such that $\lef(F^{n}\lef(s\lef(i\rig)\rig)\rig)_{i\in I}$ also is a $\mathcal{W}\lef(\overline{R}\rig)$-basis for all $n\in\mathbb{N}$.

Then, $I$ is finite, $\overline{S}$ is an étale $\overline{R}$-algebra, and $\lef({s\lef(i\rig)}_{0}\rig)_{i\in I}$ is a basis of $\overline{S}$ as a $\overline{R}$-module.
\end{coro}

\begin{proof}
Let us first apply lemma \ref{wittfreeimpliesredfree} to get a sub Witt functor $\mathcal{W}'$ as in its statement, on the subcategory of $\mathsf{CRing}$ whose only morphism which is not an identity is $\overline{R}_{\mathsf{red}}\to\overline{S}_{\mathsf{red}}$. Then, we can apply lemma \ref{wittfreeimpliesfinitefree} and get that $I$ is finite. Proposition \ref{whencanitbewittfree} then tells us that $\overline{S}$ is a relatively semiperfect $\overline{R}$-algebra.

Let $\lef(r\lef(i\rig)\rig)_{i\in I}\in\overline{R}^{I}$ such that $\sum_{i\in I}{s\lef(i\rig)}_{0}r\lef(i\rig)=0$. Let $w\coloneqq\sum_{i\in I}s\lef(i\rig)\lef[r\lef(i\rig)\rig]$. Then, there exists $w'\in\mathcal{W}\lef(\overline{S}\rig)$ such that $w=V\lef(w'\rig)$. By hypothesis, there is also a family $\lef(t\lef(i\rig)\rig)_{i\in I}\in{\mathcal{W}\lef(\overline{R}\rig)}^{I}$ such that $w'=\sum_{i\in I}F\lef(s\lef(i\rig)\rig)t\lef(i\rig)$. Thus, $w=\sum_{i\in I}s\lef(i\rig)V\lef(t\lef(i\rig)\rig)$. By unicity, this implies that $r\lef(i\rig)=0$ for all $i\in I$. We have thus shown that $\lef({s\lef(i\rig)}_{0}\rig)_{i\in I}$ is a basis of $\overline{S}$ as a $\overline{R}$-module, and we can conclude using proposition \ref{characteriserelativelyperfect}.
\end{proof}

This last result gives us the necessary conditions for some bases of the $W\lef(\overline{R}\rig)$-module to come from a basis of the $\overline{R}$-module. We will see that these conditions are sometimes sufficient in proposition \ref{decompositionfreewitt}. The condition on the $\mathcal{W}\lef(\overline{R}\rig)$-basis is very mild. It is in fact always true in the situations we shall be interested in for the functor of Witt vectors, and the author knows no counterexample in the general setting. We shall therefore focus on this setting in the next section.

\section{The de Rham--Witt complex for a polynomial ring modulo $p$}

Unless otherwise stated, here $k$ shall denote a commutative $\mathbb{Z}_{\lef\langle p\rig\rangle}$-algebra. In this section we will study the de Rham--Witt complex modulo $p$.

We fix $n\in\mathbb{N}$ and keep the notation $k\lef[\underline{X}\rig]=\lef[X_{1},\ldots,X_{n}\rig]$. Recall that $\mathcal{P}$ has been defined as the set of all $\lef(a,I\rig)$, where $a\colon\lef\llbracket1,n\rig\rrbracket\to\mathbb{N}\lef[\frac{1}{p}\rig]$ is a weight function and $I\subset\operatorname{Supp}\lef(a\rig)$ is a partition of $a$.

We are first going to study the sub-$\overline{W\lef(k\rig)}$-module of $\overline{W\Omega^{\mathrm{int}}_{k\lef[\underline{X}\rig]/k}}$ whose elements are all $\overline{e\lef(\eta,a,I\rig)}$, with $\eta\in W\lef(k\rig)$ and $\lef(a,I\rig)\in\mathcal{P}$ such that $\operatorname{v}_{p}\lef(a\rig)=0$ and $I_{0}\neq\emptyset$. These elements are studied in a less general setting in \cite{overconvergentderhamwittcohomology}, where they are called ``primitive basic Witt differentials''. According to theorem \ref{structuretheorem} and by definition of these elements, this $\overline{W\lef(k\rig)}$-module is free with basis:
\begin{equation*}
\lef\{\overline{e\lef(1,a,I\rig)}\mid\lef(a,I\rig)\in\mathcal{P},\ \operatorname{v}_{p}\lef(a\rig)=0,\ I_{0}\neq\emptyset\rig\}\text{.}
\end{equation*}

Our first aim is to demonstrate that this set is actually a free $\overline{F}^{u}\lef(\overline{W\lef(k\lef[\underline{X}\rig]\rig)}\rig)$-module for any $u\in\mathbb{N}^{*}$. This will allow us to study the $\overline{W\lef(k\lef[\underline{X}\rig]\rig)}$-module structure of the fractional part of the de Rham--Witt complex at the end of the section.

For any $\lef(a,I\rig)\in\mathcal{P}$ such that $\operatorname{v}_{p}\lef(a\rig)=0$, we shall consider the following conditions:
\begin{align}
\exists i\in\operatorname{Supp}\lef(a\rig),\ &a_{i}>p^{u}\text{,}\label{multbypumodpone}\\
\exists i\in I_{0},\ &a_{i}=p^{u}\text{,}\label{multbypumodptwo}\\
\exists l\in\lef\llbracket1,\#I\rig\rrbracket,\ \exists i\in I_{l},\ &\lef(\#I_{l}\geqslant2\rig)\wedge\lef(a_{i}=p^{u}\rig)\text{.}\label{multbypumodpthree}
\end{align}

The purpose of following technical lemmas is to show that the $\overline{F}^{u}\lef(\overline{W\lef(k\lef[\underline{X}\rig]\rig)}\rig)$-module is free, with a basis indexed on the couples which do not satisfy these conditions.

\begin{lemm}\label{combinationone}
Let $u\in\mathbb{N}^{*}$. Let $\eta\in W\lef(k\rig)$. Let $a$ be a weight function such that $\operatorname{v}_{p}\lef(a\rig)=0$. Let $I$ be a partition of $a$ such that $I_{0}\neq\emptyset$. Assume that either \eqref{multbypumodpone} or \eqref{multbypumodptwo} is true for $\lef(a,I\rig)$.

Then, the element $e\lef(\eta,a,I\rig)$ is a $\mathbb{Z}_{\lef\langle p\rig\rangle}$-linear combination of elements of the form:
\begin{equation*}
\lef[{X_{i}}^{p^{u}}\rig]e\lef(\eta,b,J\rig)
\end{equation*}
meeting all the following conditions:
\begin{gather*}
i\in\lef\llbracket1,n\rig\rrbracket\text{,}\\
\lef(b,J\rig)\in\mathcal{P}\text{,}\\
\operatorname{v}_{p}\lef(b\rig)=0\text{,}\\
\lef\lvert a\rig\rvert=\lef\lvert b\rig\rvert+p^{u}\text{,}\\
J_{0}\neq\emptyset\text{,}\\
\#J=\#I\text{.}
\end{gather*}
\end{lemm}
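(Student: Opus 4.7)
The strategy is to distinguish cases according to where the distinguished index $i$ (witnessing either \eqref{multbypumodpone} or \eqref{multbypumodptwo}) sits in the partition, and to rely on the explicit formula
\begin{equation*}
e\lef(\eta,a,I\rig)=\eta\lef[\underline{X}^{a|_{I_{0}}}\rig]\prod_{l=1}^{\#I}g\lef(a|_{I_{l}}\rig)
\end{equation*}
(valid since $\operatorname{v}_{p}\lef(a\rig)=0$ and $I_{0}\neq\emptyset$) together with the Leibniz expansion
\begin{equation*}
g\lef(a|_{I_{l}}\rig)=\sum_{j\in I_{l}}\frac{a_{j}}{p^{\operatorname{v}_{p}\lef(a|_{I_{l}}\rig)}}\lef[\underline{X}^{a|_{I_{l}}-\varepsilon_{j}}\rig]d\lef[X_{j}\rig]\text{,}
\end{equation*}
which follows from \eqref{frobeniuswittvectors}, \eqref{fdttdt} and \eqref{dproducts}. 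Here $\varepsilon_{j}$ denotes the weight function sending $j$ to $1$ and all other indices to $0$. In every case the candidate weight function is $b\coloneqq a-p^{u}\varepsilon_{i}$.

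Suppose first that \eqref{multbypumodptwo} holds: pick $i\in I_{0}$ with $a_{i}=p^{u}$. Since $\operatorname{v}_{p}\lef(a_{i}\rig)=u\geqslant 1$ while $\operatorname{v}_{p}\lef(a\rig)=0$, the index $j$ realising the minimum valuation satisfies $j\neq i$, and a direct inspection of the order $\preceq$ forces $j\in I_{0}$, so $I_{0}\smallsetminus\lef\{i\rig\}\neq\emptyset$. The factorisation $\lef[\underline{X}^{a|_{I_{0}}}\rig]=\lef[X_{i}^{p^{u}}\rig]\lef[\underline{X}^{b|_{I_{0}}}\rig]$ then yields $e\lef(\eta,a,I\rig)=\lef[X_{i}^{p^{u}}\rig]e\lef(\eta,b,I\rig)$, and the required conditions are immediate. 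The same strategy handles the subcase of \eqref{multbypumodpone} where the excess index $i$ sits in $I_{0}$: either $I_{0}\smallsetminus\lef\{i\rig\}\neq\emptyset$ and the argument is verbatim, or $I_{0}=\lef\{i\rig\}$, in which case a short inspection using $\operatorname{v}_{p}\lef(a\rig)=0$ forces $\operatorname{v}_{p}\lef(a_{i}\rig)=0$, hence $\operatorname{v}_{p}\lef(b_{i}\rig)=0$, and $i$ remains in $J_{0}$ for the order induced by $b$.

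The delicate situation is \eqref{multbypumodpone} with $i\in I_{l}$ for some $l\geqslant 1$. Writing $v_{l}\coloneqq\operatorname{v}_{p}\lef(a|_{I_{l}}\rig)$, I split the analysis into two regimes. When $v_{l}\leqslant u$, every exponent of $X_{i}$ appearing in the Leibniz expansion of $g\lef(a|_{I_{l}}\rig)$ is at least $p^{u}$, and $\lef[X_{i}^{p^{u}}\rig]$ may be factored out summand-by-summand; comparing with the Leibniz expansion of $g\lef(b|_{I_{l}}\rig)$ relative to its own valuation produces an identity
\begin{equation*}
g\lef(a|_{I_{l}}\rig)=\lef[X_{i}^{p^{u}}\rig]\lef(c_{0}g\lef(b|_{I_{l}}\rig)+c_{1}\lef[\underline{X}^{b|_{I_{l}}-\varepsilon_{i}}\rig]d\lef[X_{i}\rig]\rig)\text{,}
\end{equation*}
whose coefficients $c_{0},c_{1}$ turn out to lie in $\mathbb{Z}_{\lef\langle p\rig\rangle}$; the correction term is itself recognised as $g$ applied to a suitably restricted weight function. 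When $v_{l}>u$, every $j\in I_{l}$ automatically satisfies $a_{j}>p^{u}$, so the factoring above may be performed with respect to each $j\in I_{l}$; the resulting system of $\#I_{l}$ relations between $g\lef(a|_{I_{l}}\rig)$ and $\lef\{\lef[X_{j}^{p^{u}}\rig]g\lef(b_{j}|_{I_{l}}\rig)\rig\}_{j\in I_{l}}$, with $b_{j}\coloneqq a-p^{u}\varepsilon_{j}$, has a change-of-basis matrix whose determinant is a unit in $\mathbb{Z}_{\lef\langle p\rig\rangle}$, and inverting it expresses $g\lef(a|_{I_{l}}\rig)$ as the required $\mathbb{Z}_{\lef\langle p\rig\rangle}$-combination. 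In both regimes, the factors $\lef[\underline{X}^{a|_{I_{0}}}\rig]$ and $g\lef(a|_{I_{l'}}\rig)$ for $l'\neq l$ are untouched by the substitution into $e\lef(\eta,a,I\rig)$, and $J_{0}\neq\emptyset$ is verified by exhibiting an element of $I_{0}$ that stays before $\min\lef(J\rig)$ in the order induced by the new weight function.

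The main obstacle is the regime $v_{l}>u$ of the last case: the naive single-index pull-out produces fractional coefficients, and one has to assemble and invert a full $\#I_{l}\times\#I_{l}$ linear system in order to recover honest $\mathbb{Z}_{\lef\langle p\rig\rangle}$-coefficients. The remaining side conditions $\operatorname{v}_{p}\lef(b\rig)=0$, $\lef\lvert a\rig\rvert=\lef\lvert b\rig\rvert+p^{u}$ and $\#J=\#I$ are checked routinely in every branch.
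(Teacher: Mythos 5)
The decisive step is missing from every branch of your argument. After you pull $\lef[{X_{i}}^{p^{u}}\rig]$ out of one factor, the remaining product of Teichmüller monomials and $g$'s is in general \emph{not} equal to $e\lef(\eta,b,J\rig)$ for a single partition $J$: the blocks $J_{0},\ldots,J_{\#J}$ are computed from the order \eqref{ordersupp} attached to $b$, and $\operatorname{v}_{p}\lef(b_{i}\rig)=\operatorname{v}_{p}\lef(a_{i}-p^{u}\rig)$ usually differs from $\operatorname{v}_{p}\lef(a_{i}\rig)$, so $i$ migrates across block boundaries. Take $p=2$, $u=1$, $a=\lef(1,6,2\rig)$, $I=\lef\{3\rig\}$: the only witness of \eqref{multbypumodpone} is $i=2\in I_{0}=\lef\{1,2\rig\}$ with $I_{0}\smallsetminus\lef\{i\rig\}\neq\emptyset$, yet for $b=\lef(1,4,2\rig)$ the order becomes $1\prec3\prec2$ and one checks that $e\lef(\eta,a,I\rig)=\eta\lef[X_{1}{X_{2}}^{6}X_{3}\rig]d\lef(\lef[X_{3}\rig]\rig)$ equals $\lef[{X_{2}}^{2}\rig]\lef(e\lef(\eta,b,\lef\{3\rig\}\rig)-2e\lef(\eta,b,\lef\{2\rig\}\rig)\rig)$, not $\lef[{X_{2}}^{2}\rig]e\lef(\eta,b,I\rig)$. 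So your ``verbatim'' claim for that subcase is false, and in the other branches the sentence about the factors being ``untouched by the substitution'' hides exactly the same problem. The paper resolves it with proposition \ref{zpcombination}, applied after setting aside the minimal factor $\lef[{X_{\min\lef(a\rig)}}^{a_{\min\lef(a\rig)}}\rig]$ (the ``minimum trick''); that is what actually produces $\mathbb{Z}_{\lef\langle p\rig\rangle}$-coefficients together with $J_{0}\neq\emptyset$ and $\#J=\#I$, and your proof never invokes it.

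Your regime $v_{l}>u$ is also not salvageable as written: expanding $g\lef(a|_{I_{l}}\rig)$ fully into single terms $d\lef(\lef[X_{j}\rig]\rig)$ and regrouping against $g\lef(b|_{I_{l}}\rig)$ forces the coefficient $p^{u-v_{l}}\notin\mathbb{Z}_{\lef\langle p\rig\rangle}$ on both pieces, and the $\#I_{l}\times\#I_{l}$ system you propose to invert is neither written down nor, in fact, needed. The paper splits off only the variable $X_{i}$, writing $F^{v_{l}}\lef(d\lef(\lef[\underline{X}^{p^{-v_{l}}a|_{I_{l}}}\rig]\rig)\rig)$ as $\lef[{X_{i}}^{a_{i}}\rig]$ times the analogous factor in the remaining variables, plus $p^{-v_{l}}a_{i}\lef[{X_{i}}^{a_{i}-1}\rig]\lef[\underline{X}^{a|_{I_{l}\smallsetminus\lef\{i\rig\}}}\rig]d\lef(\lef[X_{i}\rig]\rig)$; both coefficients lie in $\mathbb{Z}_{\lef\langle p\rig\rangle}$ because $v_{l}\leqslant\operatorname{v}_{p}\lef(a_{i}\rig)$, and both terms are divisible by $\lef[{X_{i}}^{p^{u}}\rig]$ because $a_{i}-1\geqslant p^{u}$, so no case distinction between $v_{l}\leqslant u$ and $v_{l}>u$ arises. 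I would restructure the proof along these lines: two-term Leibniz split, then proposition \ref{zpcombination} with the minimum trick to reassemble.
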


\begin{proof}
By definition:
\begin{equation*}
e\lef(\eta,a,I\rig)=\eta\lef[\underline{X}^{a|_{I_{0}}}\rig]\times\prod_{l=1}^{\#I}F^{\operatorname{v}_{p}\lef(a|_{I_{l}}\rig)}\lef(d\lef(\lef[\underline{X}^{p^{-\operatorname{v}_{p}\lef(a|_{I_{l}}\rig)}a|_{I_{l}}}\rig]\rig)\rig)\text{.}
\end{equation*}

In particular, the lemma is obvious when there exists $i\in I_{0}$ such that $a_{i}\geqslant p^{u}$. So assume that there is $l'\in\lef\llbracket1,\#I\rig\rrbracket$ such that $a_{i}>p^{u}$ for a given $i\in I_{l'}$.

Using \eqref{fdttdt} we get:
\begin{multline*}
F^{\operatorname{v}_{p}\lef(a|_{I_{l'}}\rig)}\lef(d\lef(\lef[\underline{X}^{p^{-\operatorname{v}_{p}\lef(a|_{I_{l'}}\rig)}a|_{I_{l'}}}\rig]\rig)\rig)\\=\lef[\underline{X}^{\lef(1-p^{-\operatorname{v}_{p}\lef(a|_{I_{l'}}\rig)}\rig)a|_{I_{l'}}}\rig]d\lef(\lef[\underline{X}^{p^{-\operatorname{v}_{p}\lef(a|_{I_{l'}}\rig)}a|_{I_{l'}}}\rig]\rig)\text{.}
\end{multline*}

But by Leibniz rule we find:
\begin{multline*}
F^{\operatorname{v}_{p}\lef(a|_{I_{l'}}\rig)}\lef(d\lef(\lef[\underline{X}^{p^{-\operatorname{v}_{p}\lef(a|_{I_{l'}}\rig)}a|_{I_{l'}}}\rig]\rig)\rig)\\=\lef[{X_{i}}^{a_{i}}\rig]\lef[\underline{X}^{\lef(1-p^{-\operatorname{v}_{p}\lef(a|_{I_{l'}}\rig)}\rig)a|_{I_{l'}\smallsetminus\lef\{i\rig\}}}\rig]d\lef(\lef[\underline{X}^{p^{-\operatorname{v}_{p}\lef(a|_{I_{l'}}\rig)}a|_{I_{l'}\smallsetminus\lef\{i\rig\}}}\rig]\rig)\\+p^{-\operatorname{v}_{p}\lef(a|_{I_{l'}}\rig)}a_{i}\lef[{X_{i}}^{a_{i}-1}\rig]\lef[\underline{X}^{a|_{I_{l'}\smallsetminus\lef\{i\rig\}}}\rig]d\lef(\lef[X_{i}\rig]\rig)\text{.}
\end{multline*}

By hypothesis on $a_{i}$, this sum is divided by $\lef[{X_{i}}^{p^{u}}\rig]$. Therefore, we are done proving the lemma if we show that the product of the remaining multiplicand with:
\begin{multline*}
\eta\lef[\underline{X}^{a|_{I_{0}}}\rig]\times\prod_{l\in\lef\llbracket1,\#I\rig\rrbracket\smallsetminus\lef\{l'\rig\}}F^{\operatorname{v}_{p}\lef(a|_{I_{l}}\rig)}\lef(d\lef(\lef[\underline{X}^{p^{-\operatorname{v}_{p}\lef(a|_{I_{l}}\rig)}a|_{I_{l}}}\rig]\rig)\rig)\\=e\lef(\eta,a|_{\operatorname{Supp}\lef(a\rig)\smallsetminus I_{l'}},I\smallsetminus\lef\{i_{l'}\rig\}\rig)
\end{multline*}
is a $\mathbb{Z}_{\lef\langle p\rig\rangle}$-linear combination of elements of the form $e\lef(\eta,b,J\rig)$, where:
\begin{gather*}
\lef(b,J\rig)\in\mathcal{P}\text{,}\\
\operatorname{v}_{p}\lef(b\rig)=0\text{,}\\
\lef\lvert a\rig\rvert=\lef\lvert b\rig\rvert+p^{u}\text{,}\\
J_{0}\neq\emptyset\text{,}\\
\#J=\#I\text{.}
\end{gather*}

To do so, we are going to use a useful reasoning, which we shall call the ``minimum trick'' in the remainder of this section. We first use proposition \ref{dactionone} where it applies in the sum above. Because $u\lef(a_{I_{l'}}\rig)=0$, we can then use proposition \ref{zpcombination} to the product we want to consider, but from which we first exclude the smallest term for the order \eqref{ordersupp}, that is $\lef[{X_{\min\lef(a\rig)}}^{a_{\min\lef(a\rig)}}\rig]$.

Then, the product of the result with this excluded term will be a linear combination of the desired form. We indeed have $J_{0}\neq\emptyset$ because either $\operatorname{v}_{p}\lef(a_{i}\rig)=0$, or $\operatorname{v}_{p}\lef(a_{i}-p^{u}\rig)>0=\operatorname{v}_{p}\lef(a_{\min\lef(a\rig)}\rig)$, so in all cases $\min\lef(a\rig)\prec i$ on $b$.
\end{proof}

\begin{lemm}\label{combinationtwo}
Let $u\in\mathbb{N}^{*}$. Let $\eta\in W\lef(k\rig)$. Let $a$ be a weight function satisfying $\operatorname{v}_{p}\lef(a\rig)=0$. Let $I$ be a partition of $a$ with $I_{0}\neq\emptyset$. Assume that \eqref{multbypumodpthree} is true for $\lef(a,I\rig)$.

Then, the element $e\lef(\eta,a,I\rig)$ is a $\mathbb{Z}_{\lef\langle p\rig\rangle}$-linear combination of elements of either of the forms:
\begin{gather*}
\lef[{X_{i}}^{p^{u}}\rig]e\lef(\eta,b,J\rig)\text{,}\\
e\lef(\eta,a,J'\rig)
\end{gather*}
where $\lef(a,J'\rig)$ satisfies none of the conditions \eqref{multbypumodpone}, \eqref{multbypumodptwo} nor \eqref{multbypumodpthree}, and all the following ones are met:
\begin{gather*}
i\in\lef\llbracket1,n\rig\rrbracket\text{,}\\
\lef(b,J\rig),\lef(a,J'\rig)\in\mathcal{P}\text{,}\\
\operatorname{v}_{p}\lef(b\rig)=0\text{,}\\
\lef\lvert a\rig\rvert=\lef\lvert b\rig\rvert+p^{u}\text{,}\\
J_{0}\neq\emptyset\text{,}\\
{J'}_{0}\neq\emptyset\text{,}\\
\#J=\#{J'}=\#I\text{.}
\end{gather*}
\end{lemm}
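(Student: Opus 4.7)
The proof follows the template of lemma \ref{combinationone}. The idea is to expand the offending factor $g\lef(a|_{I_{l}}\rig)$ of $e\lef(\eta,a,I\rig)$ via the Leibniz rule so as to separate an explicit $\lef[X_{i}^{p^{u}}\rig]$ factor; what is left is then reassembled as a single $e\lef(\eta,a,J'\rig)$ attached to a modified partition, plus correction terms that inherit an $\lef[X_{j}^{p^{u}}\rig]$ factor. Iterating on a well-chosen measure concludes.

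In detail, pick $i\in I_{l}$ with $a_{i}=p^{u}$ and $\#I_{l}\geqslant2$, and set $m\coloneqq\operatorname{v}_{p}\lef(a|_{I_{l}}\rig)\leqslant u$. Factorise $\lef[\underline{X}^{p^{-m}a|_{I_{l}}}\rig]=\lef[\underline{X}^{p^{-m}a|_{I_{l}\smallsetminus\lef\{i\rig\}}}\rig]\lef[X_{i}^{p^{u-m}}\rig]$, apply Leibniz \eqref{dproducts} to the differential inside $g\lef(a|_{I_{l}}\rig)$, then $F^{m}$, and use \eqref{dfpfd} with \eqref{fdttdt} to rewrite $F^{m}\lef(d\lef(\lef[X_{i}^{p^{u-m}}\rig]\rig)\rig)=p^{u-m}F^{u}\lef(d\lef(\lef[X_{i}\rig]\rig)\rig)$. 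This gives
\begin{equation*}
g\lef(a|_{I_{l}}\rig)=\lef[X_{i}^{p^{u}}\rig]F^{m}\lef(d\lef(\lef[\underline{X}^{p^{-m}a|_{I_{l}\smallsetminus\lef\{i\rig\}}}\rig]\rig)\rig)+p^{u-m}\lef[\underline{X}^{a|_{I_{l}\smallsetminus\lef\{i\rig\}}}\rig]F^{u}\lef(d\lef(\lef[X_{i}\rig]\rig)\rig)\text{.}
\end{equation*}

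Plugged back into $e\lef(\eta,a,I\rig)=\eta\lef[\underline{X}^{a|_{I_{0}}}\rig]\prod_{l'=1}^{\#I}g\lef(a|_{I_{l'}}\rig)$, the first summand carries the factor $\lef[X_{i}^{p^{u}}\rig]$. Applying the minimum trick from the proof of lemma \ref{combinationone} — extracting $\lef[X_{\min\lef(a\rig)}^{a_{\min\lef(a\rig)}}\rig]$ and invoking proposition \ref{zpcombination} — this is a $\mathbb{Z}_{\lef\langle p\rig\rangle}$-linear combination of terms $\lef[X_{i}^{p^{u}}\rig]e\lef(\eta,b,J\rig)$ satisfying the listed constraints, exactly as in lemma \ref{combinationone}.

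For the second summand, observe that $F^{u}\lef(d\lef(\lef[X_{i}\rig]\rig)\rig)=g\lef(a|_{\lef\{i\rig\}}\rig)$ (since $a_{i}=p^{u}$), and express the whole contribution as the product of the three $e$-factors $e\lef(p^{u-m}\eta,a|_{I_{0}\cup\lef(I_{l}\smallsetminus\lef\{i\rig\}\rig)},\emptyset\rig)$, $e\lef(1,a|_{\lef\{i\rig\}},\lef\{i\rig\}\rig)$ and $e\lef(1,a|_{I_{l'}},\lef\{i_{l'}\rig\}\rig)$ for $l'\in\lef\llbracket1,\#I\rig\rrbracket\smallsetminus\lef\{l\rig\}$. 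Since all weight functions involved are integer-valued — so that every $u$-value vanishes and the divisibility conditions $p^{v}\mid s\lef(L\rig)$, $p^{v+u\lef(a+b\rig)}\mid s\lef(L\rig)$ of proposition \ref{zpcombination} become trivial — iteratively applying proposition \ref{zpcombination} develops the product into a $\mathbb{Z}_{\lef\langle p\rig\rangle}$-linear combination $\sum_{L}c_{L}e\lef(\eta,a,L\rig)$ with $\#L=\#I$ (the partition sizes add up to $0+1+\lef(\#I-1\rig)=\#I$).

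I conclude by induction on the non-negative integer
\begin{equation*}
\mu\lef(a,I\rig)\coloneqq\sum_{\substack{l\in\lef\llbracket1,\#I\rig\rrbracket\\\#I_{l}\geqslant2}}\#\lef\{j\in I_{l}\mid a_{j}=p^{u}\rig\}\text{.}
\end{equation*}
The construction strictly decreases $\mu$ — the index $i$ is now a breakpoint, hence no longer contributes via a block of size $\geqslant2$. Any resulting $\lef(a,L\rig)$ which still satisfies \eqref{multbypumodpthree} is re-processed by the same lemma; any satisfying \eqref{multbypumodpone} or \eqref{multbypumodptwo} is disposed of by lemma \ref{combinationone} (yielding further $\lef[X_{i}^{p^{u}}\rig]$-terms). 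After finitely many steps, only the two declared forms survive.

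The chief technical obstacle is the bookkeeping in the penultimate paragraph: one must verify that in every partition $L$ with $c_{L}\neq0$ appearing in the iterated product expansion, the index $i$ genuinely shows up as a breakpoint — so that $\mu\lef(a,L\rig)<\mu\lef(a,I\rig)$ strictly. This requires a careful analysis of which partitions receive nonzero $s\lef(L\rig)$ at each multiplication step in proposition \ref{zpcombination}, possibly refined by choosing $i$ as the $\preceq$-maximum of $\lef\{j\in I_{l}\mid a_{j}=p^{u}\rig\}$ so as to force the block structure of $L$ to contain $\lef\{i\rig\}$ in the expected place.
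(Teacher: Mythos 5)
Your Leibniz expansion of the offending factor is exactly the one the paper uses, and your treatment of the first summand (the one carrying $\lef[{X_{i}}^{p^{u}}\rig]$) is fine. The gap is in the second summand, and you have correctly located it yourself: nothing in the statement of proposition \ref{zpcombination} tells you which partitions $L$ receive a nonzero coefficient $s\lef(L\rig)$, so you cannot conclude that $i$ is a breakpoint of every surviving $L$, nor --- even granting that --- that some other index $j$ with $a_{j}=p^{u}$ which was a singleton block of $I$ does not get absorbed into a block of size $\geqslant2$ of $L$. Either failure stops $\mu$ from decreasing, so the termination of your induction is not established, and it cannot be extracted from the tools as quoted in this paper.

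The paper's proof closes this hole by exploiting the order \eqref{ordersupp}: since $\operatorname{v}_{p}\lef(a_{j}\rig)=u$ is maximal when $a_{j}=p^{u}$, these indices are the $\preceq$-largest elements of $\operatorname{Supp}\lef(a\rig)$. It first disposes of the cases where \eqref{multbypumodpone} or \eqref{multbypumodptwo} holds via lemma \ref{combinationone}, and of the case where two weight-$p^{u}$ indices share a block (there both Leibniz terms acquire an explicit Teichm\"uller factor $\lef[{X_{j}}^{p^{u}}\rig]$, so only first-form terms arise). In the remaining configuration exactly one block of size $\geqslant2$ contains exactly one weight-$p^{u}$ index, and all other weight-$p^{u}$ indices are already singleton blocks; the paper then keeps every factor $F^{u}\lef(d\lef(\lef[X_{j}\rig]\rig)\rig)$ with $a_{j}=p^{u}$ \emph{outside} the shuffle of proposition \ref{zpcombination} and applies the minimum trick only to the remaining factors, which involve only indices of weight $<p^{u}$. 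Multiplying back by the reserved factors is then an exact product identity --- it merely appends the weight-$p^{u}$ indices as trailing singleton blocks --- so the output is already of the form $e\lef(\eta,a,J'\rig)$ with $\lef(a,J'\rig)$ violating all three conditions, in a single step and with no induction. To repair your argument you should restructure it this way rather than attempt to refine proposition \ref{zpcombination}.
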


\begin{proof}
If $\lef(a,I\rig)$ satisfies either \eqref{multbypumodpone} or \eqref{multbypumodptwo}, then we can conclude immediately with lemma \ref{combinationone}. Therefore, we only need to prove the lemma when $a_{i}\leqslant p^{u}$ for all $i\in\operatorname{Supp}\lef(a\rig)$.

By hypothesis, there is $j\in\lef\llbracket1,\#I\rig\rrbracket$ such that $\#I_{l}\geqslant2$ and $a_{i}=p^{u}$ for some $i\in I_l$. Assume first that there is $i'\in I_{l}$ such that $i\neq i'$ and $a_{i'}=p^{u}$. Then by \eqref{frobeniuswittvectors}, we get the following equality for this factor of $e\lef(\eta,a,I\rig)$:
\begin{multline*}
F^{\operatorname{v}_{p}\lef(a|_{I_{l}}\rig)}\lef(d\lef(\lef[\underline{X}^{p^{-\operatorname{v}_{p}\lef(a|_{I_{l}}\rig)}a|_{I_{l}}}\rig]\rig)\rig)\\=\lef[{X_{i}}^{p^{u}}\rig]F^{\operatorname{v}_{p}\lef(a|_{I_{l}}\rig)}\lef(d\lef(\lef[\underline{X}^{p^{-\operatorname{v}_{p}\lef(a|_{I_{l}}\rig)}a|_{I_{l}\smallsetminus\lef\{i\rig\}}}\rig]\rig)\rig)\\+\lef[{X_{i'}}^{p^{u}}\rig]\lef[\underline{X}^{a|_{I_{l}\smallsetminus\lef\{i,i'\rig\}}}\rig]F^{\operatorname{v}_{p}\lef(a|_{I_{l}}\rig)}\lef(d\lef(\lef[{X_{i}}^{p^{u-\operatorname{v}_{p}\lef(a|_{I_{l}}\rig)}}\rig]\rig)\rig)\text{.}
\end{multline*}

In this case, one can conclude using a reasoning similar to the minimum trick used in the proof of lemma \ref{combinationone}.

Thus, it only remains to prove the lemma when $\lef(a,I\rig)$ has the property that for any $l\in\lef\llbracket1,\#I\rig\rrbracket$, if $a_{i}=p^{u}$, then $a_{i'}<p^{u}$ for all $i'\in I_{l}$ satisfying $i\neq i'$. In particular, by definition of the order \eqref{ordersupp}, then $l$ is the only element of $\lef\llbracket1,\#I\rig\rrbracket$ satisfying \eqref{multbypumodpthree}.

A similar calculation as above, and then applying \eqref{dfpfd} to the result, leads to:
\begin{multline*}
F^{\operatorname{v}_{p}\lef(a|_{I_{l}}\rig)}\lef(d\lef(\lef[\underline{X}^{p^{-\operatorname{v}_{p}\lef(a|_{I_{l}}\rig)}a|_{I_{l}}}\rig]\rig)\rig)\\=\lef[{X_{i}}^{p^{u}}\rig]F^{\operatorname{v}_{p}\lef(a|_{I_{l}}\rig)}\lef(d\lef(\lef[\underline{X}^{p^{-\operatorname{v}_{p}\lef(a|_{I_{l}}\rig)}a|_{I_{l}\smallsetminus\lef\{i\rig\}}}\rig]\rig)\rig)\\+p^{u-\operatorname{v}_{p}\lef(a|_{I_{l}}\rig)}\lef[\underline{X}^{a|_{I_{l}\smallsetminus\lef\{i\rig\}}}\rig]F^{u}\lef(d\lef(\lef[X_{i}\rig]\rig)\rig)\text{.}
\end{multline*}

Putting this sum back in the product defining $e\lef(\eta,a,I\rig)$ will lead to a new sum. It is easy to see that the first term of the sum is of the form $\lef[{X_{i}}^{p^{u}}\rig]e\lef(\eta,b,J\rig)$ with the conditions of the statement of the lemma. The second term of the sum will be a product with multiplicand $\prod_{\substack{i\in\operatorname{Supp}\lef(a\rig)\\ a_{i}=p^{u}}}F^{u}\lef(d\lef(\lef[X_{i}\rig]\rig)\rig)$, and we can apply the minimum trick to the multiplier, leading to a $\mathbb{Z}_{\lef\langle p\rig\rangle}$-linear combination of elements of the form $e\lef(\eta,a,J'\rig)$ as in the statement of the lemma.
\end{proof}

\begin{prop}\label{linearpuintegral}
Let $u\in\mathbb{N}^{*}$. Let $\eta\in W\lef(k\rig)$ and $\lef(b,L\rig)\in\mathcal{P}$ be such that $\operatorname{v}_{p}\lef(b\rig)=0$ and $L_{0}\neq\emptyset$. Then $e\lef(\eta,b,L\rig)$ can be uniquely written as a $\mathbb{Z}_{\lef\langle p\rig\rangle}$-linear combination of elements of the form 
\begin{equation*}
\lef[\underline{X}^{p^{u}c}\rig]e\lef(\eta,a,I\rig)
\end{equation*}
where $c$ is a weight function with integral values, $\lef(a,I\rig)\in\mathcal{P}$ satisfies none of the conditions \eqref{multbypumodpone}, \eqref{multbypumodptwo} nor \eqref{multbypumodpthree}, and the following conditions are met:
\begin{gather*}
b=a+p^{u}c\text{,}\\
\operatorname{v}_{p}\lef(a\rig)=0\text{,}\\
I_{0}\neq\emptyset\text{,}\\
\#I=\#L\text{.}
\end{gather*}
\end{prop}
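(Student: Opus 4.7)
The plan is to establish existence by iterating lemmas \ref{combinationone} and \ref{combinationtwo}, inducting on the integer $\lef\lvert b\rig\rvert$, and to derive uniqueness from the canonical basis provided by theorem \ref{structuretheorem}.

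For existence, I would induct on $\lef\lvert b\rig\rvert$, which is a non-negative integer since $\operatorname{v}_{p}\lef(b\rig)=0$ forces every $b_{i}$ to lie in $\mathbb{N}$. If $\lef(b,L\rig)$ satisfies none of the conditions \eqref{multbypumodpone}, \eqref{multbypumodptwo}, \eqref{multbypumodpthree}, the writing is trivial with $c=0$, $a=b$, $I=L$ and coefficient $1$. Otherwise one of the three conditions holds: if \eqref{multbypumodpone} or \eqref{multbypumodptwo} holds, lemma \ref{combinationone} expresses $e\lef(\eta,b,L\rig)$ as a $\mathbb{Z}_{\lef\langle p\rig\rangle}$-linear combination of terms of the form $\lef[{X_{i_{j}}}^{p^{u}}\rig]e\lef(\eta,b'_{j},L'_{j}\rig)$ with $\lef\lvert b'_{j}\rig\rvert=\lef\lvert b\rig\rvert-p^{u}$; if only \eqref{multbypumodpthree} does, lemma \ref{combinationtwo} does the same but may additionally produce terms $e\lef(\eta,b,J'\rig)$ where $\lef(b,J'\rig)$ already satisfies none of the three conditions, and is thus already in the desired form. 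I would then apply the induction hypothesis to each term with strictly smaller weight, substitute the resulting writings, and absorb the outer $\lef[{X_{i_{j}}}^{p^{u}}\rig]$ into $\lef[\underline{X}^{p^{u}c}\rig]$ by incrementing $c_{i_{j}}$ by $1$.

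For uniqueness, I would consider a hypothetical dependence relation $\sum_{\lef(c,a,I\rig)}\lambda_{c,a,I}\lef[\underline{X}^{p^{u}c}\rig]e\lef(\eta,a,I\rig)=0$ and, using the identity $\lef[\underline{X}^{p^{u}c}\rig]=e\lef(1,p^{u}c,\emptyset\rig)$ together with proposition \ref{zpcombination}, expand each product as a sum of basis elements $e\lef(\eta,a+p^{u}c,L'\rig)$ from theorem \ref{structuretheorem}. Grouping the resulting terms by $b\coloneqq a+p^{u}c$ and invoking the linear independence from theorem \ref{structuretheorem}, the problem reduces to showing that, for each fixed $b$, the valid triples $\lef(c,a,I\rig)$ with $a+p^{u}c=b$ contribute linearly independent expansions. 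The strategy is to identify such a triple with the subset $I\subset\operatorname{Supp}\lef(b\rig)$, show that this yields a bijection onto the valid partitions $L'$ of $\operatorname{Supp}\lef(b\rig)$ of size $\#I$ with $L'_{0}\neq\emptyset$, and check that under this indexing the coefficient matrix of the expansions is triangular with units of $\mathbb{Z}_{\lef\langle p\rig\rangle}$ on the diagonal.

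The main obstacle is this last triangularity statement, because proposition \ref{zpcombination} only asserts the existence of the expansion coefficients rather than giving explicit formulas. Verifying it will require a case analysis on whether each $b_{i}$ is a multiple of $p^{u}$, whether $i\in I$, and whether $i$ lies in $I_{0}$ or in some $I_{l}$ with $l\geqslant 1$; in addition, the total order $\preceq$ is not preserved between $\operatorname{Supp}\lef(a\rig)$ and $\operatorname{Supp}\lef(b\rig)$, because the $p$-adic valuation of $b_{i}=a_{i}+p^{u}c_{i}$ can strictly exceed that of $a_{i}$ when $a_{i}\in\lef\{0,p^{u}\rig\}$, so the bijection and the diagonal-unit claim must be tracked carefully along this change of orders.
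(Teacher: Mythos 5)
Your existence argument is essentially the paper's: the same induction on $\lef\lvert b\rig\rvert$, feeding $\lef(b,L\rig)$ into lemmas \ref{combinationone} and \ref{combinationtwo} and absorbing the extracted factors $\lef[{X_{i}}^{p^{u}}\rig]$ into $\lef[\underline{X}^{p^{u}c}\rig]$. That half is fine.

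The uniqueness half, however, has a genuine gap, and it is exactly the one you flag yourself: the triangularity of the coefficient matrix. Proposition \ref{zpcombination} only asserts the existence of coefficients $s\lef(L\rig)\in\mathbb{Z}_{\lef\langle p\rig\rangle}$ with certain divisibility properties; it gives no formula from which one could read off a leading term, and the mismatch between the orders $\preceq$ on $\operatorname{Supp}\lef(a\rig)$ and on $\operatorname{Supp}\lef(b\rig)$ (the valuation of $b_{i}=a_{i}+p^{u}c_{i}$ can jump when $a_{i}\in\lef\{0,p^{u}\rig\}$, so $\min\lef(a\rig)$ need not be $\min\lef(b\rig)$) means even the bijection you want to triangularise against is not the identity on subsets. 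As written, this step "will require" a case analysis you have not performed, so the proof is incomplete. The paper sidesteps all of this with a counting argument: for fixed $b$ and $m=\#L$, the target is the free $W\lef(k\rig)$-module on the $e\lef(1,b,L\rig)$ with $L_{0}\neq\emptyset$ and $\#L=m$, of rank $\binom{\#\operatorname{Supp}\lef(b\rig)-1}{m}$; the existence step exhibits the family of admissible $\lef[\underline{X}^{p^{u}c}\rig]e\lef(1,a,I\rig)$ as a generating set of that module (that it lands inside, and not in a larger module, uses proposition \ref{zpcombination} together with \cite[proposition 2.2]{overconvergentderhamwittcohomology}); one then counts the admissible triples — splitting $\operatorname{Supp}\lef(b\rig)$ into the indices with $p^{u}\mid b_{i}$ and the rest, and applying the Chu--Vandermonde identity — and finds exactly $\binom{\#\operatorname{Supp}\lef(b\rig)-1}{m}$ of them. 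A generating family of a free module of finite rank with cardinality equal to the rank is a basis \cite[05G8]{stacksproject}, which gives uniqueness with no information about the expansion coefficients at all. I would recommend replacing your triangularity plan by this cardinality argument.
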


\begin{proof}
We are first going to show that such a writing exists by induction on $\lef\lvert b\rig\rvert$. If $\lef\lvert b\rig\rvert<p^{u}$, then $\lef(b,L\rig)$ cannot satisfy any of the conditions \eqref{multbypumodpone}, \eqref{multbypumodptwo} and \eqref{multbypumodpthree}. So we can put $\lef(a,I\rig)\coloneqq\lef(b,L\rig)$ and define $c$ to be the zero function and we are done.

Now assume that there is $m\in\mathbb{N}^{*}$ for which the proposition is proven for all $b$ such that $\lef\lvert b\rig\rvert<mp^{u}$. Let $\lef(b,L\rig)\in\mathcal{P}$ be such that $\operatorname{v}_{p}\lef(b\rig)=0$, $\lef\lvert b\rig\rvert<\lef(m+1\rig)p^{u}$ and $L_{0}\neq\emptyset$. If $\lef(b,L\rig)$ satisfies none of the aforementioned conditions, then we can conclude with the same argument than above. Otherwise, we can apply lemmas \ref{combinationone} and \ref{combinationtwo} and write $e\lef(\eta,b,L\rig)$ as a $\mathbb{Z}_{\lef\langle p\rig\rangle}$-linear combination of elements of either form $e\lef(\eta,b,L'\rig)$ or $\lef[{X_{i}}^{p^{u}}\rig]e\lef(\eta,b',L''\rig)$, where $i\in\lef\llbracket1,n\rig\rrbracket$, $\lef(b,L'\rig),\lef(b',L''\rig)\in\mathcal{P}$, $\operatorname{v}_{p}\lef(b'\rig)=0$, $\lef\lvert b\rig\rvert=\lef\lvert b'\rig\rvert+p^{u}$, ${L'}_{0}\neq\emptyset$, ${L''}_{0}\neq\emptyset$ and $\#{L'}=\#{L''}=\#L$. Also, $\lef\lvert b'\rig\rvert<mp^{u}$ and $\lef(b,L'\rig)$ satisfies none of the conditions \eqref{multbypumodpone}, \eqref{multbypumodptwo} nor \eqref{multbypumodpthree}. Furthermore, proposition \ref{zpcombination} implies that any such $b'$ equals $b$ on the whole set of departure, except for $i$. We are finished proving the existence after applying our induction hypothesis.

To see that this writing is unique, fix $m\in\mathbb{N}$ and $b$ a weight function with $\operatorname{v}_{p}\lef(b\rig)=0$. Now consider the $W\lef(k\rig)$-module generated by all $e\lef(1,b,L\rig)$ with $L$ a partition of $b$ such that $L_{0}\neq\emptyset$ and $\#L=m$. By theorem \ref{structuretheorem}, it is a free $W\lef(k\rig)$-module, and this $W\lef(k\rig)$-generating set is a $W\lef(k\rig)$-basis. Therefore, it has rank $\binom{\#\operatorname{Supp}\lef(b\rig)-1}{m}$.

But we have just shown that the set of all $\lef[\underline{X}^{p^{u}c}\rig]e\lef(1,a,I\rig)$, where $a$ and $c$ are weight functions taking values in $\mathbb{N}$ and $I$ is a partition of $a$, such that $b=a+p^{u}c$, and that $\operatorname{v}_{p}\lef(a\rig)=0$, and that $I_{0}\neq\emptyset$, and that $\#I=m$ and that $\lef(a,I\rig)$ satisfies none of the conditions \eqref{multbypumodpone}, \eqref{multbypumodptwo} and \eqref{multbypumodpthree} is a $W\lef(k\rig)$-generating set of that free $W\lef(k\rig)$-module. This set does not generate a bigger $W\lef(k\rig)$-module by virtue of proposition \ref{zpcombination}, and of \cite[proposition 2.2]{overconvergentderhamwittcohomology} whose proof holds verbatim in our situation even though it was demonstrated in a less general setting. We are now going to compute the cardinal of this set.

Consider such $a$, $c$ and $I$. Observe that $c$ is uniquely determined by $a$, so we only have to count how many $\lef(a,I\rig)\in\mathcal{P}$ satisfy all the needed constraints. Notice that for any $\lef(a,I\rig)\in\mathcal{P}$, to meet none of the three conditions above is equivalent to saying that that for all $i\in\lef\llbracket1,n\rig\rrbracket$ we have $a_{i}\leqslant p^{u\lef(b\rig)}$, and $i\in I$ whenever $a_{i}=p^{u\lef(b\rig)}$. Consider $P\coloneqq\lef\{i\in\operatorname{Supp}\lef(b\rig)\mid b_{i}\in p^{u}\mathbb{N}\rig\}$ and $Q\coloneqq\operatorname{Supp}\lef(b\rig)\smallsetminus P$. Remark that if $i\in Q$, then $a_{i}$ has to be the remainder of the division of $b_{i}$ by $p^{u}$. However, if $i\in P$, then $a_{i}\in\lef\{0,p^{u}\rig\}$. Assume that we have chosen $l\in\lef\llbracket0,m\rig\rrbracket$ elements $i\in P$ to verify $a_{i}=p^{u}$; in other words, such that $i\in I$. Then we have $\binom{\#P}{l}$ choices for such elements. We are left to pick $m-l$ elements in $Q$ to belong in $I$, knowing that we cannot select the smallest $i$ for the order $\prec$ as $I_{0}\neq\emptyset$. That is, we still have $\binom{\#Q-1}{m-l}$ elements to choose. Therefore, the $W\lef(k\rig)$-generating set has a cardinality of $\sum_{l=0}^{m}\binom{\#P}{l}\binom{\#Q-1}{m-l}$.

By the Chu--Vandermonde identity, this $W\lef(k\rig)$-generating set also has cardinality $\binom{\#\operatorname{Supp}\lef(b\rig)-1}{m}$, so it implies that it is a $W\lef(k\rig)$-basis and that the writing is unique \cite[05G8]{stacksproject}.
\end{proof}

Now that this analysis is done, let us notice how the ring of Witt vectors decomposes modulo $p$.

\begin{prop}\label{splittingwittvectorsmodp}
Assume that $\overline{W\lef(k\rig)}\to\overline{k}$ has a section. Then, we have an isomorphism of $\overline{k}$-modules for all $u\in\mathbb{N}$:
\begin{equation*}
\overline{W_{u+1}\lef(k\rig)}\cong\overline{W_{u}\lef(k\rig)}\oplus\overline{V}^{u}\lef(\overline{W_{1}\lef(k\rig)}\rig)\text{.}
\end{equation*}
\end{prop}

\begin{proof}
We proceed by induction on $u\in\mathbb{N}^{*}$, the direct sum in the case $u=1$ being clear. So let us assume that the proposition is shown for $u\in\mathbb{N}^{*}$. By hypothesis, we have $\overline{W_{u+2}\lef(k\rig)}\cong\overline{W_{1}\lef(k\rig)}\oplus\overline{V}\lef(\overline{W_{u+1}\lef(k\rig)}\rig)$. We thus only have to make sure that when $\sum_{i=1}^{u+1}V^{i}\lef(x\lef(i\rig)\rig)=pw$, with $\lef(x\lef(i\rig)\rig)_{i\in\lef\llbracket1,u+1\rig\rrbracket}\in{W_{u+1}\lef(k\rig)}^{u+1}$ and $w\in W_{u+2}\lef(k\rig)$, then all of the $V^{i}\lef(x\lef(i\rig)\rig)$ are divisible by $p$.

As before, we see that we must have $pw_{0}=0$ and ${x\lef(1\rig)}_{0}\equiv{w_{0}}^{p}\pmod{p}$. This implies that $x\lef(1\rig)=\lef[w_{0}\rig]^{p}+pr$ for some $r\in W_{u+1}\lef(k\rig)$. As $pw_{0}=0$, we deduce from the construction of $F$ on Witt vectors that $p\lef[w_{0}\rig]=V\lef(\lef[w_{0}\rig]^{p}\rig)$, so that $p\lef(\lef[w_{0}\rig]+V\lef(r\rig)\rig)=V\lef(x\lef(1\rig)\rig)$.

We are thus get $\sum_{i=2}^{u}V^{i}\lef(c\lef(i\rig)\rig)=pw'$, with $w'=w-\lef[w_{0}\rig]-V\lef(r\rig)\in V\lef(W_{u+1}\lef(k\rig)\rig)$. By injectivity of $V$, we conclude by the induction hypothesis.
\end{proof}

\begin{deff}
We shall say that $k$ has a \textbf{rebar cage} if $\overline{W\lef(k\rig)}\to\overline{k}$ has a section and there exists a $p$-torsion free and $p$-adically complete and separated commutative ring of characteristic zero $C$, as well as a map $C\to W\lef(k\rig)$ inducing a surjective morphism of rings $C\to k$ which is an isomorphism modulo $p$, and such that for each $u\in\mathbb{N}$ and each $m\in\mathbb{N}$ there is a subset $\mathcal{B}\lef(u,m\rig)\subset C$ satisfying:
\begin{itemize}
\item the set $\lef\{\overline{V}^{m}\lef(\overline{b}\rig)\rig\}_{b\in\mathcal{B}\lef(u,m\rig)}$ is a ${\operatorname{Frob}_{\overline{k}}}^{u}\lef(\overline{k}\rig)$-generating set of $\overline{V}^{m}\lef(\overline{W_{1}\lef(k\rig)}\rig)$;
\item the set $\lef\{\overline{b}\rig\}_{b\in\mathcal{B}\lef(u,m\rig)}$ is a ${\operatorname{Frob}_{\overline{k}}}^{u+m}\lef(\overline{k}\rig)$-free family of $\overline{k}$.
\end{itemize}
\end{deff}

These two conditions are equivalent to saying that every $x\in\overline{V}^{m}\lef(\overline{W_{1}\lef(k\rig)}\rig)$ has a unique writing of the form $x=\overline{V}^{m}\lef(\sum_{b\in\mathcal{B}\lef(u,m\rig)}x_{b}\overline{b}\rig)$ where the $x_{b}\in{\operatorname{Frob}_{\overline{k}}}^{u+m}\lef(\overline{k}\rig)$ are almost all nought.

Notice that the second condition implies that the image of $\mathcal{B}\lef(u,m\rig)$ in $W\lef(k\rig)$ is not in $V\lef(W\lef(k\rig)\rig)$. Also, by proposition \ref{splittingwittvectorsmodp}, for every $b\in\mathcal{B}\lef(u,m\rig)$, the element $V^{m}\lef(b\rig)$ is in the image of $C$ if and only if $m=0$.

\begin{xmpl}\label{perfectrebarcage}
When $k$ is a perfect ring of characteristic $p$, and $C=W\lef(k\rig)$. Then, one simply has to take:
\begin{equation*}
\mathcal{B}\lef(u,m\rig)=\begin{cases}\lef\{1\rig\}&\text{if }m=0\text{,}\\\emptyset&\text{otherwise.}\end{cases}
\end{equation*}
\end{xmpl}

\begin{xmpl}\label{laurentrebarcage}
When $k=l\lef(\lef(T\rig)\rig)$ is the field of Laurent series over a perfect field $l$ of characteristic $p$, then one can let $C$ be a Cohen ring with residue field $k$ \cite[0328]{stacksproject}. If we also denote by $T\in C$ an element whose image in $k$ is $T$, we can define the following rebar cage on $k$:
\begin{equation*}
\mathcal{B}\lef(u,m\rig)=\begin{cases}\lef\{T^{i}\rig\}_{i\in\lef\llbracket0,p^{u}-1\rig\rrbracket}&\text{if }m=0\text{,}\\\lef\{T^{i}\rig\}_{\substack{i\in\lef\llbracket0,p^{u+m}-1\rig\rrbracket\\p^{m}\nmid i}}&\text{otherwise.}\end{cases}
\end{equation*}
\end{xmpl}

\begin{xmpl}\label{semiperfectrebarcage}
Assume that $\overline{k}$ is semiperfect, and that $k$ is a $p$-torsion free and $p$-adically complete and separated $\mathbb{Z}_{\lef\langle p\rig\rangle}$-algebra. For instance, $k$ could be either an integral perfectoid ring, or the ring of Witt vectors associated to a semiperfect ring of characteristic $p$. Then, we can choose $C=k$, use the map $\lef[\bullet\rig]\colon C\to W\lef(k\rig)$ and take:
\begin{equation*}
\mathcal{B}\lef(u,m\rig)=\lef\{1\rig\}\text{.}
\end{equation*}
\end{xmpl}

As one can notice, $\mathbb{Z}_{\lef\langle p\rig\rangle}$-algebras which have a rebar cage include many base rings useful in arithmetic geometry. We are going to use these cages in order to construct our structure theorem for the de Rham--Witt complex. Our bricks will be the sub-$\overline{k}\lef[\underline{X}\rig]$-modules of $\overline{W\Omega_{k\lef[\underline{X}\rig]/k}}$ below.

Assume that $k$ has a rebar cage, and assume given a map $L\to W\lef(k\lef[\underline{X}\rig]\rig)$ extending $C\to W\lef(k\rig)$, yielding a surjective ring morphism $L\to k\lef[\underline{X}\rig]$ and an isomorphism of $\overline{k}$-algebras $\overline{L}\cong\overline{k}\lef[\underline{X}\rig]$. Let $b\in\mathcal{B}\lef(u,m\rig)$ and let $\lef(a,I\rig)\in\mathcal{P}$ be a weight function and partition couple. We shall put:
\begin{equation*}
\mathcal{M}\lef(b,a,I\rig)\coloneqq\lef\{\overline{e\lef(V^{m}\lef(b\rig)P^{p^{u}},a,I\rig)}\mid P\in L\rig\}\text{.}
\end{equation*}

This depends on $u$ and $m$, but we omit them from the notation as $b\in\mathcal{B}\lef(u,m\rig)$.

\begin{prop}\label{generalmodpstructure}
Assume that $k$ has a rebar cage, and that we have a map $L\to W\lef(k\lef[\underline{X}\rig]\rig)$ as above.

For any subset $J\subset\lef\llbracket1,n\rig\rrbracket$, denote by $\chi_{J}$ the indicator function. Let $t\in\mathbb{N}$. Then, the epimorphism $\overline{W_{m+1}\Omega^{t}_{k\lef[\underline{X}\rig]/k}}\to\overline{W_{m}\Omega^{t}_{k\lef[\underline{X}\rig]/k}}$ splits, and we have an isomorphism of $\overline{k}\lef[\underline{X}\rig]$-modules for each $m\in\mathbb{N}$:
\begin{multline*}
\operatorname{Ker}\lef(\overline{W_{m+1}\Omega^{t}_{k\lef[\underline{X}\rig]/k}}\to\overline{W_{m}\Omega^{t}_{k\lef[\underline{X}\rig]/k}}\rig)\cong\bigoplus_{\substack{\lef(\chi_{I},I\rig)\in\mathcal{P}\\\#I=t}}\bigoplus_{b\in\mathcal{B}\lef(0,m\rig)}\mathcal{M}\lef(b,a,\operatorname{Supp}\lef(a\rig)\rig)\\\oplus\bigoplus_{u=1}^{m}\bigoplus_{\substack{\lef(a,I\rig)\in\mathcal{P}\\\operatorname{v}_{p}\lef(a\rig)=0\\\forall i\in\lef\llbracket1,n\rig\rrbracket,\ a_{i}<p^{u}\\I_{0}\neq\emptyset\\\#I\leqslant t}}\bigoplus_{\substack{J\subset\lef\llbracket1,n\rig\rrbracket\smallsetminus\operatorname{Supp}\lef(a\rig)\\\#J=t-\#I}}\bigoplus_{b\in\mathcal{B}\lef(u,m-u\rig)}\mathcal{M}\lef(b,p^{-u}a+\chi_{J},I\cup J\rig)\\\oplus d\lef(\bigoplus_{u=1}^{m}\bigoplus_{\substack{\lef(a,I\rig)\in\mathcal{P}\\\operatorname{v}_{p}\lef(a\rig)=0\\\forall i\in\lef\llbracket1,n\rig\rrbracket,\ a_{i}<p^{u}\\I_{0}\neq\emptyset\\\#I\leqslant t-1}}\bigoplus_{\substack{J\subset\lef\llbracket1,n\rig\rrbracket\smallsetminus\operatorname{Supp}\lef(a\rig)\\\#J=t-1-\#I}}\bigoplus_{b\in\mathcal{B}\lef(u,m-u\rig)}\mathcal{M}\lef(b,p^{-u}a+\chi_{J},I\cup J\rig)\rig)\text{.}
\end{multline*}
\end{prop}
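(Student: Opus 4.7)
The plan is to combine Theorem \ref{structuretheorem}, the integral-fractional splittings \eqref{intfracdecomposition} and \eqref{frpdfrpdecomposition}, the rebar cage hypothesis, and Proposition \ref{linearpuintegral}.

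By Theorem \ref{structuretheorem}, every $x \in W\Omega^{t}_{k[\underline{X}]/k}$ writes uniquely as $x = \sum_{(a,I) \in \mathcal{P},\ \#I = t} e(\eta_{a,I}, a, I)$ with $\eta_{a,I} \in W(k)$; such an $x$ lies in $\operatorname{Fil}^{m}(W\Omega_{k[\underline{X}]/k})$ precisely when $V^{u(a)}(\eta_{a,I}) \in V^{m}(W(k))$ for all $(a,I)$, and its class modulo $\operatorname{Fil}^{m+1}$ is determined by these coefficients modulo $V^{m+1}(W(k))$. Since $V^{s}(W(k)) \subset V^{m+1}(W(k))$ for $s > m$, only indices with $u(a) \leq m$ contribute; for them, I write $V^{u(a)}(\eta_{a,I}) = V^{m}(\mu_{a,I})$ with $\mu_{a,I} \in W(k)$ by injectivity of $V$.

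Reducing further modulo $p$, Proposition \ref{splittingwittvectorsmodp} identifies $\overline{V^{m}(W(k))}/\overline{V^{m+1}(W(k))}$ with $\overline{V}^{m}(\overline{W_{1}(k)})$ as $\overline{k}$-modules. The rebar cage $\mathcal{B}(u(a), m - u(a))$ then yields a unique writing of $\overline{\mu_{a,I}}$ modulo $\overline{V(W(k))}$ as $\sum_{b} \lambda_{a,I,b} \overline{b}$ with $\lambda_{a,I,b} \in \operatorname{Frob}^{m}_{\overline{k}}(\overline{k})$. As $\lambda_{a,I,b}$ is a $p^{m}$-th power, it lifts to $\overline{P_{a,I,b}}^{p^{u(a)}}$ for some $P_{a,I,b} \in L$ through the isomorphism $\overline{L} \cong \overline{k}[\underline{X}]$, so the corresponding contribution lies inside the module $\mathcal{M}(b, a, I)$.

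I would then organise the terms according to the three types coming from \eqref{intfracdecomposition} and \eqref{frpdfrpdecomposition}: integral ($u(a) = 0$), pure fractional ($u(a) > 0$ with $I_{0} \neq \emptyset$), and image under $d$ of a pure fractional element ($u(a) > 0$ with $I_{0} = \emptyset$, by Proposition \ref{dactionone}). In each case, Proposition \ref{linearpuintegral} uniquely rewrites $e(\eta, a, I)$ as a $\mathbb{Z}_{\lef\langle p\rig\rangle}$-linear combination of $[\underline{X}^{p^{u}c}] e(\eta, a', I')$ with $(a', I')$ avoiding conditions \eqref{multbypumodpone}, \eqref{multbypumodptwo} and \eqref{multbypumodpthree}; the polynomial factor $[\underline{X}^{p^{u}c}]$ is absorbed into the $P^{p^{u(a)}}$ component of $\mathcal{M}(b, a', I')$. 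Degree constraints force the integral case to reduce to $(a', I') = (\chi_{I'}, I')$ with $\#I' = t$, while the two fractional cases correspond to the index shift $(a, I) \mapsto (p^{-u(a)} a + \chi_{J}, I \cup J)$, the $d$-case having $\#(I \cup J) = t - 1$ by Proposition \ref{dactionone}.

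The main obstacle I expect is the book-keeping of the indexing conditions, in particular checking that the constraints $\#I = t$ (respectively $t - 1$ for the $d$ piece), $J \cap \operatorname{Supp}(a) = \emptyset$ and $\operatorname{v}_{p}(a) = 0$ arise naturally from the rewriting. Directness and exhaustiveness then follow from the three sources of uniqueness: Theorem \ref{structuretheorem}, the $\operatorname{Frob}^{u+m}(\overline{k})$-freeness of the rebar cage, and the unique writing in Proposition \ref{linearpuintegral}. Finally, the splitting of the surjection $\overline{W_{m+1}\Omega^{t}_{k[\underline{X}]/k}} \to \overline{W_{m}\Omega^{t}_{k[\underline{X}]/k}}$ is constructed by choosing compatible rebar cage representatives across $m$.
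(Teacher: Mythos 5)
Your proposal is correct and follows essentially the same route as the paper: the splitting comes from theorem \ref{structuretheorem} and proposition \ref{splittingwittvectorsmodp}, the three summands are matched with the integral, pure fractional and $d$-of-pure-fractional pieces of \eqref{intfracdecomposition} and \eqref{frpdfrpdecomposition}, and proposition \ref{linearpuintegral} together with the rebar cage produces the $\mathcal{M}\lef(b,\cdot,\cdot\rig)$ generators. The only step left implicit in your sketch is that in the fractional cases one must first write $e\lef(\eta,a,I\rig)=V^{u\lef(a\rig)}\lef(e\lef(\eta,p^{u\lef(a\rig)}a,I\rig)\rig)$ via proposition \ref{vactionone} before proposition \ref{linearpuintegral} applies (it requires $\operatorname{v}_{p}=0$), and then push the result back through $V^{u\lef(a\rig)}$ using \eqref{vxfyvxy}; this is exactly the index shift $\lef(a,I\rig)\mapsto\lef(p^{-u}a+\chi_{J},I\cup J\rig)$ you record.
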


\begin{proof}
By theorem \ref{structuretheorem} and proposition \ref{splittingwittvectorsmodp}, the epimorphism splits.

We are going to show that the first line in the above isomorphism corresponds to the integral part of the complex as in \eqref{intfracdecomposition}, that the second one to the pure fractional part, and that the third one to the image through $d$ of the pure fractional part as in \eqref{frpdfrpdecomposition}. First, apply proposition \ref{splittingwittvectorsmodp} and \cite[corollary 2.13]{derhamwittcohomologyforaproperandsmoothmorphism} to have such a decomposition as a $\overline{W\lef(k\rig)}$-module.

We have $\overline{W_{1}\Omega^{t}_{k\lef[\underline{X}\rig]/k}}\cong\Omega^{t}_{\overline{k}\lef[\underline{X}\rig]/\overline{k}}$ as $\overline{k}\lef[\underline{X}\rig]$-modules, so we find that the integral part is also isomorphic to the first line.

Notice that if we know that the second line is isomorphic to the pure fractional part, we can immediately conclude the proof. So let us focus on that second line.

Let $\lef(c,L\rig)\in\mathcal{P}$. Recall the definition:
\begin{equation*}
u\lef(c\rig)=\max\lef\{0,-\operatorname{v}_{p}\lef(c\rig)\rig\}\text{.}
\end{equation*}

Assume that $u\lef(c\rig)\in\lef\llbracket1,m\rig\rrbracket$. Let $\eta\in V^{m-u\lef(c\rig)}\lef(W\lef(k\rig)\rig)$ be a Witt vector such that $e\lef(\eta,c,L\rig)\in W\Omega_{k\lef[\underline{X}\rig]/k}^{\mathrm{frp},t}$. In other terms, $u\lef(c\rig)\neq0$ and $L_{0}\neq\emptyset$. By proposition \ref{vactionone} we have:
\begin{equation*}
e\lef(\eta,c,L\rig)=V^{u\lef(c\rig)}\lef(e\lef(\eta,p^{u\lef(c\rig)}c,L\rig)\rig)\text{.}
\end{equation*}

Use proposition \ref{linearpuintegral} to write uniquely $\overline{e\lef(\eta,p^{u\lef(c\rig)}c,L\rig)}$ as a $\mathbb{F}_{p}$-linear combination of elements of the form $\overline{\lef[\underline{X}^{p^{u\lef(c\rig)}c'}\rig]e\lef(\eta,a',I'\rig)}$, where $a'$ and $c'$ are weight functions taking values in $\mathbb{N}$ with $p^{u\lef(c\rig)}\lef\lvert c\rig\rvert=\lef\lvert a'\rig\rvert+p^{u\lef(c\rig)}\lef\lvert c'\rig\rvert$, $\operatorname{v}_{p}\lef(a'\rig)=0$ and ${I'}_{0}\neq\emptyset$, and where $I'$ is a partition of $a'$ such that $\lef(a',I'\rig)$ does not satisfy \eqref{multbypumodpone}, nor \eqref{multbypumodptwo}, nor \eqref{multbypumodpthree}.

Furthermore, \eqref{frobeniuswittvectors} yields $\lef[\underline{X}^{p^{u\lef(c\rig)}c'}\rig]=F^{u\lef(c\rig)}\lef(\lef[\underline{X}^{c'}\rig]\rig)$. So applying \eqref{vxfyvxy} twice and using proposition \ref{vactionone} again, we get:
\begin{equation*}
V^{u\lef(c\rig)}\lef(\lef[\underline{X}^{p^{u\lef(c\rig)}c'}\rig]e\lef(\eta,a',I'\rig)\rig)=e\lef(\lef[\underline{X}^{p^{u\lef(c\rig)}c'}\rig]\eta,p^{-u\lef(c\rig)}a',I'\rig)\text{.}
\end{equation*}

It remains to determine that $\lef(p^{-u\lef(c\rig)}a',I'\rig)$ is of the form $\lef(p^{-u\lef(c\rig)}a+\chi_{J},I\cup J\rig)$ as in the statement. To see this, recall that $\lef(a',I'\rig)$ does not satisfy the three above-mentioned conditions. It is equivalent to saying that that for all $i\in\lef\llbracket1,n\rig\rrbracket$ we have $a_{i}\leqslant p^{u\lef(c\rig)}$, and if $a_{i}=p^{u\lef(c\rig)}$ then $i\in I'$.
\end{proof}

For the remainder of this section, we assume that $k$ has a rebar cage. Let $t\in\mathbb{N}$ and $m\in\mathbb{N}$. For any subset $J\subset\lef\llbracket1,n\rig\rrbracket$, denote by $\chi_{J}$ the indicator function. Proposition \ref{generalmodpstructure} motivates the following definition:
\begin{align*}
G\lef(t,m\rig)&\coloneqq\lef\{e\lef(V^{m-u}\lef(b\right),\frac{a+p^{u}\chi_{J}}{p^{u}},I\cup J\rig)\mid\begin{array}{c}u\in\lef\llbracket1,m\rig\rrbracket,\ \lef(a,I\rig)\in\mathcal{P},\\J\subset\lef\llbracket1,n\rig\rrbracket\smallsetminus\operatorname{Supp}\lef(a\rig),\\\operatorname{v}_{p}\lef(a\rig)=0,\\\forall i\in\lef\llbracket1,n\rig\rrbracket,\ a_{i}<p^{u},\\I_{0}\neq\emptyset,\ \#I+\#J=t,\\b\in\mathcal{B}\lef(u,m-u\rig)\end{array}\rig\}\text{,}\\
H\lef(t,m\rig)&\coloneqq\lef\{e\lef(V^{m}\lef(b\rig),\chi_{I},I\rig)\mid\begin{array}{c}\lef(\chi_{I},I\rig)\in\mathcal{P},\\\#I=t,\\b\in\mathcal{B}\lef(0,m\rig)\end{array}\rig\}\text{.}
\end{align*}

These are subsets of $W\Omega^{t}_{k\lef[\underline{X}\rig]/k}$. By convention, we also put:
\begin{equation*}
G\lef(-1,m\rig)\coloneqq\emptyset\text{.}
\end{equation*}

For every $e\in G\lef(t,m\rig)\sqcup H\lef(t,m\rig)$, we shall denote by $\eta\lef(e\rig)\in W\lef(k\rig)\smallsetminus V\lef(W\lef(k\rig)\rig)$, by $u\lef(e\rig)\in\mathbb{N}$, and by $\lef(a\lef(e\rig),I\lef(e\rig)\rig)\in\mathcal{P}$ the weight function and partition satisfying:
\begin{equation}\label{enotation}
e=e\lef(V^{u\lef(e\rig)}\lef(\eta\lef(e\rig)\rig),a\lef(e\rig),I\lef(e\rig)\rig)\text{.}
\end{equation}

We now are in position to express a local structure proposition of the de Rham--Witt complex modulo $p$. For simplicity, we only state it in the case where $\overline{k}$ is reduced.

\begin{prop}\label{truncatedderhamwittlocalstructure}
Assume that $k$ has a rebar cage, and that $\overline{k}$ is reduced. Let $R$ be a commutative étale $k\lef[\underline{X}\rig]$-algebra. Then, the epimorphism $\overline{W_{m+1}\Omega^{t}_{R/k}}\to\overline{W_{m}\Omega^{t}_{R/k}}$ splits, and we have an isomorphism of $\overline{R}$-modules for each $t\in\mathbb{N}$ and each $m\in\mathbb{N}$:
\begin{multline*}
\operatorname{Ker}\lef(\overline{W_{m+1}\Omega^{t}_{R/k}}\to\overline{W_{m}\Omega^{t}_{R/k}}\rig)\\\cong\bigoplus_{e\in H\lef(t,m\rig)}\overline{e}\overline{R}\oplus\bigoplus_{e\in G\lef(t,m\rig)}\overline{e}\overline{R}\oplus d\lef(\bigoplus_{e\in G\lef(t-1,m\rig)}\overline{e}\overline{R}\rig)\text{.}
\end{multline*}

We can be more precise when $R=k\lef[\underline{X}\rig]$. Let $\lef(c,L\rig)\in\mathcal{P}$. When $u\lef(c\rig)\neq0$, assume that $L_{0}\neq\emptyset$. Let $b\in\mathcal{B}\lef(u\lef(c\rig),m-u\lef(c\rig)\rig)$. We can associate to every $e\in H\lef(t,m\rig)\sqcup G\lef(t,m\rig)$ with $u\lef(a\lef(e\rig)\rig)=u\lef(c\rig)$, $u\lef(e\rig)=m-u\lef(c\rig)$ and $\eta\lef(e\rig)=b$ a polynomial $P_{e}\in k\lef[\underline{X}\rig]$ such that:
\begin{gather*}
\deg\lef(P_{e}\rig)+\lef\lvert a\lef(e\rig)\rig\rvert=\lef\lvert c\rig\rvert\text{,}\\
\overline{e\lef(b,c,L\rig)}=\sum_{\substack{e\in H\lef(t,m\rig)\sqcup G\lef(t,m\rig)\\u\lef(a\lef(e\rig)\rig)=u\lef(c\rig)\\u\lef(e\rig)=m-u\lef(c\rig)\\\eta\lef(e\rig)=b}}\overline{\lef[P_{e}\rig]e}\text{.}
\end{gather*}
\end{prop}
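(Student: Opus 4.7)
The strategy is in two stages: prove the polynomial case $R=k\lef[\underline{X}\rig]$ first by specialising proposition \ref{generalmodpstructure}, then bootstrap by étale base change. For $R=k\lef[\underline{X}\rig]$, we apply proposition \ref{generalmodpstructure} to the morphism $L\coloneqq C\lef[\underline{X}\rig]\to W\lef(k\lef[\underline{X}\rig]\rig)$ obtained by extending the rebar cage map $C\to W\lef(k\rig)$ via $X_{i}\mapsto\lef[X_{i}\rig]$. The rebar cage condition gives $\overline{L}\cong\overline{k}\lef[\underline{X}\rig]$, so the hypotheses of proposition \ref{generalmodpstructure} are met. The indexing of its three-line decomposition matches exactly the sets defining $H\lef(t,m\rig)$ (first line, $u=0$, $b\in\mathcal{B}\lef(0,m\rig)$), $G\lef(t,m\rig)$ (second line, $u\in\lef\llbracket1,m\rig\rrbracket$, $b\in\mathcal{B}\lef(u,m-u\rig)$), and $d$ of $G\lef(t-1,m\rig)$ (third line, with $\#I+\#J=t-1$).

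The substantive work for the polynomial case is to identify each module $\mathcal{M}\lef(b,a,I\rig)$ with the free rank-one $\overline{R}$-module $\overline{e}\,\overline{R}$, where $\overline{e}$ corresponds to $P=1$. Elements of $\mathcal{M}\lef(b,a,I\rig)$ are parameterised by $P\in L$ through the class of $\overline{P}^{p^{u}}$; reducedness of $\overline{k}$ implies $\overline{R}=\overline{k}\lef[\underline{X}\rig]$ is reduced, so the $p^{u}$-th power Frobenius on $\overline{R}$ is injective, and combined with the freeness already provided by proposition \ref{generalmodpstructure} this forces the desired rank-one identification. The $\overline{R}$-action itself is obtained from proposition \ref{zpcombination} and \eqref{vxfyvxy} in order to transport multiplication by $\lef[Q\rig]$ across Verschiebungen; carefully handling this Frobenius twist in the pure fractional summands is the main technical obstacle.

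For $R$ étale over $k\lef[\underline{X}\rig]$, one invokes the étale base change property of the de Rham--Witt complex, namely the isomorphism $W\Omega_{k\lef[\underline{X}\rig]/k}\otimes_{W\lef(k\lef[\underline{X}\rig]\rig)}W\lef(R\rig)\cong W\Omega_{R/k}$, and analogously for each truncated complex. Tensoring the polynomial-ring decomposition with $\overline{W\lef(R\rig)}$ over $\overline{W\lef(k\lef[\underline{X}\rig]\rig)}$ then transports the splitting of the truncation epimorphism and every summand to $R$, yielding the stated decomposition with coefficients in $\overline{R}$.

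Finally, the explicit formula for $\overline{e\lef(b,c,L\rig)}$ over $k\lef[\underline{X}\rig]$ follows from proposition \ref{linearpuintegral}. When $u\lef(c\rig)=0$, that proposition writes $e\lef(b,c,L\rig)$ uniquely as a $\mathbb{Z}_{\lef\langle p\rig\rangle}$-linear combination of elements of the form $\lef[\underline{X}^{p^{u}c'}\rig]e\lef(b,a,I\rig)$ with $\lef(a,I\rig)$ parameterising $H\lef(t,m\rig)$, from which one reads off the $\lef[P_{e}\rig]$. When $u\lef(c\rig)\geqslant1$, we first use proposition \ref{vactionone} to write $e\lef(b,c,L\rig)=V^{u\lef(c\rig)}\lef(e\lef(b,p^{u\lef(c\rig)}c,L\rig)\rig)$, apply proposition \ref{linearpuintegral} to the inner element at weight $p^{u\lef(c\rig)}c$, and push $V^{u\lef(c\rig)}$ back through the product using \eqref{vxfyvxy}. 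The degree identity $\deg\lef(P_{e}\rig)+\lef\lvert a\lef(e\rig)\rig\rvert=\lef\lvert c\rig\rvert$ is then a bookkeeping matter of comparing the weights of the factors entering $e\lef(b,c,L\rig)$ and $e$.
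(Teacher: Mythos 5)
Your proposal is correct and follows essentially the same route as the paper: reduction to (or transfer from) the polynomial case by étale base change of the truncated de Rham--Witt complex, proposition \ref{generalmodpstructure} for the three-part splitting, reducedness of $\overline{k}$ (hence injectivity of Frobenius) to identify each $\mathcal{M}\lef(b,a,I\rig)$ as free of rank one over $\overline{R}$ via \eqref{vxfyvxy}, and proposition \ref{linearpuintegral} (after pulling out $V^{u\lef(c\rig)}$ with proposition \ref{vactionone}) for the explicit expansion of $\overline{e\lef(b,c,L\rig)}$ and the degree bookkeeping. The only cosmetic difference is the order in which you perform the base change; the paper does it first so as to assume $R=k\lef[\underline{X}\rig]$ throughout.
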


\begin{proof}
By the étale base change property of the truncated de Rham--Witt complex \cite[lemma 10.8]{integral}, we can assume that $R=k\lef[\underline{X}\rig]$. Then, one can always construct a map $L\to W\lef(k\lef[\underline{X}\rig]\rig)$ in order to apply proposition \ref{generalmodpstructure} to have a basis as a $\overline{k}$-module, which allows us to decompose the module into three parts.

Then, the statement is mainly a reformulation of proposition \ref{linearpuintegral}, we only use the hypothesis that $\overline{k}$ is reduced to prove that for all $e\in G\lef(t,m\rig)$ the $\overline{k}\lef[\underline{X}\rig]$-module $\mathcal{M}\lef(\eta\lef(e\rig),a\lef(e\rig),I\lef(e\rig)\rig)$ is free of rank $1$ with generator $\overline{e}$ using \eqref{vxfyvxy}; and similarly with $e\in H\lef(t,m\rig)$ and $e\in G\lef(t-1,m\rig)$. Thus, only the last part of the statement needs a proof.

But that statement is immediate from either \eqref{dproducts} in the integral case, or from the proof of proposition \ref{generalmodpstructure} by noticing that $\lef\lvert a'\rig\rvert=\lef\lvert a\rig\rvert+p^{u\lef(c\rig)}\#J$.
\end{proof}

\section{Local structure of the de Rham--Witt complex}

The goal of this section is to state various general structure theorems for the de Rham--Witt complex.

In this section, $k$ is a $p$-adically complete and separated commutative $\mathbb{Z}_{\lef\langle p\rig\rangle}$-algebra. In that case, $W\lef(k\rig)$ is also $p$-adically complete and separated \cite[proposition 3]{thedisplayofaformal}. Assume moreover that $W\lef(k\rig)$ has no $p$-torsion. We keep $n\in\mathbb{N}$ and the notation $k\lef[\underline{X}\rig]=k\lef[X_{1},\ldots,X_{n}\rig]$.

In all that follows, we further assume given a complete and separated commutative ring of characteristic zero $C$, as well as a map $C\to W\lef(k\rig)$ inducing a surjective morphism of rings $C\to k$ which is an isomorphism modulo $p$, such that that $k$ and has a rebar cage with this data.

Unless where otherwise stated, the map $C\to W\lef(k\rig)$ may not be a morphism of rings.

We shall also let $R$ be an étale commutative $k\lef[\underline{X}\rig]$-algebra. Let $L$ be an étale commutative $C\lef[\underline{X}\rig]$-algebra lifting $R$, that is such that $L\otimes_{C\lef[\underline{X}\rig]}k\lef[\underline{X}\rig]\cong R$ as $C\lef[\underline{X}\rig]$-algebras. Such a lift always exists \cite[04D1]{stacksproject}.

Denote by $\widehat{R}$ and $\widehat{L}$ the respective $p$-adic completions of $R$ and $L$. We let $\widehat{L}\to W\lef(\widehat{R}\rig)$ be a map such that we have the following commutative diagram, in which all arrows except the exterior diagonal ones are morphisms of rings, and where all vertical arrows are surjective:
\begin{equation*}
\begin{tikzcd}
&C\arrow[ddd]\arrow[rr]\arrow[ddddl]\arrow[dr]&&C\lef[\underline{X}\rig]\arrow[ddd]\arrow[r]&L\arrow[ddd]\arrow[r]&\widehat{L}\arrow[ddd]\arrow[ddddr]&\\
&&\overline{C}\arrow[d,"\rotatebox{270}{$\simeq$}"]&&&&\\
&&\overline{k}&&&&\\
&k\arrow[rr]\arrow[ru]&&k\lef[\underline{X}\rig]\arrow[r]&R\arrow[r]&\widehat{R}&\\
W\lef(k\rig)\arrow[ru]\arrow[rrrrrr]&&&&&&W\lef(\widehat{R}\rig)\arrow[lu]
\end{tikzcd}
\end{equation*}

Let us first focus on the $p$-torsion free setting. This setting includes the one given by example \ref{semiperfectrebarcage}. In that case, in the above diagram the two smaller horizontal lines are the same.

\begin{prop}\label{wittcompletionandcompletionwitt}
Let $m\in\mathbb{N}$. Assume that $k$ is $p$-torsion free. We have a canonical isomorphism of rings:
\begin{equation*}
\widehat{W_{m}\lef(R\rig)}\to W_{m}\lef(\widehat{R}\rig)\text{.}
\end{equation*}

Furthermore, when we choose $C=k$ and $\lef[\bullet\rig]\colon C\to W\lef(k\rig)$ to be the map of the rebar cage of $k$, then $\widehat{R}\cong\widehat{L}$.
\end{prop}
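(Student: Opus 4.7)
I will prove the two assertions separately, starting with $\widehat{R}\cong\widehat{L}$. The plan is to show that in this $p$-torsion free setting, the composition of the inclusion $C\hookrightarrow W\lef(k\rig)$ with the zeroth ghost projection is an isomorphism of rings $C\cong k$. Surjectivity follows by successive approximation: given $a\in k$, the isomorphism $\overline{C}\cong\overline{k}$ produces $c_{0}\in C$ with $c_{0}\equiv a\pmod{p}$; writing $a-c_{0}=pa_{1}$ and iterating, the series $c=\sum_{i}p^{i}c_{i}\in C$ converges by $p$-adic completeness of $C$ and maps to $a$. For the kernel $\mathfrak{a}$, reduction modulo $p$ forces $\mathfrak{a}\subset pC$; given $x=py\in\mathfrak{a}$, the $p$-torsion freeness of $k$ forces $y$ into $\mathfrak{a}$, so $\mathfrak{a}=p\mathfrak{a}$, and the $p$-adic separatedness of $C$ yields $\mathfrak{a}=0$. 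Identifying $C$ with $k$, the étale lift reduces to $L=L\otimes_{C\lef[\underline{X}\rig]}k\lef[\underline{X}\rig]\cong R$, and so $\widehat{L}\cong\widehat{R}$.

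For the canonical isomorphism $\widehat{W_{m}\lef(R\rig)}\cong W_{m}\lef(\widehat{R}\rig)$, first observe that $R$ is $p$-torsion free since it is flat over the $p$-torsion free ring $k\lef[\underline{X}\rig]$, hence the same holds for $W_{m}\lef(R\rig)$. Since $W_{m}$ commutes with inverse limits along surjective systems, $W_{m}\lef(\widehat{R}\rig)=\varprojlim_{n}W_{m}\lef(R/p^{n}R\rig)$, whereas $\widehat{W_{m}\lef(R\rig)}=\varprojlim_{n}W_{m}\lef(R\rig)/p^{n}W_{m}\lef(R\rig)$ by definition of the $p$-adic completion. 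It therefore suffices to show that on $W_{m}\lef(R\rig)$ the two filtrations $p^{n}W_{m}\lef(R\rig)$ and $I_{n}\coloneqq\operatorname{Ker}\lef(W_{m}\lef(R\rig)\to W_{m}\lef(R/p^{n}R\rig)\rig)$ are cofinal. Given $x\in I_{N}$, its Witt components lie in $p^{N}R$; solving $p^{n}y=x$ recursively for the Witt components of $y$ using the ghost polynomials and the $p$-torsion freeness of $R$ yields $y\in W_{m}\lef(R\rig)$ once $N$ is sufficiently large relative to $n$ and $m$. Conversely, by direct computation of the ghost components of $p^{N}\cdot1$ and recursive inversion of the Witt congruences, one verifies that $p^{N}\cdot1\in I_{n}$ for $N$ large enough relative to $n$ and $m$, and since $I_{n}$ is an ideal of $W_{m}\lef(R\rig)$, this gives $p^{N}W_{m}\lef(R\rig)\subset I_{n}$.

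The main obstacle lies in this second inclusion: the naive guess $p^{n}W_{m}\lef(R\rig)\subset I_{n}$ is already false for $n=1$, since $p\cdot1$ equals $\lef(p,1-p^{p-1},\ldots\rig)$ as a Witt vector, whose second Witt component $1-p^{p-1}$ is a unit modulo $p$ rather than an element of $pR$. Controlling the precise shift between the two filtrations, and verifying that the required divisibilities propagate correctly through the recursive Witt vector formulas, is what makes the argument work; once the cofinality is established, the two inverse limits agree, yielding the desired canonical isomorphism of rings.
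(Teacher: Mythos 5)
Your proposal is correct, but it takes a genuinely more self-contained route than the paper. For the isomorphism $\widehat{W_{m}(R)}\cong W_{m}(\widehat{R})$, the paper builds the map by completing the inclusion $W_{m}(R)\to W_{m}(\widehat{R})$ --- first checking, via the closedness of $V^{m}(W(\widehat{R}))$ and Zink's completeness result, that the target is already $p$-adically complete and separated --- and then quotes Hesselholt's lemma 1.1.2 as a black box for the bijectivity when $R$ has no $p$-torsion. You instead essentially reprove that lemma: you write both sides as inverse limits of $W_{m}(R)$ along the two filtrations $p^{n}W_{m}(R)$ and $I_{n}=\operatorname{Ker}(W_{m}(R)\to W_{m}(R/p^{n}R))$ and show these are intertwined, with a shift controlled by $m$ (indeed $p^{n+m-1}\cdot1\in I_{n}$, and $I_{N}\subset p^{n}W_{m}(R)$ for $N$ large, both obtained by recursively dividing ghost components using the $p$-torsion freeness of $R$, which you correctly deduce from flatness over $k[\underline{X}]$). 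Your remark that the naive inclusion $p^{n}W_{m}(R)\subset I_{n}$ already fails for $n=1$ identifies exactly the point that makes the cited lemma nontrivial. For the final assertion the paper simply takes $C=k$; you instead prove that the projection $C\to k$ is forced to be an isomorphism for any admissible $C$ (surjectivity by successive approximation, injectivity from $\mathfrak{a}=p\mathfrak{a}$ plus separatedness), which is a slightly stronger justification since it shows independence of the choice of $C$. The only soft spots are the two divisibility estimates in the cofinality step, which you assert rather than carry out; they are standard Witt-vector computations and do go through, so this is a matter of detail rather than a gap.
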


\begin{proof}
We have $V^{m}\lef(W\lef(\widehat{R}\rig)\rig)=\cap_{i\in\mathbb{N}}\lef(V^{m}\lef(W\lef(\widehat{R}\rig)\rig)+p^{i}W\lef(\widehat{R}\rig)\rig)$. Indeed, notice that for each Witt vector $w$ in the intersection, for each $j\in\lef\llbracket0,m-1\rig\rrbracket$ then $w_{j}$ is divisible by all powers of $p$. This gives the equality because $\widehat{R}$ is $p$-adically separated.

The map in the statement is given by applying the completion functor to the inclusion map $W_{m}\lef(R\rig)\to W_{m}\lef(\widehat{R}\rig)$. As the target is $p$-adically complete and separated by \cite[proposition 3]{thedisplayofaformal} and \cite[031A]{stacksproject}, it does not change. We can then conclude by applying \cite[lemma 1.1.2]{onthetopologicalcyclichomologyofthealgebraicclosureofalocalfield}, because $R$ is a flat $k$-algebra, so it also has no $p$-torsion.

The second part of the statement proves itself.
\end{proof}

\begin{deff}
Let $m\in\mathbb{N}$. The \textbf{continuous de Rham--Witt complex} of length $m$ of the morphism $k\to R$ is:
\begin{equation*}
W_{m}\Omega^{\mathrm{cont}}_{R/k}\coloneqq\varprojlim_{i\in\mathbb{N}}W_{m}\Omega_{R/k}/p^{i}\text{.}
\end{equation*}
\end{deff}

The continuous de Rham--Witt complex has been introduced in \cite[definition 10.11]{integral}, in order to prove an integral analogue of Fontaine's conjecture. In that paper, they use an explicit description of the complex for the proof of some of their theorems. The following proposition gives another description of the complex, but we study the $\widehat{L}$-module structure of the complex, instead of the $W\lef(k\rig)$-module one.

\begin{prop}\label{structurecaracteristiczero}
Let $m\in\mathbb{N}^{*}$. Assume that $k$ has no $p$-torsion. Let $C=k$ and $\lef[\bullet\rig]\colon C\to W\lef(k\rig)$ to be the map of the rebar cage of $k$. Assume also that either $\overline{k}$ is reduced, or $R=k\lef[\underline{X}\rig]$. Then, $\widehat{R}\cong\widehat{L}$ as rings, and we can project the map $\lef[\bullet\rig]$ so that it takes its values in $W_{m}\Omega^{\mathrm{cont},0}_{R/k}$. Moreover, for every $t\in\mathbb{N}$ and every $x\in W_{m}\Omega^{\mathrm{cont},t}_{R/k}$, there exists a unique map:
\begin{equation*}
l\colon\begin{array}{rl}\bigoplus_{r=0}^{m-1}H\lef(t,r\rig)\sqcup G\lef(t,r\rig)\sqcup G\lef(t-1,r\rig)\to&\widehat{L}\\e\mapsto&l_{e}\end{array}\text{,}
\end{equation*}
such that $l_{e}=\sum_{i\in\mathbb{N}}p^{i}{\lambda_{i}}^{p^{r}}$, for some not necessarily unique $\lambda_{i}\in\widehat{L}$ for every $i\in\mathbb{N}$, when $e\in H\lef(t,r\rig)\sqcup G\lef(t,r\rig)\sqcup G\lef(t-1,r\rig)$, and:
\begin{multline*}
x=\sum_{r=0}^{m-1}\sum_{e\in H\lef(t,r\rig)}e\lef(V^{r}\lef(\eta\lef(e\rig)\lef[l_{e}\rig]\rig),a\lef(e\rig),I\lef(e\rig)\rig)\\+\sum_{r=0}^{m-1}\sum_{e\in G\lef(t,r\rig)}e\lef(V^{u\lef(e\rig)}\lef(\eta\lef(e\rig)\lef[l_{e}\rig]\rig),a\lef(e\rig),I\lef(e\rig)\rig)\\+d\lef(\sum_{r=0}^{m-1}\sum_{e\in G\lef(t-1,r\rig)}e\lef(V^{u\lef(e\rig)}\lef(\eta\lef(e\rig)\lef[l_{e}\rig]\rig),a\lef(e\rig),I\lef(e\rig)\rig)\rig)\text{.}
\end{multline*}
\end{prop}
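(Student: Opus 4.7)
The plan is to lift the mod-$p$ structure given by Proposition \ref{truncatedderhamwittlocalstructure} through the $p$-adic filtration, using $t_{F}$ as a canonical lifting of $\overline{R}$-coefficients into $\widehat{L}$. The identifications $\widehat{R}\cong\widehat{L}$ and the factorization of $t_{F}$ come for free from Proposition \ref{wittcompletionandcompletionwitt}: since $k$ is $p$-torsion free we may take $C=k$, yielding $\widehat{R}\cong\widehat{L}$ and the canonical isomorphism $\widehat{W_{m}(R)}\cong W_{m}(\widehat{R})$; composing $t_{F}$ with the Witt truncation and this isomorphism produces the required projection to $W_{m}\Omega^{\mathrm{cont},0}_{R/k}$.

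For existence of the decomposition, I would approximate $x$ modulo successive powers of $p$. Reducing $x$ modulo $p$ and applying Proposition \ref{truncatedderhamwittlocalstructure} inductively to the filtration layers $\operatorname{Ker}(W_{j+1}\Omega^{t}_{R/k}\to W_{j}\Omega^{t}_{R/k})$ for $j\in\{0,\ldots,m-1\}$ yields a unique mod-$p$ writing of $\overline{x}$ indexed by $e\in H(t,m-1)\sqcup G(t,m-1)\sqcup G(t-1,m-1)$, with coefficients in $\overline{R}\cong\overline{L}$. For each such coefficient I would pick a lift in $\widehat{L}$, rewrite it as $F^{m-1}$ applied to a suitable element using the identity $V^{m}(\eta F^{m-1}(z))=V(V^{m-1}(\eta)z)$ coming from \eqref{vxfyvxy}, and apply $t_{F}$. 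Subtract the resulting first-order approximation from $x$ to obtain a remainder in $pW_{m}\Omega^{\mathrm{cont},t}_{R/k}$; divide by $p$ and iterate. The $p$-adic completeness of $\widehat{L}$, and consequently of $F^{m-1}(\widehat{L})$, guarantees that the partial sums form a Cauchy sequence in $F^{m-1}(\widehat{L})$ and converge to the desired $l_{e}$.

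Uniqueness follows by the same inductive pattern: a vanishing sum of the stated form reduces modulo $p$ to a vanishing combination in the mod-$p$ decomposition, so by Proposition \ref{truncatedderhamwittlocalstructure} each $l_{e}$ must be divisible by $p$ in $F^{m-1}(\widehat{L})$; dividing by $p$ and iterating gives $l_{e}=0$ by $p$-adic separatedness of $\widehat{L}$.

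The main obstacle is matching the $F^{m-1}(\widehat{L})$ constraint with the Witt filtration so that the assignment $l_{e}\mapsto V^{m}(\eta(e)t_{F}(l_{e}))$ (respectively $V^{u(e)}(\eta(e)t_{F}(l_{e}))$ when $e$ lies in $G(t,m-1)\sqcup G(t-1,m-1)$) extracts exactly the mod-$p$ coefficient dictated by Proposition \ref{truncatedderhamwittlocalstructure}. This requires careful bookkeeping with the identities \eqref{vxfyvxy}, \eqref{fdttdt} and \eqref{dfpfd}, together with the $p$-torsion freeness of $\widehat{L}$, to ensure that passing through $V^{m}$ and $d$ preserves enough information to control each iterative step.
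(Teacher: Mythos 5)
Your argument is essentially the paper's own proof: the degree-zero identifications come from proposition \ref{wittcompletionandcompletionwitt}, and the decomposition is obtained by lifting the mod-$p$ structure through the $p$-adic filtration using the completeness and separatedness of $\widehat{L}$ and $W_{m}\Omega^{\mathrm{cont}}_{R/k}$ together with the absence of $p$-torsion in the latter (which you should state explicitly, since it is what makes both the division step and the uniqueness induction work). The only omission is that you rely solely on proposition \ref{truncatedderhamwittlocalstructure}, which assumes $\overline{k}$ reduced; in the alternative case $R=k\lef[\underline{X}\rig]$ with $\overline{k}$ not necessarily reduced you must invoke proposition \ref{generalmodpstructure} instead, exactly as the paper does.
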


\begin{proof}
We have $\widehat{W_{m}\lef(R\rig)}\cong W_{m}\Omega^{0,\mathrm{cont}}_{R/k}$. Since $\lef[\bullet\rig]$ takes values in $W\lef(\widehat{R}\rig)$, we can project it to $W_{m}\lef(\widehat{R}\rig)$ and then apply proposition \ref{wittcompletionandcompletionwitt} to get a map with values in $W_{m}\Omega^{0,\mathrm{cont}}_{R/k}$.

Now, the conclusion is a consequence of either proposition \ref{generalmodpstructure} or \ref{truncatedderhamwittlocalstructure} depending on the hypothesis made, and of \eqref{vxfyvxy}. Indeed, we have such an identification modulo $p$, and as $\widehat{L}$ and $W_{m}\Omega^{\mathrm{cont}}_{R/k}$ are both $p$-adically complete and separated, and the latter has no $p$-torsion, we are finished.
\end{proof}

The proposition enables us to give a new description of the complex studied by Bhatt, Morrow and Scholze.

\begin{xmpl}
Let $m\in\mathbb{N}^{*}$. Let $k=\mathcal{O}$ be the ring of integers of a complete algebraically closed extension of $\mathbb{Q}_{p}$. Let $\mathfrak{X}=\operatorname{Spf}\lef(\widehat{\mathcal{O}\lef[\underline{X}\rig]}\rig)$. Consider Fontaine's period ring $A_{\mathrm{inf}}\coloneqq W\lef(\varprojlim_{\operatorname{Frob}_{\mathcal{O}/p\mathcal{O}}}\mathcal{O}/p\mathcal{O}\rig)$. There is a natural ring morphism $\theta_{m}\colon A_{\mathrm{inf}}\to W_{m}\lef(\mathcal{O}\rig)$. For details, refer to \cite[lemma 3.2]{integral}.

In this context Bhatt, Morrow and Scholze have introduced a sheaf of $A_{\mathrm{inf}}$-modules $A\Omega_{\mathfrak{X}}$ on $\mathfrak{X}$ such that:
\begin{equation*}
A\Omega_{\mathfrak{X}}\otimes^{\mathbf{L}}_{A_{\mathrm{inf}},\theta_{m}}W_{m}\lef(\mathcal{O}\rig)\cong W_{m}\Omega^{\mathrm{cont}}_{\widehat{\mathcal{O}\lef[\underline{X}\rig]}/\mathcal{O}}\text{.}
\end{equation*}

This formula can be found in the proof of \cite[theorem 14.1]{integral}.

We can apply proposition \ref{structurecaracteristiczero} with $L=R=\mathcal{O}\lef[\underline{X}\rig]$ and thus get the $\widehat{\mathcal{O}\lef[\underline{X}\rig]}$-algebra structure of $W_{m}\Omega^{\mathrm{cont}}_{\widehat{\mathcal{O}\lef[\underline{X}\rig]}/\mathcal{O}}$. Moreover, $k$ has the rebar cage given by example \ref{semiperfectrebarcage}, which is a quite simple one. It can be used to simplify the writing as a convergent series. We omit the details.
\end{xmpl}

The de Rham--Witt complex of Hesselholt and Madsen of an integral perfectoid ring has also been studied thoroughly in \cite{onthederhamwittcomplexoverperfectoidrings}. This complex is closely related to the de Rham--Witt complex of Langer and Zink that we study here \cite[lemma 1.1]{theabsoluteandrelativederhamwittcomplexes}. But as the de Rham--Witt complex in not in general $p$-adically complete, these methods can only bring results modulo $p$.

However, if $k\cong\overline{k}$ is a reduced ring of characteristic $p$, and when the de Rham--Witt complex is $p$-torsion free, for instance in the setting of examples \ref{perfectrebarcage} and \ref{laurentrebarcage}, then we can be more precise.

We shall now assume for the remainder of this section that the map $C\to W\lef(k\rig)$ is in fact a morphism of $\delta$-rings, as it is the case in the aforementioned examples.

By lifting through the formally smooth morphisms $C/p^{m}C\to L/p^{m}L$ for varying $m\in\mathbb{N}^{*}$, we can fix a lift of the Frobenius endomorphism $\operatorname{Frob}_{\overline{R}}\colon\overline{R}\to\overline{R}$ to the $p$-adic completion of $L$:
\begin{equation*}
F\colon\widehat{L}\to\widehat{L}\text{.}
\end{equation*}

Now, notice that $\widehat{L}$ has no $p$-torsion because $L$ is flat over the $p$-torsion ring $C$. In particular, $F$ promotes $\widehat{L}$ to a $\delta$-ring, and by adjunction \cite[théorème 4]{joyal} we get a morphism of $\delta$-rings:
\begin{equation*}
t_{F}\colon\widehat{L}\to W\lef(\widehat{R}\rig)\cong W\lef(R\rig)\text{.}
\end{equation*}

In this article, for any morphism of commutative rings $R\to S$ we shall denote by $\Omega_{R/S}$ the de Rham complex of $S$ over $R$, and we shall write:
\begin{equation*}
\Omega^{\mathrm{sep}}_{R/S}\coloneqq\Omega_{R/S}/\bigcap_{n\in\mathbb{N}}p^{n}\Omega_{R/S}\text{.}
\end{equation*}

The functor, together with the obvious morphisms, associating to any commutative $R$-algebra $S$ the $p$-adically separated alternating $R$-dga $\Omega^{\mathrm{sep}}_{R/S}$ is left adjoint to the restriction functor associating to any $p$-adically separated alternating $R$-dga $T$ the commutative $R$-algebra $T^{0}$.

For instance, by adjunction $t_{F}$ extends to a morphism of $W\lef(k\rig)$-dgas:
\begin{equation*}
\Omega^{\mathrm{sep}}_{\widehat{L}/W\lef(k\rig)}\to W\Omega_{R/k}\text{.}
\end{equation*}

With our constructions, we can consider every element in $H\lef(t,0\rig)$ as an element of $\Omega^{\mathrm{sep}}_{\widehat{L}/W\lef(k\rig)}$ instead, so that we can state the following theorem.

\begin{thrm}\label{structuretheoremconvergent}
Assume that $k\cong\overline{k}$ is a reduced ring of characteristic $p$ and that $W\Omega_{\overline{R}/\overline{k}}$ is $p$-torsion free. Then, for every $t\in\mathbb{N}$ and every $x\in W\Omega^{t}_{\overline{R}/\overline{k}}$, there exists a unique map:
\begin{equation*}
l\colon\begin{array}{rl}\bigsqcup_{m\in\mathbb{N}}\lef(H\lef(t,m\rig)\sqcup G\lef(t,m\rig)\sqcup G\lef(t-1,m\rig)\rig)\to&\widehat{L}\\e\mapsto&l_{e}\end{array}\text{,}
\end{equation*}
such that:
\begin{multline*}
x=\sum_{H\lef(t,0\rig)}t_{F}\lef(l_{e}e\rig)+\sum_{e\in\bigsqcup_{m\in\mathbb{N}^{*}}H\lef(t,m\rig)}t_{F}\lef(l_{e}\rig)e\\+\sum_{e\in\bigsqcup_{m\in\mathbb{N}^{*}}G\lef(t,m\rig)}t_{F}\lef(l_{e}\rig)e+d\lef(\sum_{e\in\bigsqcup_{m\in\mathbb{N}^{*}}G\lef(t-1,m\rig)}t_{F}\lef(l_{e}\rig)e\rig)\text{.}
\end{multline*}

Moreover, for all $u\in\mathbb{N}^{*}$ we have that $x\in\operatorname{Fil}^{u}W\Omega^{t}_{\overline{R}/\overline{k}}$ if and only if for all $m\in\lef\llbracket0,u\rig\rrbracket$ and all $e\in H\lef(t,m\rig)\sqcup G\lef(t,m\rig)\sqcup G\lef(t-1,m\rig)$ we have $p^{u-m-u\lef(e\rig)}\mid l_{e}$.
\end{thrm}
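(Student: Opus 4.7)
The plan is to reduce to the polynomial case and then build the decomposition inductively along the canonical filtration, applying at each level the modulo $p$ structure from proposition \ref{truncatedderhamwittlocalstructure}. First I would invoke the étale base change property of the de Rham--Witt complex, exactly as in the proof of proposition \ref{truncatedderhamwittlocalstructure}, to reduce to $R=k\lef[\underline{X}\rig]$, in which case $\widehat{L}$ is the $p$-adic completion of $C\lef[\underline{X}\rig]$ and the image of $t_{F}$ lands in $W\lef(R\rig)$.

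For existence, given $x\in W\Omega^{t}_{R/k}$, I would proceed inductively on the canonical filtration level. Modulo $\operatorname{Fil}^{1}$, the element lies in $\Omega^{t}_{R/k}$, which admits the basis $H\lef(t,0\rig)$ by the $m=0$ case of proposition \ref{truncatedderhamwittlocalstructure}; this gives coefficients in $R\cong\widehat{L}/p\widehat{L}$, which I would lift arbitrarily to $\widehat{L}$ and subtract to obtain an element of $\operatorname{Fil}^{1}$. The inductive step at level $m\geqslant1$ is analogous: by proposition \ref{truncatedderhamwittlocalstructure}, the graded piece $\operatorname{Fil}^{m}/\operatorname{Fil}^{m+1}$ modulo $p$ is a direct sum of free rank-one $\overline{R}$-modules indexed by $H\lef(t,m\rig)\sqcup G\lef(t,m\rig)\sqcup dG\lef(t-1,m\rig)$, the action of $\widehat{L}$ comes from $t_{F}$, and the rebar cage data prescribes the subgroup $F^{m-u\lef(e\rig)}\lef(\widehat{L}\rig)$ from which each $l_{e}$ must be chosen in order for \eqref{vxfyvxy} to match the Witt-vector decomposition on the $V^{m-u\lef(e\rig)}$ piece. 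Convergence of the resulting series in the canonical topology is automatic since $W\Omega_{R/k}$ is complete and separated.

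Uniqueness follows by contradiction: if two maps $l,l'$ were to produce the same $x$, then the smallest $m$ at which they differ on some $e$ at level $m$ would yield a nonzero element of $\operatorname{Fil}^{m}/\operatorname{Fil}^{m+1}$ equal to zero, contradicting uniqueness in the mod $p$ decomposition from proposition \ref{truncatedderhamwittlocalstructure} combined with the $p$-torsion-freeness hypothesis on $W\Omega_{\overline{R}/\overline{k}}$ and the $p$-adic separatedness of $\widehat{L}$.

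Finally, the filtration characterization comes from tracking the canonical filtration level of each summand. Using \eqref{vxfyvxy} and propositions \ref{vactionone} and \ref{dactionone}, each $e$ in the level-$m$ sets lies in a specific step of the canonical filtration, and multiplying by an element divisible by $p^{j}$ shifts this step by exactly $j$; this translates directly into the stated divisibility condition, with the forward implication computational and the converse following again from uniqueness. The main obstacle will be in the inductive existence step: at each level $m$ one must verify that a lift of the mod $p$ coefficients can be chosen inside the prescribed subgroup $F^{m-u\lef(e\rig)}\lef(\widehat{L}\rig)$, and that different compatible choices at successive levels assemble into a single map $l$. This compatibility rests on $t_{F}$ being a morphism of $\delta$-rings, so that the Frobenius on $\widehat{L}$ and the Frobenius on Witt vectors are intertwined, which is precisely what makes the rebar cage structure in proposition \ref{generalmodpstructure} lift from modulo $p$ to the whole complex.
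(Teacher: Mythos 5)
Your overall strategy is the paper's: everything is meant to reduce to proposition \ref{truncatedderhamwittlocalstructure} plus a successive approximation using completeness, separatedness and $p$-torsion-freeness. But the induction as you organize it, along the canonical filtration, has a gap at the step ``subtract to obtain an element of $\operatorname{Fil}^{m+1}$''. Proposition \ref{truncatedderhamwittlocalstructure} describes $\operatorname{Ker}\lef(\overline{W_{m+1}\Omega^{t}_{R/k}}\to\overline{W_{m}\Omega^{t}_{R/k}}\rig)$, that is the graded piece of the complex \emph{reduced modulo $p$}, and this is strictly smaller than $\operatorname{Fil}^{m}/\operatorname{Fil}^{m+1}$: for instance $p^{m}t_{F}\lef(\mu\rig)$ with $\mu\in\widehat{L}$ lies in $\operatorname{Fil}^{m}$ (since $p^{m}W_{m}\Omega=0$) but in general not in $\operatorname{Fil}^{m+1}$ (in degree zero, $p^{m}\lef[x\rig]=V^{m}\lef(\lef[x^{p^{m}}\rig]\rig)$), while its class in $\overline{W_{m+1}\Omega^{t}}$ vanishes. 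These classes are exactly where the $p$-adic corrections to the coefficients you fixed ``arbitrarily'' at levels $<m$ live. So after you subtract the level-$m$ terms with arbitrarily lifted coefficients, the remainder's image in $\operatorname{Fil}^{m}/\operatorname{Fil}^{m+1}$ is only divisible by $p$, not zero; your induction never actually enters $\operatorname{Fil}^{m+1}$, and the ``compatibility across levels'' that you flag at the end is not a matter of the $\delta$-ring structure of $t_{F}$ or of the rebar cage — it is precisely this missing $p$-adic dévissage.

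The repair is to run the approximation $p$-adically, which is what the paper's short argument does: proposition \ref{truncatedderhamwittlocalstructure}, summed over all $m$ via the splitting and convergence in the canonical topology, yields $x=\sum_{e}t_{F}\lef(l^{(0)}_{e}\rig)e+py$ (with the $H\lef(t,0\rig)$ terms written as $t_{F}\lef(l^{(0)}_{e}e\rig)$); $p$-torsion-freeness of $W\Omega_{\overline{R}/\overline{k}}$ makes $y$ well defined, one recurses on $y$, and one concludes by $p$-adic completeness and separatedness of both $\widehat{L}$ and $W\Omega_{\overline{R}/\overline{k}}$. The same reorganization is needed for uniqueness: your ``smallest $m$ at which $l$ and $l'$ differ'' argument fails for the same reason when the first discrepancy $l_{e}-l'_{e}$ is divisible by $p$, whereas reducing modulo $p$ first forces $l_{e}\equiv l'_{e}\pmod{p}$ for all $e$ by the uniqueness in proposition \ref{truncatedderhamwittlocalstructure}, after which one divides by $p$ and iterates, using that $\widehat{L}$ is $p$-adically separated.
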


\begin{proof}
As $\overline{k}$ is reduced, we deduce from theorem \ref{structuretheorem} and \cite[corollary 2.13]{derhamwittcohomologyforaproperandsmoothmorphism} there is a $W\lef(k\rig)$-module $F_{m}$ sitting in the following diagram with exact arrow and column of $W\lef(k\rig)$-modules for all $m\in\mathbb{N}^{*}$:
\begin{equation*}
\begin{tikzcd}
&0\arrow[d]&&\\
0\arrow[r]&F_{m}\arrow[r]\arrow[d]&W_{m}\Omega_{\overline{k}\lef[\underline{X}\rig]/\overline{k}}\arrow[r,"\times p"]&W_{m}\Omega_{\overline{k}\lef[\underline{X}\rig]/\overline{k}}\\
&W_{m}\Omega_{\overline{k}\lef[\underline{X}\rig]/\overline{k}}\arrow[d]&&\\
&W_{m-1}\Omega_{\overline{k}\lef[\underline{X}\rig]/\overline{k}}\text{.}
\end{tikzcd}
\end{equation*}

Applying the étale base change property of the de Rham-Witt complex \cite[proposition 1.7]{derhamwittcohomologyforaproperandsmoothmorphism}, we thus find that $W\Omega_{\overline{R}/\overline{k}}$ has no $p$-torsion.

Then, the writing as a convergent series is an immediate consequence of proposition \ref{truncatedderhamwittlocalstructure}. Indeed, as $\widehat{L}$ and $W\Omega_{\overline{R}/\overline{k}}$ are both $p$-adically complete and separated, and the latter has no $p$-torsion, we are finished.

As for the property on the filtration, it is also an immediate consequence of the diagram above, and proposition \ref{truncatedderhamwittlocalstructure} again.
\end{proof}

Notice that we retrieve a decomposition similar to \eqref{intfracdecomposition} and \eqref{frpdfrpdecomposition}. We shall later see how, in some cases, it is actually the same decomposition. For the remainder of this article, our goal will be to show that this decomposition preserves the notion of overconvergence.

\section{The overconvergent de Rham--Witt complex}

Unless otherwise stated, in this section $k$ will always be a commutative ring of characteristic $p$. We are going to recall the definition of the overconvergent de Rham--Witt complex, and extend some known basic results in this general setting. In practice, for the complex to be useful, $k$ should only be Noetherian and perfect, but here these assumptions are not necessary.

Remember that we are using the convention $k\lef[\underline{X}\rig]=k\lef[X_{1},\ldots,X_{n}\rig]$ for a given integer $n\in\mathbb{N}$. In this section, we shall consider $\varphi\colon k\lef[\underline{X}\rig]\to\overline{R}$ a surjective morphism of commutative $k$-algebras.

In this article, many of the proofs are analytic in nature. The following definition will be our main tool in that regard.

\begin{deff}
A \textbf{pseudovaluation} on a ring $A$ is a map $v\colon A\to\mathbb{R}\cup\lef\{+\infty,-\infty\rig\}$ such that:
\begin{gather*}
v\lef(0\rig)=+\infty\text{,}\\
v\lef(1\rig)=0\text{,}\\
\forall a\in A,\ v\lef(a\rig)=v\lef(-a\rig)\text{,}\\
\forall a,b\in A,\ v\lef(a+b\rig)\geqslant\min\lef\{v\lef(a\rig),v\lef(b\rig)\rig\}\text{,}\\
\forall a,b\in A,\ \lef(v\lef(a\rig)\neq-\infty\rig)\wedge\lef(v\lef(b\rig)\neq-\infty\rig)\implies\lef(v\lef(ab\rig)\geqslant v\lef(a\rig)+v\lef(b\rig)\rig)\text{.}
\end{gather*}
\end{deff}

\begin{xmpl}
Let $R$ be a commutative ring of characteristic $p$. Then by \cite[IX. \textsection1 proposition 5]{algebrecommutativechapitres} the following map is a pseudovaluation on the ring of Witt vectors of $R$:
\begin{equation*}
\operatorname{v}_{V}\colon\begin{array}{rl}W\lef(R\rig)\to&\mathbb{R}\cup\lef\{+\infty,-\infty\rig\}\\w\mapsto&\max\lef\{u\in\mathbb{N}\cup\lef\{+\infty\rig\}\mid w\in V^{u}\lef(W\lef(R\rig)\rig)\rig\}\end{array}\text{.}
\end{equation*}
\end{xmpl}

We have seen in theorem \ref{structuretheorem} that any $w\in W\Omega_{k\lef[\underline{X}\rig]/k}$ can be written as a convergent series $w=\sum_{\lef(a,I\rig)\in\mathcal{P}}e\lef(\eta_{a,I},a,I\rig)$. Using this, for any $\varepsilon>0$ we define:
\begin{multline}\label{zetaepsilon}
\zeta_{\varepsilon}\colon\\\begin{array}{rl}W\Omega_{k\lef[\underline{X}\rig]/k}\to&\mathbb{R}\cup\lef\{+\infty,-\infty\rig\}\\w\mapsto&\begin{cases}\inf_{\lef(a,I\rig)\in\mathcal{P}}\lef\{2n\operatorname{v}_{V}\lef(\eta_{a,I}\rig)+\#Iu\lef(a\rig)-\varepsilon\lef\lvert a\rig\rvert\rig\}&\text{if }I_{0}=\emptyset\text{,}\\\inf_{\lef(a,I\rig)\in\mathcal{P}}\lef\{2n\operatorname{v}_{V}\lef(\eta_{a,I}\rig)+\lef(\#I+1\rig)u\lef(a\rig)-\varepsilon\lef\lvert a\rig\rvert\rig\}&\text{if }I_{0}\neq\emptyset\text{.}\end{cases}\end{array}
\end{multline}

When the ring $k$ is reduced, this definition implies that:
\begin{equation}\label{multiplicationbypzeta}
\forall\varepsilon>0,\ \forall w\in W\Omega_{k\lef[\underline{X}\rig]/k},\ \zeta_{\varepsilon}\lef(pw\rig)=\zeta_{\varepsilon}\lef(w\rig)+2n\text{.}
\end{equation}

The main theorem from the author's previous article \cite[theorem 3.17]{pseudovaluationsonthederhamwittcomplex} states that for every $\varepsilon>0$, the map $\zeta_{\varepsilon}$ is a pseudovaluation when $k$ is any commutative ring with characteristic $p$. We invite the reader who wants to understand why these maps are defined like this to refer to that paper.

Besides, we immediately get from proposition \ref{dactionone} the following inequality:
\begin{equation}\label{dzetaepsilon}
\forall x\in W\Omega_{k\lef[\underline{X}\rig]/k},\ \zeta_{\varepsilon}\lef(d\lef(x\rig)\rig)\geqslant\zeta_{\varepsilon}\lef(x\rig)\text{.}
\end{equation}

It also follows from the definition that if $\lef(w_{u}\rig)_{u\in\mathbb{N}}\in{W\Omega_{k\lef[\underline{X}\rig]/k}}^{\mathbb{N}}$ is a sequence which converges to $0$ for the $p$-adic topology, and such that there exists $\varepsilon>0$ and $M\in\mathbb{R}$ such that $\zeta_{\varepsilon}\lef(w_{u}\rig)\geqslant M$ for all $u\in\mathbb{N}$, then:
\begin{equation}\label{padicoverconvergence}
\zeta_{\varepsilon}\lef(\sum_{u\in\mathbb{N}}w_{u}\rig)\geqslant M\text{.}
\end{equation}

In degree zero -- that is, for Witt vectors -- there was already a pseudovaluation defined by Davis, Langer and Zink. It has a very convenient definition which employs Witt coordinates. We recall it here, as well as some basic properties that can be found in \cite{overconvergentwittvectors}.

In what follows, we will consider real tuples $b=\lef(b_{i}\rig)_{i\in\lef\llbracket1,n\rig\rrbracket}\in{\lef]0,+\infty\rig[}^{n}$ and $c=\lef(c_{i}\rig)_{i\in\lef\llbracket1,n\rig\rrbracket}\in{\lef]0,+\infty\rig[}^{n}$. For all $P=\sum_{j\in\mathbb{N}^{n}}a_{j}\underline{X}^{j}\in k\lef[\underline{X}\rig]$, where the $a_{j}\in k$ for any $j\in\mathbb{N}^{n}$ are almost all nought, we define the following pseudovaluation:
\begin{equation*}
v_{b}\lef(P\rig)\coloneqq\min\lef\{-\sum_{i=1}^{n}b_{i}j_{i}\mid j\in\mathbb{N}^{n},\ b_{j}\neq0\rig\}\text{,}
\end{equation*}
with the convention $v_{b}\lef(0\rig)=+\infty$. Then, of course for every $P\in k\lef[\underline{X}\rig]$ we have $v_{\lef(1,\ldots,1\rig)}\lef(P\rig)=-\deg\lef(P\rig)$.

Consider the following pseudovaluation on $\overline{R}$:
\begin{equation*}
v_{\varphi,b}\colon\begin{array}{rl}\overline{R}\to&\mathbb{R}\cup\lef\{+\infty,-\infty\rig\}\\r\mapsto&\sup\lef\{v_{b}\lef(P\rig)\mid P\in\varphi^{-1}\lef(\lef\{r\rig\}\rig)\rig\}\text{.}\end{array}
\end{equation*}

Let $m\lef(b,c\rig)\coloneqq\min\lef\{\frac{c_{i}}{b_{i}}\mid i\in\lef\llbracket1,n\rig\rrbracket\rig\}$ and $M\lef(b,c\rig)\coloneqq\max\lef\{\frac{c_{i}}{b_{i}}\mid i\in\lef\llbracket1,n\rig\rrbracket\rig\}$ be two positive real numbers. We immediately see that:
\begin{equation}\label{equivalencesofv}
\forall P\in k\lef[\underline{X}\rig],\ m\lef(b,c\rig)v_{\varphi,b}\lef(P\rig)\geqslant v_{\varphi,c}\lef(P\rig)\geqslant M\lef(b,c\rig)v_{\varphi,b}\lef(P\rig)\text{.}
\end{equation}

Now, for any $\varepsilon>0$ we shall define:
\begin{equation*}
\gamma_{\varepsilon,\varphi,b}\colon\begin{array}{rl}W\lef(\overline{R}\rig)\to&\mathbb{R}\cup\lef\{+\infty,-\infty\rig\}\\\lef(w_{u}\rig)_{u\in\mathbb{N}}\mapsto&\inf\lef\{u+\varepsilon p^{-u}v_{\varphi,b}\lef(w_{u}\rig)\mid u\in\mathbb{N}\rig\}\text{.}\end{array}
\end{equation*}

According to \cite[proposition 2.3]{overconvergentwittvectors}, $\gamma_{\varepsilon,\varphi,b}$ is a pseudovaluation on $W\lef(\overline{R}\rig)$. We deduce from \eqref{equivalencesofv} the following inequalities:
\begin{equation}\label{equivalencesofgamma}
\forall w\in W\lef(\overline{R}\rig),\ \gamma_{m\lef(b,c\rig)\varepsilon,\varphi,b}\lef(w\rig)\geqslant\gamma_{\varepsilon,\varphi,c}\lef(w\rig)\geqslant\gamma_{M\lef(b,c\rig)\varepsilon,\varphi,b}\lef(w\rig)\text{.}
\end{equation}

This pseudovaluation is related with the one that we have previously introduced.

\begin{prop}\label{gammaandzetawitt}
Assume that $n\in\mathbb{N}^{*}$. Then for any $w\in W\lef(k\lef[\underline{X}\rig]\rig)$ and any $\varepsilon>0$, we have:
\begin{equation*}
2n\gamma_{\frac{\varepsilon}{2n},\operatorname{Id}_{k\lef[\underline{X}\rig]},\lef(1,\ldots,1\rig)}\lef(w\rig)\geqslant\zeta_{\varepsilon}\lef(w\rig)\geqslant\gamma_{\varepsilon,\operatorname{Id}_{k\lef[\underline{X}\rig]},\lef(1,\ldots,1\rig)}\lef(w\rig)\text{.}
\end{equation*}
\end{prop}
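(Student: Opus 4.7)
The plan is to evaluate both pseudovaluations explicitly on the canonical decomposition of $w$ supplied by theorem \ref{structuretheorem} and then to compare the resulting formulas term by term. Since $w\in W\lef(k\lef[\underline{X}\rig]\rig)$ sits in degree zero, the only partitions appearing in the decomposition are $I=\emptyset$, and one writes uniquely $w=\sum_{a}e\lef(\eta_{a},a,\emptyset\rig)$ with $\eta_{a}\in W\lef(k\rig)$. For $I=\emptyset$ one has $I_{0}=\operatorname{Supp}\lef(a\rig)$, which is empty exactly when $a=0$; combining both branches of the definition of $\zeta_{\varepsilon}$ and using $u\lef(0\rig)=\lef\lvert0\rig\rvert=0$ to unify them gives the single formula
\[
\zeta_{\varepsilon}\lef(w\rig)=\inf_{a}\lef(2n\operatorname{v}_{V}\lef(\eta_{a}\rig)+u\lef(a\rig)-\varepsilon\lef\lvert a\rig\rvert\rig)\text{.}
\]

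The central calculation is to compute the Witt coordinates of the basic element $e\lef(\eta,a,\emptyset\rig)=V^{u\lef(a\rig)}\lef(\eta\lef[\underline{X}^{p^{u\lef(a\rig)}a}\rig]\rig)$. Combining the Teichmüller multiplication identity $\lef[P\rig]\cdot\lef(x_{0},x_{1},\ldots\rig)=\lef(Px_{0},P^{p}x_{1},P^{p^{2}}x_{2},\ldots\rig)$ with the shift formula $V^{m}\lef(x_{0},x_{1},\ldots\rig)=\lef(0,\ldots,0,x_{0},x_{1},\ldots\rig)$ having $m$ leading zeros, the $u$-th Witt coordinate of $e\lef(\eta,a,\emptyset\rig)$ equals $\eta_{u-u\lef(a\rig)}\underline{X}^{p^{u}a}$ whenever $u\geqslant u\lef(a\rig)$ and vanishes otherwise. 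Since distinct weight functions produce distinct exponent vectors $p^{u}a$, no cancellation occurs when summing over $a$, and the $u$-th coordinate $w_{u}$ of $w$ satisfies $\deg\lef(w_{u}\rig)=\max\lef\{p^{u}\lef\lvert a\rig\rvert\mid u\lef(a\rig)\leqslant u,\ \lef(\eta_{a}\rig)_{u-u\lef(a\rig)}\neq0\rig\}$. Substituting this into the definition $\gamma_{\varepsilon,\operatorname{Id}_{k\lef[\underline{X}\rig]},\lef(1,\ldots,1\rig)}\lef(w\rig)=\inf_{u}\lef\{u-\varepsilon p^{-u}\deg\lef(w_{u}\rig)\rig\}$ and swapping the two infima then yields
\[
\gamma_{\varepsilon,\operatorname{Id}_{k\lef[\underline{X}\rig]},\lef(1,\ldots,1\rig)}\lef(w\rig)=\inf_{a}\lef(u\lef(a\rig)+\operatorname{v}_{V}\lef(\eta_{a}\rig)-\varepsilon\lef\lvert a\rig\rvert\rig)\text{.}
\]

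Both inequalities now reduce to term-by-term comparisons. On the single summand $e\lef(\eta_{a},a,\emptyset\rig)$ the difference $\zeta_{\varepsilon}-\gamma_{\varepsilon,\operatorname{Id},\lef(1,\ldots,1\rig)}$ equals $\lef(2n-1\rig)\operatorname{v}_{V}\lef(\eta_{a}\rig)$, while $2n\gamma_{\varepsilon/\lef(2n\rig),\operatorname{Id},\lef(1,\ldots,1\rig)}-\zeta_{\varepsilon}$ equals $\lef(2n-1\rig)u\lef(a\rig)$. Both quantities are non-negative because $n\geqslant1$ and $\operatorname{v}_{V}$, $u$ take values in $\mathbb{N}\cup\lef\{+\infty\rig\}$; taking infima over $a$ preserves the inequalities and concludes the proof. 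The only step demanding genuine work is the explicit Witt-coordinate description of $e\lef(\eta,a,\emptyset\rig)$, and in particular the observation that different weight functions never produce overlapping monomials in a common Witt coordinate; once that is settled, the rest is straightforward bookkeeping.
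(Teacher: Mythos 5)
Your overall route is the same as the paper's: decompose $w$ by theorem \ref{structuretheorem}, evaluate both maps on each basic element $e\lef(\eta_{a},a,\emptyset\rig)$, and compare term by term. Your three single-term values $\operatorname{v}_{V}\lef(\eta_{a}\rig)+u\lef(a\rig)-\varepsilon\lef\lvert a\rig\rvert$, $2n\operatorname{v}_{V}\lef(\eta_{a}\rig)+u\lef(a\rig)-\varepsilon\lef\lvert a\rig\rvert$ and $2n\operatorname{v}_{V}\lef(\eta_{a}\rig)+2nu\lef(a\rig)-\varepsilon\lef\lvert a\rig\rvert$ are exactly those computed in the paper, and the comparison between them is correct since $\operatorname{v}_{V}$ and $u$ take non-negative values.

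The gap is in the step you yourself identify as the crux. The $u$-th Witt coordinate of a single $e\lef(\eta_{a},a,\emptyset\rig)$ is indeed $\lef(\eta_{a}\rig)_{u-u\lef(a\rig)}\underline{X}^{p^{u}a}$, because multiplication by a Teichmüller representative is coordinate-wise and $V$ is the shift. But addition of Witt vectors is \emph{not} coordinate-wise: the $u$-th coordinate of a sum is given by the universal polynomial $S_{u}$, which contains cross terms. For instance $\lef(\lef[X_{1}\rig]+\lef[X_{2}\rig]\rig)_{1}=-\frac{1}{p}\sum_{i=1}^{p-1}\binom{p}{i}{X_{1}}^{i}{X_{2}}^{p-i}\neq0$, although both summands have vanishing coordinate there. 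Hence your identity $w_{u}=\sum_{a}\lef(e_{a}\rig)_{u}$, and with it the asserted equality $\deg\lef(w_{u}\rig)=\max\lef\{p^{u}\lef\lvert a\rig\rvert\mid\cdots\rig\}$ and the exact formula $\gamma_{\varepsilon,\operatorname{Id},\lef(1,\ldots,1\rig)}\lef(w\rig)=\inf_{a}\lef(u\lef(a\rig)+\operatorname{v}_{V}\lef(\eta_{a}\rig)-\varepsilon\lef\lvert a\rig\rvert\rig)$, are not justified as stated. Since $S_{u}$ is isobaric, the cross terms have degree at most $\max_{a}p^{u}\lef\lvert a\rig\rvert$, so the inequality $\gamma\lef(w\rig)\geqslant\inf_{a}\gamma\lef(e_{a}\rig)$ survives (it also follows from the pseudovaluation property together with $V$-adic convergence, which is how the paper reduces to a single term), and this is all that is needed for the left-hand inequality $2n\gamma_{\frac{\varepsilon}{2n},\operatorname{Id},\lef(1,\ldots,1\rig)}\lef(w\rig)\geqslant\zeta_{\varepsilon}\lef(w\rig)$. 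The right-hand inequality $\zeta_{\varepsilon}\lef(w\rig)\geqslant\gamma_{\varepsilon}\lef(w\rig)$, however, requires the reverse bound $\gamma_{\varepsilon}\lef(w\rig)\leqslant\inf_{a}\zeta_{\varepsilon}\lef(e_{a}\rig)$, that is, a \emph{lower} bound on $\deg\lef(w_{u}\rig)$: one must show that the top-degree monomials are not destroyed by the cross terms, and a cross term can have exactly the maximal degree when several $\lef\lvert a\rig\rvert$ coincide, and can even land on an exponent vector $p^{u}c$ of a third weight $c$ occurring in the decomposition. This needs the filtration and leading-term analysis of Davis--Langer--Zink (or an explicit citation of it); the phrase ``no cancellation occurs'' does not establish it.
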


\begin{proof}
Let $w\in W\lef(k\lef[\underline{X}\rig]\rig)$. Using theorem \ref{structuretheorem}, we can write this element as a series:
\begin{equation*}
w=\sum_{a\in{\mathbb{N}\lef[\frac{1}{p}\rig]}^{n}}V^{u\lef(a\rig)}\lef(\eta_{a}\lef[\underline{X}^{p^{u\lef(a\rig)}a}\rig]\rig)\text{,}
\end{equation*}
where $\eta_{a}\in W\lef(k\rig)$ for any $a\in{\mathbb{N}\lef[\frac{1}{p}\rig]}^{n}$. As this series converges for the $V$-adic topology, and since we are dealing with pseudovaluations, it is enough to show the inequality with $w=V^{u\lef(a\rig)}\lef(\eta_{a}\lef[\underline{X}^{p^{u\lef(a\rig)}a}\rig]\rig)$ for some $a\in{\mathbb{N}\lef[\frac{1}{p}\rig]}^{n}$, in which case:
\begin{align*}
\gamma_{\varepsilon,\operatorname{Id}_{k\lef[\underline{X}\rig]},\lef(1,\ldots,1\rig)}\lef(w\rig)&=\operatorname{v}_{V}\lef(\eta_{a}\rig)+u\lef(a\rig)-\varepsilon\lef\lvert a\rig\rvert\text{,}\\
\zeta_{\varepsilon}\lef(w\rig)&=2n\operatorname{v}_{V}\lef(\eta_{a}\rig)+u\lef(a\rig)-\varepsilon\lef\lvert a\rig\rvert\text{,}\\
2n\gamma_{\frac{\varepsilon}{2n},\operatorname{Id}_{k\lef[\underline{X}\rig]},\lef(1,\ldots,1\rig)}\lef(w\rig)&=2n\operatorname{v}_{V}\lef(\eta_{a}\rig)+2nu\lef(a\rig)-\varepsilon\lef\lvert a\rig\rvert\text{.}
\end{align*}
\end{proof}

The definition of this other pseudovaluation makes it easier to understand the behaviour of the \emph{Verschiebung} endomorphism regarding overconvergence.

\begin{lemm}\label{verschiebunggamma}
Let $\overline{R}$, $\varphi$ and $b$ be as above. Let $w\in W\lef(\overline{R}\rig)$ and $u\in\mathbb{N}$. Then:
\begin{equation*}
\gamma_{\varepsilon,\varphi,b}\lef(V^{u}\lef(w\rig)\rig)=u+\gamma_{p^{-u}\varepsilon,\varphi,b}\lef(w\rig)\text{.}
\end{equation*}
\end{lemm}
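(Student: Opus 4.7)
The identity is a direct computation from the definition of $\gamma_{\varepsilon,\varphi,b}$, once one unwinds what $V^{u}$ does on Witt coordinates. The plan is the following.

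First I would recall that, by construction of the Verschiebung on $W\lef(\overline{R}\rig)$, if $w=\lef(w_{j}\rig)_{j\in\mathbb{N}}$ then
\begin{equation*}
\lef(V^{u}\lef(w\rig)\rig)_{j}=\begin{cases}0&\text{if }j<u\text{,}\\w_{j-u}&\text{if }j\geqslant u\text{.}\end{cases}
\end{equation*}
Since $v_{\varphi,b}\lef(0\rig)=+\infty$, the terms with $j<u$ contribute $+\infty$ to the infimum defining $\gamma_{\varepsilon,\varphi,b}\lef(V^{u}\lef(w\rig)\rig)$ and can therefore be discarded. The infimum is thus attained, or approached, on indices $j\geqslant u$ only.

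Next I would reindex by $j=u+i$ with $i\in\mathbb{N}$ and compute
\begin{equation*}
j+\varepsilon p^{-j}v_{\varphi,b}\lef(\lef(V^{u}\lef(w\rig)\rig)_{j}\rig)=u+i+\varepsilon p^{-u-i}v_{\varphi,b}\lef(w_{i}\rig)=u+\lef(i+\lef(p^{-u}\varepsilon\rig)p^{-i}v_{\varphi,b}\lef(w_{i}\rig)\rig)\text{.}
\end{equation*}
Taking the infimum over $i\in\mathbb{N}$ and pulling the constant $u$ out yields exactly
\begin{equation*}
\gamma_{\varepsilon,\varphi,b}\lef(V^{u}\lef(w\rig)\rig)=u+\inf\lef\{i+\lef(p^{-u}\varepsilon\rig)p^{-i}v_{\varphi,b}\lef(w_{i}\rig)\mid i\in\mathbb{N}\rig\}=u+\gamma_{p^{-u}\varepsilon,\varphi,b}\lef(w\rig)\text{,}
\end{equation*}
which is the claimed equality.

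There is no real obstacle here; the only point requiring a modicum of care is the convention $v_{\varphi,b}\lef(0\rig)=+\infty$, which guarantees that the shifted zero coordinates do not contribute to the infimum, so that the reindexation is valid and equality — not merely inequality — holds.
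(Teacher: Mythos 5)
Your proof is correct and matches the paper's argument: the paper performs exactly the same reindexing computation $\inf_i\lef\{u+i+\varepsilon p^{-u-i}v_{\varphi,b}\lef(w_{i}\rig)\rig\}=u+\gamma_{p^{-u}\varepsilon,\varphi,b}\lef(w\rig)$, merely prefacing it with a (not really needed) reduction to the case $\overline{R}=k\lef[\underline{X}\rig]$, $\varphi=\operatorname{Id}$. Your observation that the shifted zero coordinates contribute $+\infty$ and drop out of the infimum is the right point of care.
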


\begin{proof}
It is enough to prove this in the case where $\overline{R}=k\lef[\underline{X}\rig]$ and $\varphi=\operatorname{Id}_{k\lef[\underline{X}\rig]}$. Then:
\begin{equation*}
\gamma_{\varepsilon,\varphi,b}\lef(V^{u}\lef(w\rig)\rig)=\inf\lef\{u+i+\varepsilon p^{-u-i}v_{\varphi,b}\lef(w_{i}\rig)\mid i\in\mathbb{N}\rig\}=u+\gamma_{p^{-u}\varepsilon,\varphi,b}\lef(w\rig)\text{.}
\end{equation*}
\end{proof}

By using the fact that multiplication by $p$ in Witt vectors over $k\lef[\underline{X}\rig]$ equals $V\circ W\lef(\operatorname{Frob}_{k\lef[\underline{X}\rig]}\rig)$, when $k$ is reduced we can derive the following equality from the previous lemma:
\begin{equation}\label{multiplicationbypgamma}
\forall w\in W\lef(k\lef[\underline{X}\rig]\rig),\ \gamma_{\varepsilon,\operatorname{Id}_{k\lef[\underline{X}\rig]},b}\lef(pw\rig)=\gamma_{\varepsilon,\operatorname{Id}_{k\lef[\underline{X}\rig]},b}\lef(w\rig)+1\text{.}
\end{equation}

In all the remainder of this article, we will be working with the following complex, which was first introduced in \cite{overconvergentderhamwittcohomology}.

\begin{deff}
The \textbf{overconvergent de Rham--Witt complex} is:
\begin{equation*}
W^{\dagger}\Omega_{k\lef[\underline{X}\rig]/k}\coloneqq\lef\{w\in W\Omega_{k\lef[\underline{X}\rig]/k}\mid\exists\varepsilon>0,\ \zeta_{\varepsilon}\lef(w\rig)\neq-\infty\rig\}\text{.}
\end{equation*}
\end{deff}

In the author's previous article, it was shown that this definition coincides with the one given by Davis, Langer and Zink \cite[proposition 3.5]{pseudovaluationsonthederhamwittcomplex}.

We define the following sub-$W\lef(k\rig)$-dga:
\begin{equation*}
W^{\dagger}\Omega^{\mathrm{int}}_{k\lef[\underline{X}\rig]/k}\coloneqq W\Omega^{\mathrm{int}}_{k\lef[\underline{X}\rig]/k}\cap W^{\dagger}\Omega_{k\lef[\underline{X}\rig]/k}\text{,}
\end{equation*}
and the following sub-graded $W\lef(k\rig)$-modules:
\begin{align*}
W^{\dagger}\Omega^{\mathrm{frac}}_{k\lef[\underline{X}\rig]/k}&\coloneqq W\Omega^{\mathrm{frac}}_{k\lef[\underline{X}\rig]/k}\cap W^{\dagger}\Omega_{k\lef[\underline{X}\rig]/k}\text{,}\\
W^{\dagger}\Omega^{\mathrm{frp}}_{k\lef[\underline{X}\rig]/k}&\coloneqq W\Omega^{\mathrm{frp}}_{k\lef[\underline{X}\rig]/k}\cap W^{\dagger}\Omega_{k\lef[\underline{X}\rig]/k}\text{.}
\end{align*}

We immediately deduce two decompositions as graded $W\lef(k\rig)$-modules:
\begin{align}
W^{\dagger}\Omega_{k\lef[\underline{X}\rig]/k}&=W^{\dagger}\Omega^{\mathrm{int}}_{k\lef[\underline{X}\rig]/k}\oplus W^{\dagger}\Omega^{\mathrm{frac}}_{k\lef[\underline{X}\rig]/k}\text{,}\label{decomposeoverconvergent}\\
W^{\dagger}\Omega^{\mathrm{frac}}_{k\lef[\underline{X}\rig]/k}&=W^{\dagger}\Omega^{\mathrm{frp}}_{k\lef[\underline{X}\rig]/k}\oplus d\lef(W^{\dagger}\Omega^{\mathrm{frp}}_{k\lef[\underline{X}\rig]/k}\rig)\text{.}\label{decomposefracoverconvergent}
\end{align}

To get a functor on $k$-algebras of finite type, we first need the following generalisation of \cite[proposition 0.9]{overconvergentderhamwittcohomology}. In this new proof, we will rely on the fact that $\zeta_{\varepsilon}$ is a pseudovaluation.

\begin{prop}\label{overconvergentfunctor}
Let $m\in\mathbb{N}$, and let $\varphi\colon k\lef[\underline{X}\rig]\to k\lef[Y_{1},\ldots,Y_{m}\rig]$ be a morphism of $k$-algebras. Then the image of $W^{\dagger}\Omega_{k\lef[\underline{X}\rig]/k}$ through the morphism of $W\lef(k\rig)$-dgas $W\Omega_{k\lef[\underline{X}\rig]/k}\to W\Omega_{k\lef[Y_{1},\ldots,Y_{m}\rig]/k}$ functorially induced by $\varphi$ is included in the overconvergent subcomplex $W^{\dagger}\Omega_{k\lef[Y_{1},\ldots,Y_{m}\rig]/k}$.

Moreover, if $\varphi$ is surjective, so is $W^{\dagger}\Omega_{k\lef[\underline{X}\rig]/k}\to W^{\dagger}\Omega_{k\lef[Y_{1},\ldots,Y_{m}\rig]/k}$.
\end{prop}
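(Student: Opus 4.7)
The plan is to control the induced morphism $\varphi_{*} \colon W\Omega_{k\lef[\underline{X}\rig]/k} \to W\Omega_{k\lef[Y_{1},\ldots,Y_{m}\rig]/k}$ via the pseudovaluations $\zeta_{\varepsilon}$. Setting $D \coloneqq \max_{i} \deg\lef(\varphi\lef(X_{i}\rig)\rig)$, the goal is to show that for a sufficiently small $\varepsilon' > 0$ depending on $\varepsilon$ and $D$, the inequality $\zeta_{\varepsilon'}\lef(\varphi_{*}\lef(w\rig)\rig) \neq -\infty$ holds whenever $\zeta_{\varepsilon}\lef(w\rig) \neq -\infty$. First I would fix such an $\varepsilon$ and expand $w = \sum_{\lef(a,I\rig) \in \mathcal{P}} e\lef(\eta_{a,I}, a, I\rig)$ using theorem \ref{structuretheorem}; by the hypothesis there is a uniform constant $M$ with $\zeta_{\varepsilon}\lef(e\lef(\eta_{a,I}, a, I\rig)\rig) \geqslant M$ for all $\lef(a, I\rig) \in \mathcal{P}$. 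Since $\varphi_{*}$ commutes with $F$, $V$, $d$, products, and sends $\lef[X_{i}\rig]$ to $\lef[\varphi\lef(X_{i}\rig)\rig]$, the image $\varphi_{*}\lef(e\lef(\eta_{a,I}, a, I\rig)\rig)$ is obtained from the same defining formula with each monomial $\underline{X}^{p^{u} a|_{I_{l}}}$ replaced by a polynomial in $k\lef[Y_{1},\ldots,Y_{m}\rig]$ of degree at most $D p^{u} \lef\lvert a|_{I_{l}} \rig\rvert$.

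The key estimate to establish is that, for every $\lef(a, I\rig) \in \mathcal{P}$ and every $\eta \in W\lef(k\rig)$,
\begin{equation*}
\zeta_{\varepsilon'}\lef(\varphi_{*}\lef(e\lef(\eta, a, I\rig)\rig)\rig) \geqslant u\lef(a\rig) + \operatorname{v}_{V}\lef(\eta\rig) - \varepsilon' D \lef\lvert a \rig\rvert \text{.}
\end{equation*}
To reach it, I would apply the pseudovaluation property of $\zeta_{\varepsilon'}$ to the defining product of $\varphi_{*}\lef(e\lef(\eta, a, I\rig)\rig)$, and on each factor use proposition \ref{gammaandzetawitt} to bound $\zeta_{\varepsilon'}$ from below by $\gamma_{\varepsilon', \operatorname{Id}_{k\lef[Y_{1},\ldots,Y_{m}\rig]}, \lef(1, \ldots, 1\rig)}$, lemma \ref{verschiebunggamma} to handle the Verschiebungs, and the identity $\gamma_{\alpha, \operatorname{Id}_{k\lef[Y_{1},\ldots,Y_{m}\rig]}, \lef(1, \ldots, 1\rig)}\lef(\lef[Q\rig]\rig) = -\alpha \deg\lef(Q\rig)$ together with the degree bound on the image polynomials. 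The $g\lef(a|_{I_{l}}\rig)$ factors must first be rewritten without the outer $F$: when $\operatorname{v}_{p}\lef(a|_{I_{l}}\rig) \geqslant 0$ formula \eqref{fdttdt} does the job, otherwise the exponent $u\lef(a|_{I_{l}}\rig) + \operatorname{v}_{p}\lef(a|_{I_{l}}\rig)$ already vanishes; then \eqref{dzetaepsilon} takes care of the $d$. Combining the key estimate with the hypothesis $2n \operatorname{v}_{V}\lef(\eta_{a,I}\rig) + c u\lef(a\rig) - \varepsilon \lef\lvert a \rig\rvert \geqslant M$, where $c \in \lef\{\#I, \#I+1\rig\}$ so $c \leqslant 2n$, and choosing $\varepsilon' \coloneqq \varepsilon/\lef(2nD\rig)$, yields the uniform bound $\zeta_{\varepsilon'}\lef(\varphi_{*}\lef(e\lef(\eta_{a,I}, a, I\rig)\rig)\rig) \geqslant M/\lef(2n\rig)$. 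The convergence property \eqref{padicoverconvergence} then gives $\zeta_{\varepsilon'}\lef(\varphi_{*}\lef(w\rig)\rig) \geqslant M/\lef(2n\rig)$, proving the first assertion.

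For the surjectivity statement, when $\varphi$ is surjective I would pick polynomial preimages $b_{j} \in k\lef[\underline{X}\rig]$ of each $Y_{j}$ and use the $k$-algebra section $\sigma \colon k\lef[Y_{1},\ldots,Y_{m}\rig] \to k\lef[\underline{X}\rig]$ sending $Y_{j}$ to $b_{j}$. The first part of the proposition applied to $\sigma$ gives $\sigma_{*}\lef(W^{\dagger}\Omega_{k\lef[Y_{1},\ldots,Y_{m}\rig]/k}\rig) \subset W^{\dagger}\Omega_{k\lef[\underline{X}\rig]/k}$, and since $\varphi \circ \sigma = \operatorname{Id}_{k\lef[Y_{1},\ldots,Y_{m}\rig]}$ we obtain $\varphi_{*} \circ \sigma_{*} = \operatorname{Id}$ by functoriality, so every element of $W^{\dagger}\Omega_{k\lef[Y_{1},\ldots,Y_{m}\rig]/k}$ is the image under $\varphi_{*}$ of its overconvergent preimage $\sigma_{*}\lef(\bullet\rig)$.

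The main obstacle I anticipate lies in the bookkeeping inside the key estimate: distributing the degree bound $D p^{u\lef(a\rig)} \lef\lvert a|_{I_{l}} \rig\rvert$ across the several factors of $e\lef(\eta, a, I\rig)$ without losing the balance between the positive contribution $u\lef(a\rig)$ coming from the Verschiebungs and the negative contribution $-\varepsilon' D \lef\lvert a \rig\rvert$ coming from the Teichmüllers. The case distinctions on whether $I_{0}$ is empty and on the sign of each $\operatorname{v}_{p}\lef(a|_{I_{l}}\rig)$ produce several subcases, but each follows the same template of pseudovaluation inequality, $\gamma$-reduction, and polynomial degree control.
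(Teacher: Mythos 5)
Your proposal is correct and follows essentially the same route as the paper's proof: expand $w$ via theorem \ref{structuretheorem}, bound each factor of $\varphi_{*}\lef(e\lef(\eta_{a,I},a,I\rig)\rig)$ from below using the pseudovaluation property of $\zeta_{\varepsilon'}$, proposition \ref{gammaandzetawitt}, lemma \ref{verschiebunggamma}, formula \eqref{fdttdt} and \eqref{dzetaepsilon}, arrive at the term-wise estimate $u\lef(a\rig)+\operatorname{v}_{V}\lef(\eta_{a,I}\rig)-\varepsilon' D\lef\lvert a\rig\rvert$, and rescale $\varepsilon$ by $2nD$; the section argument for surjectivity is exactly the one the paper delegates to Davis--Langer--Zink. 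The only cosmetic slip is the citation of \eqref{padicoverconvergence} (stated for $p$-adic convergence) where the series converges in the canonical topology, but the same infimum principle applies there and the paper glosses over this point identically.
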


\begin{proof}
Let us start by applying theorem \ref{structuretheorem} to write $w\in W\Omega_{k\lef[\underline{X}\rig]/k}$ as a convergent series:
\begin{equation*}
w=\sum_{\lef(a,I\rig)\in\mathcal{P}}e\lef(\eta_{a,I},a,I\rig)\text{.}
\end{equation*}

It is sufficient to control uniformly the overconvergence of the image through the functorially induced morphism, which we shall also denote $\varphi$, of all terms of this series.

Denote by $M$ the maximum of the degrees of the polynomials $\varphi\lef(X_{l}\rig)$, where $l$ runs through all $\lef\llbracket1,n\rig\rrbracket$. Then, using proposition \ref{gammaandzetawitt}, we get for all $l\in\lef\llbracket1,n\rig\rrbracket$ and all $\varepsilon>0$, with the obvious notations:
\begin{equation}\label{phixoverconvergence}
\begin{split}
\zeta_{\varepsilon}\lef(\lef[\varphi\lef(X_{l}\rig)\rig]\rig)&\geqslant\gamma_{\varepsilon,\operatorname{Id}_{k\lef[Y_{1},\ldots,Y_{m}\rig]},\lef(1,\ldots,1\rig)}\lef(\lef[\varphi\lef(X_{l}\rig)\rig]\rig)\\
&\geqslant\varepsilon v_{\operatorname{Id}_{k\lef[Y_{1},\ldots,Y_{m}\rig]},\lef(1,\ldots,1\rig)}\lef(\varphi\lef(X_{l}\rig)\rig)\\
&\geqslant-\varepsilon M\text{.}
\end{split}
\end{equation}

Consider $\lef(a,I\rig)\in\mathcal{P}$. We will be using the notations introduced with the definition of a partition, and write $I=\lef\{i_{j}\rig\}_{j\in\lef\llbracket1,\#I\rig\rrbracket}$. When either $I_{0}\neq\emptyset$, or $u\lef(a\rig)=0$, then by the very definition of $e$, we have:
\begin{multline*}
e\lef(\eta_{a,I},a,I\rig)=V^{u\lef(a\rig)}\lef(\eta_{a,I}\lef[\underline{X}^{p^{u\lef(a\rig)}a|_{I_{0}}}\rig]\rig)\\\times\prod_{j=1}^{\#I}F^{u\lef(a|_{I_{j}}\rig)+\operatorname{v}_{p}\lef(a|_{I_{j}}\rig)}\lef(d\lef(V^{u\lef(a|_{I_{j}}\rig)}\lef(\lef[\underline{X}^{p^{-\operatorname{v}_{p}\lef(a|_{I_{j}}\rig)}a|_{I_{j}}}\rig]\rig)\rig)\rig)\text{.}
\end{multline*}

We are now going to study the image through the morphism functorially induced by $\varphi$ of this element. We shall proceed factor by factor. First, we get the following inequalities:
\begin{multline}\label{zetaepsilonverschiebung}
\zeta_{\varepsilon}\lef(V^{u\lef(a\rig)}\lef(\eta_{a,I}\lef[\varphi\lef(\underline{X}^{p^{u\lef(a\rig)}a|_{I_{0}}}\rig)\rig]\rig)\rig)\\\begin{aligned}
&\overset{\hphantom{\eqref{phixoverconvergence}}}{\overset{\ref{gammaandzetawitt}}{\geqslant}}\gamma_{\varepsilon,\operatorname{Id}_{k\lef[Y_{1},\ldots,Y_{m}\rig]},\lef(1,\ldots,1\rig)}\lef(V^{u\lef(a\rig)}\lef(\eta_{a,I}\lef[\varphi\lef(\underline{X}^{p^{u\lef(a\rig)}a|_{I_{0}}}\rig)\rig]\rig)\rig)\\
&\overset{\hphantom{\eqref{phixoverconvergence}}}{\overset{\ref{verschiebunggamma}}{\geqslant}}u\lef(a\rig)+\gamma_{p^{-u\lef(a\rig)}\varepsilon,\operatorname{Id}_{k\lef[Y_{1},\ldots,Y_{m}\rig]},\lef(1,\ldots,1\rig)}\lef(\eta_{a,I}\lef[\varphi\lef(\underline{X}^{p^{u\lef(a\rig)}a|_{I_{0}}}\rig)\rig]\rig)\\
&\overset{\eqref{phixoverconvergence}}{\geqslant}u\lef(a\rig)+\operatorname{v}_{V}\lef(\eta_{a,I}\rig)-\varepsilon M\lef\lvert a|_{I_{0}}\rig\rvert\text{.}\end{aligned}
\end{multline}

In the last line, we have also used the fact that $\gamma_{\varepsilon,\operatorname{Id}_{k\lef[Y_{1},\ldots,Y_{m}\rig]},\lef(1,\ldots,1\rig)}$ is a pseudovaluation.

Now, let $j\in\lef\llbracket1,\#I\rig\rrbracket$. If $u\lef(a|_{I_{j}}\rig)=0$, then:
\begin{multline*}
\zeta_{\varepsilon}\lef(F^{u\lef(a|_{I_{j}}\rig)+\operatorname{v}_{p}\lef(a|_{I_{j}}\rig)}\lef(d\lef(V^{u\lef(a|_{I_{j}}\rig)}\lef(\lef[\varphi\lef(\underline{X}^{p^{-\operatorname{v}_{p}\lef(a|_{I_{j}}\rig)}a|_{I_{j}}}\rig)\rig]\rig)\rig)\rig)\rig)\\\begin{aligned}
&\overset{\hphantom{\eqref{phixoverconvergence}}}{=}\zeta_{\varepsilon}\lef(F^{\operatorname{v}_{p}\lef(a|_{I_{j}}\rig)}\lef(d\lef(\lef[\varphi\lef(\underline{X}^{p^{-\operatorname{v}_{p}\lef(a|_{I_{j}}\rig)}a|_{I_{j}}}\rig)\rig]\rig)\rig)\rig)\\
&\overset{\hphantom{\eqref{phixoverconvergence}}}{\overset{\eqref{fdttdt}}{=}}\zeta_{\varepsilon}\lef(\lef[\varphi\lef(\underline{X}^{p^{-\operatorname{v}_{p}\lef(a|_{I_{j}}\rig)}a|_{I_{j}}}\rig)^{p^{\operatorname{v}_{p}\lef(a|_{I_{j}}\rig)}-1}\rig]d\lef(\lef[\varphi\lef(\underline{X}^{p^{-\operatorname{v}_{p}\lef(a|_{I_{j}}\rig)}a|_{I_{j}}}\rig)\rig]\rig)\rig)\\
&\overset{\hphantom{\eqref{phixoverconvergence}}}{\overset{\eqref{dzetaepsilon}}{\geqslant}}\zeta_{\varepsilon}\lef(\lef[\varphi\lef(\underline{X}^{\lef(1-p^{-\operatorname{v}_{p}\lef(a|_{I_{j}}\rig)}\rig)a|_{I_{j}}}\rig)\rig]\rig)+\zeta_{\varepsilon}\lef(\lef[\varphi\lef(\underline{X}^{p^{-\operatorname{v}_{p}\lef(a|_{I_{j}}\rig)}a|_{I_{j}}}\rig)\rig]\rig)\\
&\overset{\eqref{phixoverconvergence}}{\geqslant}-\varepsilon M\lef\lvert a|_{I_{j}}\rig\rvert\text{.}\end{aligned}
\end{multline*}

In the penultimate line, we have also used the fact that $\zeta_{\varepsilon}$ is a pseudovaluation \cite[theorem 3.17]{pseudovaluationsonthederhamwittcomplex}. Moreover, when $u\lef(a|_{I_{j}}\rig)\neq0$, then $u\lef(a|_{I_{j}}\rig)+\operatorname{v}_{p}\lef(a|_{I_{j}}\rig)=0$ so that:
\begin{multline*}
\zeta_{\varepsilon}\lef(F^{u\lef(a|_{I_{j}}\rig)+\operatorname{v}_{p}\lef(a|_{I_{j}}\rig)}\lef(d\lef(V^{u\lef(a|_{I_{j}}\rig)}\lef(\lef[\varphi\lef(\underline{X}^{p^{-\operatorname{v}_{p}\lef(a|_{I_{j}}\rig)}a|_{I_{j}}}\rig)\rig]\rig)\rig)\rig)\rig)\\\begin{aligned}
&\overset{\hphantom{\eqref{zetaepsilonverschiebung}}}{=}\zeta_{\varepsilon}\lef(d\lef(V^{u\lef(a|_{I_{j}}\rig)}\lef(\lef[\varphi\lef(\underline{X}^{p^{u\lef(a|_{I_{j}}\rig)}a|_{I_{j}}}\rig)\rig]\rig)\rig)\rig)\\
&\overset{\hphantom{\eqref{zetaepsilonverschiebung}}}{\overset{\eqref{dzetaepsilon}}{\geqslant}}\zeta_{\varepsilon}\lef(V^{u\lef(a|_{I_{j}}\rig)}\lef(\lef[\varphi\lef(\underline{X}^{p^{u\lef(a|_{I_{j}}\rig)}a|_{I_{j}}}\rig)\rig]\rig)\rig)\\
&\overset{\eqref{zetaepsilonverschiebung}}{\geqslant} u\lef(a|_{I_{j}}\rig)-\varepsilon M\lef\lvert a|_{I_{j}}\rig\rvert\text{.}\end{aligned}
\end{multline*}

Combining these three inequalities, and as $\zeta_{\varepsilon}$ is a pseudovaluation, we find under our assumption that $I_{0}\neq\emptyset$ or $u\lef(a\rig)=0$, and under the new hypothesis $n\neq0$:
\begin{equation*}
\zeta_{\varepsilon}\lef(\varphi\lef(e\lef(\eta_{a,I},a,I\rig)\rig)\rig)\geqslant \operatorname{v}_{V}\lef(\eta_{a,I}\rig)+u\lef(a\rig)-\varepsilon M\lef\lvert a\rig\rvert\geqslant\frac{\zeta_{2n\varepsilon M}\lef(e\lef(\eta_{a,I},a,I\rig)\rig)}{2n}\text{.}
\end{equation*}

In the case where $n=0$, there was nothing to prove in the first place. And finally, when we drop our assumption to get $I_{0}=\emptyset$ and $u\lef(a\rig)\neq0$, then by proposition \ref{dactionone} we have $e\lef(\eta_{a,I},a,I\rig)=d\lef(e\lef(\eta_{a,I},a,I\smallsetminus\lef\{\min\lef(a\rig)\rig\}\rig)\rig)$; so applying \eqref{dzetaepsilon} to our above result ends the proof of the overconvergence of the image because:
\begin{gather*}
\lef(\lef(I_{0}\neq\emptyset\rig)\vee\lef(u\lef(a\rig)=0\rig)\rig)\implies\frac{\zeta_{2n\varepsilon M}\lef(e\lef(\eta_{a,I},a,I\rig)\rig)}{2n}\geqslant\frac{\zeta_{2n\varepsilon M}\lef(w\rig)}{2n}\text{,}\\
\begin{multlined}\lef(\lef(I_{0}=\emptyset\rig)\wedge\lef(u\lef(a\rig)\neq0\rig)\rig)\\\implies\frac{\zeta_{2n\varepsilon M}\lef(e\lef(\eta_{a,I},a,I\smallsetminus\lef\{\min\lef(a\rig)\rig\}\rig)\rig)}{2n}=\frac{\zeta_{2n\varepsilon M}\lef(e\lef(\eta_{a,I},a,I\rig)\rig)}{2n}\geqslant\frac{\zeta_{2n\varepsilon M}\lef(w\rig)}{2n}\text{.}\end{multlined}
\end{gather*}

The second part of the statement, concerning the surjectivity of the morphism induced by $\varphi$, can be shown using exactly the same easy argument as in the proof of \cite[proposition 0.9]{overconvergentderhamwittcohomology}.
\end{proof}

Proposition \ref{overconvergentfunctor} allows us to use the same arguments as in \cite[definition 1.1]{overconvergentderhamwittcohomology}, in which the complex is extended to any smooth commutative $k$-algebra $\overline{R}$ by considering a surjective morphism $\varphi\colon k\lef[\underline{X}\rig]\to\overline{R}$ of $k$-algebras. The complex $W^{\dagger}\Omega_{\overline{R}/k}$ is then defined as the image of $W^{\dagger}\Omega_{k\lef[\underline{X}\rig]/k}$ through the morphism $W\Omega_{k\lef[\underline{X}\rig]/k}\to W\Omega_{\overline{R}/k}$ functorially induced by $\varphi$. The arguments given by Davis, Langer and Zink still hold here word for word, stating that this definition does not depend on $\varphi$, and is functorial.

For all $m\in\mathbb{N}^{*}$, we also define the two-sided ideal:
\begin{equation*}
\operatorname{Fil}^{m}\lef(W^{\dagger}\Omega_{\overline{R}/k}\rig)\coloneqq\operatorname{Fil}^{m}\lef(W\Omega_{\overline{R}/k}\rig)\cap W^{\dagger}\Omega_{\overline{R}/k}\text{.}
\end{equation*}

\section{A characterisation of finite free relatively perfect algebras}

In this section $k$ shall denote a commutative ring of characteristic $p$. As usual, write $k\lef[\underline{X}\rig]=k\lef[X_{1},\ldots,X_{n}\rig]$ for a fixed $n\in\mathbb{N}$. We shall consider $\varphi\colon k\lef[\underline{X}\rig]\to\overline{R}$ a surjective morphism of commutative $k$-algebras.

\begin{deff}
The ring of \textbf{overconvergent Witt vectors over $\overline{R}$} is:
\begin{equation*}
W^{\dagger}\lef(\overline{R}\rig)\coloneqq W^{\dagger}\Omega^{0}_{\overline{R}/k}\text{.}
\end{equation*}
\end{deff}

Notice that this gives rise to a sub Witt functor, as introduced in definition \ref{subwittfunctor}, from the category of finite type commutative $k$-algebras to the category of $\delta$-rings.

The goal of this section will be to give a characterisation of relatively perfect commutative $\overline{R}$-algebras which are finite free as $\overline{R}$-modules using overconvergent Witt vectors.

The pseudovaluations we have studied for the definition of the overconvergent de Rham--Witt complex behave well with regard to finite free algebras.

\begin{lemm}\label{pseudovaluationbasis}
Let $\overline{S}$ be a commutative $\overline{R}$-algebra. Suppose given a $\overline{R}$-basis $\lef(s_{i}\rig)_{i\in\lef\llbracket1,m\rig\rrbracket}$ of $\overline{S}$, where $m\in\mathbb{N}$. Extend $\varphi$ defined above to a surjective morphism of $k\lef[\underline{X}\rig]$-algebras $\varphi\colon k\lef[X_{1},\ldots,X_{n},Y_{1},\ldots,Y_{m}\rig]\to\overline{S}$ such that $\varphi\lef(Y_{i}\rig)=s_{i}$ for every $i\in\lef\llbracket1,m\rig\rrbracket$.

Then, there exists $\lef(b_{i}\rig)_{i\in\lef\llbracket1,n+m\rig\rrbracket}\in{\lef]0,+\infty\rig[}^{n+m}$ and a real number $C\geqslant0$ such that for all $\lef(r_{j}\rig)_{j\in\lef\llbracket1,m\rig\rrbracket}\in\overline{R}^{m}$ we have:
\begin{equation*}
\min\lef\{v_{\varphi|_{k\lef[\underline{X}\rig]},\lef(b_{i}\rig)_{i\in\lef\llbracket1,n\rig\rrbracket}}\lef(r_{j}\rig)\mid j\in\lef\llbracket1,m\rig\rrbracket\rig\}\geqslant v_{\varphi,\lef(b_{i}\rig)_{i\in\lef\llbracket1,n+m\rig\rrbracket}}\lef(\sum_{j=1}^{m}r_{j}s_{j}\rig)-C\text{.}
\end{equation*}
\end{lemm}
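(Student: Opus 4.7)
The strategy is to take a preimage $Q\in k\lef[X_{1},\ldots,X_{n},Y_{1},\ldots,Y_{m}\rig]$ of $\sum_{j}r_{j}s_{j}$, expand it as $Q=\sum_{\alpha\in\mathbb{N}^{m}}Q_{\alpha}\lef(\underline{X}\rig)\underline{Y}^{\alpha}$ (a finite sum), and then reconstruct preimages of the $r_{l}$ in $k\lef[\underline{X}\rig]$ by substituting for each monomial $\underline{Y}^{\alpha}$ a preimage of the $\overline{R}$-coordinate of $\prod_{i}s_{i}^{\alpha_{i}}$ in the basis $\lef(s_{l}\rig)_{l}$ of $\overline{S}$. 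The main difficulty is that $\alpha$ ranges over an infinite set, so these coordinates have a priori unbounded values; the key technical step is to bound their growth linearly in $\lef\lvert\alpha\rig\rvert$ and then compensate for this growth by choosing the weights $b_{n+1},\ldots,b_{n+m}$ large enough.

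I would fix $b_{1}=\cdots=b_{n}\coloneqq 1$. Writing the multiplication in $\overline{S}$ via finitely many structure constants $s_{i}s_{j}=\sum_{l}\mu_{i,j,l}s_{l}$, and the unit as $1=\sum_{l}c_{l}s_{l}$ with $\mu_{i,j,l},c_{l}\in\overline{R}$, one chooses $D\geqslant 1$ such that $v_{\varphi|_{k\lef[\underline{X}\rig]},\lef(1,\ldots,1\rig)}$ of any of these elements is at least $-D$. For every $\alpha\in\mathbb{N}^{m}$, let $\lef(\lambda^{\alpha}_{l}\rig)_{l}\in\overline{R}^{m}$ be the unique coordinates of $\prod_{i}s_{i}^{\alpha_{i}}$ in the basis, so that $\lambda^{0}_{l}=c_{l}$; for $\alpha\neq 0$, picking any $k$ with $\alpha_{k}\geqslant 1$ and using the relation $\prod_{i}s_{i}^{\alpha_{i}}=s_{k}\prod_{i}s_{i}^{\alpha'_{i}}$, where $\alpha'$ is obtained from $\alpha$ by subtracting one from the $k$-th entry, together with the multiplication table, yields $\lambda^{\alpha}_{l}=\sum_{j}\lambda^{\alpha'}_{j}\mu_{k,j,l}$. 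An induction on $\lef\lvert\alpha\rig\rvert$ via the pseudovaluation inequalities then gives the linear-growth estimate
\begin{equation*}
v_{\varphi|_{k\lef[\underline{X}\rig]},\lef(1,\ldots,1\rig)}\lef(\lambda^{\alpha}_{l}\rig)\geqslant -D\lef(\lef\lvert\alpha\rig\rvert+1\rig)\text{.}
\end{equation*}

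I would then set $b_{n+1}=\cdots=b_{n+m}\coloneqq D$. For any real $\lambda<v_{\varphi,\lef(b_{i}\rig)_{i\in\lef\llbracket1,n+m\rig\rrbracket}}\lef(\sum_{j}r_{j}s_{j}\rig)$, pick a preimage $Q$ of $\sum_{j}r_{j}s_{j}$ with $v_{\lef(b_{i}\rig)_{i\in\lef\llbracket1,n+m\rig\rrbracket}}\lef(Q\rig)\geqslant\lambda$. Writing $Q=\sum_{\alpha}Q_{\alpha}\underline{Y}^{\alpha}$ and identifying basis coordinates in $\overline{S}$ after applying $\varphi$ gives $r_{l}=\sum_{\alpha}\varphi\lef(Q_{\alpha}\rig)\lambda^{\alpha}_{l}$ for every $l$. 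For every $\alpha$ with $Q_{\alpha}\neq 0$ and every $l$, fix a preimage $P^{\alpha}_{l}\in k\lef[\underline{X}\rig]$ of $\lambda^{\alpha}_{l}$ with $v_{\lef(1,\ldots,1\rig)}\lef(P^{\alpha}_{l}\rig)\geqslant v_{\varphi|_{k\lef[\underline{X}\rig]},\lef(1,\ldots,1\rig)}\lef(\lambda^{\alpha}_{l}\rig)-1$. Then $P_{l}\coloneqq\sum_{\alpha}Q_{\alpha}P^{\alpha}_{l}\in k\lef[\underline{X}\rig]$ is a preimage of $r_{l}$, and the pseudovaluation inequalities together with $v_{\lef(1,\ldots,1\rig)}\lef(Q_{\alpha}\rig)\geqslant v_{\lef(b_{i}\rig)_{i\in\lef\llbracket1,n+m\rig\rrbracket}}\lef(Q\rig)+D\lef\lvert\alpha\rig\rvert$ and the linear-growth estimate give $v_{\lef(1,\ldots,1\rig)}\lef(P_{l}\rig)\geqslant\lambda-D-1$. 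Letting $\lambda$ approach the supremum and taking the minimum over $l$ conclude with $C\coloneqq D+1$.

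The main obstacle is the linear-growth estimate on the $\lambda^{\alpha}_{l}$; everything else is routine pseudovaluation bookkeeping. That estimate relies crucially on the finiteness of the basis, which provides uniform bounds on the structure constants, and the choice $b_{n+i}=D$ is calibrated precisely to absorb the $D\lef\lvert\alpha\rig\rvert$ growth arising when comparing $v_{\lef(1,\ldots,1\rig)}\lef(Q_{\alpha}\rig)$ with $v_{\lef(b_{i}\rig)_{i\in\lef\llbracket1,n+m\rig\rrbracket}}\lef(Q\rig)$.
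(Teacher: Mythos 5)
Your proof is correct, and it rests on the same underlying mechanism as the paper's: the weights $b_{n+1},\ldots,b_{n+m}$ are taken equal to a bound $D$ on the degree of (preimages of) the structure constants of $\overline{S}$ over $\overline{R}$, calibrated exactly so that rewriting a product of basis elements as an $\overline{R}$-combination of the basis costs nothing against the weighted pseudovaluation. The implementation, however, is genuinely different. The paper stays inside the polynomial ring: by induction on $\deg_{\underline{Y}}$ it replaces an arbitrary preimage $P$ of $\sum_{j}r_{j}s_{j}$ by one lying in $k\lef[\underline{X}\rig]\oplus\bigoplus_{t}k\lef[\underline{X}\rig]Y_{t}$ without decreasing $v_{b}$ (each occurrence of $Y_{j'}Y_{t'}$ is substituted by a chosen $Y$-linear preimage $e_{j',t'}$ of $s_{j'}s_{t'}$, and a separate multiplication by a $Y$-linear preimage of $1$ absorbs the $Y$-degree-zero part), and then reads the $r_{j}$ off the coefficients of the $Y_{j}$. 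You push everything down to $\overline{R}$: you expand the basis coordinates $\lambda^{\alpha}_{l}$ of every monomial $\prod_{i}{s_{i}}^{\alpha_{i}}$, prove the linear-growth bound $v\lef(\lambda^{\alpha}_{l}\rig)\geqslant-D\lef(\lef\lvert\alpha\rig\rvert+1\rig)$ by induction on $\lef\lvert\alpha\rig\rvert$, and assemble preimages of the $r_{l}$ in a single step. Your route isolates the quantitative heart of the argument (the growth of the coordinates) as a standalone estimate and treats the constant term uniformly as the base case $\alpha=0$, whereas the paper's rewriting produces a distinguished $Y$-linear preimage, which is what makes the chain of equalities in its proof transparent. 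The constants obtained are of the same nature, and both arguments use the finiteness of the basis only through the uniform bound on the structure constants.
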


\begin{proof}
For any $j',t'\in\lef\llbracket1,m\rig\rrbracket$, let us choose a preimage $e_{j',t'}\in\bigoplus_{t=1}^{m}k\lef[\underline{X}\rig]Y_{t}$ of $s_{j'}s_{t'}$ by $\varphi$, and let $d\in\mathbb{N}^{*}$ be the maximum of these polynomials' degrees. Put $b_{i}\coloneqq1$ for $i\in\lef\llbracket1,n\rig\rrbracket$ and $b_{i}\coloneqq d$ for $i\in\lef\llbracket n+1,n+m\rig\rrbracket$. We thus have:
\begin{equation*}
\forall j',t'\in\lef\llbracket1,m\rig\rrbracket,\ v_{\lef(b_{i}\rig)_{i\in\lef\llbracket1,n+m\rig\rrbracket}}\lef(e_{j',t'}\rig)\geqslant-2d+1\text{.}
\end{equation*}

For any $j\in\lef\llbracket1,m\rig\rrbracket$, put $r_{j}\in\overline{R}$. Recall that $\varphi\lef(k\lef[\underline{X}\rig]\rig)=\overline{R}\subset\overline{S}$, so:
\begin{multline*}
\min\lef\{v_{\varphi|_{k\lef[\underline{X}\rig]},\lef(b_{i}\rig)_{i\in\lef\llbracket1,n\rig\rrbracket}}\lef(r_{j}\rig)\mid j\in\lef\llbracket1,m\rig\rrbracket\rig\}-d\\\begin{aligned}
&=\min\lef\{\sup\lef\{v_{\lef(b_{i}\rig)_{i\in\lef\llbracket1,n+m\rig\rrbracket}}\lef(U\rig)\mid U\in\varphi^{-1}\lef(\lef\{r_{j}\rig\}\rig)\cap k\lef[\underline{X}\rig]\rig\}-b_{n+j}\mid j\in\lef\llbracket1,m\rig\rrbracket\rig\}\\
&=\min\lef\{\sup\lef\{v_{\lef(b_{i}\rig)_{i\in\lef\llbracket1,n+m\rig\rrbracket}}\lef(V\rig)\mid V\in\varphi^{-1}\lef(\lef\{r_{j}s_{j}\rig\}\rig)\cap k\lef[\underline{X}\rig]Y_{j}\rig\}\mid j\in\lef\llbracket1,m\rig\rrbracket\rig\}\\
&=\sup\lef\{v_{\lef(b_{i}\rig)_{i\in\lef\llbracket1,n+m\rig\rrbracket}}\lef(W\rig)\mid W\in\varphi^{-1}\lef(\lef\{\sum_{j=1}^{m}r_{j}s_{j}\rig\}\rig)\cap\bigoplus_{t=1}^{m}k\lef[\underline{X}\rig]Y_{t}\rig\}\text{.}\end{aligned}
\end{multline*}

Let $P\in k\lef[X_{1},\ldots,X_{n},Y_{1},\ldots,Y_{m}\rig]$ be a preimage of $\sum_{j=1}^{m}r_{j}s_{j}$ by $\varphi$. Let us first give another preimage $Q\in k\lef[\underline{X}\rig]\oplus\bigoplus_{t=1}^{m}k\lef[\underline{X}\rig]Y_{t}$ of $\sum_{j=1}^{m}r_{j}s_{j}$ by $\varphi$ such that $v_{\lef(b_{i}\rig)_{i\in\lef\llbracket1,n+m\rig\rrbracket}}\lef(Q\rig)\geqslant v_{\lef(b_{i}\rig)_{i\in\lef\llbracket1,n+m\rig\rrbracket}}\lef(P\rig)$. Such an element can be constructed by induction on $\deg_{\underline{Y}}\lef(P\rig)$.

When $P\in k\lef[\underline{X}\rig]\oplus\bigoplus_{t=1}^{m}k\lef[\underline{X}\rig]Y_{t}$, we can just take $Q=P$. Otherwise, for each monomial $M$ in $P$ such that there exists $j',t'\in\lef\llbracket1,m\rig\rrbracket$ for which $Y_{j'}Y_{t'}\mid M$, we replace an occurrence of $Y_{j'}Y_{t'}$ in $M$ by $e_{j',t'}$, and we do at most one substitution by monomial. The new polynomial $R$ which we get then satisfies $\varphi\lef(R\rig)=\sum_{j=1}^{m}r_{j}s_{j}$ and $v_{\lef(b_{i}\rig)_{i\in\lef\llbracket1,n+m\rig\rrbracket}}\lef(R\rig)\geqslant v_{\lef(b_{i}\rig)_{i\in\lef\llbracket1,n+m\rig\rrbracket}}\lef(P\rig)$ because $v_{\lef(b_{i}\rig)_{i\in\lef\llbracket1,n+m\rig\rrbracket}}$ is a pseudovaluation and:
\begin{equation*}
v_{\lef(b_{i}\rig)_{i\in\lef\llbracket1,n+m\rig\rrbracket}}\lef(Y_{j'}Y_{t'}\rig)=-2d<v_{\lef(b_{i}\rig)_{i\in\lef\llbracket1,n+m\rig\rrbracket}}\lef(e_{j',t'}\rig)\text{.}
\end{equation*}

Moreover we have $\deg_{\underline{Y}}\lef(R\rig)<\deg_{\underline{Y}}\lef(P\rig)$, which ends the induction.

Let us now show that there is $C\geqslant0$ such that for all $R\in k\lef[\underline{X}\rig]$, there is a polynomial $W\in\bigoplus_{t=1}^{m}k\lef[\underline{X}\rig]Y_{t}$ with $v_{\lef(b_{i}\rig)_{i\in\lef\llbracket1,n+m\rig\rrbracket}}\lef(W\rig)\geqslant v_{\lef(b_{i}\rig)_{i\in\lef\llbracket1,n+m\rig\rrbracket}}\lef(R\rig)-C$ and $\varphi\lef(R\rig)=\varphi\lef(W\rig)$. Take $U$ a preimage of $1$ by $\varphi$ contained in $\bigoplus_{t=1}^{m}k\lef[\underline{X}\rig]Y_{t}$, and put $C\coloneqq-v_{\lef(b_{i}\rig)_{i\in\lef\llbracket1,n+m\rig\rrbracket}}\lef(U\rig)\geqslant0$. Then, one can take $W\coloneqq R U$, and this implies the lemma.
\end{proof}

The nice properties we have seen of the above pseudovaluation can be refined in the case of relatively perfect algebras.

\begin{lemm}\label{basispseudovaluationcontrol}
Let $\overline{S}$ be a commutative $\overline{R}$-algebra. Suppose given $m\in\mathbb{N}$ and $\lef(s_{i}\rig)_{i\in\lef\llbracket1,m\rig\rrbracket}\in\overline{S}^{m}$ such that $\lef({s_{i}}^{p^{l}}\rig)_{i\in\lef\llbracket1,m\rig\rrbracket}$ is a $\overline{R}$-basis of $\overline{S}$ for all $l\in\mathbb{N}$. Extend $\varphi$ to a surjective morphism of $k\lef[\underline{X}\rig]$-algebras $\varphi\colon k\lef[X_{1},\ldots,X_{n},Y_{1},\ldots,Y_{m}\rig]\to\overline{S}$ such that $\varphi\lef(Y_{i}\rig)=s_{i}$ for every $i\in\lef\llbracket1,m\rig\rrbracket$.

Then, there exists $\lef(b_{i}\rig)_{i\in\lef\llbracket1,n+m\rig\rrbracket}\in{\lef]0,+\infty\rig[}^{n+m}$ and two real numbers $C,D\geqslant0$ such that for all $s\in\overline{S}$ and all $l\in\mathbb{N}$, if we decompose $s=\sum_{j=1}^{m}r_{j,l}{s_{j}}^{p^{l}}$ with all $r_{j,l}\in\overline{R}$ for each $j\in\lef\llbracket1,m\rig\rrbracket$, we have:
\begin{equation*}
v_{\varphi|_{k\lef[\underline{X}\rig]},\lef(b_{i}\rig)_{i\in\lef\llbracket1,n\rig\rrbracket}}\lef(r_{j,l}\rig)\geqslant v_{\varphi,\lef(b_{i}\rig)_{i\in\lef\llbracket1,n+m\rig\rrbracket}}\lef(s\rig)-C-p^{l}D\text{.}
\end{equation*}
\end{lemm}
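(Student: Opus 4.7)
The plan is to reduce to lemma \ref{pseudovaluationbasis} (which handles the case $l=0$) by an explicit change of basis, and to bound the change-of-basis matrix uniformly using the characteristic $p$ Frobenius.

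First, I would apply lemma \ref{pseudovaluationbasis} to the basis $\lef(s_{i}\rig)_{i\in\lef\llbracket1,m\rig\rrbracket}$ to produce weights $\lef(b_{i}\rig)_{i\in\lef\llbracket1,n+m\rig\rrbracket}\in\lef]0,+\infty\rig[^{n+m}$ and a real number $C\geqslant0$ such that whenever $s=\sum_{i}r_{i,0}s_{i}$ in $\overline{S}$, one has $\min_{i}v_{\varphi|_{k\lef[\underline{X}\rig]},\lef(b_{i}\rig)_{i\in\lef\llbracket1,n\rig\rrbracket}}\lef(r_{i,0}\rig)\geqslant v_{\varphi,\lef(b_{i}\rig)_{i\in\lef\llbracket1,n+m\rig\rrbracket}}\lef(s\rig)-C$. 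These weights are the ones that will appear in the final statement. For notational brevity, I will write $v$ for either of these two pseudovaluations, depending on whether the argument lies in $\overline{R}$ or $\overline{S}$.

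Using the case $l=1$ of the hypothesis, each $s_{j}$ admits a unique decomposition $s_{j}=\sum_{i=1}^{m}M_{i,j}s_{i}^{p}$ with $M_{i,j}\in\overline{R}$; I would set $D\coloneqq-\min_{i,j}v\lef(M_{i,j}\rig)$, which is finite and non-negative as the minimum runs over a finite set. Since $\overline{R}$ and $\overline{S}$ have characteristic $p$, raising this identity to the $p^{t}$-th power yields $s_{i}^{p^{t}}=\sum_{k}M_{k,i}^{p^{t}}s_{k}^{p^{t+1}}$, and iterating substitutions produces the unique decomposition $s_{j}=\sum_{i}M^{\lef(l\rig)}_{i,j}s_{i}^{p^{l}}$, in which each entry $M^{\lef(l\rig)}_{k,j}$ is a sum of products of the form $M_{i_{1},j}M_{i_{2},i_{1}}^{p}M_{i_{3},i_{2}}^{p^{2}}\cdots M_{k,i_{l-1}}^{p^{l-1}}$. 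By the pseudovaluation property, each such product has valuation at least $-D\lef(1+p+\cdots+p^{l-1}\rig)\geqslant-p^{l}D$, so $v\lef(M^{\lef(l\rig)}_{i,j}\rig)\geqslant-p^{l}D$ for all $i,j$.

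To conclude, I would substitute $s_{i}=\sum_{j}M^{\lef(l\rig)}_{j,i}s_{j}^{p^{l}}$ into $s=\sum_{i}r_{i,0}s_{i}$ and use the unicity of the decomposition in the basis $\lef(s_{j}^{p^{l}}\rig)$ to obtain $r_{j,l}=\sum_{i}r_{i,0}M^{\lef(l\rig)}_{j,i}$; combining the two bounds just established then gives $v\lef(r_{j,l}\rig)\geqslant v\lef(s\rig)-C-p^{l}D$, which is the required inequality. The main subtle point is the polynomial-in-$p^{l}$ estimate for $M^{\lef(l\rig)}$, which relies essentially on characteristic $p$: the Frobenius twists allow $M^{\lef(l\rig)}$ to be assembled iteratively from the single-step matrix $M$, whereas the inverse base change (expressing $s_{j}^{p^{l}}$ in the basis $\lef(s_{i}\rig)$) has no analogous a priori bound, so the argument must proceed in the direction chosen.
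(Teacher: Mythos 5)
Your proposal is correct and follows essentially the same route as the paper's proof: both start from lemma \ref{pseudovaluationbasis} for the $l=0$ decomposition, express the change of basis from $\lef(s_{i}\rig)$ to $\lef({s_{i}}^{p^{l}}\rig)$ as an $l$-fold product of Frobenius twists of the single-step matrix, and bound the resulting entries by the geometric sum $D\lef(1+p+\dots+p^{l-1}\rig)\leqslant p^{l}D$ using the pseudovaluation property. The paper phrases this with matrices $U_{t}$ and keeps the bound in the form $d\frac{p^{l}-1}{p-1}$, but the content is identical.
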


\begin{proof}
We will follow closely the reasoning in the proof of \cite[lemma 2.39]{overconvergentwittvectors}. Let $U_{0}=\lef(u_{j,j'}\rig)_{j,j'\in\lef\llbracket1,m\rig\rrbracket}\in\operatorname{GL}_{m}\lef(\overline{R}\rig)$ be the matrix satisfying:
\begin{equation*}
\begin{pmatrix}{s_{1}}\\\vdots\\{s_{m}}\end{pmatrix}=U_{0}\begin{pmatrix}{s_{1}}^{p}\\\vdots\\{s_{m}}^{p}
\end{pmatrix}\text{.}
\end{equation*}

For all $t\in\mathbb{N}$, let $U_{t}\in\operatorname{GL}_{m}\lef(\overline{R}\rig)$ be the matrix one gets by raising each coefficient in $U_{0}$ to the power of $p^{t}$, so that:
\begin{equation*}
\forall l\in\mathbb{N},\ \begin{pmatrix}{s_{1}}\\\vdots\\{s_{m}}\end{pmatrix}=\lef(\prod_{t=0}^{l-1}U_{t}\rig)\begin{pmatrix}{s_{1}}^{p^{l}}\\\vdots\\{s_{m}}^{p^{l}}\end{pmatrix}\text{.}
\end{equation*}

And by transposition:
\begin{equation*}
\forall l\in\mathbb{N},\ \lef(\prod_{t=0}^{l-1}U_{t}\rig)^{\mathbf{T}}\begin{pmatrix}{r_{1,0}}\\\vdots\\{r_{m,0}}\end{pmatrix}=\begin{pmatrix}{r_{1,l}}\\\vdots\\{r_{m,l}}\end{pmatrix}\text{.}
\end{equation*}

Let $\lef(b_{i}\rig)_{i\in\lef\llbracket1,n+m\rig\rrbracket}\in{\lef]0,+\infty\rig[}^{n+m}$ and $C\geqslant0$ the constants given by lemma \ref{pseudovaluationbasis}. Let $d\coloneqq\min\lef\{v_{\varphi|_{k\lef[\underline{X}\rig]},\lef(b_{i}\rig)_{i\in\lef\llbracket1,n\rig\rrbracket}}\lef(u_{j,j'}\rig)\mid j,j'\in\lef\llbracket1,m\rig\rrbracket\rig\}$. For all $j\in\lef\llbracket1,m\rig\rrbracket$ and all $l\in\mathbb{N}$, as $v_{\varphi|_{k\lef[\underline{X}\rig]},\lef(b_{i}\rig)_{i\in\lef\llbracket1,n\rig\rrbracket}}$ is a pseudovaluation we find:
\begin{equation*}
\begin{aligned}
v_{\varphi|_{k\lef[\underline{X}\rig]},\lef(b_{i}\rig)_{i\in\lef\llbracket1,n\rig\rrbracket}}\lef(r_{j,l}\rig)&\geqslant\min\lef\{v_{\varphi|_{k\lef[\underline{X}\rig]},\lef(b_{i}\rig)_{i\in\lef\llbracket1,n\rig\rrbracket}}\lef(r_{j,0}\rig)+\sum_{t=0}^{l-1}p^{t}d\mid j\in\lef\llbracket1,m\rig\rrbracket\rig\}\\
&=\min\lef\{v_{\varphi|_{k\lef[\underline{X}\rig]},\lef(b_{i}\rig)_{i\in\lef\llbracket1,n\rig\rrbracket}}\lef(r_{j,0}\rig)\mid j\in\lef\llbracket1,m\rig\rrbracket\rig\}+d\frac{p^{l}-1}{p-1}\text{.}
\end{aligned}
\end{equation*}

We thus derive from lemma \ref{pseudovaluationbasis} that:
\begin{equation*}
v_{\varphi|_{k\lef[\underline{X}\rig]},\lef(b_{i}\rig)_{i\in\lef\llbracket1,n\rig\rrbracket}}\lef(r_{j,l}\rig)\geqslant v_{\varphi,\lef(b_{i}\rig)_{i\in\lef\llbracket1,n+m\rig\rrbracket}}\lef(s\rig)-C+p^{l}\frac{d}{p-1}\text{.}
\end{equation*}

The nonpositivity of $d$ winds up the proof.
\end{proof}

\begin{lemm}\label{lowerboundoverconvergentverschiebung}
Let $w\in W^{\dagger}\lef(\overline{R}\rig)$ such that $w_{0}=0$. Let $E<1$ be a real number. Then:
\begin{equation*}
\forall b\in{\lef]0,+\infty\rig[}^{n},\ \exists\delta>0,\ \forall\varepsilon\in\lef]0,\delta\rig],\ \gamma_{\varepsilon,\varphi,b}\lef(w\rig)\geqslant E\text{.}
\end{equation*}
\end{lemm}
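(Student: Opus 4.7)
The strategy is to exploit the constraint $w_{0}=0$ (which removes the $u=0$ term from the defining infimum of $\gamma$) in order to bootstrap any finite $\gamma$-bound into a lower bound approaching $1$ as $\varepsilon\to 0^{+}$; since $E<1$, this is enough.

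First I would use that $w$ lies in $W^{\dagger}\lef(\overline{R}\rig)$, so that by lifting $w$ to $W^{\dagger}\Omega^{0}_{k\lef[\underline{X}\rig]/k}$, applying proposition \ref{gammaandzetawitt} to compare $\zeta_{\varepsilon}$ and $\gamma_{\varepsilon,\operatorname{Id}_{k\lef[\underline{X}\rig]},\lef(1,\ldots,1\rig)}$, projecting through $\varphi$, and finally using \eqref{equivalencesofgamma} to change the tuple from $\lef(1,\ldots,1\rig)$ to $b$, I obtain $\varepsilon_{0}>0$ and $N_{0}\in\mathbb{R}$ such that $\gamma_{\varepsilon_{0},\varphi,b}\lef(w\rig)\geqslant N_{0}$. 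Unwinding the definition, this means that for every $u\in\mathbb{N}$ we have $v_{\varphi,b}\lef(w_{u}\rig)\geqslant p^{u}\lef(N_{0}-u\rig)/\varepsilon_{0}$, and in particular $v_{\varphi,b}\lef(w_{u}\rig)\neq-\infty$.

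Next, because $w_{0}=0$ gives $v_{\varphi,b}\lef(w_{0}\rig)=+\infty$, the $u=0$ term of $\gamma_{\varepsilon,\varphi,b}\lef(w\rig)$ is $+\infty$ and may be ignored. For $u\geqslant 1$ and $\varepsilon\in\lef]0,\varepsilon_{0}\rig]$, multiplying the previous inequality by $\varepsilon p^{-u}>0$ and adding $u$ yields
\begin{equation*}
u+\varepsilon p^{-u}v_{\varphi,b}\lef(w_{u}\rig)\geqslant u\lef(1-\frac{\varepsilon}{\varepsilon_{0}}\rig)+\frac{\varepsilon}{\varepsilon_{0}}N_{0}\geqslant 1-\lef(1-N_{0}\rig)\frac{\varepsilon}{\varepsilon_{0}}\text{,}
\end{equation*}
where the second inequality uses that $1-\varepsilon/\varepsilon_{0}\geqslant 0$ and $u\geqslant 1$. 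Taking the infimum over $u\geqslant 1$, I would conclude $\gamma_{\varepsilon,\varphi,b}\lef(w\rig)\geqslant 1-\lef(1-N_{0}\rig)\varepsilon/\varepsilon_{0}$.

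Finally, the right-hand side tends to $1$ as $\varepsilon\to 0^{+}$, and $E<1$, so it suffices to choose $\delta\in\lef]0,\varepsilon_{0}\rig]$ small enough that $1-\lef(1-N_{0}\rig)\delta/\varepsilon_{0}\geqslant E$; for example $\delta\coloneqq\min\lef\{\varepsilon_{0},\lef(1-E\rig)\varepsilon_{0}/\lef(1+\lef\lvert 1-N_{0}\rig\rvert\rig)\rig\}$ works. The only mildly delicate point is the very first step, namely converting the $\zeta_{\varepsilon}$-based definition of overconvergence into a $\gamma_{\varepsilon_{0},\varphi,b}$-bound for the specific $b$ appearing in the statement; once that is in hand, everything reduces to a transparent continuity argument whose conclusion hinges on $w_{0}=0$ to discard the unique term where the factor $u$ could not compensate for $\varepsilon p^{-u}v_{\varphi,b}\lef(w_{u}\rig)$ becoming very negative.
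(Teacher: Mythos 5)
Your proof is correct and follows essentially the same route as the paper's: both arguments discard the $u=0$ term using $w_{0}=0$, then interpolate each remaining term $u+\varepsilon p^{-u}v_{\varphi,b}\lef(w_{u}\rig)$ between its value at $\varepsilon=0$ (which is $u\geqslant1$) and its value at the initial $\varepsilon_{0}$ (which is $\geqslant N_{0}$), so that the bound tends to $1>E$ as $\varepsilon\to0^{+}$. The paper simply plugs in the explicit $\delta=\epsilon\lef(1-E\rig)/\lef(1-\gamma_{\epsilon,\varphi,b}\lef(w\rig)\rig)$ and treats the initial finiteness of some $\gamma_{\epsilon,\varphi,b}\lef(w\rig)$ as part of the definition of overconvergence, where you spell out the comparison via proposition \ref{gammaandzetawitt} and \eqref{equivalencesofgamma}; these are presentational differences only.
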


\begin{proof}
Fix $b\in{\lef]0,+\infty\rig[}^{n}$. By definition, there is $\epsilon>0$ such that $\gamma_{\epsilon,\varphi,b}\lef(w\rig)\neq-\infty$. If $\gamma_{\epsilon,\varphi,b}\lef(w\rig)\geqslant E$, put $\delta\coloneqq\epsilon$ and the proof is over.

Otherwise, let $\delta\coloneqq\frac{\epsilon\lef(1-E\rig)}{1-\gamma_{\epsilon,\varphi,b}\lef(w\rig)}$. Then for any $\varepsilon\in\lef]0,\delta\rig]$ and any $i\in\mathbb{N}^{*}$ we have:
\begin{equation*}
\begin{split}
i+\varepsilon p^{-i}v_{\varphi,b}\lef(w_{i}\rig)&\geqslant\frac{E-\gamma_{\epsilon,\varphi,b}\lef(w\rig)}{1-\gamma_{\epsilon,\varphi,b}\lef(w\rig)}i+\frac{1-E}{1-\gamma_{\epsilon,\varphi,b}\lef(w\rig)}\lef(i+\epsilon p^{-i}v_{\varphi,b}\lef(w_{i}\rig)\rig)\\
&\geqslant\frac{E-\gamma_{\epsilon,\varphi,b}\lef(w\rig)+\lef(1-E\rig)\gamma_{\epsilon,\varphi,b}\lef(w\rig)}{1-\gamma_{\epsilon,\varphi,b}\lef(w\rig)}\\
&\geqslant E\text{.}
\end{split}
\end{equation*}

This ends the proof because $w_{0}=0$.
\end{proof}

\begin{lemm}\label{pseudovaluationrelativelyperfectbasis}
Let $\overline{S}$ be a relatively perfect commutative $\overline{R}$-algebra. Suppose given a tuple $\lef(s\lef(i\rig)\rig)_{i\in\lef\llbracket1,m\rig\rrbracket}\in{W^{\dagger}\lef(\overline{S}\rig)}^{m}$, where $m\in\mathbb{N}$, such that $\lef({s\lef(i\rig)}_{0}\rig)_{i\in\lef\llbracket1,m\rig\rrbracket}$ is a $\overline{R}$-basis of $\overline{S}$. Extend $\varphi$ to a morphism of $k\lef[\underline{X}\rig]$-algebras $\varphi\colon k\lef[X_{1},\ldots,X_{n},Y_{1},\ldots,Y_{m}\rig]\to\overline{S}$ such that $\varphi\lef(Y_{i}\rig)={s\lef(i\rig)}_{0}$ for every $i\in\lef\llbracket1,m\rig\rrbracket$.

Then, there exists $b\in{\lef]0,+\infty\rig[}^{n+m}$ and $\delta>0$ such that for any $\varepsilon\in\lef]0,\delta\rig]$, any $l\in\mathbb{N}$ and any tuple $\lef(r\lef(i\rig)\rig)_{i\in\lef\llbracket1,m\rig\rrbracket}\in{W\lef(\overline{R}\rig)}^{m}$ we have:
\begin{equation*}
\gamma_{\varepsilon,\varphi,b}\lef(\sum_{i=1}^{m}V^{l}\lef(\lef[{r\lef(i\rig)}_{0}\rig]\rig)s\lef(i\rig)\rig)\geqslant\gamma_{\varepsilon,\varphi,b}\lef(\sum_{i=1}^{m}V^{l}\lef(r\lef(i\rig)\rig)s\lef(i\rig)\rig)\text{.}
\end{equation*}
\end{lemm}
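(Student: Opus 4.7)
The plan is to adapt the strategy of \cite[lemma 2.39]{overconvergentwittvectors}, of which the present statement is the overconvergent counterpart, using the same template as lemma \ref{basispseudovaluationcontrol} above. First I extend $\varphi$ to a surjection onto $\overline{S}$ by sending $Y_{i}\mapsto s\lef(i\rig)_{0}$, which is indeed surjective since $\lef(s\lef(i\rig)_{0}\rig)_{i}$ is an $\overline{R}$-basis of $\overline{S}$. Since $\overline{S}$ is relatively perfect, proposition \ref{characteriserelativelyperfect} ensures that the families $\lef({s\lef(i\rig)_{0}}^{p^{l}}\rig)_{i}$ remain $\overline{R}$-bases of $\overline{S}$ for every $l\in\mathbb{N}$. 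Applying lemma \ref{basispseudovaluationcontrol} furnishes a tuple $b\in{\lef]0,+\infty\rig[}^{n+m}$ together with constants $C,D\geqslant0$ that control the coefficients of the decomposition of any element of $\overline{S}$ in these bases, uniformly in $l$.

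The key algebraic identity is the decomposition $r\lef(i\rig)=\lef[r\lef(i\rig)_{0}\rig]+V\lef(\widetilde{r}\lef(i\rig)\rig)$ with $\widetilde{r}\lef(i\rig)\in W\lef(\overline{R}\rig)$; since we work in characteristic $p$, the Verschiebung is the shift of Witt coordinates, so $\widetilde{r}\lef(i\rig)_{u}=r\lef(i\rig)_{u+1}$. Combining this with \eqref{vxfyvxy}, the difference between the two sums becomes
\begin{equation*}
\sum_{i=1}^{m}V^{l}\lef(r\lef(i\rig)\rig)s\lef(i\rig)-\sum_{i=1}^{m}V^{l}\lef(\lef[r\lef(i\rig)_{0}\rig]\rig)s\lef(i\rig)=\sum_{i=1}^{m}V^{l+1}\lef(\widetilde{r}\lef(i\rig)F^{l+1}\lef(s\lef(i\rig)\rig)\rig)\text{.}
\end{equation*}
Showing that $\gamma_{\varepsilon,\varphi,b}$ of the right-hand side is at least $\gamma_{\varepsilon,\varphi,b}$ of $\sum_{i}V^{l}\lef(r\lef(i\rig)\rig)s\lef(i\rig)$ will, by the ultrametric property, yield the desired inequality. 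I apply lemma \ref{verschiebunggamma} to absorb the outer $V^{l+1}$ as a gain of $l+1$ in $\gamma$, and use the identity $\gamma_{\varepsilon,\varphi,b}\lef(F^{l+1}\lef(w\rig)\rig)=\gamma_{\varepsilon p^{l+1},\varphi,b}\lef(w\rig)$, which follows from $F$ acting by the $p$th power on Witt coordinates in characteristic $p$. Together with the shift estimate $\gamma_{\varepsilon p^{-(l+1)},\varphi,b}\lef(\widetilde{r}\lef(i\rig)\rig)\geqslant-1+\gamma_{\varepsilon p^{-l},\varphi,b}\lef(r\lef(i\rig)\rig)$ dictated by $\widetilde{r}\lef(i\rig)_{u}=r\lef(i\rig)_{u+1}$, the spare unit of Verschiebung compensates the shift and leaves a strictly positive margin.

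The threshold $\delta$ is then obtained by applying lemma \ref{lowerboundoverconvergentverschiebung} to each of the Witt vectors $s\lef(i\rig)-\lef[s\lef(i\rig)_{0}\rig]\in W^{\dagger}\lef(\overline{S}\rig)$, whose zeroth Witt coordinate vanishes, for a value of $E<1$ close enough to $1$, and shrinking it further to stay below the overconvergence thresholds of the $s\lef(i\rig)$'s and below the constants $C$ and $D$ coming from lemma \ref{basispseudovaluationcontrol}. The main obstacle is ensuring the uniformity in $l\in\mathbb{N}$: the Frobenius iterate $F^{l+1}$ blows up the polynomial degree by a factor of $p^{l+1}$, and this has to be beaten by the weight $\varepsilon p^{-(l+1)}$ produced by the outer Verschiebung together with the $p^{l}$-growth from the change-of-basis coefficients arising from the relatively perfect basis $\lef(s\lef(j\rig)\rig)_{j}$ of $W\lef(\overline{S}\rig)$ over $W\lef(\overline{R}\rig)$ supplied by proposition \ref{whencanitbewittfree}, thereby ruling out any exceptional cancellation at the ultrametric boundary.
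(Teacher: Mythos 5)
Your reduction to bounding the difference $D\coloneqq\sum_{i=1}^{m}V^{l+1}\lef(\widetilde{r}\lef(i\rig)F^{l+1}\lef(s\lef(i\rig)\rig)\rig)$ is logically equivalent to the statement (since $\gamma$ is a pseudovaluation and $w=\sum_{i}V^{l}\lef(\lef[{r\lef(i\rig)}_{0}\rig]\rig)s\lef(i\rig)+D$), but the way you propose to bound $\gamma_{\varepsilon,\varphi,b}\lef(D\rig)$ does not close. Your chain of estimates ends with a lower bound in terms of $\gamma_{\varepsilon p^{-l},\varphi,b}\lef(r\lef(i\rig)\rig)$ for each individual $i$, and no such bound is available: the $r\lef(i\rig)$ are arbitrary elements of $W\lef(\overline{R}\rig)$ (not even assumed overconvergent), and the hypothesis only constrains the combination $w=\sum_{i}V^{l}\lef(r\lef(i\rig)\rig)s\lef(i\rig)$. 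The only data one can extract from $w$ about the coefficients is control of the zeroth coordinates ${r\lef(i\rig)}_{0}$, via $w_{l}=\sum_{i}{r\lef(i\rig)}_{0}{{s\lef(i\rig)}_{0}}^{p^{l}}$ and lemma \ref{basispseudovaluationcontrol}; the tails $\widetilde{r}\lef(i\rig)$, which are exactly what $D$ is built from, are invisible to this mechanism. Appealing instead to the $W\lef(\overline{R}\rig)$-basis property of $\lef(s\lef(i\rig)\rig)_{i}$ to control the coefficients of $D$ would be circular, since the quantitative form of that decomposition is proposition \ref{decompositionfreewitt}, whose proof relies on the present lemma.

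The workable route is to bound $\gamma_{\varepsilon,\varphi,b}\lef(\sum_{i}V^{l}\lef(\lef[{r\lef(i\rig)}_{0}\rig]\rig)s\lef(i\rig)\rig)$ directly: first get $v_{\varphi,b}\lef(w_{l}\rig)\geqslant\frac{p^{l}}{\varepsilon}\lef(\gamma_{\varepsilon,\varphi,b}\lef(w\rig)-l\rig)$ from the definition of $\gamma$, deduce control of each ${r\lef(i\rig)}_{0}$ from lemma \ref{basispseudovaluationcontrol}, then split $s\lef(i\rig)=\lef[{s\lef(i\rig)}_{0}\rig]+v\lef(i\rig)$. The part involving the $v\lef(i\rig)$ is handled by lemma \ref{lowerboundoverconvergentverschiebung} and the pseudovaluation property, while for $x\coloneqq\sum_{i}V^{l}\lef(\lef[{r\lef(i\rig)}_{0}\rig]\rig)\lef[{s\lef(i\rig)}_{0}\rig]$ one needs the key fact \cite[IX. \textsection1 N$^\circ$3]{algebrecommutativechapitres} that $x_{l+k}$ is a homogeneous integral polynomial of degree $p^{k}$ in the ${r\lef(i\rig)}_{0}{{s\lef(i\rig)}_{0}}^{p^{l}}$; this homogeneity is what propagates the bound on the zeroth coordinates to all coordinates and makes the factors $p^{l+k}$ and $\varepsilon p^{-l-k}$ cancel, so the uniformity in $l$ you worry about in your last paragraph is automatic rather than a delicate balance against $F^{l+1}$. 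This step has no counterpart in your proposal and is the missing idea.
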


\begin{proof}
Let $b=\lef(b_{i}\rig)_{i\in\lef\llbracket1,n+m\rig\rrbracket}\in{\lef]0,+\infty\rig[}^{n+m}$ and $C,D\geqslant0$ be the reals we get by applying lemma \ref{basispseudovaluationcontrol} in this situation. By proposition \ref{characteriserelativelyperfect}, this is possible. Let $l\in\mathbb{N}$, fix $\lef(r\lef(i\rig)\rig)_{i\in\lef\llbracket1,m\rig\rrbracket}\in{W\lef(\overline{R}\rig)}^{m}$, and let $w\coloneqq\sum_{i=1}^{m}V^{l}\lef(r\lef(i\rig)\rig)s\lef(i\rig)$. For the time being, we let $\varepsilon>0$ be any positive real number.

By definition of $\gamma_{\varepsilon,\varphi,b}$, we have:
\begin{equation*}
l+\varepsilon p^{-l}v_{\varphi,\lef(b_{i}\rig)_{i\in\lef\llbracket1,n+m\rig\rrbracket}}\lef(w_{l}\rig)\geqslant\gamma_{\varepsilon,\varphi,a}\lef(w\rig)\text{.}
\end{equation*}

In other terms:
\begin{equation}\label{easyinequality}
v_{\varphi,\lef(b_{i}\rig)_{i\in\lef\llbracket1,n+m\rig\rrbracket}}\lef(w_{l}\rig)\geqslant\frac{p^{l}}{\varepsilon}\lef(\gamma_{\varepsilon,\varphi,b}\lef(w\rig)-l\rig)\text{.}
\end{equation}

But by \eqref{vxfyvxy} we have $w_{l}=\sum_{i=1}^{m}{r\lef(i\rig)}_{0}{{s\lef(i\rig)}_{0}}^{p^{l}}$, hence:
\begin{equation*}
\begin{split}
v_{\varphi|_{k\lef[\underline{X}\rig]},\lef(b_{i}\rig)_{i\in\lef\llbracket1,n\rig\rrbracket}}\lef({r\lef(i\rig)}_{0}\rig)&\overset{\hphantom{\eqref{easyinequality}}}{\overset{\ref{basispseudovaluationcontrol}}{\geqslant}}v_{\varphi,\lef(b_{i}\rig)_{i\in\lef\llbracket1,n+m\rig\rrbracket}}\lef(w_{l}\rig)-C-p^{l}D\\
&\overset{\eqref{easyinequality}}{\geqslant}\frac{p^{l}}{\varepsilon}\lef(\gamma_{\varepsilon,\varphi,b}\lef(w\rig)-l-\varepsilon\lef(p^{-l}C+D\rig)\rig)\text{.}
\end{split}
\end{equation*}

Thus:
\begin{equation*}
\forall i\in\lef\llbracket1,m\rig\rrbracket,\ \gamma_{\varepsilon,\varphi,b}\lef(V^{l}\lef(\lef[{r\lef(i\rig)}_{0}\rig]\rig)\rig)\geqslant\gamma_{\varepsilon,\varphi,b}\lef(w\rig)-\varepsilon\lef(p^{-l}C+D\rig)\text{.}
\end{equation*}

For all $i\in\lef\llbracket1,m\rig\rrbracket$, we can write $s\lef(i\rig)=\lef[{s\lef(i\rig)}_{0}\rig]+v\lef(i\rig)$ for some $v\lef(i\rig)\in W^{\dagger}\lef(\overline{S}\rig)$ which has to satisfy ${v\lef(i\rig)}_{0}=0$. Applying lemma \ref{lowerboundoverconvergentverschiebung} to these $v\lef(i\rig)$, we see that there exists $\delta>0$ such that when $\varepsilon\leqslant\delta$ we have $\gamma_{\varepsilon,\varphi,b}\lef(v\lef(i\rig)\rig)\geqslant\frac{1}{2}$. By shrinking $\delta$, we can also assume that $\varepsilon\lef(p^{-l}C+D\rig)\leqslant\frac{1}{2}$. Since $\gamma_{\varepsilon,\varphi,b}$ is a pseudovaluation \cite[proposition 2.3]{overconvergentwittvectors}, we get:
\begin{equation*}
\gamma_{\varepsilon,\varphi,b}\lef(\sum_{i=1}^{m}V^{l}\lef(\lef[{r\lef(i\rig)}_{0}\rig]\rig)v\lef(i\rig)\rig)\geqslant\gamma_{\varepsilon,\varphi,b}\lef(w\rig)\text{.}
\end{equation*}

By \cite[IX. \textsection1 N$^\circ$3]{algebrecommutativechapitres}, if we put $x\coloneqq\sum_{i=1}^{m}V^{l}\lef(\lef[{r\lef(i\rig)}_{0}\rig]\rig)\lef[{s\lef(i\rig)}_{0}\rig]$, then for all $k\in\mathbb{N}$ we can write $x_{l+k}$ as a homogenous integral polynomial of degree $p^{k}$ with indeterminates ${r\lef(i\rig)}_{0}{{s\lef(i\rig)}_{0}}^{p^{l}}$ for $i\in\lef\llbracket1,m\rig\rrbracket$.

Let $E\coloneqq-\min\lef\{v_{\varphi|_{k\lef[\underline{X}\rig]},\lef(b_{i}\rig)_{i\in\lef\llbracket1,n\rig\rrbracket}}\lef({s\lef(i\rig)}_{0}\rig)\mid i\in\lef\llbracket1,m\rig\rrbracket\rig\}$. As $v_{\varphi|_{k\lef[\underline{X}\rig]},\lef(b_{i}\rig)_{i\in\lef\llbracket1,n\rig\rrbracket}}$ is a pseudovaluation, we get that:
\begin{equation*}
v_{\varphi|_{k\lef[\underline{X}\rig]},\lef(b_{i}\rig)_{i\in\lef\llbracket1,n\rig\rrbracket}}\lef(x_{l+k}\rig)\geqslant\frac{p^{k+l}}{\varepsilon}\lef(\gamma_{\varepsilon,\varphi,b}\lef(w\rig)-l-\varepsilon\lef(p^{-l}C+D+E\rig)\rig)\text{.}
\end{equation*}

This can be rewritten as follows:
\begin{equation*}
k+l+\varepsilon p^{-k-l}v_{\varphi|_{k\lef[\underline{X}\rig]},\lef(b_{i}\rig)_{i\in\lef\llbracket1,n\rig\rrbracket}}\lef(x_{l+k}\rig)\geqslant k+\gamma_{\varepsilon,\varphi,b}\lef(w\rig)-\varepsilon\lef(p^{-l}C+D+E\rig)\text{.}
\end{equation*}

If $p^{-l}C+D+E=0$, we can keep the same $\delta$ as before, otherwise we need to shrink it again so that $\delta\leqslant\frac{1}{p^{-l}C+D+E}$. Then:
\begin{equation*}
\forall k\in\mathbb{N}^{*},\ k+l+\varepsilon p^{-k-l}v_{\varphi|_{k\lef[\underline{X}\rig]},\lef(b_{i}\rig)_{i\in\lef\llbracket1,n\rig\rrbracket}}\lef(x_{l+k}\rig)\geqslant\gamma_{\varepsilon,\varphi,b}\lef(w\rig)\text{.}
\end{equation*}

Furthermore, for all $i\in\lef\llbracket0,l\rig\rrbracket$ we have $x_{i}=w_{i}$, so $\gamma_{\varepsilon,\varphi,b}\lef(x\rig)\geqslant\gamma_{\varepsilon,\varphi,b}\lef(w\rig)$.
\end{proof}

We are now able to state the characterisation of finite free relatively perfect $\overline{R}$-algebras using overconvergent Witt vectors, which can be understood as the converse of corollary \ref{whencanitbewittfreecoro}.

\begin{prop}\label{decompositionfreewitt}
Let $\overline{S}$ be a commutative $\overline{R}$-algebra. Let $m\in\mathbb{N}$ and fix a tuple $\lef(s\lef(i\rig)\rig)_{i\in\lef\llbracket1,m\rig\rrbracket}\in{W^{\dagger}\lef(\overline{S}\rig)}^{m}$. Then, the following properties are equivalent:
\begin{enumerate}
\item the family $\lef({s\lef(i\rig)}_{0}\rig)_{i\in\lef\llbracket1,m\rig\rrbracket}$ is a basis of $\overline{S}$ as a $\overline{R}$-module, and $\overline{S}$ is a relatively perfect $\overline{R}$-algebra; that is, an algebra satisfying the equivalent conditions given by proposition \ref{characteriserelativelyperfect};
\item the family $\lef(s\lef(i\rig)\rig)_{i\in\lef\llbracket1,m\rig\rrbracket}$ is a basis of $W\lef(\overline{S}\rig)$ as a $W\lef(\overline{R}\rig)$-module;
\item for every $n\in\mathbb{N}$, the family $\lef(F^{n}\lef(s\lef(i\rig)\rig)\rig)_{i\in\lef\llbracket1,m\rig\rrbracket}$ is a basis of $W\lef(\overline{S}\rig)$ as a $W\lef(\overline{R}\rig)$-module;
\item for every $n\in\mathbb{N}$, the family $\lef(F^{n}\lef(s\lef(i\rig)\rig)\rig)_{i\in\lef\llbracket1,m\rig\rrbracket}$ is a basis of $W^{\dagger}\lef(\overline{S}\rig)$ as a $W^{\dagger}\lef(\overline{R}\rig)$-module.
\end{enumerate}

Moreover, when these conditions are satisfied, then if we extend $\varphi$ as a morphism of $k\lef[\underline{X}\rig]$-algebras $\varphi\colon k\lef[X_{1},\ldots,X_{n},Y_{1},\ldots,Y_{m}\rig]\to\overline{S}$ with $\varphi\lef(Y_{i}\rig)={s\lef(i\rig)}_{0}$ for every $i\in\lef\llbracket1,m\rig\rrbracket$, there exists $\lef(b_{i}\rig)_{i\in\lef\llbracket1,n+m\rig\rrbracket}\in{\lef]0,+\infty\rig[}^{n+m}$, $\delta>0$ and $E\geqslant0$ such that for all $\varepsilon\in\lef]0,\delta\rig]$ and for every tuple $\lef(r\lef(i\rig)\rig)_{i\in\lef\llbracket1,m\rig\rrbracket}\in{W^{\dagger}\lef(\overline{R}\rig)}^{m}$ we have:
\begin{gather*}
\forall i\in\lef\llbracket1,m\rig\rrbracket,\ \gamma_{\varepsilon,\varphi|_{k\lef[\underline{X}\rig]},\lef(b_{i}\rig)_{i\in\lef\llbracket1,n\rig\rrbracket}}\lef(r\lef(i\rig)\rig)\geqslant\gamma_{\varepsilon,\varphi,\lef(b_{i}\rig)_{i\in\lef\llbracket1,n+m\rig\rrbracket}}\lef(\sum_{i=1}^{m}r\lef(i\rig)s\lef(i\rig)\rig)-\varepsilon E\text{,}\\
\forall u\in\mathbb{N},\ \sum_{i=1}^{m}r\lef(i\rig)s\lef(i\rig)\in V^{u}\lef(W\lef(\overline{S}\rig)\rig)\iff\forall i\in\lef\llbracket1,m\rig\rrbracket,\ r\lef(i\rig)\in V^{u}\lef(W\lef(\overline{R}\rig)\rig)\text{.}
\end{gather*}
\end{prop}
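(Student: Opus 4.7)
The plan is to establish the cycle $\lef(3\rig)\Rightarrow\lef(1\rig)\Rightarrow\lef(2\rig)\Rightarrow\lef(3\rig)$ and to derive both \emph{moreover} statements from the proof of the last implication. Both $\lef(2\rig)\Rightarrow\lef(1\rig)$ and $\lef(3\rig)\Rightarrow\lef(1\rig)$ are direct applications of proposition \ref{whencanitbewittfree} in its finite-cardinality branch, using $\mathcal{W}=W$ and $\mathcal{W}=W^{\dagger}$ respectively; recall that $W^{\dagger}$ is a sub Witt functor, as observed just after the definition of $W^{\dagger}\lef(\overline{R}\rig)$.

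For $\lef(1\rig)\Rightarrow\lef(2\rig)$, I would proceed by inductive peeling of Witt coordinates. Given $w\in W\lef(\overline{S}\rig)$, the relation $w_{0}=\sum_{i}c_{0}\lef(i\rig){s\lef(i\rig)}_{0}$ in the $\overline{R}$-basis determines unique $c_{0}\lef(i\rig)\in\overline{R}$; then $w-\sum_{i}\lef[c_{0}\lef(i\rig)\rig]s\lef(i\rig)$ has vanishing zeroth coordinate, hence equals $V\lef(y^{\lef(1\rig)}\rig)$ for some $y^{\lef(1\rig)}\in W\lef(\overline{S}\rig)$. Since $\lef({s\lef(i\rig)_{0}}^{p}\rig)_{i}$ is again an $\overline{R}$-basis by proposition \ref{characteriserelativelyperfect}, the same procedure applies to $y^{\lef(1\rig)}$ with the family $\lef(F\lef(s\lef(i\rig)\rig)\rig)_{i}$; iterating and collapsing $V^{L}\lef(\bullet F^{L}\lef(s\lef(i\rig)\rig)\rig)=V^{L}\lef(\bullet\rig)s\lef(i\rig)$ via \eqref{vxfyvxy} produces coefficients $c_{L}\lef(i\rig)\in\overline{R}$ with
\begin{equation*}
w=\sum_{i}\lef(\sum_{L=0}^{N-1}V^{L}\lef(\lef[c_{L}\lef(i\rig)\rig]\rig)\rig)s\lef(i\rig)+V^{N}\lef(y^{\lef(N\rig)}\rig)
\end{equation*}
for every $N\in\mathbb{N}$. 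Since $W\lef(\overline{R}\rig)$ is $V$-adically complete, the series $r\lef(i\rig)\coloneqq\sum_{L\geqslant0}V^{L}\lef(\lef[c_{L}\lef(i\rig)\rig]\rig)$ converges and satisfies $w=\sum_{i}r\lef(i\rig)s\lef(i\rig)$, with $r\lef(i\rig)_{L}=c_{L}\lef(i\rig)$. Uniqueness is clear by running the peeling on a hypothetical zero decomposition. The Verschiebung characterisation in the \emph{moreover} is then immediate from $c_{L}\lef(i\rig)=r\lef(i\rig)_{L}$: if $w\in V^{u}\lef(W\lef(\overline{S}\rig)\rig)$, the first $u$ Witt coordinates of $w$ vanish and the peeling forces $c_{L}\lef(i\rig)=0$ for $L<u$, while the converse is \eqref{vxfyvxy}.

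Finally, for $\lef(2\rig)\Rightarrow\lef(3\rig)$ together with the pseudovaluation estimate, I would rerun the previous construction while tracking $\gamma_{\varepsilon}$. Applying $\lef(1\rig)\Rightarrow\lef(2\rig)$ to the tuple $\lef(F^{L}\lef(s\lef(i\rig)\rig)\rig)_{i}$, whose zeroth coordinates still form an $\overline{R}$-basis, we may write $y^{\lef(L\rig)}=\sum_{i}\rho_{L}\lef(i\rig)F^{L}\lef(s\lef(i\rig)\rig)$ with $\rho_{L}\lef(i\rig)_{0}=c_{L}\lef(i\rig)$, whence $V^{L}\lef(y^{\lef(L\rig)}\rig)=\sum_{i}V^{L}\lef(\rho_{L}\lef(i\rig)\rig)s\lef(i\rig)$ by \eqref{vxfyvxy}. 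A straightforward induction on $L$, based at each step on lemma \ref{pseudovaluationrelativelyperfectbasis} applied to $\lef(\rho_{L-1}\lef(i\rig)\rig)_{i}$ and on the pseudovaluation property of $\gamma_{\varepsilon,\varphi,b}$, then yields
\begin{equation*}
\gamma_{\varepsilon,\varphi,b}\lef(\sum_{i}V^{L}\lef(\lef[c_{L}\lef(i\rig)\rig]\rig)s\lef(i\rig)\rig)\geqslant\gamma_{\varepsilon,\varphi,b}\lef(V^{L}\lef(y^{\lef(L\rig)}\rig)\rig)\geqslant\gamma_{\varepsilon,\varphi,b}\lef(w\rig)\text{.}
\end{equation*}
Extracting $c_{L}\lef(i\rig)$ from the $L$-th Witt coordinate of the left-hand side through lemma \ref{basispseudovaluationcontrol} produces constants $C,D\geqslant0$ independent of $L$ and of $w$ such that $L+\varepsilon p^{-L}v_{\varphi|_{k\lef[\underline{X}\rig]},b'}\lef(c_{L}\lef(i\rig)\rig)\geqslant\gamma_{\varepsilon,\varphi,b}\lef(w\rig)-\varepsilon\lef(p^{-L}C+D\rig)$, and using $p^{-L}C\leqslant C$ together with the identity $\gamma_{\varepsilon,\varphi|_{k\lef[\underline{X}\rig]},b'}\lef(r\lef(i\rig)\rig)=\inf_{L}\lef(L+\varepsilon p^{-L}v_{\varphi|_{k\lef[\underline{X}\rig]},b'}\lef(c_{L}\lef(i\rig)\rig)\rig)$ yields the desired estimate with $E\coloneqq C+D$; in particular each $r\lef(i\rig)$ lies in $W^{\dagger}\lef(\overline{R}\rig)$. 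The main obstacle will be keeping $E$ uniform in $L$ and in $w$, which is exactly what the exponential decay $p^{-L}$ in lemma \ref{basispseudovaluationcontrol} makes possible.
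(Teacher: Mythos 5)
Your proposal is correct and follows essentially the same route as the paper: the converse implications via proposition \ref{whencanitbewittfree}, the decomposition over $W\lef(\overline{R}\rig)$ by peeling Witt coordinates against the bases $\lef({s\lef(i\rig)_{0}}^{p^{l}}\rig)_{i}$, and the overconvergence estimate by an induction combining lemma \ref{pseudovaluationrelativelyperfectbasis} with the coordinatewise control of lemma \ref{basispseudovaluationcontrol}, yielding $E=C+D$. The only differences are organisational (you arrange the implications as a cycle, and you carry the full Witt vectors $F^{L}\lef(s\lef(i\rig)\rig)$ through the peeling rather than Teichmüller representatives), which does not change the substance.
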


\begin{proof}
Assume first that $\lef({s\lef(i\rig)}_{0}\rig)_{i\in\lef\llbracket1,m\rig\rrbracket}$ is a basis of $\overline{S}$ as a $\overline{R}$-module, and that $\overline{S}$ is a relatively perfect $\overline{R}$-algebra. This is the setting of proposition \ref{characteriserelativelyperfect}.

Let $w=\lef(w_{u}\rig)_{u\in\mathbb{N}}\in W^{\dagger}\lef(\overline{S}\rig)$. For starters, assume that $w\in V^{u}\lef(W\lef(\overline{S}\rig)\rig)$ for a given $u\in\mathbb{N}$. By proposition \ref{characteriserelativelyperfect}, we can find a unique writing of the form $w_{u}=\sum_{i=1}^{m}{r\lef(i,u\rig)}{{s\lef(i\rig)}_{0}}^{p^{u}}$, where every ${r\lef(i,u\rig)}\in\overline{R}$ for each $i\in\lef\llbracket1,m\rig\rrbracket$. We then have $w-\sum_{i=1}^{m}V^{u}\lef(\lef[{r\lef(i,u\rig)}\rig]F^{u}\lef(s\lef(i\rig)\rig)\rig)\in V^{u+1}\lef(W\lef(\overline{S}\rig)\rig)$.

Recall that $V^{u}\lef(\lef[{r\lef(i,u\rig)}\rig]F^{u}\lef(s\lef(i\rig)\rig)\rig)=V^{u}\lef(\lef[{r\lef(i,u\rig)}\rig]\rig)s\lef(i\rig)$ for all $i\in\lef\llbracket1,m\rig\rrbracket$ by \eqref{vxfyvxy}. Therefore, by repeating this process inductively on $u\in\mathbb{N}$, by $V$-adic convergence we get unique Witt vectors $\lef(r\lef(i\rig)\rig)_{i\in\lef\llbracket1,m\rig\rrbracket}\in{W\lef(\overline{R}\rig)}^{m}$ satisfying:
\begin{equation*}
w=\sum_{i=1}^{m}r\lef(i\rig)s\lef(i\rig)\text{.}
\end{equation*}

So we have shown $\lef(1\rig)\implies\lef(2\rig)$.

Let us now assume $\lef(2\rig)$. Apply proposition \ref{whencanitbewittfree} to see that $\overline{S}$ is a relatively semiperfect $\overline{R}$-algebra. We can then use lemma \ref{characteriserelativelysemiperfect} and notice that we can apply the above construction to find that for each $n\in\mathbb{N}$, the family $\lef(F^{n}\lef(s\lef(i\rig)\rig)\rig)_{i\in\lef\llbracket1,m\rig\rrbracket}$ is a $W\lef(\overline{R}\rig)$-generating family of $W\lef(\overline{S}\rig)$. In particular it is a $W\lef(\overline{R}\rig)$-basis \cite[05G8]{stacksproject}. We have $\lef(2\rig)\implies\lef(3\rig)$.

Corollary \ref{whencanitbewittfreecoro} gives us $\lef(3\rig)\implies\lef(1\rig)$ and $\lef(4\rig)\implies\lef(1\rig)$. To finish the proof, let us prove that $\lef(1\rig)\wedge\lef(3\rig)\implies\lef(4\rig)$.

Let $\lef(b_{i}\rig)_{i\in\lef\llbracket1,n+m\rig\rrbracket}\in{\lef]0,+\infty\rig[}^{n+m}$ and $\delta>0$ satisfying the conditions of lemma \ref{pseudovaluationrelativelyperfectbasis}, so that we find by induction on $l\in\mathbb{N}$ the following inequality:
\begin{equation*}
\forall\varepsilon\in\lef]0,\delta\rig],\ \forall l\in\mathbb{N},\ \gamma_{\varepsilon,\varphi,\lef(b_{i}\rig)_{i\in\lef\llbracket1,n+m\rig\rrbracket}}\lef(\sum_{i=1}^{m}V^{l}\lef(\lef[{r\lef(i\rig)}_{l}\rig]\rig)s\lef(i\rig)\rig)\geqslant\gamma_{\varepsilon,\varphi,\lef(b_{i}\rig)_{i\in\lef\llbracket1,n+m\rig\rrbracket}}\lef(w\rig)\text{.}
\end{equation*}

In particular:
\begin{equation*}
\forall\varepsilon\in\lef]0,\delta\rig],\ \forall l\in\mathbb{N},\ l+\varepsilon p^{-l}v_{\varphi,\lef(b_{i}\rig)_{i\in\lef\llbracket1,n+m\rig\rrbracket}}\lef(\sum_{i=1}^{m}{r\lef(i\rig)}_{l}{s_{i}}^{p^{l}}\rig)\geqslant\gamma_{\varepsilon,\varphi,\lef(b_{i}\rig)_{i\in\lef\llbracket1,n+m\rig\rrbracket}}\lef(w\rig)\text{.}
\end{equation*}

By virtue of lemma \ref{basispseudovaluationcontrol}, we find $C,D\geqslant0$ independently on the $\lef(r\lef(i\rig)\rig)_{i\in\lef\llbracket1,m\rig\rrbracket}$ such that:
\begin{multline*}
\forall\varepsilon\in\lef]0,\delta\rig],\ \forall i\in\lef\llbracket1,m\rig\rrbracket,\ \forall l\in\mathbb{N},\\l+\varepsilon p^{-l}v_{\varphi|_{k\lef[\underline{X}\rig]},\lef(b_{i}\rig)_{i\in\lef\llbracket1,n\rig\rrbracket}}\lef({r\lef(i\rig)}_{l}\rig)\geqslant\gamma_{\varepsilon,\varphi,\lef(b_{i}\rig)_{i\in\lef\llbracket1,n+m\rig\rrbracket}}\lef(w\rig)-\varepsilon\lef(p^{-l}C+D\rig)\text{.}
\end{multline*}

So that, by definition, $\gamma_{\varepsilon,\varphi|_{k\lef[\underline{X}\rig]},\lef(b_{i}\rig)_{i\in\lef\llbracket1,n\rig\rrbracket}}\lef(r\lef(i\rig)\rig)\geqslant\gamma_{\varepsilon,\varphi,\lef(b_{i}\rig)_{i\in\lef\llbracket1,n+m\rig\rrbracket}}\lef(w\rig)-\varepsilon\lef(C+D\rig)$ and by \eqref{equivalencesofgamma} and proposition \ref{gammaandzetawitt} we end the implication in the case $n=0$, the case for general $n\in\mathbb{N}$ being exactly similar.
\end{proof}

Of course, without the overconvergence condition on the tuple, one can get a similar result on the usual ring of Witt vectors.

\section{The Lazard morphism}

In this section, $k$ is a perfect commutative $\mathbb{F}_{p}$-algebra such that $W\lef(k\rig)$ is a Noetherian ring. In that setting, for any commutative $W\lef(k\rig)$-algebra $R$ there is a notion of weak completion due to Monsky--Washnitzer \cite{formalcohomologyi}. It is a $W\lef(k\rig)$-algebra that we shall denote $R^{\dagger}$, consisting of all the elements $z\in\hat{R}$ in the $p$-adic completion of $R$ such that:
\begin{equation*}
\exists m,c\in\mathbb{N},\ \exists\lef(P_{j}\rig)_{j\in\mathbb{N}}\in\lef(R\lef[X_{1},\ldots,X_{m}\rig]\rig)^{\mathbb{N}},\ \begin{cases}\displaystyle\exists\underline{x}\in R^{m},\ z=\sum_{j\in\mathbb{N}}p^{j}P_{j}\lef(\underline{x}\rig)\text{,}\\\forall j\in\mathbb{N},\ \deg\lef(P_{j}\rig)\leqslant c\lef(j+1\rig)\text{.}\end{cases}
\end{equation*}

In what follows, $\overline{R}$ shall denote a smooth commutative $k$-algebra, and $R$ will denote a smooth commutative $W\lef(k\rig)$-algebra lifting $\overline{R}$.

An important morphism in the literature is the canonical morphism from the weak completion of $R$ to the Witt vectors with coefficients in $\overline{R}$. The goal of this section is to study extensively its properties.

We shall consider:
\begin{equation*}
F\colon R^{\dagger}\to R^{\dagger}
\end{equation*}
a lift of the Frobenius endomorphism of $\overline{R}$. By virtue of \cite[3.3.4]{relevementdesalgebreslissesetdeleursmorphismes}, such lifts always exist. Furthermore, the work of Arabia also imply that such lifts satisfy, if needed, some compatibility conditions that we will recall here.

In this section, we shall sometimes state results in the case of a polynomial ring $R=W\lef(k\rig)\lef[\underline{X}\rig]=W\lef(k\rig)\lef[X_{1},\ldots,X_{n}\rig]$ for some $n\in\mathbb{N}$. This is warranted by the lemma below.

\begin{lemm}\label{commutativefrobeniuslift}
Under the above assumptions, we can always find $n\in\mathbb{N}$ and two morphisms of $W\lef(k\rig)$-algebras $g\colon{W\lef(k\rig)\lef[\underline{X}\rig]}^{\dagger}\to{W\lef(k\rig)\lef[\underline{X}\rig]}^{\dagger}$ and $\varphi\colon{W\lef(k\rig)\lef[\underline{X}\rig]}^{\dagger}\to R^{\dagger}$ such that $\varphi$ is surjective and that the following diagram commutes:
\begin{equation*}
\begin{tikzcd}[column sep=large]
{W\lef(k\rig)\lef[\underline{X}\rig]}^{\dagger}\arrow[rr,"g"]\arrow[rd,"\varphi"]\arrow[dd]&&{W\lef(k\rig)\lef[\underline{X}\rig]}^{\dagger}\arrow[rd,"\varphi"]\arrow[dd]&\\
&R^{\dagger}\arrow[rr,crossing over,pos=0.33,"F"]&&R^{\dagger}\arrow[dd]\\
k\lef[\underline{X}\rig]\arrow[rr,pos=0.24,"\operatorname{Frob}_{k\lef[\underline{X}\rig]}"]\arrow[rd]&&k\lef[\underline{X}\rig]\arrow[rd]&\\
&\overline{R}\arrow[rr,"\operatorname{Frob}_{\overline{R}}"]\arrow[from=uu,crossing over]&&\overline{R}\text{.}
\end{tikzcd}
\end{equation*}
\end{lemm}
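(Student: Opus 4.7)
The plan is to construct $\varphi$ by presenting $R^{\dagger}$ as a quotient of a weakly complete polynomial algebra, and then to build the compatible Frobenius lift $g$ by specifying carefully chosen images of the generators.

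Since $W\lef(k\rig)$ is Noetherian and $R$ is smooth over $W\lef(k\rig)$, the algebra $R$ is finitely presented, so we may fix a presentation $R\cong W\lef(k\rig)\lef[\underline{X}\rig]/I$ for some $n\in\mathbb{N}$ and finitely generated ideal $I\subset W\lef(k\rig)\lef[\underline{X}\rig]$. Taking weak completions yields the desired surjection $\varphi\colon{W\lef(k\rig)\lef[\underline{X}\rig]}^{\dagger}\to R^{\dagger}$; writing $x_{i}\coloneqq\varphi\lef(X_{i}\rig)$, the reduction of $\varphi$ modulo $p$ recovers the surjection $k\lef[\underline{X}\rig]\to\overline{R}$ sending $X_{i}$ to the class of $x_{i}$, so the front-bottom face of the cube commutes automatically.

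Next I define $g$. For each $i\in\lef\llbracket1,n\rig\rrbracket$, the element $F\lef(x_{i}\rig)-x_{i}^{p}$ lies in $pR^{\dagger}$ since $F$ lifts the Frobenius of $\overline{R}$; write $F\lef(x_{i}\rig)=x_{i}^{p}+p\rho_{i}$ and, using the surjectivity of $\varphi$, pick $Q_{i}\in{W\lef(k\rig)\lef[\underline{X}\rig]}^{\dagger}$ with $\varphi\lef(Q_{i}\rig)=\rho_{i}$. I then declare $g$ to restrict to the Witt vector Frobenius $\sigma$ on $W\lef(k\rig)$ and to satisfy $g\lef(X_{i}\rig)\coloneqq X_{i}^{p}+pQ_{i}$. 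The existence of such a ring endomorphism of ${W\lef(k\rig)\lef[\underline{X}\rig]}^{\dagger}$ is precisely what Arabia's lifting results \cite{relevementdesalgebreslissesetdeleursmorphismes} provide for the formally smooth weakly complete polynomial algebra; the congruence $g\lef(X_{i}\rig)\equiv X_{i}^{p}\pmod{p}$ ensures that $g$ is itself a Frobenius lift.

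Finally, $\varphi\circ g$ and $F\circ\varphi$ are ring homomorphisms ${W\lef(k\rig)\lef[\underline{X}\rig]}^{\dagger}\to R^{\dagger}$ that both restrict to $\sigma$ on $W\lef(k\rig)$ (for $F$, by uniqueness of the Frobenius lift on $W\lef(k\rig)$) and agree on each generator $X_{i}$ by the very choice of $Q_{i}$, so they coincide on $W\lef(k\rig)\lef[\underline{X}\rig]$, and the universal property of the weak completion forces them to coincide on ${W\lef(k\rig)\lef[\underline{X}\rig]}^{\dagger}$; the two side faces and the back face of the cube then follow by reducing this identity modulo $p$. The one nonformal step, and what I expect to be the main obstacle, is the guarantee that an arbitrary choice of $Q_{i}\in{W\lef(k\rig)\lef[\underline{X}\rig]}^{\dagger}$ yields a well-defined endomorphism of the weakly complete algebra, which is exactly where Arabia's machinery on lifts of smooth morphisms is essential rather than decorative.
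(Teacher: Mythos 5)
Your proof is correct and follows essentially the same route as the paper: $\varphi$ is obtained by weakly completing a finite presentation of $R$ (surjectivity being \cite[theorem 3.2]{formalcohomologyi}), and $g$ is a Frobenius lift of ${W\lef(k\rig)\lef[\underline{X}\rig]}^{\dagger}$ chosen compatibly with $F$; the paper simply black-boxes the existence of $g$ via \cite[3.3.2]{relevementdesalgebreslissesetdeleursmorphismes}, whereas you construct it explicitly by prescribing $g\lef(X_{i}\rig)=X_{i}^{p}+pQ_{i}$. One small attribution point: the step you flag as the main obstacle — that prescribing $\sigma$ on $W\lef(k\rig)$ and arbitrary images of the $X_{i}$ in a weakly complete algebra determines a unique morphism out of ${W\lef(k\rig)\lef[\underline{X}\rig]}^{\dagger}$ — is exactly the universal property of the weakly completed polynomial algebra (Monsky--Washnitzer, theorem 3.2, applied to the target viewed as a $W\lef(k\rig)$-algebra through $\sigma$), rather than Arabia's smooth-lifting machinery, which is only needed when the source is a general very smooth lift instead of a free one.
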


\begin{proof}
As $R$ is a $W\lef(k\rig)$-algebra of finite type, then $\varphi$ exists by functoriality and is onto \cite[theorem 3.2]{formalcohomologyi}. As we assumed that $W\lef(k\rig)$ is Noetherian, according to \cite[3.3.2]{relevementdesalgebreslissesetdeleursmorphismes} there exists a morphism $g$ making the above diagram commutative.
\end{proof}

Since $k$ is reduced, $W\lef(k\rig)$ has no $p$-torsion. It follows that $R^{\dagger}$ is flat over $W\lef(k\rig)$ \cite[theorem 2.4]{formalcohomologyi}, in particular $R^{\dagger}$ also has no $p$-torsion. In this case, it is known that there exists a morphism of $\delta$-rings $s_{F}\colon R^{\dagger}\to W\lef(R^{\dagger}\rig)$ depending on $F$, satisfying $s_{F}\lef(r\rig)_{0}=r$ for all $r\in R^{\dagger}$. For details, see \cite[pp. 508--10]{complexedederhamwittetcohomologiecristalline}.

\begin{deff}\label{lazardmorphism}
If we denote by $\Pi\colon R^{\dagger}\to\overline{R}$ the canonical projection, the \textbf{Lazard morphism} is the morphism of $W\lef(k\rig)$-algebras:
\begin{equation*}
t_{F}\colon\begin{array}{rl}R^{\dagger}\to&W\lef(\overline{R}\rig)\\r\mapsto&W\lef(\Pi\rig)\circ s_{F}\lef(r\rig)\text{.}\end{array}
\end{equation*}
\end{deff}

Our goal in this section is to study the image of this morphism, and to see how it extends to the de Rham--Witt complex.

Throughout this paper, we shall denote by:
\begin{equation*}
\widetilde{\operatorname{Frob}}\colon{W\lef(k\rig)\lef[\underline{X}\rig]}^{\dagger}\to{W\lef(k\rig)\lef[\underline{X}\rig]}^{\dagger}
\end{equation*}
the canonical lift of the Frobenius endomorphism on $k\lef[\underline{X}\rig]=k\lef[X_{1},\ldots,X_{n}\rig]$, where $n\in\mathbb{N}$, such that $\widetilde{\operatorname{Frob}}|_{W\lef(k\rig)}$ is the Witt--Frobenius morphism, and that any indeterminate $X_{i}$ for $i\in\lef\llbracket1,n\rig\rrbracket$ is mapped to ${X_{i}}^{p}$. As a consequence of the construction of $t_{\widetilde{\operatorname{Frob}}}$ \cite[0. 1.3.18]{complexedederhamwittetcohomologiecristalline} we have:
\begin{equation}\label{xtoteichmullerx}
\forall i\in\lef\llbracket1,n\rig\rrbracket,\ t_{\widetilde{\operatorname{Frob}}}\lef(X_{i}\rig)=\lef[X_{i}\rig]\text{.}
\end{equation}

In the following lemmas, we generalise \cite[proposition 3.1]{overconvergentderhamwittcohomology} to this slightly more general setting. We shall use $\zeta_{\varepsilon}$ as our main tool in that regard.

\begin{lemm}\label{isodegreezero}
The morphism $t_{\widetilde{\operatorname{Frob}}}$ induces an isomorphism of $W\lef(k\rig)$-algebras:
\begin{equation*}
{W\lef(k\rig)\lef[\underline{X}\rig]}^{\dagger}\cong W^{\dagger}\Omega^{\mathrm{int},0}_{k\lef[\underline{X}\rig]/k}\subset W\lef(k\lef[\underline{X}\rig]\rig)\text{.}
\end{equation*}
\end{lemm}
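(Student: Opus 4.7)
The plan is to identify both sides of the claimed isomorphism as formal series in monomials and to verify that $t_{\widetilde{\operatorname{Frob}}}$ realises the obvious identification. First I would unpack $W\Omega^{\mathrm{int},0}_{k\lef[\underline{X}\rig]/k}$ via theorem \ref{structuretheorem}: when $I=\emptyset$ the convention $i\prec i_{1}$ for every $i\in\operatorname{Supp}\lef(a\rig)$ forces $I_{0}=\operatorname{Supp}\lef(a\rig)$, and the integrality constraint $u\lef(a\rig)=0$ then reduces the defining formula to $e\lef(\eta,a,\emptyset\rig)=\eta\lef[\underline{X}^{a}\rig]$ with $a\in\mathbb{N}^{n}$. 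Hence elements of $W\Omega^{\mathrm{int},0}_{k\lef[\underline{X}\rig]/k}$ are exactly the $V$-adically convergent series $\sum_{a\in\mathbb{N}^{n}}\eta_{a}\lef[\underline{X}^{a}\rig]$ with $\eta_{a}\in W\lef(k\rig)$, and by definition we have $\zeta_{\varepsilon}\lef(\sum_{a}\eta_{a}\lef[\underline{X}^{a}\rig]\rig)=\inf_{a}\lef\{2n\operatorname{v}_{V}\lef(\eta_{a}\rig)-\varepsilon\lef\lvert a\rig\rvert\rig\}$. Since $k$ is perfect the Witt Frobenius is bijective, so $V^{u}\lef(W\lef(k\rig)\rig)=p^{u}W\lef(k\rig)$ and $\operatorname{v}_{V}=v_{p}$ on $W\lef(k\rig)$; therefore membership in $W^{\dagger}\Omega^{\mathrm{int},0}_{k\lef[\underline{X}\rig]/k}$ is equivalent to $v_{p}\lef(\eta_{a}\rig)\geqslant\varepsilon'\lef\lvert a\rig\rvert-M$ for some $\varepsilon'>0$ and $M\geqslant 0$. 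This is precisely the Monsky--Washnitzer growth condition defining ${W\lef(k\rig)\lef[\underline{X}\rig]}^{\dagger}$ via the expansion $z=\sum_{a}\eta_{a}\underline{X}^{a}$.

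Next I would compute $t_{\widetilde{\operatorname{Frob}}}$ explicitly. On the polynomial subring $W\lef(k\rig)\lef[\underline{X}\rig]$ it is a $W\lef(k\rig)$-algebra morphism which by \eqref{xtoteichmullerx} sends $X_{i}$ to $\lef[X_{i}\rig]$, so by multiplicativity $t_{\widetilde{\operatorname{Frob}}}\lef(\sum_{a}\eta_{a}\underline{X}^{a}\rig)=\sum_{a}\eta_{a}\lef[\underline{X}^{a}\rig]$ on finite sums. To pass to the weak completion I would approximate a general $z\in{W\lef(k\rig)\lef[\underline{X}\rig]}^{\dagger}$ by its truncations $z_{N}=\sum_{\lef\lvert a\rig\rvert\leqslant N}\eta_{a}\underline{X}^{a}$; the difference $z-z_{N}$ is divisible by a power of $p$ that grows with $N$ thanks to the growth bound on $\eta_{a}$, so $t_{\widetilde{\operatorname{Frob}}}\lef(z\rig)-t_{\widetilde{\operatorname{Frob}}}\lef(z_{N}\rig)$ is divisible by the same power of $p$ in the $p$-torsion-free ring $W\lef(k\lef[\underline{X}\rig]\rig)$. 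Using \eqref{padicoverconvergence} together with the translation of the growth bound into a lower bound on $\zeta_{\varepsilon}$ of each truncation tail, the partial sums $t_{\widetilde{\operatorname{Frob}}}\lef(z_{N}\rig)$ converge in $W^{\dagger}\Omega^{\mathrm{int},0}_{k\lef[\underline{X}\rig]/k}$ to $\sum_{a}\eta_{a}\lef[\underline{X}^{a}\rig]$. This simultaneously shows that the image of $t_{\widetilde{\operatorname{Frob}}}$ lies in $W^{\dagger}\Omega^{\mathrm{int},0}_{k\lef[\underline{X}\rig]/k}$, and that each element there, written in this form, arises from the corresponding lift $\sum_{a}\eta_{a}\underline{X}^{a}\in R^{\dagger}$, yielding surjectivity.

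Finally injectivity is a standard $p$-adic argument: the composition of $t_{\widetilde{\operatorname{Frob}}}$ with the zeroth ghost component is the canonical projection $\Pi\colon R^{\dagger}\to k\lef[\underline{X}\rig]$, whose kernel is $pR^{\dagger}$; if $t_{\widetilde{\operatorname{Frob}}}\lef(z\rig)=0$ then $z=pz'$ and $pt_{\widetilde{\operatorname{Frob}}}\lef(z'\rig)=0$, so $t_{\widetilde{\operatorname{Frob}}}\lef(z'\rig)=0$ by the absence of $p$-torsion in $W\lef(k\lef[\underline{X}\rig]\rig)$, and iterating places $z$ in $\bigcap_{n}p^{n}R^{\dagger}=\lef\{0\rig\}$. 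The main obstacle will be the passage-to-the-limit step, where one has to reconcile the $p$-adic convergence available in $R^{\dagger}$ with the $\zeta_{\varepsilon}$-overconvergence required on the de Rham--Witt side: the key ingredient is that over a perfect base the two filtrations $V^{\bullet}$ and $p^{\bullet}$ coincide on $W\lef(k\rig)$, so the bookkeeping boils down to comparing the Monsky--Washnitzer growth exponent with the slope of $\zeta_{\varepsilon}$, which is accomplished by \eqref{multiplicationbypzeta} combined with the explicit formula for $\zeta_{\varepsilon}$ on integral degree-zero elements obtained in the first paragraph.
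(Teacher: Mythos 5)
Your proof is correct and follows essentially the same route as the paper: both sides are identified with series $\sum_{a}\eta_{a}\lef[\underline{X}^{a}\rig]$ subject to a linear growth condition on $\operatorname{v}_{V}\lef(\eta_{a}\rig)$ in $\lef\lvert a\rig\rvert$ — the weak-completion side via the coefficientwise description of ${W\lef(k\rig)\lef[\underline{X}\rig]}^{\dagger}$ (which the paper takes from Monsky--Washnitzer's theorem 2.3 and which you should cite rather than assert), the de Rham--Witt side via the explicit formula for $\zeta_{\varepsilon}$ on integral degree-zero elements. Your injectivity argument by reduction modulo $p$ is slightly more roundabout than the paper's, which reads injectivity off from the uniqueness of the series representation in theorem \ref{structuretheorem}, but both are valid.
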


\begin{proof}
By construction, ${W\lef(k\rig)\lef[\underline{X}\rig]}^{\dagger}$ is a sub-$W\lef(k\rig)$-algebra of the $p$-adic completion $\widehat{W\lef(k\rig)\lef[\underline{X}\rig]}$. For any $w\in\widehat{W\lef(k\rig)\lef[\underline{X}\rig]}$ and any $\alpha\in\mathbb{N}^{n}$, we shall write $w\lef(\alpha\rig)\in W\lef(k\rig)$ the Witt vector such that $w=\sum_{\alpha\in\mathbb{N}^{n}}w\lef(\alpha\rig)\underline{X}^{\alpha}$.

This proposition is obvious when $n=0$, so we are going to assume that $n$ is positive. For all $N\in\mathbb{N}$, denote by $\mathbb{N}^{n}_{\geqslant N}=\lef\{\alpha\in\mathbb{N}^{n}\mid\lef\lvert\alpha\rig\rvert\geqslant N\rig\}$. Applying \cite[theorem 2.3]{formalcohomologyi}, we get an isomorphism of $W\lef(k\rig)$-algebras:
\begin{equation*}
{W\lef(k\rig)\lef[\underline{X}\rig]}^{\dagger}\cong\lef\{w\in\widehat{W\lef(k\rig)\lef[\underline{X}\rig]}\mid\exists\varepsilon>0,\ \exists N\in\mathbb{N}^{*},\ \forall\alpha\in\mathbb{N}^{n}_{\geqslant N},\ \frac{\operatorname{v}_{V}\lef(w\lef(\alpha\rig)\rig)}{\lef\lvert\alpha\rig\rvert}\geqslant\varepsilon\rig\}\text{.}
\end{equation*}

The condition $\frac{\operatorname{v}_{V}\lef(w\lef(\alpha\rig)\rig)}{\lef\lvert\alpha\rig\rvert}\geqslant\varepsilon$ is equivalent to $2n\operatorname{v}_{V}\lef(w\lef(\alpha\rig)\rig)-2n\varepsilon\lef\lvert\alpha\rig\rvert\geqslant0$. Only a finite number of $\alpha\in\mathbb{N}^{n}$ do not satisfy $\lef\lvert\alpha\rig\rvert\geqslant N$. So applying \eqref{xtoteichmullerx}, an element of ${W\lef(k\rig)\lef[\underline{X}\rig]}^{\dagger}$ meeting the above condition for some $\varepsilon>0$ is sent via $t_{\widetilde{\operatorname{Frob}}}$ to an element $w\in W\Omega^{\mathrm{int},0}_{k\lef[\underline{X}\rig]/k}$ with $\zeta_{2n\varepsilon}\lef(w\rig)\neq-\infty$. Therefore, we have an injective morphism of $W\lef(k\rig)$-algebras ${W\lef(k\rig)\lef[\underline{X}\rig]}^{\dagger}\to W^{\dagger}\Omega^{\mathrm{int},0}_{k\lef[\underline{X}\rig]/k}$. It only remains to show that it is surjective.

Take $w\in W^{\dagger}\Omega^{\mathrm{int},0}_{k\lef[\underline{X}\rig]/k}$ satisfying $\zeta_{\varepsilon}\lef(w\rig)\neq-\infty$ for some $\varepsilon>0$. For all $\alpha\in\mathbb{N}^{n}$ considered as a weight function, we have $2n\operatorname{v}_{V}\lef(\eta_{\alpha,\emptyset}\rig)-\varepsilon\lef\lvert\alpha\rig\rvert\geqslant\zeta_{\varepsilon}\lef(w\rig)$. In particular, if $\lef\lvert\alpha\rig\rvert\geqslant-\frac{2}{\varepsilon}\zeta_{\varepsilon}\lef(w\rig)$, then $\zeta_{\varepsilon}\lef(w\rig)\geqslant-\frac{\varepsilon}{2}\lef\lvert\alpha\rig\rvert$, so that $2n\operatorname{v}_{V}\lef(\eta_{\alpha,\emptyset}\rig)-\frac{\varepsilon}{2}\lef\lvert\alpha\rig\rvert\geqslant0$ and
\begin{equation*}
\frac{\operatorname{v}_{V}\lef(\eta_{\alpha,\emptyset}\rig)}{\lef\lvert\alpha\rig\rvert}\geqslant\frac{\varepsilon}{4n}\text{,}
\end{equation*}
which ends the proof.
\end{proof}

\begin{lemm}\label{tfaffine}
The image of the morphism $t_{F}$ is overconvergent when $R=W\lef(k\rig)\lef[\underline{X}\rig]$, that is, we have a morphism of $W\lef(k\rig)$-algebras:
\begin{equation*}
t_{F}\colon{W\lef(k\rig)\lef[\underline{X}\rig]}^{\dagger}\to W^{\dagger}\lef(k\lef[\underline{X}\rig]\rig)\text{.}
\end{equation*}
\end{lemm}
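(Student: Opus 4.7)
The plan is to reduce the statement to the overconvergence of $t_F\lef(X_i\rig)$ for each generator $X_i$, and then to control the Witt coordinates of $t_F\lef(X_i\rig)$ explicitly using the ghost-component recursion.

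For the reduction step, since $t_F$ is a morphism of $W\lef(k\rig)$-algebras and $W^{\dagger}\lef(k\lef[\underline{X}\rig]\rig)$ is a sub-$W\lef(k\rig)$-algebra of $W\lef(k\lef[\underline{X}\rig]\rig)$, establishing a common $\varepsilon>0$ and a lower bound for $\zeta_{\varepsilon}\lef(t_F\lef(X_i\rig)\rig)$ for all $i\in\lef\llbracket1,n\rig\rrbracket$ already shows that every polynomial in the indeterminates is mapped to an overconvergent Witt vector with controlled $\zeta_{\varepsilon}$, by the pseudovaluation properties of $\zeta_{\varepsilon}$. Extending this to the whole weakly complete source $W\lef(k\rig)\lef[\underline{X}\rig]^{\dagger}=\lef\{\sum_{j}p^{j}P_{j}\lef(\underline{X}\rig)\mid\deg P_{j}\leqslant c\lef(j+1\rig)\rig\}$ then proceeds as in the proof of proposition \ref{overconvergentfunctor}, combining \eqref{multiplicationbypzeta} to absorb the powers of $p$ with \eqref{padicoverconvergence} to pass to the $p$-adic limit.

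For the generator step, write $F\lef(X_i\rig)=X_i^{p}+p\alpha_i$ with $\alpha_i\in W\lef(k\rig)\lef[\underline{X}\rig]^{\dagger}$. The defining property of $s_F$ gives $\Phi_m\lef(s_F\lef(X_i\rig)\rig)=F^{m}\lef(X_i\rig)$ in $W\lef(k\rig)\lef[\underline{X}\rig]^{\dagger}$ for every $m$, and the recursion
\[p^{m}s_F\lef(X_i\rig)_{m}=F^{m}\lef(X_i\rig)-\sum_{j=0}^{m-1}p^{j}s_F\lef(X_i\rig)_{j}^{p^{m-j}}\]
determines $s_F\lef(X_i\rig)_{m}\in W\lef(k\rig)\lef[\underline{X}\rig]^{\dagger}$ inductively. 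The $m$-th Witt coordinate of $t_F\lef(X_i\rig)$ is its reduction $\overline{s_F\lef(X_i\rig)_{m}}\in k\lef[\underline{X}\rig]$. Using the weak overconvergence of $\alpha_i$, a careful induction on $m$ yields a degree estimate $\deg\overline{s_F\lef(X_i\rig)_{m}}\leqslant Cp^{m}$ for some $C$ independent of $m$; once this is established, for any $\varepsilon\in\lef]0,1/C\rig]$,
\[\gamma_{\varepsilon,\operatorname{Id}_{k\lef[\underline{X}\rig]},\lef(1,\ldots,1\rig)}\lef(t_F\lef(X_i\rig)\rig)=\inf_{m\in\mathbb{N}}\lef\{m-\varepsilon p^{-m}\deg\overline{s_F\lef(X_i\rig)_{m}}\rig\}\geqslant-\varepsilon C\text{,}\]
and by proposition \ref{gammaandzetawitt} this implies $\zeta_{\varepsilon}\lef(t_F\lef(X_i\rig)\rig)\geqslant-\varepsilon C$, which is the desired overconvergence.

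The main obstacle is the uniform degree bound $\deg\overline{s_F\lef(X_i\rig)_{m}}\leqslant Cp^{m}$. Although intuitively plausible --- each application of $F$ multiplies degrees by roughly $p$ and the weak overconvergence of $\alpha_i$ ensures that high-degree contributions carry high powers of $p$ that ultimately compensate for the divisions by $p^{m}$ in the recursion --- tracking the infinitely many $p$-adic strata of $\alpha_i$ through iterated $F$ and through the recursion calls for careful bookkeeping. I expect the induction to exploit both the inequality $\deg P_{i,j}\leqslant c\lef(j+1\rig)$ coming from weak overconvergence and the cancellations in the recursion's alternating sum, and the final constant $C$ will be expressible in terms of the constant $c$ attached to the $\alpha_i$ together with $p$ and $n$.
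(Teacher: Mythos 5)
Your overall strategy coincides with the paper's: identify ${W\lef(k\rig)\lef[\underline{X}\rig]}^{\dagger}$ inside $W\lef(k\lef[\underline{X}\rig]\rig)$, control the Witt coordinates $s_{F}\lef(\cdot\rig)_{m}$ through the ghost recursion, and conclude via $\gamma_{\varepsilon,\operatorname{Id}_{k\lef[\underline{X}\rig]},\lef(1,\ldots,1\rig)}$ and proposition \ref{gammaandzetawitt}. Your reduction of a weakly convergent series to its monomials, using \eqref{multiplicationbypzeta} and \eqref{padicoverconvergence}, is also sound precisely because your bound at the generators is linear in $\varepsilon$. However, the step you yourself identify as the main obstacle --- the estimate $\deg\lef(\overline{s_{F}\lef(X_{i}\rig)_{m}}\rig)\leqslant Cp^{m}$ --- is left as an expectation, and the induction as you set it up would not close. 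Your induction hypothesis records only the degree of the reduction modulo $p$ of the $s_{F}\lef(X_{i}\rig)_{j}$. But in the recursion $p^{m}s_{F}\lef(X_{i}\rig)_{m}=F^{m}\lef(X_{i}\rig)-\sum_{j=0}^{m-1}p^{j}s_{F}\lef(X_{i}\rig)_{j}^{p^{m-j}}$ the right-hand side must be divided by $p^{m}$, so $\overline{s_{F}\lef(X_{i}\rig)_{m}}$ depends on the $p^{m-j}$-th $p$-adic stratum of $s_{F}\lef(X_{i}\rig)_{j}^{p^{m-j}}$, hence on all the higher strata of $s_{F}\lef(X_{i}\rig)_{j}$, not merely on its class modulo $p$. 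The hypothesis therefore has to be strengthened to a full overconvergence bound on $s_{F}\lef(X_{i}\rig)_{j}$ as an element of ${W\lef(k\rig)\lef[\underline{X}\rig]}^{\dagger}$; this is exactly what the inequality $\zeta_{\varepsilon}\lef({s_{F}\lef(w\rig)}_{i}\rig)\geqslant p^{i}\zeta_{\varepsilon}\lef(w\rig)-2n\sum_{j=0}^{i-1}p^{j}$ in the paper's proof provides.

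A second, related point is that you cannot confine the analysis to the generators. Since $F\lef(X_{i}\rig)={X_{i}}^{p}+p\delta\lef(X_{i}\rig)$ with $\delta\lef(X_{i}\rig)$ an arbitrary overconvergent element, $F^{m}\lef(X_{i}\rig)$ involves $F^{m-1}\lef(\delta\lef(X_{i}\rig)\rig)$, so bounding the terms $F^{m}\lef(X_{i}\rig)$ appearing in the recursion already requires the estimate $\zeta_{\varepsilon}\lef(F^{m}\lef(w\rig)\rig)\geqslant p^{m}\zeta_{\varepsilon}\lef(w\rig)$ for \emph{arbitrary} $w\in{W\lef(k\rig)\lef[\underline{X}\rig]}^{\dagger}$ (inequality \eqref{zetaepsilonfrobenius}), which must be established by a separate induction using the pseudovaluation property together with \eqref{multiplicationbypzeta} and \eqref{padicoverconvergence}. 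Once these two strengthenings are carried out, your sketch becomes the paper's proof; as written, the crucial induction is both unproved and stated with a hypothesis too weak to propagate.
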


\begin{proof}
We briefly revisit arguments given in the proof of \cite[proposition 3.1]{overconvergentwittvectors} to show that they still stand in our more general setting. In what follows, we will consider ${W\lef(k\rig)\lef[\underline{X}\rig]}^{\dagger}$ as a sub-$W\lef(k\rig)$-algebra of $W\lef(k\lef[\underline{X}\rig]\rig)$ using lemma \ref{isodegreezero}.

Let $F\colon{W\lef(k\rig)\lef[\underline{X}\rig]}^{\dagger}\to{W\lef(k\rig)\lef[\underline{X}\rig]}^{\dagger}$ be a Frobenius lift. As ${W\lef(k\rig)\lef[\underline{X}\rig]}^{\dagger}$ is $p$-torsion free, for any $i\in\lef\llbracket1,n\rig\rrbracket$ we can uniquely write $F\lef(X_{i}\rig)={X_{i}}^{p}+p\delta\lef(X_{i}\rig)$, where $\delta$ denotes the $\delta$-structure associated to $F$. Notice that since $\delta\lef(X_{i}\rig)$ is overconvergent, we can find $\varepsilon>0$ small enough so that $\zeta_{\varepsilon}\lef(\delta\lef(X_{i}\rig)\rig)\geqslant-2n$.

Recall that $\zeta_{\varepsilon}$ is a pseudovaluation \cite[theorem 3.17]{pseudovaluationsonthederhamwittcomplex}. So it follows from \eqref{multiplicationbypzeta} that $\zeta_{\varepsilon}\lef(F\lef(X_{i}\rig)\rig)\geqslant-p\varepsilon$. Also, for this sufficiently small $\varepsilon>0$ we derive the following inequality:
\begin{equation*}
\forall\eta\in W\lef(k\rig),\ \forall l\in\mathbb{N}^{n},\ \zeta_{\varepsilon}\lef(F\lef(\eta\underline{X}^{l}\rig)\rig)\geqslant\zeta_{\varepsilon}\lef(\eta\rig)-pl\varepsilon=p\zeta_{\varepsilon}\lef(\eta\underline{X}^{l}\rig)\text{.}
\end{equation*}

And applying \eqref{padicoverconvergence} and the pseudovaluation properties of $\zeta_{\varepsilon}$ we find:
\begin{equation*}
\forall w\in{W\lef(k\rig)\lef[\underline{X}\rig]}^{\dagger},\ \zeta_{\varepsilon}\lef(F\lef(w\rig)\rig)\geqslant p\zeta_{\varepsilon}\lef(w\rig)\text{.}
\end{equation*}

And another induction yields:
\begin{equation}\label{zetaepsilonfrobenius}
\forall m\in\mathbb{N},\ \forall w\in{W\lef(k\rig)\lef[\underline{X}\rig]}^{\dagger},\ \zeta_{\varepsilon}\lef(F^{m}\lef(w\rig)\rig)\geqslant p^{m}\zeta_{\varepsilon}\lef(w\rig)\text{.}
\end{equation}

Remember that we are given a morphism $s_{F}\colon{W\lef(k\rig)\lef[\underline{X}\rig]}^{\dagger}\to W\lef({W\lef(k\rig)\lef[\underline{X}\rig]}^{\dagger}\rig)$ of $W\lef(k\rig)$-algebras through which the image of $w\in{W\lef(k\rig)\lef[\underline{X}\rig]}^{\dagger}$ is the unique Witt vector whose ghost components are $\lef(w,F\lef(w\rig),F^{2}\lef(w\rig),\ldots\rig)$. We have ${s_{F}\lef(w\rig)}_{0}=w$. Assume that we have shown for some $m\in\mathbb{N}$ that:
\begin{equation*}
\forall i\in\lef\llbracket0,m\rig\rrbracket,\ \zeta_{\varepsilon}\lef({s_{F}\lef(w\rig)}_{i}\rig)\geqslant p^{i}\zeta_{\varepsilon}\lef(w\rig)-2n\sum_{j=0}^{i-1}p^{j}\text{.}
\end{equation*}

Then using formula \eqref{multiplicationbypzeta} and the pseudovaluation ones we find:
\begin{equation*}
\forall i\in\lef\llbracket0,m\rig\rrbracket,\ \zeta_{\varepsilon}\lef(p^{i}{{s_{F}\lef(w\rig)}_{i}}^{p^{m+1-i}}\rig)\geqslant p^{m+1}\zeta_{\varepsilon}\lef(w\rig)-2n\sum_{j=m+1-i}^{m}p^{j}+2ni\text{.}
\end{equation*}

But:
\begin{equation*}
\forall i\in\lef\llbracket0,m\rig\rrbracket,\ -2n\sum_{j=m+1-i}^{m}p^{j}+2ni\geqslant-2n\sum_{j=1}^{m}p^{j}+2nm\text{.}
\end{equation*}

Hence from the ghost equation:
\begin{equation*}
\sum_{i=0}^{m+1}p^{i}{{s_{F}\lef(w\rig)}_{i}}^{p^{m+1-i}}=F^{m+1}\lef(w\rig)\text{,}
\end{equation*}
and from the formula \eqref{zetaepsilonfrobenius} and the pseudovaluation properties, we deduce:
\begin{equation*}
\zeta_{\varepsilon}\lef(p^{m+1}{{s_{F}\lef(w\rig)}_{m+1}}\rig)\geqslant p^{m+1}\zeta_{\varepsilon}\lef(w\rig)-2n\sum_{j=1}^{m}p^{j}+2nm\text{.}
\end{equation*}

Using \eqref{multiplicationbypzeta} again, we have then shown by induction on $m\in\mathbb{N}$ that:
\begin{equation*}
\forall m\in\mathbb{N},\ \zeta_{\varepsilon}\lef({s_{F}\lef(w\rig)}_{m}\rig)\geqslant p^{m}\zeta_{\varepsilon}\lef(w\rig)-2n\sum_{j=0}^{m-1}p^{j}\geqslant p^{m}\lef(\zeta_{\varepsilon}\lef(w\rig)-2n\rig)\text{.}
\end{equation*}

As $k$ is semiperfect, this implies after projecting modulo $p$ that:
\begin{equation*}
\forall m\in\mathbb{N},\ -\varepsilon\deg\lef(\overline{{s_{F}\lef(w\rig)}_{m}}\rig)\geqslant p^{m}\lef(\zeta_{\varepsilon}\lef(w\rig)-2n\rig)\text{.}
\end{equation*}

Recall that ${t_{F}\lef(w\rig)}_{m}=\overline{{s_{F}\lef(w\rig)}_{m}}$ for all $m\in\mathbb{N}$ by definition, so in particular:
\begin{equation*}
\gamma_{\varepsilon,\operatorname{Id}_{k\lef[\underline{X}\rig]},\lef(1,\ldots,1\rig)}\lef(t_{F}\lef(w\rig)\rig)\geqslant\zeta_{\varepsilon}\lef(w\rig)-2n\text{,}
\end{equation*}
and the right-hand side is finite when $\varepsilon$ is small enough, so we conclude using proposition \ref{gammaandzetawitt}.
\end{proof}

A consequence of lemma \ref{tfaffine} is that the image of the morphism $t_{F}$ is overconvergent. Indeed, use lemma \ref{commutativefrobeniuslift} to find a commutative diagram as in its statement, so that by both functoriality and the lemma we get in degree zero that $t_{F}$ factors through $W^{\dagger}\lef(\overline{R}\rig)$.

In particular, we obtain by adjunction a morphism of $W\lef(k\rig)$-dgas which we shall denote by the same symbol:
\begin{equation*}
t_{F}\colon\Omega^{\mathrm{sep}}_{R^{\dagger}/W\lef(k\rig)}\to W^{\dagger}\Omega_{\overline{R}/k}\text{.}
\end{equation*}

\begin{lemm}\label{injectionmodp}
Let $M$ and $N$ be two additive commutative groups, and assume that $\bigcap_{n\in\mathbb{N}}p^{n}M=\lef\{0_{M}\rig\}$ and that $N$ has no $p$-torsion. Let $\varphi\colon M\to N$ be a morphism of groups such that the morphism $\overline{\varphi}\colon\overline{M}\to\overline{N}$ induced modulo $p$ is injective. Then $\varphi$ is injective.
\end{lemm}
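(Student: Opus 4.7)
The plan is to argue by a straightforward descent on divisibility by $p$, exploiting the two hypotheses (separatedness of the $p$-adic filtration on $M$, absence of $p$-torsion in $N$) in tandem with the injectivity modulo $p$.

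First, I would take $m \in M$ with $\varphi(m) = 0$ and show by induction that $m \in p^{n} M$ for every $n \in \mathbb{N}$. The base case is vacuous. For the inductive step, suppose $m = p^{n} m_{n}$ for some $m_{n} \in M$. Then $p^{n} \varphi(m_{n}) = \varphi(m) = 0$ in $N$, and since $N$ has no $p$-torsion this forces $\varphi(m_{n}) = 0$. Reducing modulo $p$, we get $\overline{\varphi}(\overline{m_{n}}) = 0$; by the assumed injectivity of $\overline{\varphi}$, this gives $\overline{m_{n}} = 0$, i.e.\ $m_{n} \in pM$. Writing $m_{n} = p m_{n+1}$ yields $m = p^{n+1} m_{n+1}$, completing the induction.

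Finally, the hypothesis $\bigcap_{n \in \mathbb{N}} p^{n} M = \{0_{M}\}$ immediately gives $m = 0_{M}$. No step here is really an obstacle; the only subtle point is that one uses the $p$-torsion freeness of $N$ (not $M$) to cancel $p^{n}$ before applying the mod-$p$ injectivity, which is why the asymmetric hypotheses on $M$ and $N$ suffice.
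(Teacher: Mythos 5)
Your proof is correct and uses the same two ingredients in the same way as the paper: the $p$-torsion-freeness of $N$ to cancel $p^{n}$ and the injectivity of $\overline{\varphi}$ to gain one more factor of $p$. The only cosmetic difference is that you run an induction showing $m\in p^{n}M$ for all $n$, whereas the paper argues by contradiction after extracting the maximal power of $p$ dividing $m$; both then invoke $\bigcap_{n}p^{n}M=\{0_{M}\}$ identically.
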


\begin{proof}
Let $m\in M$ such that $\varphi\lef(m\rig)=0$. Assume that $m\neq0$. By hypothesis on $M$, there is $n\in\mathbb{N}$ and $m'\in M\smallsetminus pM$ such that $m=p^{n}m'$. We get $\varphi\lef(m\rig)=p^{n}\varphi\lef(m'\rig)=0$. As $N$ has no $p$-torsion, it implies that $\varphi\lef(m'\rig)=0$. We obtain in turn that $\overline{\varphi}\lef(\overline{m'}\rig)=0$, so $\overline{m'}=0$. This contradicts $m'\notin pM$, so $m=0$.
\end{proof}

When $R=W\lef(k\rig)\lef[\underline{X}\rig]$, we shall use in the remainder of this article the following morphism of $W\lef(k\rig)$-modules:
\begin{equation*}
v_{F}\colon\begin{array}{rl}\Omega^{\mathrm{sep}}_{R^{\dagger}/W\lef(k\rig)}\to&W^{\dagger}\Omega_{\overline{R}/k}\\w\mapsto&t_{F}\lef(w\rig)-t_{\widetilde{\operatorname{Frob}}}\lef(w\rig)\text{.}\end{array}
\end{equation*}

We find directly:
\begin{align}
&\forall x\in{W\lef(k\rig)\lef[\underline{X}\rig]}^{\dagger},\ v_{F}\lef(x\rig)\in V\lef(W^{\dagger}\lef(k\lef[\underline{X}\rig]\rig)\rig)\text{,}\label{vfisinvw}\\
&\begin{multlined}\forall x,y \in\Omega^{\mathrm{sep}}_{{W\lef(k\rig)\lef[\underline{X}\rig]}^{\dagger}/W\lef(k\rig)},\\v_{F}\lef(xy\rig)=v_{F}\lef(x\rig)v_{F}\lef(y\rig)+t_{\widetilde{\operatorname{Frob}}}\lef(x\rig)v_{F}\lef(y\rig)+v_{F}\lef(x\rig)t_{\widetilde{\operatorname{Frob}}}\lef(y\rig)\text{.}\end{multlined}\label{productformulavf}
\end{align}

\begin{prop}\label{lazardinjectivity}
The morphism $t_{F}\colon\Omega^{\mathrm{sep}}_{R^{\dagger}/W\lef(k\rig)}\to W^{\dagger}\Omega_{\overline{R}/k}$ is injective, as well as its reduction modulo $p$.

Moreover, if $\overline{R}=k\lef[\underline{X}\rig]$, then the morphism $v_{F}\colon\Omega^{\mathrm{sep}}_{R^{\dagger}/W\lef(k\rig)}\to W^{\dagger}\Omega_{\overline{R}/k}$ takes values in $\operatorname{Fil}^{1}\lef(W^{\dagger}\Omega_{\overline{R}/k}\rig)$.
\end{prop}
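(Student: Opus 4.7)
The plan is to first prove the injectivity of $\overline{t_{F}}$ by exhibiting a retraction, then to deduce the injectivity of $t_F$ itself via Lemma \ref{injectionmodp}, and finally to prove the $v_F$ statement by an induction on the dga structure in the polynomial setting.

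For the injectivity modulo $p$, I would consider the canonical dga projection $\pi\colon W\Omega_{\overline{R}/k}\to W_{1}\Omega_{\overline{R}/k}$ together with the standard identification $W_{1}\Omega_{\overline{R}/k}\cong\Omega_{\overline{R}/\overline{k}}$ of de Rham--Witt theory (recall $\overline{k}=k$ here). The composite $\pi\circ t_F\colon\Omega^{\mathrm{sep}}_{R^{\dagger}/W(k)}\to\Omega_{\overline{R}/\overline{k}}$ is then a morphism of $W(k)$-dgas, hence determined by its restriction to degree zero. In that degree it sends $r\in R^{\dagger}$ to $t_F(r)_0=\Pi(r)=\overline{r}$; so $\pi\circ t_F$ coincides with the canonical base-change morphism, and after reducing modulo $p$ it becomes the identity under the standard isomorphism $\Omega^{\mathrm{sep}}_{R^{\dagger}/W(k)}/p\cong\Omega_{\overline{R}/\overline{k}}$ coming from $\overline{R}=R^{\dagger}/pR^{\dagger}$. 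Therefore $\overline{t_F}$ admits a retraction and is injective. Applying Lemma \ref{injectionmodp} with source $M=\Omega^{\mathrm{sep}}_{R^{\dagger}/W(k)}$, which is $p$-adically separated by the very definition of $\Omega^{\mathrm{sep}}$, and target $N=W^{\dagger}\Omega_{\overline{R}/k}$, which inherits freeness of $p$-torsion from $W\Omega_{\overline{R}/k}$ (as established in the proof of theorem \ref{structuretheoremconvergent}, since $k$ is perfect hence reduced), we conclude that $t_F$ itself is injective.

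For the final statement on $v_{F}$ in the polynomial case $\overline{R}=k[\underline{X}]$, I would induct on degree, using the fact that $\Omega^{\mathrm{sep}}_{R^{\dagger}/W(k)}$ is generated as a $W(k)$-dga by its degree-zero part $R^{\dagger}$. The base case in degree zero is formula \eqref{vfisinvw}, which gives $v_F(r)\in V(W^{\dagger}(k[\underline{X}]))=\operatorname{Fil}^{1}W^{\dagger}\Omega^{0}$ for all $r\in R^{\dagger}$. For the inductive step, observe that $v_F$ commutes with $d$, since both $t_F$ and $t_{\widetilde{\operatorname{Frob}}}$ are morphisms of dgas; and $d(\operatorname{Fil}^{1})\subseteq\operatorname{Fil}^{1}$ because $\operatorname{Fil}^{\bullet}$ is a dga-filtration, so $v_F(da)=d\, v_F(a)\in\operatorname{Fil}^{1}$ for every $a\in R^{\dagger}$. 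For products one uses formula \eqref{productformulavf}: each of its three right-hand terms contains at least one $v_F$ factor which by induction lies in the two-sided ideal $\operatorname{Fil}^{1}$, so the product remains in $\operatorname{Fil}^{1}$. Iterating these two operations over elementary wedges $a_{0}\,da_{1}\wedge\cdots\wedge da_{t}$ finishes the proof.

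The main delicate point will be checking that $\pi\circ t_F$ reduces modulo $p$ to the identity under the base-change identification $\Omega^{\mathrm{sep}}_{R^{\dagger}/W(k)}/p\cong\Omega_{\overline{R}/\overline{k}}$, which requires unwinding the construction of $t_F$ from the $\delta$-ring adjunction enough to pin down the zeroth Witt coordinate; once this is granted, the remaining steps are essentially formal consequences of the dga structure, of the two identities \eqref{vfisinvw} and \eqref{productformulavf}, and of the $p$-torsion freeness already encoded in theorem \ref{structuretheoremconvergent}.
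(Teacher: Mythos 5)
Your argument for the injectivity statement is essentially the paper's: compose with the canonical projection onto $W_{1}\Omega_{\overline{R}/k}\cong\Omega_{\overline{R}/k}$, use the adjunction to see that the resulting endomorphism of $\Omega_{\overline{R}/k}$ is the identity because it is so in degree zero (where ${t_{F}(r)}_{0}=\Pi(r)$ by construction of $s_{F}$), conclude that $\overline{t_{F}}$ admits a retraction and is injective, and finish with lemma \ref{injectionmodp}; your explicit verification of that lemma's hypotheses ($p$-adic separatedness of $\Omega^{\mathrm{sep}}$ by definition, absence of $p$-torsion in $W\Omega_{\overline{R}/k}$ since $k$ is perfect hence reduced) is correct. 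For the statement on $v_{F}$ you take a genuinely different, and longer, route: you rebuild membership in $\operatorname{Fil}^{1}$ from degree zero by induction over elementary wedges, using \eqref{vfisinvw} as the base case, commutation of $v_{F}$ with $d$, and the product formula \eqref{productformulavf}, together with the fact that $\operatorname{Fil}^{1}$ is a differential two-sided ideal. This is valid, but unnecessary: since $\operatorname{Fil}^{1}(W^{\dagger}\Omega_{\overline{R}/k})$ is by definition the kernel of the projection to $W_{1}\Omega_{\overline{R}/k}$, it suffices to observe that $t_{F}$ and $t_{\widetilde{\operatorname{Frob}}}$ both project to the same canonical morphism $\Omega^{\mathrm{sep}}_{R^{\dagger}/W(k)}\to\Omega_{\overline{R}/k}$ --- exactly the computation you already carried out for the injectivity --- so their difference $v_{F}$ projects to zero. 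That is the paper's one-line argument; your inductive version proves the same thing at greater cost, though it does make explicit how \eqref{productformulavf} interacts with the filtration.
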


\begin{proof}
Modulo $p$, the morphism becomes $\overline{t_{F}}\colon\Omega_{\overline{R}/k}\to\overline{W^{\dagger}\Omega_{\overline{R}/k}}$. By construction in \cite[1.3]{derhamwittcohomologyforaproperandsmoothmorphism}, we have $W_{1}\Omega_{\overline{R}/k}=\Omega_{\overline{R}/k}$, so we get a canonical arrow $\overline{W^{\dagger}\Omega_{\overline{R}/k}}\to\Omega_{\overline{R}/k}$ of $k$-dgas. After composing it with $\overline{t_{F}}$, we obtain a morphism $i_{F}\colon\Omega_{\overline{R}/k}\to\Omega_{\overline{R}/k}$ of $k$-dgas. By adjunction, this morphism is uniquely determined by the restriction morphism in degree $0$, which, by construction of $t_{F}$, is $\operatorname{Id}_{\overline{R}}$. Hence, $i_{F}$ is the identity morphism, and $\overline{t_{F}}$ is injective. We can conclude by using lemma \ref{injectionmodp}.

The second part of the proposition comes from the fact that after projecting $v_{F}$ to $W_{1}\Omega_{\overline{R}/k}$, the morphism becomes zero from the above discussion.
\end{proof}

The next result was shown in Christopher Davis' PhD thesis in the case of a perfect field of positive characteristic. We generalise it with a different proof which uses $\zeta_{\varepsilon}$.

\begin{prop}\label{lazardbijectivity}
The morphism $t_{\widetilde{\operatorname{Frob}}}$ induces an isomorphism of $W\lef(k\rig)$-dgas:
\begin{equation*}
\Omega^{\mathrm{sep}}_{{W\lef(k\rig)\lef[\underline{X}\rig]}^{\dagger}/W\lef(k\rig)}\cong W^{\dagger}\Omega^{\mathrm{int}}_{k\lef[\underline{X}\rig]/k}\text{.}
\end{equation*}
\end{prop}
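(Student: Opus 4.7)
The plan is to combine three ingredients: injectivity (already given by proposition \ref{lazardinjectivity}), the fact that the image of $t_{\widetilde{\operatorname{Frob}}}$ lies inside the integral overconvergent subcomplex, and surjectivity onto it via an explicit inverse constructed from the structure series of theorem \ref{structuretheorem}.

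For the first ingredient, I would verify that $W^{\dagger}\Omega^{\mathrm{int}}_{k\lef[\underline{X}\rig]/k}$ is stable under the differential and the product. Proposition \ref{dactionone} shows that $d\lef(e\lef(\eta, a, I\rig)\rig)$ preserves the condition $u\lef(a\rig) = 0$ in both subcases, so the integral part is a subcomplex. Proposition \ref{zpcombination} applied with $u\lef(a\rig) = u\lef(b\rig) = 0$ forces $u\lef(a + b\rig) = 0$ and trivialises the Verschiebung prefactor, so the integral part is also closed under products. Since the Lazard morphism is a dga morphism, and lemma \ref{isodegreezero} already identifies its image in degree zero with $W^{\dagger}\Omega^{\mathrm{int}, 0}_{k\lef[\underline{X}\rig]/k}$, the whole image sits inside the integral part.

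For surjectivity, I would construct an explicit inverse. Let $x \in W^{\dagger}\Omega^{\mathrm{int}, t}_{k\lef[\underline{X}\rig]/k}$; theorem \ref{structuretheorem} writes it as $x = \sum e\lef(\eta_{a, I}, a, I\rig)$ over $\lef(a, I\rig) \in \mathcal{P}$ with $u\lef(a\rig) = 0$ and $\#I = t$. For such a pair, setting $\alpha_l \coloneqq p^{-\operatorname{v}_{p}\lef(a|_{I_l}\rig)} a|_{I_l}$ for $l \in \lef\llbracket 1, \#I \rig\rrbracket$, the identity \eqref{fdttdt} rewrites
\begin{equation*}
e\lef(\eta_{a, I}, a, I\rig) = \eta_{a, I} \lef[\underline{X}^{a|_{I_0} + \sum_{l} \lef(a|_{I_l} - \alpha_l\rig)}\rig] \prod_{l=1}^{\#I} d\lef(\lef[\underline{X}^{\alpha_l}\rig]\rig)\text{.}
\end{equation*}
Since $t_{\widetilde{\operatorname{Frob}}}\lef(\underline{X}^{c}\rig) = \lef[\underline{X}^{c}\rig]$ by \eqref{xtoteichmullerx} combined with the multiplicativity of the Teichmüller lift on monomials, and since $t_{\widetilde{\operatorname{Frob}}}$ commutes with $d$, a candidate preimage of each term is the differential form $\eta_{a, I} \underline{X}^{a|_{I_0} + \sum_l \lef(a|_{I_l} - \alpha_l\rig)} d\underline{X}^{\alpha_1} \wedge \cdots \wedge d\underline{X}^{\alpha_{\#I}}$, living a priori in $\Omega^{\mathrm{sep}, t}_{{W\lef(k\rig)\lef[\underline{X}\rig]}^{\dagger}/W\lef(k\rig)}$.

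The main obstacle is showing that the sum of these candidate preimages converges inside the $p$-adically separated Kähler differentials of the weak completion. I would expand each $d\underline{X}^{\alpha_l}$ by Leibniz into the standard basis $dX_{i_1} \wedge \cdots \wedge dX_{i_t}$, and for each fixed ordered tuple of indices, collect the resulting scalar series in ${W\lef(k\rig)\lef[\underline{X}\rig]}^{\dagger}$. Each monomial produced is of the form $\eta_{a, I} \underline{X}^{\gamma}$ with $\lef\lvert \gamma \rig\rvert \leqslant \lef\lvert a \rig\rvert$. Finiteness of $\zeta_{\varepsilon}\lef(x\rig)$ for some $\varepsilon > 0$ amounts, on integral terms, to the uniform inequality $2n \operatorname{v}_V\lef(\eta_{a, I}\rig) \geqslant \zeta_{\varepsilon}\lef(x\rig) + \varepsilon \lef\lvert a \rig\rvert$, which is exactly the Monsky--Washnitzer growth condition characterising ${W\lef(k\rig)\lef[\underline{X}\rig]}^{\dagger}$ inside $\widehat{W\lef(k\rig)\lef[\underline{X}\rig]}$ that was used in the proof of lemma \ref{isodegreezero}. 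Hence every coefficient lies in ${W\lef(k\rig)\lef[\underline{X}\rig]}^{\dagger}$, the preimage is well-defined, and applying $t_{\widetilde{\operatorname{Frob}}}$ recovers $x$; together with proposition \ref{lazardinjectivity}, this yields the isomorphism of $W\lef(k\rig)$-dgas.
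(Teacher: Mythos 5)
Your proposal is correct and follows essentially the same route as the paper's proof: both reduce a general integral overconvergent series, via theorem \ref{structuretheorem}, \eqref{fdttdt} and the Leibniz rule, to combinations of the standard generators $d\lef(\lef[X_{i}\rig]\rig)$ with coefficients whose $\operatorname{v}_{V}$-growth is controlled by $\zeta_{\varepsilon}\lef(x\rig)$, and then identify these coefficients with elements of ${W\lef(k\rig)\lef[\underline{X}\rig]}^{\dagger}$ through the Monsky--Washnitzer growth criterion of lemma \ref{isodegreezero}, injectivity being proposition \ref{lazardinjectivity} in both cases. The only cosmetic difference is that you assemble the preimage on the K\"ahler side, whereas the paper performs the expansion inside $W^{\dagger}\Omega^{\mathrm{int}}_{k\lef[\underline{X}\rig]/k}$ and concludes by module generation over the degree-zero part.
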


\begin{proof}
By theorem \ref{structuretheorem}, any $w\in W^{\dagger}\Omega^{\mathrm{int}}_{k\lef[\underline{X}\rig]/k}$ can be written as a convergent series $w=\sum_{\lef(a,I\rig)\in\mathcal{P}}e\lef(\eta_{a,I},a,I\rig)$, where for all $\lef(a,I\rig)\in\mathcal{P}$ the weight function $a$ takes integral values, and $\eta_{a,I}\in W\lef(k\rig)$. Moreover, there exists $\varepsilon>0$ and $C\in\mathbb{R}$ such that $2n\operatorname{v}_{V}\lef(\eta_{a,I}\rig)-\varepsilon\lef\lvert a\rig\rvert>C$.

Let $a\colon\lef\llbracket1,n\rig\rrbracket\to\mathbb{N}$ be a weight function with integral values. In this case:
\begin{multline*}
F^{\operatorname{v}_{p}\lef(a\rig)}\lef(d\lef(\lef[\underline{X}^{p^{-\operatorname{v}_{p}\lef(a\rig)}a}\rig]\rig)\rig)\\\begin{aligned}
&\overset{\eqref{dproducts}}{=}\sum_{i=1}^{n}F^{\operatorname{v}_{p}\lef(a\rig)}\lef(d\lef(\lef[{X_{i}}^{p^{-\operatorname{v}_{p}\lef(a\rig)}a_{i}}\rig]\rig)\prod_{j\in\operatorname{Supp}\lef(a\rig)\smallsetminus\lef\{i\rig\}}\lef[{X_{j}}^{p^{-\operatorname{v}_{p}\lef(a\rig)}a_{j}}\rig]\rig)\\
&\overset{\eqref{frobeniuswittvectors}}{\overset{\eqref{fdttdt}}{=}}\sum_{i=1}^{n}p^{-\operatorname{v}_{p}\lef(a\rig)}a_{i}\lef[{X_{i}}^{a_{i}-1}\rig]d\lef(\lef[{X_{i}}\rig]\rig)\prod_{j\in\operatorname{Supp}\lef(a\rig)\smallsetminus\lef\{i\rig\}}\lef[{X_{j}}^{a_{j}}\rig]\text{.}
\end{aligned}
\end{multline*}

In particular, $F^{\operatorname{v}_{p}\lef(a\rig)}\lef(d\lef(\lef[\underline{X}^{p^{-\operatorname{v}_{p}\lef(a\rig)}a}\rig]\rig)\rig)$ can be written as a linear combination of elements of the form $w_{i}d\lef(\lef[{X_{i}}\rig]\rig)$ with $i\in\lef\llbracket1,n\rig\rrbracket$ and $w_{i}\in W^{\dagger}\Omega^{\mathrm{int},0}_{k\lef[\underline{X}\rig]/k}$ such that:
\begin{equation*}
\zeta_{\varepsilon}\lef(w_{i}\rig)\geqslant\varepsilon\lef(1-\lef\lvert a\rig\rvert\rig)\text{.}
\end{equation*}

Let $I\subset\operatorname{Supp}\lef(a\rig)$ be a partition of $a$. We deduce from the above discussion, and from the fact that $\zeta_{\varepsilon}$ is a pseudovaluation \cite[theorem 3.17]{pseudovaluationsonthederhamwittcomplex}, that we can write $e\lef(\eta_{a,I},a,I\rig)$ as a linear combination of elements of the form $w_{J}\prod_{j\in J}d\lef(\lef[{X_{j}}\rig]\rig)$ with $J\subset\operatorname{Supp}\lef(a\rig)$ and $w_{J}\in W^{\dagger}\Omega^{\mathrm{int},0}_{k\lef[\underline{X}\rig]/k}$ such that $\#J=\#I$, $\operatorname{v}_{V}\lef(w_{J}\rig)\geqslant\operatorname{v}_{V}\lef(\eta_{a,I}\rig)$ and:
\begin{equation*}
\zeta_{\varepsilon}\lef(w_{J}\rig)\geqslant2n\operatorname{v}_{V}\lef(\eta_{a,I}\rig)+\varepsilon\lef(\#J-\lef\lvert a\rig\rvert\rig)\geqslant\varepsilon\#J+C\text{.}
\end{equation*}

By overconvergence in the canonical topology, this means that the set of all $\prod_{j\in J}d\lef(\lef[{X_{j}}\rig]\rig)$ with varying $J\subset\lef\llbracket1,n\rig\rrbracket$ satisfying $\#J=\#I$ generate $W^{\dagger}\Omega^{\mathrm{int},\#I}_{k\lef[\underline{X}\rig]/k}$ as a $W^{\dagger}\Omega^{\mathrm{int},0}_{k\lef[\underline{X}\rig]/k}$-module.

This last fact allows us to conclude that the image of the morphism $t_{\widetilde{\operatorname{Frob}}}$ is all of $W^{\dagger}\Omega^{\mathrm{int}}_{k\lef[\underline{X}\rig]/k}$ by virtue of \cite[theorem 4.5]{formalcohomologyi} and lemma \ref{isodegreezero}; finally, proposition \ref{lazardinjectivity} states that it is injective.
\end{proof}

In the next two propositions, we see how the Lazard morphism is well-behaved regarding overconvergence. These results are primordial in the proofs of the main results.

\begin{prop}\label{mingammaepsilon}
Assume that $R=W\lef(k\rig)\lef[\underline{X}\rig]$. Let $b\coloneqq\lef(b_{i}\rig)_{i\in\lef\llbracket1,n\rig\rrbracket}\in\lef]0,+\infty\rig[^{n}$ and $\mu>0$. We can find $\delta>0$ such that for any $\varepsilon\in\lef]0,\delta\rig]$ and for any weight function $a$ taking values in $\mathbb{N}$ we have:
\begin{equation*}
\gamma_{\varepsilon,\operatorname{Id}_{k\lef[\underline{X}\rig]},b}\lef(v_{F}\lef(\underline{X}^{a}\rig)\rig)\geqslant1-\mu-\varepsilon\sum_{i=1}^{n}b_{i}a_{i}\text{.}
\end{equation*}
\end{prop}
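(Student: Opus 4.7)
The plan is induction on $\lef\lvert a\rig\rvert$, using the multiplicativity formula \eqref{productformulavf} for $v_{F}$ together with the pseudovaluation properties of $\gamma_{\varepsilon,\operatorname{Id}_{k\lef[\underline{X}\rig]},b}$. We may assume without loss of generality that $\mu\leqslant 1$, since shrinking $\mu$ only strengthens the conclusion.

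The base case $\lef\lvert a\rig\rvert=0$ is immediate since $v_{F}\lef(1\rig)=0$. For the substantive base case $a=e_{j}$ with $j\in\lef\llbracket 1,n\rig\rrbracket$, \eqref{vfisinvw} supplies $u_{F,j}\in W\lef(k\lef[\underline{X}\rig]\rig)$ with $v_{F}\lef(X_{j}\rig)=V\lef(u_{F,j}\rig)$. Lemma \ref{tfaffine} together with the trivial overconvergence of $\lef[X_{j}\rig]$ yields $v_{F}\lef(X_{j}\rig)\in W^{\dagger}\lef(k\lef[\underline{X}\rig]\rig)$, so lemma \ref{verschiebunggamma} gives $u_{F,j}\in W^{\dagger}\lef(k\lef[\underline{X}\rig]\rig)$ as well. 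The key analytic input is the elementary monotonicity fact that for any overconvergent Witt vector $w$, $\gamma_{\eta,\operatorname{Id}_{k\lef[\underline{X}\rig]},b}\lef(w\rig)\to 0$ as $\eta\to 0^{+}$: indeed, if $\gamma_{\eta_{0},\operatorname{Id}_{k\lef[\underline{X}\rig]},b}\lef(w\rig)=C\neq-\infty$, then for $\eta\in\lef]0,\eta_{0}\rig]$ and every $u\in\mathbb{N}$ one has $u+\eta p^{-u}v_{b}\lef(w_{u}\rig)\geqslant u\lef(1-\eta/\eta_{0}\rig)+\eta C/\eta_{0}\geqslant\eta C/\eta_{0}$, so $\gamma_{\eta,\operatorname{Id}_{k\lef[\underline{X}\rig]},b}\lef(w\rig)\geqslant\eta C/\eta_{0}$. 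Applying this to each of the $n$ vectors $u_{F,j}$ allows us to choose $\delta>0$ small enough that $\gamma_{p^{-1}\varepsilon,\operatorname{Id}_{k\lef[\underline{X}\rig]},b}\lef(u_{F,j}\rig)\geqslant-\mu$ for every $\varepsilon\in\lef]0,\delta\rig]$ and every $j$; lemma \ref{verschiebunggamma} then gives $\gamma_{\varepsilon,\operatorname{Id}_{k\lef[\underline{X}\rig]},b}\lef(v_{F}\lef(X_{j}\rig)\rig)\geqslant 1-\mu\geqslant 1-\mu-\varepsilon b_{j}$.

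For the inductive step, pick $j\in\operatorname{Supp}\lef(a\rig)$ and set $a'\coloneqq a-e_{j}$, so that $\underline{X}^{a}=\underline{X}^{a'}X_{j}$. Combining \eqref{productformulavf} with \eqref{xtoteichmullerx} yields
\begin{equation*}
v_{F}\lef(\underline{X}^{a}\rig)=v_{F}\lef(\underline{X}^{a'}\rig)v_{F}\lef(X_{j}\rig)+\lef[\underline{X}^{a'}\rig]v_{F}\lef(X_{j}\rig)+v_{F}\lef(\underline{X}^{a'}\rig)\lef[X_{j}\rig]\text{.}
\end{equation*}
A direct computation from the definition gives $\gamma_{\varepsilon,\operatorname{Id}_{k\lef[\underline{X}\rig]},b}\lef(\lef[\underline{X}^{c}\rig]\rig)=-\varepsilon\sum_{i}b_{i}c_{i}$ for any integer weight function $c$. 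Using the inductive hypothesis at $a'$, the base case at $X_{j}$, these Teichmüller computations, and the pseudovaluation properties of $\gamma_{\varepsilon,\operatorname{Id}_{k\lef[\underline{X}\rig]},b}$ \cite[proposition 2.3]{overconvergentwittvectors}, each of the three summands on the right is bounded below by $1-\mu-\varepsilon\sum_{i}b_{i}a_{i}$ (where the bound for the first summand is in fact $2-2\mu-\varepsilon\sum_{i}b_{i}a_{i}$, which suffices since $\mu\leqslant 1$), and the min inequality yields the desired bound.

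The only delicate part of the argument is the analytic step controlling $\gamma_{\eta,\operatorname{Id}_{k\lef[\underline{X}\rig]},b}\lef(u_{F,j}\rig)$ from below as $\eta\to 0^{+}$; its subtlety reduces to the remark that only the $n$ elements $u_{F,j}$ need to be uniformly controlled, which is possible because the index set is finite. Everything else is a formal consequence of the product formula \eqref{productformulavf} and the pseudovaluation axioms.
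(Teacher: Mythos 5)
Your proof is correct and follows essentially the same route as the paper: the base case for the variables $X_{j}$ rests on \eqref{vfisinvw} together with a shrink-$\varepsilon$ lower bound (your inline monotonicity computation is exactly the content of lemma \ref{lowerboundoverconvergentverschiebung}, which the paper cites instead of reproving), and the inductive step on $\lef\lvert a\rig\rvert$ uses \eqref{productformulavf}, \eqref{xtoteichmullerx} and the pseudovaluation axioms just as in the paper. The explicit reduction to $\mu\leqslant1$ and the three-term expansion are cosmetic variations of the paper's two-term grouping.
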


\begin{proof}
We can assume without loss of generality that $\mu\leqslant1$. As a consequence of \eqref{vfisinvw} and lemma \ref{lowerboundoverconvergentverschiebung}, we know that we can find $\delta>0$ such that for any $\varepsilon\in\lef]0,\delta\rig]$ we have:
\begin{equation}\label{smallenougheta}
\forall i\in\lef\llbracket1,n\rig\rrbracket,\ \gamma_{\varepsilon,\operatorname{Id}_{k\lef[\underline{X}\rig]},b}\lef(v_{F}\lef(X_{i}\rig)\rig)\geqslant1-\mu-\varepsilon b_{i}\text{.}
\end{equation}

Take such $\delta$ and $\varepsilon$. We shall prove this proposition by induction on $\lef\lvert a\rig\rvert\in\mathbb{N}$. When $\lef\lvert a\rig\rvert\leqslant1$, there is nothing to do. Assume that the proposition is shown for any weight function $a'$ with integral values such that $\lef\lvert a'\rig\rvert\leqslant j$, for some $j\in\mathbb{N}^{*}$. Let $a$ and $a'$ be two weight functions with integral values such that $\lef\lvert a'\rig\rvert=\lef\lvert a\rig\rvert-1=j$ and $a-a'$ also has values in $\mathbb{N}$; in particular, there exists $i\in\lef\llbracket1,n\rig\rrbracket$ such that $a_{i}={a'}_{i}+1$. According to \eqref{xtoteichmullerx} and \eqref{productformulavf}, we have:
\begin{equation*}
v_{F}\lef(\underline{X}^{a}\rig)=v_{F}\lef(\underline{X}^{a'}X_{i}\rig)=v_{F}\lef(\underline{X}^{a'}\rig)\lef(v_{F}\lef(X_{i}\rig)+\lef[X_{i}\rig]\rig)+\lef[\underline{X}^{a'}\rig]v_{F}\lef(X_{i}\rig)\text{.}
\end{equation*}

Since $\gamma_{\varepsilon,\operatorname{Id}_{k\lef[\underline{X}\rig]},b}$ is a pseudovaluation \cite[proposition 2.3]{overconvergentwittvectors}, by \eqref{smallenougheta} we find the following inequality which concludes the proof:
\begin{multline*}
\gamma_{\varepsilon,\operatorname{Id}_{k\lef[\underline{X}\rig]},b}\lef(v_{F}\lef(\underline{X}^{a}\rig)\rig)\\\geqslant\min\lef\{1-\mu-\varepsilon\sum_{i'=1}^{n}b_{i'}{a'}_{i'}-\varepsilon b_{i},-\varepsilon\sum_{i'=1}^{n}b_{i'}{a'}_{i'}+1-\mu-\varepsilon b_{i}\rig\}\text{.}
\end{multline*}
\end{proof}

\begin{prop}\label{vfgamma}
Identify ${W\lef(k\rig)\lef[\underline{X}\rig]}^{\dagger}$ with $W^{\dagger}\Omega^{\mathrm{int},0}_{k\lef[\underline{X}\rig]/k}$ as $W\lef(k\rig)$-modules by virtue of proposition \ref{lazardbijectivity}. Let $b\in\lef]0,+\infty\rig[^{n}$ and $\mu>0$. Then there exists $\delta>0$ such that for any $\varepsilon\in\lef]0,\delta\rig]$ we have:
\begin{align*}
\forall x\in{W\lef(k\rig)\lef[\underline{X}\rig]}^{\dagger},\ \gamma_{\varepsilon,\operatorname{Id}_{k\lef[\underline{X}\rig]},b}\lef(v_{F}\lef(x\rig)\rig)&\geqslant1-\mu+\gamma_{\varepsilon,\operatorname{Id}_{k\lef[\underline{X}\rig]},b}\lef(x\rig)\text{,}\\
\forall x\in{W\lef(k\rig)\lef[\underline{X}\rig]}^{\dagger},\ \zeta_{\varepsilon}\lef(v_{F}\lef(x\rig)\rig)&\geqslant1-\mu+\zeta_{\varepsilon}\lef(x\rig)\text{.}
\end{align*}
\end{prop}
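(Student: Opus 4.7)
The plan is to expand $x$ in its unique series of integral Teichm\"uller monomials and reduce to proposition \ref{mingammaepsilon} via the $W\lef(k\rig)$-linearity of $v_{F}$. By theorem \ref{structuretheorem} applied in degree zero to integral elements, combined with the identification of proposition \ref{lazardbijectivity}, every $x$ has a unique $p$-adically convergent writing
\begin{equation*}
x=\sum_{\alpha\in\mathbb{N}^{n}}\eta_{\alpha}\lef[\underline{X}^{\alpha}\rig]\text{,}\qquad\eta_{\alpha}\in W\lef(k\rig)\text{.}
\end{equation*}
Since both $t_{F}$ and $t_{\widetilde{\operatorname{Frob}}}$ are $W\lef(k\rig)$-algebra morphisms into the $p$-adically separated ring $W^{\dagger}\lef(k\lef[\underline{X}\rig]\rig)$, the difference $v_{F}$ is $W\lef(k\rig)$-linear and $p$-adically continuous, so $v_{F}\lef(x\rig)=\sum_{\alpha}\eta_{\alpha}v_{F}\lef(\underline{X}^{\alpha}\rig)$ with convergence in the $p$-adic topology.

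The key technical input is an explicit computation of both pseudovaluations on $x$. The formula $\zeta_{\varepsilon}\lef(x\rig)=\inf_{\alpha}\lef(2n\operatorname{v}_{V}\lef(\eta_{\alpha}\rig)-\varepsilon\lef\lvert\alpha\rig\rvert\rig)$ is immediate from the definition of $\zeta_{\varepsilon}$ evaluated on the integral decomposition. The companion formula $\gamma_{\varepsilon,\operatorname{Id}_{k\lef[\underline{X}\rig]},b}\lef(x\rig)=\inf_{\alpha}\lef(\operatorname{v}_{V}\lef(\eta_{\alpha}\rig)-\varepsilon\sum_{i=1}^{n}b_{i}\alpha_{i}\rig)$ is subtler: since $k\lef[\underline{X}\rig]$ is reduced of characteristic $p$ one has the Witt-coordinate identity $\lef(\eta_{\alpha}\lef[\underline{X}^{\alpha}\rig]\rig)_{u}=\lef(\eta_{\alpha}\rig)_{u}\underline{X}^{p^{u}\alpha}$, and a direct inspection of the universal Witt-addition polynomials reveals that any correction monomial arising in $x_{u}$ has weighted $v_{b}$-valuation no smaller than that of some diagonal term, so the infimum is attained precisely on the diagonal contributions.

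Equipped with these formulas, the two inequalities follow the same template. Let $\delta$ be the smaller of the two radii produced by proposition \ref{mingammaepsilon} applied to $\lef(b,\mu\rig)$ and to $\lef(\lef(1,\ldots,1\rig),\mu\rig)$ respectively. For every $\varepsilon\in\lef]0,\delta\rig]$ and every $\alpha\in\mathbb{N}^{n}$, the pseudovaluation axioms together with proposition \ref{mingammaepsilon} give
\begin{equation*}
\gamma_{\varepsilon,\operatorname{Id}_{k\lef[\underline{X}\rig]},b}\lef(\eta_{\alpha}v_{F}\lef(\underline{X}^{\alpha}\rig)\rig)\geqslant\operatorname{v}_{V}\lef(\eta_{\alpha}\rig)+1-\mu-\varepsilon\sum_{i=1}^{n}b_{i}\alpha_{i}\text{,}
\end{equation*}
and the chain $\zeta_{\varepsilon}\geqslant\gamma_{\varepsilon,\operatorname{Id}_{k\lef[\underline{X}\rig]},\lef(1,\ldots,1\rig)}$ from proposition \ref{gammaandzetawitt} yields an analogous bound for $\zeta_{\varepsilon}$. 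Since the summands $\eta_{\alpha}v_{F}\lef(\underline{X}^{\alpha}\rig)$ tend to $0$ for the $p$-adic topology, \eqref{padicoverconvergence} handles the $\zeta_{\varepsilon}$ case and its straightforward analogue handles $\gamma$ (valid by lower semi-continuity of $\gamma$ for the $p$-adic topology on $W\lef(k\lef[\underline{X}\rig]\rig)$, since each Witt coordinate of a partial sum stabilises in finitely many steps), passing the uniform bound to $v_{F}\lef(x\rig)$ itself.

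The main obstacle I foresee is verifying the closed form of $\gamma_{\varepsilon,\operatorname{Id}_{k\lef[\underline{X}\rig]},b}\lef(x\rig)$: one must check that the Witt-addition correction terms never contribute a monomial to some $x_{u}$ whose weighted degree improves on the minimising diagonal value. Once this combinatorial fact is settled, the rest of the argument is a mechanical assembly of proposition \ref{mingammaepsilon} with the pseudovaluation axioms.
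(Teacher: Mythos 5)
Your proposal follows the same route as the paper's proof: expand $x=\sum_{\alpha}\eta_{\alpha}\lef[\underline{X}^{\alpha}\rig]$, use the $W\lef(k\rig)$-linearity and $p$-adic continuity of $v_{F}$ to apply proposition \ref{mingammaepsilon} termwise together with the pseudovaluation axioms, and pass to the limit via the $V$-adic convergence of the $p$-adically convergent series. However, your justification of the one auxiliary fact you rightly isolate as the crux goes in the wrong direction. What the argument needs is
\begin{equation*}
\gamma_{\varepsilon,\operatorname{Id}_{k\lef[\underline{X}\rig]},b}\lef(x\rig)\leqslant\operatorname{v}_{V}\lef(\eta_{\alpha}\rig)-\varepsilon\sum_{i=1}^{n}b_{i}\alpha_{i}\quad\text{for every }\alpha\text{,}
\end{equation*}
so that the termwise bound $\inf_{\alpha}\lef(\operatorname{v}_{V}\lef(\eta_{\alpha}\rig)+1-\mu-\varepsilon\sum_{i}b_{i}\alpha_{i}\rig)$ dominates $1-\mu+\gamma_{\varepsilon,\operatorname{Id}_{k\lef[\underline{X}\rig]},b}\lef(x\rig)$. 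Your sketched verification --- that every Witt-addition correction monomial in $x_{u}$ has $v_{b}$-valuation no smaller than that of some diagonal term --- only bounds $v_{b}\lef(x_{u}\rig)$ from below, hence yields $\gamma_{\varepsilon,\operatorname{Id}_{k\lef[\underline{X}\rig]},b}\lef(x\rig)\geqslant\inf_{\alpha}\lef(\cdots\rig)$, which is just subadditivity and is not what is used. The direction actually required says that each diagonal contribution is \emph{detected}: the monomial $\underline{X}^{p^{u}\alpha}$ with $u=\operatorname{v}_{V}\lef(\eta_{\alpha}\rig)$ must not be cancelled by cross terms from other components, and such cancellation is combinatorially possible (already for $p=2$, the exponent $\alpha$ can lie in the convex hull of other exponents in the support, so a cross term can produce the same monomial). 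This is precisely the statement that the Gauss norm is computed exactly by the standard Teichmüller decomposition, established in \cite{overconvergentwittvectors}; the paper's own proof relies on it tacitly at the same step. Note that for $\zeta_{\varepsilon}$ the corresponding ``$\leqslant$'' is automatic, since $\zeta_{\varepsilon}$ is defined directly through the unique decomposition of theorem \ref{structuretheorem}. Once this direction is supplied or cited, the rest of your argument is sound.
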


\begin{proof}
One can write $x\in{W\lef(k\rig)\lef[\underline{X}\rig]}^{\dagger}$ as a $p$-adically convergent series:
\begin{equation*}
x=\sum_{a\colon\lef\llbracket1,n\rig\rrbracket\to\mathbb{N}}\eta_{a}\underline{X}^{a}
\end{equation*}
where $\eta_{a}\in W\lef(k\rig)$ for any weight function $a\colon\lef\llbracket1,n\rig\rrbracket\to\mathbb{N}$. Consider now a finite subset $S\subset\lef\{a\colon\lef\llbracket1,n\rig\rrbracket\to\mathbb{N}\rig\}$. According to proposition \ref{mingammaepsilon}, we can fix some $\delta>0$ such that for all $\varepsilon\in\lef]0,\delta\rig]$ we have the following inequalities which arise from the pseudovaluation properties of $\gamma_{\varepsilon,\operatorname{Id}_{k\lef[\underline{X}\rig]},b}$ \cite[proposition 2.3]{overconvergentwittvectors}:
\begin{equation*}
\begin{split}
\gamma_{\varepsilon,\operatorname{Id}_{k\lef[\underline{X}\rig]},b}\lef(v_{F}\lef(\sum_{a\in S}\eta_{a}\underline{X}^{a}\rig)\rig)&\geqslant\min_{a\in S}\lef\{\gamma_{\varepsilon,\operatorname{Id}_{k\lef[\underline{X}\rig]},b}\lef(\eta_{a}\rig)+\gamma_{\varepsilon,\operatorname{Id}_{k\lef[\underline{X}\rig]},b}\lef(v_{F}\lef(\underline{X}^{a}\rig)\rig)\rig\}\\
&\geqslant\min_{a\in S}\lef\{\operatorname{v}_{V}\lef(\eta_{a}\rig)+1-\mu-\varepsilon\sum_{i=1}^{n}b_{i}a_{i}\rig\}\\
&\geqslant1-\mu+\gamma_{\varepsilon,\operatorname{Id}_{k\lef[\underline{X}\rig]},b}\lef(\sum_{a\in S}\eta_{a}\underline{X}^{a}\rig)\\
\gamma_{\varepsilon,\operatorname{Id}_{k\lef[\underline{X}\rig]},b}\lef(v_{F}\lef(\sum_{a\in S}\eta_{a}\underline{X}^{a}\rig)\rig)&\geqslant1-\mu+\gamma_{\varepsilon,\operatorname{Id}_{k\lef[\underline{X}\rig]},b}\lef(x\rig)\text{.}
\end{split}
\end{equation*}

But the $p$-adic convergence of the series implies its $V$-adic convergence. So one can keep this lower bound by definition of $\gamma_{\varepsilon,\operatorname{Id}_{k\lef[\underline{X}\rig]},b}$ for $v_{F}\lef(x\rig)$.

The proof for $\zeta_{\varepsilon}$ goes exactly the same way, with $b=\lef(1,\ldots,1\rig)$ in order to apply proposition \ref{gammaandzetawitt} in the second inequality.
\end{proof}

The last lemma tells us that lifts of relatively perfect morphisms behave well with the Lazard morphism.

\begin{lemm}\label{compatibleetalefrobenii}
Let $\overline{\varphi}\colon\overline{R}\to\overline{S}$ be a relatively perfect morphism of smooth commutative $k$-algebras. Let $S$ be a smooth commutative $W\lef(k\rig)$-algebra lifting $\overline{S}$. Then, there is a lift of the étale morphism $\varphi\colon R^{\dagger}\to S^{\dagger}$ and a lift of the Frobenius endomorphism $G\colon S^{\dagger}\to S^{\dagger}$ such that the following square is commutative:
\begin{equation*}
\begin{tikzcd}
R^{\dagger}\arrow[r,"F"]\arrow[d,"\varphi"]&R^{\dagger}\arrow[d,"\varphi"]\\
S^{\dagger}\arrow[r,"G"]&S^{\dagger}\text{.}
\end{tikzcd}
\end{equation*}

In particular, the following diagram is also commutative:
\begin{equation*}
\begin{tikzcd}
R^{\dagger}\arrow[r,"t_{F}"]\arrow[d,"\varphi"]&W\lef(\overline{R}\rig)\arrow[d,"W\lef(\overline{\varphi}\rig)"]\\
S^{\dagger}\arrow[r,"t_{G}"]&W\lef(\overline{S}\rig)\text{.}
\end{tikzcd}
\end{equation*}
\end{lemm}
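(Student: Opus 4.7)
The plan is to construct $\varphi$ and $G$ separately, and then deduce the second diagram from the Joyal adjunction. First I would argue that $\overline{\varphi}$ is étale: proposition \ref{relativelyperfectimpliesformallyetale} yields formal étaleness, and the finite-type hypotheses underlying smoothness upgrade this to étaleness via proposition \ref{characteriserelativelyperfect}. The equivalence between étale $\overline{R}$-algebras and étale $R^{\dagger}$-algebras, which is exactly Arabia's \cite[3.3.4]{relevementdesalgebreslissesetdeleursmorphismes} already invoked in lemma \ref{commutativefrobeniuslift}, then produces a lift $\varphi\colon R^{\dagger}\to S^{\dagger}$ of $\overline{\varphi}$ compatible with the chosen $S$ and étale as a morphism of weakly complete finitely generated $W\lef(k\rig)$-algebras.

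For the Frobenius lift $G$, I would consider the composite $\varphi\circ F\colon R^{\dagger}\to S^{\dagger}$. Its reduction modulo $p$ is $\overline{\varphi}\circ\operatorname{Frob}_{\overline{R}}=\operatorname{Frob}_{\overline{S}}\circ\overline{\varphi}$ by naturality of the absolute Frobenius. Arabia's \cite[3.3.2]{relevementdesalgebreslissesetdeleursmorphismes} applies to the étale morphism $\varphi$ together with this datum: it produces a Frobenius lift $G\colon S^{\dagger}\to S^{\dagger}$ satisfying $G\circ\varphi=\varphi\circ F$. The main obstacle of the proof is that $G$ must land in the weak completion $S^{\dagger}$ rather than merely in the $p$-adic completion $\widehat{S^{\dagger}}$; a purely formal smoothness argument would only give a lift in $\widehat{S^{\dagger}}$, and it is precisely the content of Arabia's theorems that such Frobenius lifts can be produced within the weakly complete finitely generated category.

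Finally, the compatibility $W\lef(\overline{\varphi}\rig)\circ t_{F}=t_{G}\circ\varphi$ is automatic. Recall that $t_{F}=W\lef(\Pi_{\overline{R}}\rig)\circ s_{F}$, where $s_{F}$ is the unique $\delta$-ring morphism produced by the Joyal adjunction \cite[théorème 4]{joyal}. The commutativity $G\circ\varphi=\varphi\circ F$ expresses exactly that $\varphi$ is a morphism of $\delta$-rings from $\lef(R^{\dagger},F\rig)$ to $\lef(S^{\dagger},G\rig)$; consequently $t_{G}\circ\varphi$ and $W\lef(\overline{\varphi}\rig)\circ t_{F}$ are both $\delta$-ring morphisms $R^{\dagger}\to W\lef(\overline{S}\rig)$ whose $0$-th Witt components coincide with $\overline{\varphi}\circ\Pi_{\overline{R}}$, and the uniqueness clause of the adjunction forces them to be equal.
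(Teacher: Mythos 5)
Your proof is correct, but it constructs $G$ by a genuinely different mechanism than the paper. The paper exploits relative perfectness directly: since the Frobenius square for $\overline{\varphi}$ is cocartesian, it lifts the pushout $\overline{R}\otimes_{\operatorname{Frob}_{\overline{R}},\overline{\varphi}}\overline{S}$ to the weak completion $\lef(R^{\dagger}\otimes_{F,\varphi'}S^{\dagger}\rig)^{\dagger}$, identifies this with $S^{\dagger}$ via the uniqueness of weakly complete smooth lifts (Arabia 3.3.2), and takes $G$ to be the transported map $\lef(F\otimes\operatorname{Id}_{S^{\dagger}}\rig)^{\dagger}$, so that commutativity of the square is automatic from the coprojection. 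You instead obtain $G$ abstractly from the unique lifting property of the (weakly complete) étale morphism $\varphi$ applied to the datum $\varphi\circ F$ together with $\operatorname{Frob}_{\overline{S}}$ modulo $p$; you correctly identify that the only danger is landing in $\widehat{S}$ rather than $S^{\dagger}$, and both routes discharge exactly this point by appealing to Arabia. Your approach uses relative perfectness only through étaleness of $\overline{\varphi}$, whereas the paper's pushout construction is the explicit witness of the extension your lifting argument asserts exists; the two are essentially dual, and yours is arguably more economical. Your closing argument via the Joyal adjunction (two $\delta$-ring maps into $W\lef(\overline{S}\rig)$ with equal zeroth components coincide) is exactly the right justification for the second diagram, which the paper leaves implicit. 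One small imprecision: proposition \ref{characteriserelativelyperfect} upgrades formal étaleness to étaleness only under a finite-freeness hypothesis; in general you should combine proposition \ref{relativelyperfectimpliesformallyetale} with the fact that $\overline{S}$ is of finite presentation over the Noetherian ring $\overline{R}$, as the paper does elsewhere via the Stacks Project. This does not affect the validity of the argument, since the statement already treats $\overline{\varphi}$ as étale.
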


\begin{proof}
First lift $\overline{\varphi}$ to a morphism of $W\lef(k\rig)$-algebras $\varphi'\colon R^{\dagger}\to S^\dagger$. Modulo $p$, the first diagram we want to get is cocartesian by assumption. But we can lift $\overline{R}\otimes_{\operatorname{Frob}_{\overline{R}},\overline{\varphi}}\overline{S}$ to $\lef(R^{\dagger}\otimes_{F,\varphi'}S^{\dagger}\rig)^{\dagger}$, which is a weakly complete finitely generated $W\lef(k\rig)$-algebra in the sense of Monsky--Washnitzer; see for instance the proof of \cite[theorem 3.4]{formalcohomologyi}.

By \cite[3.3.2]{relevementdesalgebreslissesetdeleursmorphismes}, such lifts are unique; that is, there is an isomorphism of $W\lef(k\rig)$-algebras $S^{\dagger}\cong\lef(R^{\dagger}\otimes_{F,\varphi'}S^{\dagger}\rig)^{\dagger}$. And we have a commutative diagram:
\begin{equation*}
\begin{tikzcd}[column sep=huge]
R^{\dagger}\arrow[r,"F"]\arrow[d]&R^{\dagger}\arrow[d]\\
\lef(R^{\dagger}\otimes_{F,\varphi'}S^{\dagger}\rig)^{\dagger}\arrow[r,"\lef(F\otimes\operatorname{Id}_{S^{\dagger}}\rig)^{\dagger}"]&\lef(R^{\dagger}\otimes_{F,\varphi'}S^{\dagger}\rig)^{\dagger}\text{.}
\end{tikzcd}
\end{equation*}

The horizontal maps are given by the coprojection.
\end{proof}

\section{Structure of the overconvergent Witt vectors ring}

Throughout this section, we will consider a perfect commutative $\mathbb{F}_{p}$-algebra $k$ such that $W\lef(k\rig)$ is Noetherian, and a relatively perfect morphism of smooth commutative $k$-algebras $\psi\colon k\lef[\underline{X}\rig]\to\overline{R}$. As in the previous sections, we will lift smoothly $\overline{R}$ to an $W\lef(k\rig)$-algebra $R$. Except where otherwise stated, we consider a lift $F\colon R^{\dagger}\to R^{\dagger}$ of $\operatorname{Frob}_{\overline{R}}$ giving rise to:
\begin{equation*}
t_{F}\colon\Omega^{\mathrm{sep}}_{R^{\dagger}/W\lef(k\rig)}\to W^{\dagger}\Omega_{\overline{R}/k}\text{.}
\end{equation*}

By definition, the following commutative diagram is a cocartesian square of $k\lef[\underline{X}\rig]$-algebras:
\begin{equation}\label{cocartesiansquare}
\begin{tikzcd}[column sep=huge]
k\lef[\underline{X}\rig]\arrow[r,"\operatorname{Frob}_{k\lef[\underline{X}\rig]}"]\arrow[d,"\psi"]&k\lef[\underline{X}\rig]\arrow[d,"\psi"]\\
\overline{R}\arrow[r,"\operatorname{Frob}_{\overline{R}}"]&\overline{R}\text{.}
\end{tikzcd}
\end{equation}

According to \cite[0EBS]{stacksproject}, étale morphisms are relatively perfect. This implies that we can cover a smooth variety over $k$ with affine spectra $\operatorname{Spec}\lef(\overline{R}\rig)$ with $\overline{R}$ as above. In this section, our goal will be to give the structure, as an $R^{\dagger}$-algebra, of the ring $W^{\dagger}\lef(\overline{R}\rig)$ of overconvergent Witt vectors with coefficients in such rings.

The above square also implies that there is an isomorphism of $k\lef[\underline{X}\rig]$-algebras for every $l\in\mathbb{N}^{*}$:
\begin{equation}\label{etalefrobeniusdecomposition}
\overline{R}\cong\bigoplus_{\substack{a\colon\lef\llbracket1,n\rig\rrbracket\to\mathbb{N}\lef[\frac{1}{p}\rig]\\\forall i\in\lef\llbracket1,n\rig\rrbracket,\ 0\leqslant a_{i}<1\\\operatorname{v}_{p}\lef(a\rig)\geqslant-l}}\overline{R}\otimes_{k\lef[\underline{X}\rig]}\psi\lef(\underline{X}^{p^{l}a}\rig){\operatorname{Frob}_{k\lef[\underline{X}\rig]}}^{l}\lef(k\lef[\underline{X}\rig]\rig)\text{.}
\end{equation}

These decompositions translate as follows on the ring of overconvergent Witt vectors.

\begin{prop}\label{divisionbypsum}
With the above notations, consider an overconvergent series:
\begin{equation*}
x=\sum_{\substack{a\colon\lef\llbracket1,n\rig\rrbracket\to\mathbb{N}\lef[\frac{1}{p}\rig]\\\forall i\in\lef\llbracket1,n\rig\rrbracket,\ 0\leqslant a_{i}<1}}t_{F}\lef(r_{a}\rig)W\lef(\psi\rig)\lef(e\lef(1,a,\emptyset\rig)\rig)\in W^{\dagger}\lef(\overline{R}\rig)
\end{equation*}
where $r_{a}\in R^{\dagger}$ for any weight function $a$ in the series. Then, for all $l\in\mathbb{N}$, one has:
\begin{equation*}
x\in V^{l}\lef(W^{\dagger}\lef(\overline{R}\rig)\rig)\iff\forall a\colon\lef\llbracket1,n\rig\rrbracket\to\mathbb{N}\lef[\frac{1}{p}\rig],\ p^{\max\lef\{l+\operatorname{v}_{p}\lef(a\rig),0\rig\}}\mid r_{a}\text{.}
\end{equation*}
\end{prop}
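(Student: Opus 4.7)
The plan is to prove each implication separately: the reverse implication is a direct computation, while the forward implication goes by induction on $l$.

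For $(\Leftarrow)$, I would apply \eqref{vxfyvxy} to rewrite each summand as
\[
t_{F}(r_{a})\,W(\psi)(e(1,a,\emptyset))=V^{u(a)}\lef(t_{F}(F^{u(a)}(r_{a}))\cdot\lef[\psi(\underline{X}^{p^{u(a)}a})\rig]\rig).
\]
When $u(a)>l$, the term lies in $V^{u(a)}W\subseteq V^{l}W$ automatically. When $u(a)\leqslant l$, the hypothesis $p^{l-u(a)}\mid r_{a}$ combined with the characteristic-$p$ identity $p^{k}=V^{k}F^{k}$ places the argument of $V^{u(a)}$ inside $V^{l-u(a)}W$, so the term again lies in $V^{l}W$. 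Summing and invoking the closedness of $V^{l}W^{\dagger}(\overline{R})$ in the $V$-adic topology then finishes this direction (using that $V^{l}W^{\dagger}=V^{l}W\cap W^{\dagger}$, by lemma \ref{verschiebunggamma}).

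For $(\Rightarrow)$, I would argue by induction on $l$, with trivial base $l=0$. Assuming the equivalence at level $l$ and given $x\in V^{l+1}W^{\dagger}(\overline{R})$, the inclusion $V^{l+1}\subseteq V^{l}$ together with the induction hypothesis provides a factorisation $r_{a}=p^{l-u(a)}s_{a}$ in $R^{\dagger}$ for every $a\in\widetilde{A}_{l}\coloneqq\lef\{a:0\leqslant a_{i}<1,\ u(a)\leqslant l\rig\}$, and it remains to show $p\mid s_{a}$. Repeating the computation from the previous paragraph with the explicit factor $p^{l-u(a)}$ kept, and using the operator identity $p^{l-u(a)}V^{u(a)}=V^{l}F^{l-u(a)}$ on $W(\overline{R})$ in characteristic $p$ together with $F^{l-u(a)}\lef(\lef[\psi(\underline{X}^{p^{u(a)}a})\rig]\rig)=\lef[\psi(\underline{X}^{p^{l}a})\rig]$, one extracts
\[
x=V^{l}(y)+z,\qquad z\in V^{l+1}W^{\dagger},
\]
where $y$ is the \emph{finite} sum
\[
y=\sum_{a\in\widetilde{A}_{l}}F^{l}(t_{F}(s_{a}))\cdot\lef[\psi(\underline{X}^{p^{l}a})\rig].
\]
Injectivity of $V$ then forces $y\in VW(\overline{R})$, that is, the vanishing of its zeroth Witt coordinate
\[
y_{0}=\sum_{a\in\widetilde{A}_{l}}\lef(s_{a}\bmod p\rig)^{p^{l}}\psi(\underline{X}^{p^{l}a})\in\overline{R}.
\]
The decomposition \eqref{etalefrobeniusdecomposition} exhibits $\lef\{\psi(\underline{X}^{p^{l}a})\rig\}_{a\in\widetilde{A}_{l}}$ as a basis of $\overline{R}$ over $\operatorname{Frob}_{\overline{R}}^{l}(\overline{R})$, so each coefficient $\lef(s_{a}\bmod p\rig)^{p^{l}}\in\operatorname{Frob}_{\overline{R}}^{l}(\overline{R})$ must vanish. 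Since $\overline{R}$ is reduced (being smooth over the perfect field $k$), this lifts to $s_{a}\equiv0\pmod{p}$ in $R^{\dagger}$, which is the divisibility required at level $l+1$.

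The main technical obstacle is the identity $x=V^{l}(y)+z$: it requires a careful combination of \eqref{vxfyvxy}, \eqref{frobeniuswittvectors}, the characteristic-$p$ identities $p^{k}=V^{k}F^{k}=F^{k}V^{k}$, and the fact that $t_{F}$ is a morphism of $\delta$-rings so intertwines the Frobenius lifts. Once that bookkeeping is in place, the combinatorial content of the proposition reduces to reading off the zeroth Witt coordinate against the finite free basis supplied by \eqref{etalefrobeniusdecomposition}, which is precisely where the relatively perfect hypothesis on $\psi$ enters decisively.
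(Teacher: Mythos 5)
Your proof is correct and follows essentially the same route as the paper's: the sufficiency is the same direct computation with $pw=V\lef(F\lef(w\rig)\rig)$, and the necessity is the same induction on $l$, extracting the $l$-th Witt coordinate and reading it against the $\operatorname{Frob}_{\overline{R}}^{l}\lef(\overline{R}\rig)$-basis $\lef\{\psi\lef(\underline{X}^{p^{l}a}\rig)\rig\}$ furnished by the cocartesian square and \eqref{etalefrobeniusdecomposition}, then invoking reducedness of $\overline{R}$. (Only a cosmetic slip: $k$ is a perfect $\mathbb{F}_{p}$-algebra here, not necessarily a field, but $\overline{R}$ is still reduced.)
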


\begin{proof}
The condition is clearly sufficient, because we have $pw=V\lef(F\lef(w\rig)\rig)$ for any Witt vector $w$ over a ring of characteristic $p$.

We show the implication by induction on $l\in\mathbb{N}$, the case $l=0$ being obvious. So let $l\in\mathbb{N}$ satisfying the proposition, and assume that:
\begin{equation*}
x=\sum_{\substack{a\colon\lef\llbracket1,n\rig\rrbracket\to\mathbb{N}\lef[\frac{1}{p}\rig]\\\forall i\in\lef\llbracket1,n\rig\rrbracket,\ 0\leqslant a_{i}<1}}t_{F}\lef(r_{a}\rig)W\lef(\psi\rig)\lef(e\lef(1,a,\emptyset\rig)\rig)\in V^{l+1}\lef(W^{\dagger}\lef(\overline{R}\rig)\rig)\text{.}
\end{equation*}

By induction hypothesis, one has $r_{a}=p^{\max\lef\{l-u\lef(a\rig),0\rig\}}r_{a}'$ for any $a$ in the series, where all $r_{a}'\in R^{\dagger}$. With the usual arguments on Witt vectors in characteristic $p$, this implies that:
\begin{equation*}
x_{l}=\sum_{\substack{a\colon\lef\llbracket1,n\rig\rrbracket\to\mathbb{N}\lef[\frac{1}{p}\rig]\\\forall i\in\lef\llbracket1,n\rig\rrbracket,\ 0\leqslant a_{i}<1\\\operatorname{v}_{p}\lef(a\rig)\geqslant-l}}\overline{r_{a}'}^{p^{l}}\psi\lef(\underline{X}^{p^{l}a}\rig)=0\text{.}
\end{equation*}

In particular, using cocartesian square \eqref{cocartesiansquare}, we deduce that $\overline{r_{a}'}^{p^{l}}=0$ for all $a$ in the sum by \eqref{etalefrobeniusdecomposition}. As $\overline{R}$ is reduced \cite[033B]{stacksproject}, this concludes our proof.
\end{proof}

The next proposition details the $R^{\dagger}$-algebra structure of the overconvergent Witt vectors ring in the case where $R=W\lef(k\rig)\lef[\underline{X}\rig]$. We will reduce to this case to prove the general one.

\begin{prop}\label{wittpolynomialstructure}
Assume that $R=W\lef(k\rig)\lef[\underline{X}\rig]$. Denote by $\mathcal{A}$ the set of all weight functions $a\colon\lef\llbracket1,n\rig\rrbracket\to\mathbb{N}\lef[\frac{1}{p}\rig]$ such that $0\leqslant a_{i}<1$ for all $i\in\lef\llbracket1,n\rig\rrbracket$.

Let $b=\lef(b_{i}\rig)_{i\in\lef\llbracket1,n\rig\rrbracket}\in\lef]0,+\infty\rig[^{n}$. Then, there is $\delta>0$ such that for any $\varepsilon\in\lef]0,\delta\rig]$ and all $w\in W^{\dagger}\lef(k\lef[\underline{X}\rig]\rig)$ there exists a unique function:
\begin{equation*}
r\colon\begin{array}{rl}\mathcal{A}\to&{W\lef(k\rig)\lef[\underline{X}\rig]}^{\dagger}\\a\mapsto&r_{a}\end{array}
\end{equation*}
such that:
\begin{gather*}
w=\sum_{a\in\mathcal{A}}t_{F}\lef(r_{a}\rig)e\lef(1,a,\emptyset\rig)\text{,}\\
\forall a\in\mathcal{A},\ \gamma_{\varepsilon,\operatorname{Id}_{k\lef[\underline{X}\rig]},b}\lef(t_{F}\lef(r_{a}\rig)\rig)+\gamma_{\varepsilon,\operatorname{Id}_{k\lef[\underline{X}\rig]},b}\lef(e\lef(1,a,\emptyset\rig)\rig)\geqslant\gamma_{\varepsilon,\operatorname {Id}_{k\lef[\underline{X}\rig]},b}\lef(w\rig)\text{.}
\end{gather*}
\end{prop}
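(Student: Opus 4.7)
The plan is to establish existence by iterating from the canonical Frobenius lift $\widetilde{\operatorname{Frob}}$ to the given lift $F$, then to deduce uniqueness from proposition \ref{divisionbypsum}.

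First I would treat the case $F = \widetilde{\operatorname{Frob}}$. Applying theorem \ref{structuretheorem} in degree zero gives a unique convergent writing $w = \sum_{a} e\lef(\eta_{a}, a, \emptyset\rig)$ with $\eta_{a} \in W\lef(k\rig)$. Every weight function decomposes uniquely as $a = q + a'$ with $q \colon \lef\llbracket1,n\rig\rrbracket \to \mathbb{N}$ and $a' \in \mathcal{A}$, and then $u\lef(a\rig) = u\lef(a'\rig)$. Since $k$ is perfect the Witt--Frobenius is a $\operatorname{v}_{V}$-preserving automorphism of $W\lef(k\rig)$, so setting $\widetilde{\eta}_{a} \coloneqq F^{-u\lef(a'\rig)}\lef(\eta_{a}\rig)$, equations \eqref{frobeniuswittvectors} and \eqref{vxfyvxy} yield
\begin{equation*}
e\lef(\eta_{a}, a, \emptyset\rig) = V^{u\lef(a'\rig)}\lef(F^{u\lef(a'\rig)}\lef(\widetilde{\eta}_{a}\lef[\underline{X}^{q}\rig]\rig) \cdot \lef[\underline{X}^{p^{u\lef(a'\rig)} a'}\rig]\rig) = \widetilde{\eta}_{a}\lef[\underline{X}^{q}\rig] \cdot e\lef(1, a', \emptyset\rig).
\end{equation*}
Regrouping by $a' \in \mathcal{A}$ yields $w = \sum_{a'} t_{\widetilde{\operatorname{Frob}}}\lef(s_{a'}\rig) e\lef(1, a', \emptyset\rig)$ with $s_{a'} \coloneqq \sum_{q} \widetilde{\eta}_{q+a'} X^{q}$; lemma \ref{isodegreezero} identifies $t_{\widetilde{\operatorname{Frob}}}\lef(s_{a'}\rig)$ as an element of $W^{\dagger}\Omega^{\operatorname{int},0}_{k\lef[\underline{X}\rig]/k}$, and the overconvergence of $w$ translated via $\zeta_{\varepsilon}$-bounds (proposition \ref{gammaandzetawitt}) forces $s_{a'} \in W\lef(k\rig)\lef[\underline{X}\rig]^{\dagger}$ together with a corresponding $\gamma_{\varepsilon, \operatorname{Id}_{k\lef[\underline{X}\rig]}, b}$-bound on $t_{\widetilde{\operatorname{Frob}}}\lef(s_{a'}\rig)e\lef(1, a', \emptyset\rig)$.

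For a general $F$ I would iterate via $t_{F} = t_{\widetilde{\operatorname{Frob}}} + v_{F}$. Setting $w^{(0)} \coloneqq w$ and producing $s_{a'}^{(0)}$ by the $\widetilde{\operatorname{Frob}}$-decomposition above, I define
\begin{equation*}
w^{(l+1)} \coloneqq w^{(l)} - \sum_{a' \in \mathcal{A}} t_{F}\lef(s_{a'}^{(l)}\rig) e\lef(1, a', \emptyset\rig) = -\sum_{a'} v_{F}\lef(s_{a'}^{(l)}\rig) e\lef(1, a', \emptyset\rig),
\end{equation*}
and then produce $s_{a'}^{(l+1)}$ from the $\widetilde{\operatorname{Frob}}$-decomposition of $w^{(l+1)}$. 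Since $v_{F}$ takes values in $V\lef(W^{\dagger}\lef(k\lef[\underline{X}\rig]\rig)\rig)$ by \eqref{vfisinvw}, and since lemma \ref{verschiebunggamma} together with \eqref{vxfyvxy} gives $V\lef(W^{\dagger}\rig) \cdot W^{\dagger} \subseteq V\lef(W^{\dagger}\rig)$, I get $w^{(l)} \in V^{l}\lef(W^{\dagger}\lef(k\lef[\underline{X}\rig]\rig)\rig)$. Picking $\mu \in \lef]0,1\rig[$ and then $\delta > 0$ from proposition \ref{vfgamma}, the expected inductive bound $\gamma_{\varepsilon, \operatorname{Id}, b}\lef(w^{(l)}\rig) \geqslant \gamma_{\varepsilon, \operatorname{Id}, b}\lef(w\rig) + l\lef(1 - \mu\rig)$ should hold for every $\varepsilon \in \lef]0,\delta\rig]$, so $w^{(l)} \to 0$ overconvergently; the series $r_{a'} \coloneqq \sum_{l} s_{a'}^{(l)}$ will then converge in $R^{\dagger}$ and satisfy $w = \sum_{a'} t_{F}\lef(r_{a'}\rig) e\lef(1, a', \emptyset\rig)$ together with the required $\gamma$-estimate, by passing to the limit.

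Uniqueness will follow immediately from proposition \ref{divisionbypsum} applied with $\psi = \operatorname{Id}_{k\lef[\underline{X}\rig]}$: if $\sum_{a'} t_{F}\lef(r_{a'}\rig) e\lef(1, a', \emptyset\rig) = 0$, then $p^{\max\lef\{l - u\lef(a'\rig), 0\rig\}}$ divides $r_{a'}$ in $R^{\dagger}$ for every $l \in \mathbb{N}$, and the $p$-adic separatedness of $R^{\dagger} \subset \widehat{R}$ forces $r_{a'} = 0$. The hard part will be controlling the iteration: I must verify that each partial sum $\sum_{j=0}^{l} s_{a'}^{(j)}$ remains in $R^{\dagger}$ with uniform $\gamma_{\varepsilon, \operatorname{Id}, b}$ bounds in both $a'$ and $l$, and that a single $\delta > 0$ suffices to control the whole construction simultaneously for every $\varepsilon \in \lef]0,\delta\rig]$.
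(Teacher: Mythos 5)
Your route is genuinely different from the paper's, and it is essentially workable. The paper proceeds Witt coordinate by Witt coordinate: at step $l$ it takes the $l$-th ghost of the remainder $w^{\lef(l\rig)}\in V^{l}\lef(W^{\dagger}\lef(k\lef[\underline{X}\rig]\rig)\rig)$, splits that polynomial according to how far each monomial lies in the image of $\operatorname{Frob}_{k\lef[\underline{X}\rig]}$ (using perfectness of $k$), and subtracts $t_{F}\lef(p^{i}\rho\lef(P_{l,i,a}\rig)\rig)e\lef(1,a,\emptyset\rig)$ where $\rho$ is the Teichmüller lift of coefficients; the discrepancy between $t_{F}$ and the naive lift is automatically pushed into $V^{l+1}$. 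You instead perform the \emph{exact} decomposition for $\widetilde{\operatorname{Frob}}$ in one stroke via theorem \ref{structuretheorem} and the identity $e\lef(\eta_{a},a,\emptyset\rig)=F^{-u\lef(a'\rig)}\lef(\eta_{a}\rig)\lef[\underline{X}^{q}\rig]e\lef(1,a',\emptyset\rig)$ (which is correct, by \eqref{frobeniuswittvectors} and \eqref{vxfyvxy}, and does use perfectness of $k$), and then treat the passage to a general $F$ as a perturbation in $v_{F}$ controlled by proposition \ref{vfgamma}. Your iteration gains $1-\mu$ in $\gamma$ per step rather than one unit of $V$-depth, which is enough for convergence; and your observation that one should also track the divisibility $p^{\max\lef\{l-u\lef(a'\rig),0\rig\}}\mid s^{\lef(l\rig)}_{a'}$ coming from proposition \ref{divisionbypsum} is exactly what is needed to see that $v_{F}\lef(s^{\lef(l\rig)}_{a'}\rig)e\lef(1,a',\emptyset\rig)\in V^{l+1}$, so the bookkeeping closes. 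The uniqueness argument is the same as the paper's.

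The one step that does not work as written is the claim that the $\gamma_{\varepsilon,\operatorname{Id}_{k\lef[\underline{X}\rig]},b}$-estimate in the $\widetilde{\operatorname{Frob}}$-decomposition follows from proposition \ref{gammaandzetawitt}. That proposition only compares $\gamma$ with $\zeta_{\varepsilon}$ for $b=\lef(1,\ldots,1\rig)$, and even there it costs a factor $2n$ and a rescaling of $\varepsilon$; the statement you must prove demands the inequality for the \emph{same} $\varepsilon$ and an \emph{arbitrary} $b$, with no loss. The correct justification is a direct computation with Witt coordinates: the $m$-th coordinate of $V^{u\lef(a\rig)}\lef(\eta_{a}\lef[\underline{X}^{p^{u\lef(a\rig)}a}\rig]\rig)$ is $\lef(\eta_{a}\rig)_{m-u\lef(a\rig)}\underline{X}^{p^{m}a}$ (and $0$ for $m<u\lef(a\rig)$), so distinct weights $a$ contribute distinct monomials to each coordinate of $w$, whence $v_{b}\lef(w_{m}\rig)$ is the minimum of the contributions and $\gamma_{\varepsilon,\operatorname{Id}_{k\lef[\underline{X}\rig]},b}$ of each individual term is at least $\gamma_{\varepsilon,\operatorname{Id}_{k\lef[\underline{X}\rig]},b}\lef(w\rig)$, exactly. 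Given your explicit product formula this is a short verification, so the gap is one of justification rather than of substance; this disjointness of monomials is also what the paper's own proof tacitly uses when it bounds $v_{b}\lef(P_{l,i,a}\rig)$ from the definition of $\gamma$.
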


\begin{proof}
The unicity is given by proposition \ref{divisionbypsum} and $p$-adic separatedness, so we only have to focus on the existence of $r$. We shall use the following map:
\begin{equation*}
\rho\colon\begin{array}{rl}k\lef[\underline{X}\rig]\to&{W\lef(k\rig)\lef[\underline{X}\rig]}^{\dagger}\\\sum_{a\in\mathbb{N}^{n}}c_{a}\underline{X}^{a}\mapsto&\sum_{a\in\mathbb{N}^{n}}\lef[c_{a}\rig]\underline{X}^{a}\end{array}\text{,}
\end{equation*}
where we have put $c_{a}\in k$ for all $a\in\mathbb{N}^{n}$.

Let $w\in W^{\dagger}\lef(k\lef[\underline{X}\rig]\rig)$. We derive from proposition \ref{mingammaepsilon} that there is $\delta>0$ such that for any $\varepsilon\in\lef]0,\delta\rig]$ we get for any $P\in k\lef[\underline{X}\rig]$:
\begin{equation}\label{vbandgammaepsilonb}
\gamma_{\varepsilon,\operatorname{Id}_{k\lef[\underline{X}\rig]},b}\lef(t_{F}\lef(\rho\lef(P\rig)\rig)\rig)\geqslant v_{b}\lef(P\rig)\text{.}
\end{equation}

Hence, for any such $\varepsilon\in\lef]0,\delta\rig]$ we have $\gamma_{\varepsilon,\operatorname{Id}_{k\lef[\underline{X}\rig]},b}\lef(t_{F}\lef(\rho\lef(w_{0}\rig)\rig)\rig)\geqslant\gamma_{\varepsilon,\operatorname{Id}_{k\lef[\underline{X}\rig]},b}\lef(w\rig)$. So we can find an element $w^{\lef(1\rig)}\coloneqq w-t_{F}\lef(\rho\lef(w_{0}\rig)\rig)\in V\lef(W^{\dagger}\lef(k\lef[\underline{X}\rig]\rig)\rig)$ such that $\gamma_{\varepsilon,\operatorname{Id}_{k\lef[\underline{X}\rig]},b}\lef(w^{\lef(1\rig)}\rig)\geqslant\gamma_{\varepsilon,\operatorname{Id}_{k\lef[\underline{X}\rig]},b}\lef(w\rig)$ holds.

Let $l\in\mathbb{N}^{*}$. Assume given $w^{\lef(l\rig)}\in V^{l}\lef(W^{\dagger}\lef(k\lef[\underline{X}\rig]\rig)\rig)$ such that for all $\varepsilon\in\lef]0,\delta\rig]$ we have:
\begin{equation*}
\gamma_{\varepsilon,\operatorname{Id}_{k\lef[\underline{X}\rig]},b}\lef(w^{\lef(l\rig)}\rig)\geqslant\gamma_{\varepsilon,\operatorname{Id}_{k\lef[\underline{X}\rig]},b}\lef(w\rig)\text{,}
\end{equation*}
and for all $a\in\mathcal{A}$ satisfying $\operatorname{v}_{p}\lef(a\rig)>-l$ there is $r^{\lef(l\rig)}_{a}\in{W\lef(k\rig)\lef[\underline{X}\rig]}^{\dagger}$ such that:
\begin{gather*}
\operatorname{v}_{p}\lef(r^{\lef(l\rig)}_{a}-r^{\lef(l+1\rig)}_{a}\rig)\geqslant l+\operatorname{v}_{p}\lef(a\rig)\text{,}\\
w-w^{\lef(l\rig)}=\sum_{\substack{a\in\mathcal{A}\\\operatorname{v}_{p}\lef(a\rig)>-l}}t_{F}\lef(r^{\lef(l\rig)}_{a}\rig)e\lef(1,a,\emptyset\rig)\text{,}\\
\gamma_{\varepsilon,\operatorname{Id}_{k\lef[\underline{X}\rig]},b}\lef(t_{F}\lef(r^{\lef(l\rig)}_{a}\rig)\rig)+\gamma_{\varepsilon,\operatorname{Id}_{k\lef[\underline{X}\rig]},b}\lef(e\lef(1,a,\emptyset\rig)\rig)\geqslant\gamma_{\varepsilon,\operatorname{Id}_{k\lef[\underline{X}\rig]},b}\lef(w\rig)\text{.}
\end{gather*}

We have just seen that such elements exist when $l=1$, except for the condition on the $p$-adic pseudovaluation. As $k$ is perfect, we can write uniquely:
\begin{equation*}
{w^{\lef(l\rig)}}_{l}=\sum_{i=0}^{l}{\operatorname{Frob}_{k\lef[\underline{X}\rig]}}^{i}\lef(P_{l,i}\lef(\underline{X}\rig)\rig)\text{,}
\end{equation*}
where $P_{l,i}\lef(\underline{X}\rig)\in k\lef[\underline{X}\rig]$ for all $i\in\lef\llbracket0,l\rig\rrbracket$, and no monomial in $P_{l,i}\lef(\underline{X}\rig)$ for $i\neq l$ is in the image of the Frobenius endomorphism. This last condition implies that for all $i\in\lef\llbracket0,l-1\rig\rrbracket$ we have a unique decomposition:
\begin{equation*}
P_{l,i}\lef(\underline{X}\rig)=\sum_{\substack{a\in\mathcal{A}\\\operatorname{v}_{p}\lef(a\rig)=i-l}}\underline{X}^{p^{l-i}a}{\operatorname{Frob}_{k\lef[\underline{X}\rig]}}^{l-i}\lef(P_{l,i,a}\lef(\underline{X}\rig)\rig)\text{,}
\end{equation*}
where all $P_{l,i,a}\lef(\underline{X}\rig)\in k\lef[\underline{X}\rig]$ for every $i\in\lef\llbracket0,l-1\rig\rrbracket$ and every weight function $a\in\mathcal{A}$ with $\operatorname{v}_{p}\lef(a\rig)=i-l$. To sum up, we have a unique writing of the following form:
\begin{equation*}
{w^{\lef(l\rig)}}_{l}={\operatorname{Frob}_{k\lef[\underline{X}\rig]}}^{l}\lef(P_{l,l}\lef(\underline{X}\rig)\rig)+\sum_{i=0}^{l-1}\sum_{\substack{a\in\mathcal{A}\\\operatorname{v}_{p}\lef(a\rig)=i-l}}\underline{X}^{p^{l}a}{\operatorname{Frob}_{k\lef[\underline{X}\rig]}}^{l}\lef(P_{l,i,a}\lef(\underline{X}\rig)\rig)\text{.}
\end{equation*}

Moreover, the polynomials therein satisfy by definition of $\gamma_{\varepsilon,\operatorname{Id}_{k\lef[\underline{X}\rig]},b}$ the following inequalities:
\begin{equation*}
l-\varepsilon\sum_{t=1}^{n}b_{t}a_{t}-\varepsilon v_{b}\lef(P_{l,i,a}\lef(\underline{X}\rig)\rig)\geqslant\gamma_{\varepsilon,\operatorname{Id}_{k\lef[\underline{X}\rig]},b}\lef(w^{\lef(l\rig)}\rig)\geqslant\gamma_{\varepsilon,\operatorname{Id}_{k\lef[\underline{X}\rig]},b}\lef(w\rig)\text{.}
\end{equation*}

We also have $l-\varepsilon v_{b}\lef(P_{l,l}\lef(\underline{X}\rig)\rig)\geqslant\gamma_{\varepsilon,\operatorname{Id}_{k\lef[\underline{X}\rig]},b}\lef(w\rig)$. Therefore, using \eqref{vxfyvxy} and the fact that multiplication by $p$ in Witt vectors equals $V\circ W\lef(\operatorname{Frob}_{k\lef[\underline{X}\rig]}\rig)$, we are able to find $w^{\lef(l+1\rig)}\in V^{l+1}\lef(W^{\dagger}\lef(k\lef[\underline{X}\rig]\rig)\rig)$ satisfying:
\begin{multline*}
w^{\lef(l\rig)}-w^{\lef(l+1\rig)}=t_{F}\lef(p^{l}\rho\lef(P_{l,l}\lef(\underline{X}\rig)\rig)\rig)\\+\sum_{i=0}^{l-1}\sum_{\substack{a\in\mathcal{A}\\\operatorname{v}_{p}\lef(a\rig)=i-l}}V^{l-i}\lef(\lef[\underline{X}^{p^{l-i}a}\rig]\rig)t_{F}\lef(p^{i}\rho\lef(P_{l,i,a}\lef(\underline{X}\rig)\rig)\rig)\text{.}
\end{multline*}

By definition, $V^{l-i}\lef(\lef[\underline{X}^{p^{l-i}a}\rig]\rig)=e\lef(1,a,\emptyset\rig)$ for all $a$ in the double sum above, so in particular $\gamma_{\varepsilon,\operatorname{Id}_{k\lef[\underline{X}\rig]},b}\lef(e\lef(1,a,\emptyset\rig)\rig)=l-i-\varepsilon\sum_{t=1}^{n}b_{t}a_{t}$. Hence, using \eqref{multiplicationbypgamma} and \eqref{vbandgammaepsilonb}, we get by induction on $l\in\mathbb{N}^{*}$ that the $w^{\lef(l\rig)}$ satisfying the above conditions always exist, since $\gamma_{\varepsilon,\operatorname{Id}_{k\lef[\underline{X}\rig]},b}$ is a pseudovaluation \cite[proposition 2.3]{overconvergentwittvectors}. This concludes the proof by $p$-adic overconvergence.
\end{proof}

To conclude this section, we spell out the $R^{\dagger}$-algebra structure of the ring of overconvergent Witt vectors in the case of a finite free étale algebra over a localisation of a polynomial ring. The author thanks here Andreas Langer, who explained to him that it was not necessary to localise the polynomial algebra in order to give a similar description of the overconvergent de Rham--Witt complex, locally on a smooth variety. As this part is quite technical, the reader may want to use this remark to simplify the proof.

However, in some cases, this work appears to be useful. We give a small application in the last section.

We let $P\in k\lef[\underline{X}\rig]$. We assume given a finite free étale morphism of commutative $k$-algebras, as in proposition \ref{characteriserelativelyperfect}:
\begin{equation*}
{k\lef[\underline{X}\rig]}_{\lef\langle P\rig\rangle}\to\overline{R}\text{.}
\end{equation*}

We will consider $\lef(\overline{r_{i}}\rig)_{i\in\lef\llbracket1,m\rig\rrbracket}$ a basis of $\overline{R}$ as a ${k\lef[\underline{X}\rig]}_{\lef\langle P\rig\rangle}$-module, where $m\in\mathbb{N}$ is the rank of $\overline{R}$. In what follows, we shall write:
\begin{align*}
k\lef[\underline{T}\rig]&\coloneqq k\lef[X_{1},\ldots,X_{n},Y,Z_{1},\ldots,Z_{m}\rig]\text{,}\\
W\lef(k\rig)\lef[\underline{T}\rig]&\coloneqq W\lef(k\rig)\lef[X_{1},\ldots,X_{n},Y,Z_{1},\ldots,Z_{m}\rig]\text{.}
\end{align*}

We denote by $\mathcal{A}$, $\mathcal{A}'$ and $\mathcal{A}_{\mathrm{ext}}$ the sets of weight functions $a$ with $n$, $n+1$ and $n+1+m$ variables respectively and with values in $\lef[0,1\rig[$.

We shall change our previous notation, and consider the following lift of the Frobenius endomorphism:
\begin{equation*}
F\colon{W\lef(k\rig)\lef[\underline{T}\rig]}^{\dagger}\to{W\lef(k\rig)\lef[\underline{T}\rig]}^{\dagger}\text{.}
\end{equation*}

\begin{prop}\label{overconvergentwittvectorsstructure}
Let $\varphi\colon k\lef[\underline{T}\rig]\to\overline{R}$ be the surjective morphism of $k\lef[\underline{X}\rig]$-algebras such that $\varphi\lef(Y\rig)=P^{-1}$ and $\varphi\lef(Z_{i}\rig)=\overline{r_{i}}$ for each $i\in\lef\llbracket1,m\rig\rrbracket$.

Then for all $\eta>0$, there exists $\lef(b_{i}\rig)_{i\in\lef\llbracket1,n+m\rig\rrbracket}\in{\lef]0,+\infty\rig[}^{n+1+m}$ and $\delta>0$ such that for every $u\in\mathbb{N}$ and every $w\in V^{u}\lef(W^{\dagger}\lef(\overline{R}\rig)\rig)$ there is a map:
\begin{equation*}
h\colon\begin{array}{rl}\mathcal{A}\to&{W\lef(k\rig)\lef[\underline{T}\rig]}^{\dagger}\\a\mapsto&h_{a}\end{array}\text{,}
\end{equation*}
such that:
\begin{gather*}
W\lef(\varphi\rig)\lef(\sum_{a\in\mathcal{A}}t_{F}\lef(h_{a}\rig)e\lef(1,a,\emptyset\rig)\rig)=w\text{,}\\
\forall\varepsilon\in\lef]0,\delta\rig],\ \forall a\in\mathcal{A},\ \gamma_{\varepsilon,\operatorname{Id}_{k\lef[\underline{T}\rig]},b}\lef(t_{F}\lef(h_{a}\rig)e\lef(1,a,\emptyset\rig)\rig)\geqslant\gamma_{\varepsilon,\varphi,b}\lef(w\rig)-\eta\text{,}\\
\forall a\in\mathcal{A},\ p^{\max\lef\{u-u\lef(a\rig),0\rig\}}\mid h_{a}\text{.}
\end{gather*}
\end{prop}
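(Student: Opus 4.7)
The proof combines Propositions \ref{decompositionfreewitt}, \ref{wittpolynomialstructure} and \ref{divisionbypsum} through two successive reductions: first from $\overline{R}$ to the localized polynomial ring $\overline{R}' \coloneqq k\lef[\underline{X}\rig]_{\lef\langle P\rig\rangle}$, exploiting the finite free étale hypothesis; then from $\overline{R}'$ to the polynomial ring $k\lef[\underline{X},Y\rig]$ via the surjection $\psi' \colon k\lef[\underline{X},Y\rig] \to \overline{R}'$ sending $Y$ to $P^{-1}$. For the first reduction, the étale morphism $\overline{R}' \to \overline{R}$ is relatively perfect by \cite[0EBS]{stacksproject}, so Proposition \ref{decompositionfreewitt} applies with $s(i) \coloneqq \lef[\overline{r_{i}}\rig] \in W^{\dagger}(\overline{R})$, yielding a unique expansion $w = \sum_{i=1}^{m} r(i) \lef[\overline{r_{i}}\rig]$ with $r(i) \in V^{u}(W^{\dagger}(\overline{R}'))$, together with a pseudovaluation bound $\gamma_{\varepsilon,\psi',b'}(r(i)) \geqslant \gamma_{\varepsilon,\varphi,b}(w) - \varepsilon E$, valid for sufficiently small $\varepsilon$ and appropriate weights $b'$ and $b$.

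For the second reduction, I would lift each $r(i)$ to some $\tilde{r}(i) \in W^{\dagger}(k\lef[\underline{X},Y\rig])$ using the surjectivity statement of Proposition \ref{overconvergentfunctor}. Then Proposition \ref{wittpolynomialstructure}, applied to the polynomial ring $k\lef[\underline{X},Y\rig]$ with $n+1$ variables and the Frobenius lift induced by $F$, produces a unique decomposition $\tilde{r}(i) = \sum_{a' \in \mathcal{A}'} t_{F}(q_{i,a'}) e(1, a', \emptyset)$ with pseudovaluation control on each term. Projecting through $W(\psi')$ and invoking lemma \ref{compatibleetalefrobenii} to identify $W(\psi') \circ t_{F}$ with the Lazard morphism on a lift of $\overline{R}'$, Proposition \ref{divisionbypsum} translates the divisibility of $r(i)$ into the constraint $p^{\max\{u-u(a'),0\}} \mid q_{i,a'}$ modulo the kernel of the lift of $\psi'$; adjusting $\tilde{r}(i)$ within its fibre allows this divisibility to be realised integrally. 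Combining the two reductions produces $w = \sum_{i=1}^{m} \sum_{a' \in \mathcal{A}'} W(\psi')(t_{F}(q_{i,a'}) e(1, a', \emptyset)) \lef[\overline{r_{i}}\rig]$.

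The remaining task is to reorganize this double sum into the target form indexed by $a \in \mathcal{A}$ only. For each term with $a' = (a_{X}, \beta)$, the image $W(\psi')(e(1, a', \emptyset))$ equals $V^{u(a')}(\lef[\psi(\underline{X})^{p^{u(a')}a_{X}} P^{-p^{u(a')}\beta}\rig])$, and multiplying by $\lef[\overline{r_{i}}\rig]$ produces a Teichmüller of an element of $\overline{R}$ that must be absorbed into the $t_{F}$-coefficient. The absorption uses the identity $\lef[y\rig] = t_{F}(\tilde{y}) - v_{F}(\tilde{y})$ for a lift $\tilde{y} \in W(k)\lef[\underline{T}\rig]^{\dagger}$ of $y \in \overline{R}$: by Proposition \ref{lazardinjectivity} the residual $v_{F}(\tilde{y})$ lies in $\operatorname{Fil}^{1}$, and by Proposition \ref{vfgamma} it has strictly improved pseudovaluation. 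Iterating this absorption on all terms, and appealing to the $V$-adic completeness of $W^{\dagger}(\overline{R})$, one reaches a convergent expansion of the form $w = W(\varphi)(\sum_{a \in \mathcal{A}} t_{F}(h_{a}) e(1, a, \emptyset))$. The main obstacle is precisely this iterative reorganization: the cumulative pseudovaluation losses from the successive absorption steps must sum to less than the slack $\eta$, forcing a delicate choice of $\delta$ via the quantitative inequality of Proposition \ref{vfgamma} with $\mu$ chosen in terms of $\eta$, while simultaneously the divisibility constraint $p^{\max\{u-u(a),0\}} \mid h_{a}$ must be preserved through every absorption step, which is guaranteed by repeated application of Proposition \ref{divisionbypsum} and careful bookkeeping of how the $V^{u}$-filtration transfers from the index $a'$ to $a$ during the reindexing.
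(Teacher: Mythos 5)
Your outline shares the paper's skeleton (proposition \ref{decompositionfreewitt} for the finite free part, proposition \ref{wittpolynomialstructure} for the polynomial part, an iteration whose error terms come from $v_{F}$), but the two steps you delegate to ``absorption'' and ``bookkeeping'' are where the proof actually lives, and as written both fail. First, the reorganization: for $a'\in\mathcal{A}'$ with $a'_{n+1}=\beta\neq0$, the element $W\lef(\psi'\rig)\lef(e\lef(1,a',\emptyset\rig)\rig)=V^{u\lef(a'\rig)}\lef(\lef[\cdots P^{-p^{u\lef(a'\rig)}\beta}\rig]\rig)$ is \emph{not} a Teichmüller representative times $e\lef(1,a_{X},\emptyset\rig)$: the fractional negative power of $P$ sits \emph{inside} the Verschiebung, and by \eqref{vxfyvxy} a coefficient can only be extracted from $V^{u}$ if it is an $F^{u}$-image, essentially a $p^{u}$-th power. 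So the identity $\lef[y\rig]=t_{F}\lef(\tilde{y}\rig)-v_{F}\lef(\tilde{y}\rig)$ has nothing to act on yet. The paper's proof first rewrites $P^{-p^{u}\beta}=P^{p^{u}\lef(1-\beta\rig)}\cdot\varphi\lef(Y\rig)^{p^{u}}$, pulls the perfect power out of $V^{u}$ as a single $\lef[Y\rig]$, and then --- crucially --- re-expands the remaining $V^{u}\lef(\lef[\underline{X}^{p^{u}a_{X}}P^{p^{u}\lef(1-\beta\rig)}\rig]\rig)$ through proposition \ref{wittpolynomialstructure} all over again, because the product of Teichmüller representatives spreads over all of $\mathcal{A}$. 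Each such rewriting costs $\varepsilon\lef\lvert v_{b}\lef(P\rig)\rig\rvert$ in pseudovaluation, which has to be budgeted into the choice of $\delta$ (condition \eqref{conditiontechniqueepsilon} in the paper); your proposal accounts for neither the rewriting nor its cost.

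Second, the convergence of the iteration. Your model --- ``the cumulative pseudovaluation losses from the successive absorption steps must sum to less than the slack $\eta$'' --- cannot work: every pass through the loop incurs a loss of the same order (the constants of lemma \ref{pseudovaluationbasis}, the degree of $P$ and the pseudovaluations of the lifted basis do not improve from one pass to the next), so a sum of per-pass losses over infinitely many passes diverges. What closes the induction in the paper is that the error handed to step $s+1$ consists entirely of $v_{F}$-terms, and proposition \ref{vfgamma} gives each $v_{F}$ a \emph{gain} of a fixed positive constant ($\tfrac{\eta}{2}$ after calibrating $\mu$) that, for $\varepsilon$ small enough, exactly offsets the per-step loss; hence $\gamma_{\varepsilon}\lef(\rho\lef(s\rig)\rig)\geqslant\gamma_{\varepsilon}\lef(\rho\lef(0\rig)\rig)$ for every $s$, and only the terms written into the answer suffer a one-time loss of $\tfrac{\eta}{2}$. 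Relatedly, since the error at each step is again a general element of (a preimage of) $V^{\max\lef\{s,u\rig\}}\lef(W^{\dagger}\lef(\overline{R}\rig)\rig)$ involving the $Z$-variables, proposition \ref{decompositionfreewitt} must be reapplied inside every iteration rather than once at the outset; your two-clean-reductions framing hides this, and with it the need for the constants $E$ and $\delta$ of that proposition to be uniform in the element being decomposed.
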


\begin{proof}
We identify $W^{\dagger}\Omega^{\mathrm{int},0}_{k\lef[\underline{T}\rig]/k}$ with ${W\lef(k\rig)\lef[\underline{T}\rig]}^{\dagger}$ using lemma \ref{isodegreezero}. By proposition \ref{wittpolynomialstructure}, for all $x\in W^{\dagger}\lef(k\lef[\underline{T}\rig]\rig)$ there is a unique function:
\begin{equation*}
h\lef(x\rig)\colon\begin{array}{rl}\mathcal{A}_{\mathrm{ext}}\to&{W\lef(k\rig)\lef[\underline{T}\rig]}^{\dagger}\\a\mapsto&{h\lef(x\rig)}_{a}\end{array}\text{,}
\end{equation*}
such that:
\begin{equation}\label{xandrax}
x=\sum_{a\in\mathcal{A}_{\mathrm{ext}}}{h\lef(x\rig)}_{a}e\lef(1,a,\emptyset\rig)\text{.}
\end{equation}

We may assume without loss of generality that $\eta<1$. In this proof, we let $\varepsilon>0$ be a constant satisfying the following conditions: we must have $\varepsilon\leqslant\delta$, where $\delta$ is the smallest of the ones given by propositions \ref{vfgamma}, \ref{wittpolynomialstructure} and \ref{decompositionfreewitt}, and we keep from the latter $\lef(b_{i}\rig)_{i\in\lef\llbracket1,n+1+m\rig\rrbracket}\in{\lef]0,+\infty\rig[}^{n+1+m}$ and the constant $E\geqslant0$; in particular, we have:
\begin{align}
\forall x\in {W\lef(k\rig)\lef[\underline{T}\rig]}^{\dagger},\ &\gamma_{\varepsilon,\operatorname{Id}_{k\lef[\underline{T}\rig]},b}\lef(t_{F}\lef(x\rig)\rig)\geqslant\gamma_{\varepsilon,\operatorname{Id}_{k\lef[\underline{T}\rig]},b}\lef(x\rig)\text{,}\label{tgnormegauss}\\
\forall x\in {W\lef(k\rig)\lef[\underline{T}\rig]}^{\dagger},\ &\gamma_{\varepsilon,\operatorname{Id}_{k\lef[\underline{T}\rig]},b}\lef(v_{F}\lef(x\rig)\rig)\geqslant\gamma_{\varepsilon,\operatorname{Id}_{k\lef[\underline{T}\rig]},b}\lef(x\rig)+\frac{\eta}{2}\text{,}\label{tgnormegaussbis}\\
\forall x\in W^{\dagger}\lef(k\lef[\underline{T}\rig]\rig),\ \forall a\in\mathcal{A}_{\mathrm{ext}},\ &\begin{multlined}\gamma_{\varepsilon,\operatorname{Id}_{k\lef[\underline{T}\rig]},b}\lef({h\lef(x\rig)}_{a}\rig)+\gamma_{\varepsilon,\operatorname{Id}_{k\lef[\underline{T}\rig]},b}\lef(e\lef(1,a,\emptyset\rig)\rig)\\\geqslant\gamma_{\varepsilon,\operatorname{Id}_{k\lef[\underline{T}\rig]},b}\lef(x\rig)\text{.}\end{multlined}\label{conditionsurlesk}
\end{align}

Let $\lef(r_{i}\rig)_{i\in\lef\llbracket1,m\rig\rrbracket}\in{R^{\dagger}}^{m}$ be elements lifting the basis $\lef(\overline{r_{i}}\rig)_{i\in\lef\llbracket1,m\rig\rrbracket}$. We shall also fix antecedents $\lef(\beta_{i}\rig)_{i\in\lef\llbracket1,m\rig\rrbracket}\in{{W\lef(k\rig)\lef[\underline{T}\rig]}^{\dagger}}^{m}$ by $W\lef(\varphi\rig)$ of $\lef(r_{i}\rig)_{i\in\lef\llbracket1,m\rig\rrbracket}$. Going back to $\delta$, we also assume that the following constant:
\begin{equation}\label{constanteetapreuve}
\zeta\coloneqq\min\lef\{\gamma_{\varepsilon,\operatorname{Id}_{k\lef[\underline{T}\rig]},b}\lef(\beta_{i}\rig)\mid i\in\lef\llbracket1,m\rig\rrbracket\rig\}\text{,}
\end{equation}
satisfies:
\begin{equation}\label{conditiontechniqueepsilon}
\varepsilon\lef(v_{b}\lef(P\rig)-b_{n+1}-E\rig)+\zeta\geqslant-\frac{\eta}{4}\text{.}
\end{equation}

Let $u\in\mathbb{N}$ and $w\in V^{u}\lef(W^{\dagger}\lef(\overline{R}\rig)\rig)$. We can find $\rho\lef(0\rig)\in V^{u}\lef(W^{\dagger}\lef(k\lef[\underline{T}\rig]\rig)\rig)$ such that:
\begin{align*}
W\lef(\varphi\rig)\lef(\rho\lef(0\rig)\rig)&=w\text{,}\\
\gamma_{\varepsilon,\operatorname{Id}_{k\lef[\underline{T}\rig]},b}\lef(\rho\lef(0\rig)\rig)&\geqslant\gamma_{\varepsilon,\varphi,b}\lef(w\rig)-\frac{\eta}{2}\text{.}
\end{align*}

Let $s\in\mathbb{N}$. Assume that for any $s'\in\lef\llbracket0,s\rig\rrbracket$ we have a map:
\begin{equation*}
{w\lef(s'\rig)}_{\bullet}\colon\begin{array}{rl}\mathcal{A}\to&{W\lef(k\rig)\lef[\underline{T}\rig]}^{\dagger}\\a\mapsto&{w\lef(s'\rig)}_{a}\end{array}\text{,}
\end{equation*}
and an element $\rho\lef(s'\rig)\in W^{\dagger}\lef(k\lef[\underline{T}\rig]\rig)$ such that for:
\begin{equation}\label{defwsprime}
w\lef(s'\rig)\coloneqq\sum_{a\in\mathcal{A}}t_{F}\lef({w\lef(s'\rig)}_{a}\rig)e\lef(1,a,\emptyset\rig)\text{,}
\end{equation}
we have:
\begin{align}
\forall s'\in\lef\llbracket0,s\rig\rrbracket,\ \forall a\in\mathcal{A},\ &\begin{multlined}\gamma_{\varepsilon,\operatorname{Id}_{k\lef[\underline{T}\rig]},b}\lef(t_{F}\lef({w\lef(s'\rig)}_{a}\rig)\rig)+\gamma_{\varepsilon,\operatorname{Id}_{k\lef[\underline{T}\rig]},b}\lef(e\lef(1,a,\emptyset\rig)\rig)\\\geqslant\gamma_{\varepsilon,\operatorname{Id}_{k\lef[\underline{T}\rig]},b}\lef(\rho\lef(0\rig)\rig)-\frac{\eta}{2}\text{,}\end{multlined}\label{localipreuzero}\\
\forall s'\in\lef\llbracket0,s\rig\rrbracket,\ &\gamma_{\varepsilon,\operatorname{Id}_{k\lef[\underline{T}\rig]},b}\lef(\rho\lef(s'\rig)\rig)\geqslant\gamma_{\varepsilon,\operatorname{Id}_{k\lef[\underline{T}\rig]},b}\lef(\rho\lef(0\rig)\rig)\text{,}\label{localipreuun}\\
\forall s'\in\lef\llbracket0,s\rig\rrbracket,\ &W\lef(\varphi\rig)\lef(w\lef(s'\rig)+\rho\lef(s'\rig)\rig)=w\text{,}\label{localipreudeux}\\
\forall s'\in\lef\llbracket1,s\rig\rrbracket,\ &w\lef(s'\rig)-w\lef(s'-1\rig)\in V^{s'-1}\lef(W^{\dagger}\lef(k\lef[\underline{T}\rig]\rig)\rig)\text{,}\label{localipreuquatre}\\
\forall s'\in\lef\llbracket1,s\rig\rrbracket,\ &w\lef(s'\rig)\in V^{u}\lef(W^{\dagger}\lef(k\lef[\underline{T}\rig]\rig)\rig)\text{,}\label{localipreucinq}\\
\forall s'\in\lef\llbracket0,s\rig\rrbracket,\ &\rho\lef(s'\rig)\in V^{\max\lef\{s',u\rig\}}\lef(W^{\dagger}\lef(k\lef[\underline{T}\rig]\rig)\rig)\text{.}\label{localipreusix}
\end{align}

Notice that the zero function and $\rho\lef(0\rig)$ satisfy all these conditions for $s=0$. So now let us assume that these conditions are met for some $s\in\mathbb{N}$, and let us construct $w\lef(s+1\rig)$ and $\rho\lef(s+1\rig)$.

Let $k\lef[\underline{X},Y\rig]\coloneqq k\lef[X_{1},\ldots,X_{n},Y\rig]$. For any weight function $a\in\mathcal{A}'$, we shall consider $f\lef(a\rig)\coloneqq V^{u\lef(a\rig)}\lef(\lef[\underline{X}^{a|_{\lef\llbracket1,n\rig\rrbracket}}P^{p^{u\lef(a\rig)}\lef(1-a_{n+1}\rig)}\rig]\rig)$. Since $\varphi\lef(Y\rig)=P^{-1}$, we find that:
\begin{align}
W\lef(\varphi\rig)\lef(e\lef(1,a,\emptyset\rig)\rig)&=W\lef(\varphi\rig)\lef(f\lef(a\rig)\lef[Y\rig]\rig)\text{,}\label{fetfprimepareil}\\
\gamma_{\varepsilon,\operatorname{Id}_{k\lef[\underline{T}\rig]},b}\lef(f\lef(a\rig)\rig)&\geqslant\gamma_{\varepsilon,\operatorname{Id}_{k\lef[\underline{T}\rig]},b}\lef(e\lef(1,a,\emptyset\rig)\rig)+\varepsilon v_{b}\lef(P\rig)\text{.}\label{fprimenormegauss}
\end{align}

Fix some:
\begin{equation}\label{definitiond}
C\in\lef]0,\frac{\eta}{4}\rig]\text{.}
\end{equation}

Proposition \ref{decompositionfreewitt} and \eqref{localipreusix} yield the existence for all $i\in\lef\llbracket1,n\rig\rrbracket$ of Witt vectors $\rho\lef(s,i\rig)\in V^{\max\lef\{s,u\rig\}}\lef(W^{\dagger}\lef(k\lef[\underline{X},Y\rig]\rig)\rig)$ such that:
\begin{align}
\gamma_{\varepsilon,\operatorname{Id}_{k\lef[\underline{X},Y\rig]},\lef(b_{j}\rig)_{j\in\lef\llbracket1,m+1\rig\rrbracket}}\lef(\rho\lef(s,i\rig)\rig)&\geqslant\gamma_{\varepsilon,\operatorname{Id}_{k\lef[\underline{T}\rig]},b}\lef(\rho\lef(s\rig)\rig)-\varepsilon E-C\text{,}\label{rhosinormegauss}\\
W\lef(\varphi\rig)\lef(\sum_{i=1}^{n}t_{F}\lef(\beta_{i}\rig)\rho\lef(s,i\rig)\rig)&=W\lef(\varphi\rig)\lef(\rho\lef(s\rig)\rig)\label{deftfrhosi}\text{.}
\end{align}

Since $\gamma_{\varepsilon,\operatorname{Id}_{k\lef[\underline{T}\rig]},b}$ is a pseudovaluation \cite[proposition 2.3]{overconvergentwittvectors}, we get for every $a\in\mathcal{A}'$ and every $i\in\lef\llbracket1,n\rig\rrbracket$:
\begin{equation}\label{csinormegausspre}
\begin{split}
\gamma_{\varepsilon,\operatorname{Id}_{k\lef[\underline{T}\rig]},b}\lef({h\lef(\rho\lef(s,i\rig)\rig)}_{a}f\lef(a\rig)\lef[Y\rig]\rig)&\overset{\hphantom{\eqref{fprimenormegauss}}}{\geqslant}\gamma_{\varepsilon,\operatorname{Id}_{k\lef[\underline{T}\rig]},b}\lef({h\lef(\rho\lef(s,i\rig)\rig)}_{a}\rig)+\gamma_{\varepsilon,\operatorname{Id}_{k\lef[\underline{T}\rig]},b}\lef(f\lef(a\rig)\rig)-\varepsilon b_{n+1}\\
&\overset{\eqref{conditionsurlesk}}{\overset{\eqref{fprimenormegauss}}{\geqslant}}\gamma_{\varepsilon,\operatorname{Id}_{k\lef[\underline{T}\rig]},b}\lef(\rho\lef(s,i\rig)\rig)+\varepsilon\lef(v_{b}\lef(P\rig)-b_{n+1}\rig)\\
\gamma_{\varepsilon,\operatorname{Id}_{k\lef[\underline{T}\rig]},b}\lef({h\lef(\rho\lef(s,i\rig)\rig)}_{a}f\lef(a\rig)\lef[Y\rig]\rig)&\overset{\eqref{localipreuun}}{\overset{\eqref{rhosinormegauss}}{\geqslant}}\gamma_{\varepsilon,\operatorname{Id}_{k\lef[\underline{T}\rig]},b}\lef(\rho\lef(0\rig)\rig)+\varepsilon\lef(v_{b}\lef(P\rig)-b_{n+1}-E\rig)-C\text{.}
\end{split}
\end{equation}

For any $i\in\lef\llbracket1,n\rig\rrbracket$, we will be considering the $V$-adically convergent series $c\lef(s,i\rig)\coloneqq\sum_{a\in\mathcal{A}'}{h\lef(\rho\lef(s,i\rig)\rig)}_{a}f\lef(a\rig)\lef[Y\rig]$. By construction, we have:
\begin{align}
\gamma_{\varepsilon,\operatorname{Id}_{k\lef[\underline{T}\rig]},b}\lef(c\lef(s,i\rig)\rig)&\overset{\eqref{conditiontechniqueepsilon}}{\overset{\eqref{definitiond}}{\overset{\eqref{csinormegausspre}}{\geqslant}}}\gamma_{\varepsilon,\operatorname{Id}_{k\lef[\underline{T}\rig]},b}\lef(\rho\lef(0\rig)\rig)-\frac{\eta}{2}-\zeta\text{,}\label{csinormegauss}\\
W\lef(\varphi\rig)\lef(c\lef(s,i\rig)\rig)&\overset{\eqref{xandrax}}{\overset{\eqref{fetfprimepareil}}{=}}W\lef(\varphi\rig)\lef(\rho\lef(s,i\rig)\rig)\text{.}\label{csigammasi}
\end{align}

Also, for every weight function $a\in\mathcal{A}_{\mathrm{ext}}$ which is not in $\mathcal{A}$ we must have ${h\lef(c\lef(s,i\rig)\rig)}_{a}=0$.

Moreover, for every $i\in\lef\llbracket1,n\rig\rrbracket$, we get from proposition \ref{divisionbypsum} and \eqref{localipreusix}:
\begin{equation}\label{csidansv}
c\lef(s,i\rig)\in V^{\max\lef\{s,u\rig\}}\lef(W^{\dagger}\lef(k\lef[\underline{X},Y\rig]\rig)\rig)\text{.}
\end{equation}

For any weight function $a\in\mathcal{A}$, let:
\begin{equation}\label{defwsplusun}
{w\lef(s+1\rig)}_{a}\coloneqq{w\lef(s\rig)}_{a}+\sum_{i=1}^{n}\beta_{i}{h\lef(c\lef(s,i\rig)\rig)}_{a}\text{.}
\end{equation}

Since $\gamma_{\varepsilon,\operatorname{Id}_{k\lef[\underline{T}\rig]},b}$ is a pseudovaluation, for any weight function $a\in\mathcal{A}$ and every $i\in\lef\llbracket1,n\rig\rrbracket$ we have:
\begin{equation*}
\begin{split}
\gamma_{\varepsilon,\operatorname{Id}_{k\lef[\underline{T}\rig]},b}\lef(\beta_{i}{h\lef(c\lef(s,i\rig)\rig)}_{a}e\lef(1,a,\emptyset\rig)\rig)&\overset{\hphantom{\eqref{csinormegauss}}}{\overset{\eqref{conditionsurlesk}}{\overset{\eqref{constanteetapreuve}}{\geqslant}}}\gamma_{\varepsilon,\operatorname{Id}_{k\lef[\underline{T}\rig]},b}\lef(c\lef(s,i\rig)\rig)+\zeta\\
&\overset{\eqref{csinormegauss}}{\geqslant}\gamma_{\varepsilon,\operatorname{Id}_{k\lef[\underline{T}\rig]},b}\lef(\rho\lef(0\rig)\rig)-\frac{\eta}{2}\text{.}
\end{split}
\end{equation*}

Thus, applying \eqref{tgnormegauss} and \eqref{localipreuzero} yields:
\begin{equation*}
\forall a\in\mathcal{A},\ \gamma_{\varepsilon,\operatorname{Id}_{k\lef[\underline{T}\rig]},b}\lef(t_{F}\lef({w\lef(s+1\rig)}_{a}\rig)e\lef(1,a,\emptyset\rig)\rig)\geqslant\gamma_{\varepsilon,\operatorname{Id}_{k\lef[\underline{T}\rig]},b}\lef(\rho\lef(0\rig)\rig)-\frac{\eta}{2}\text{.}
\end{equation*}

So we have shown condition \eqref{localipreuzero} at rank $s+1$. We can now define $V$-adically convergent series:
\begin{gather*}
\begin{multlined}w\lef(s+1\rig)\coloneqq\sum_{a\in\mathcal{A}}t_{F}\lef({w\lef(s+1\rig)}_{a}\rig)e\lef(1,a,\emptyset\rig)\\\overset{\eqref{defwsprime}}{\overset{\eqref{defwsplusun}}{=}}w\lef(s\rig)+\sum_{a\in\mathcal{A}}t_{F}\lef(\sum_{i=1}^{n}\beta_{i}{h\lef(c\lef(s,i\rig)\rig)}_{a}\rig)e\lef(1,a,\emptyset\rig)\text{,}\end{multlined}\\
\rho\lef(s+1\rig)\coloneqq\sum_{i=1}^{n}v_{F}\lef(\beta_{i}\rig)c\lef(s,i\rig)-\sum_{a\in\mathcal{A}}v_{F}\lef(\sum_{i=1}^{n}\beta_{i}{h\lef(c\lef(s,i\rig)\rig)}_{a}\rig)e\lef(1,a,\emptyset\rig)\text{.}
\end{gather*}

Thus, by \eqref{tgnormegaussbis}, \eqref{conditionsurlesk}, \eqref{constanteetapreuve} and \eqref{csinormegauss}, and since $\gamma_{\varepsilon,\operatorname{Id}_{k\lef[\underline{T}\rig]},b}$ is a pseudovaluation:
\begin{equation*}
\gamma_{\varepsilon,\operatorname{Id}_{k\lef[\underline{T}\rig]},b}\lef(\rho\lef(s+1\rig)\rig)\geqslant\gamma_{\varepsilon,\operatorname{Id}_{k\lef[\underline{T}\rig]},b}\lef(\rho\lef(0\rig)\rig)\text{.}
\end{equation*}

So we have shown condition \eqref{localipreuun} at rank $s+1$.

But we also have:
\begin{equation*}
\begin{split}
W\lef(\varphi\rig)&\lef(w\lef(s+1\rig)+\rho\lef(s+1\rig)\rig)\\
&\overset{\hphantom{\eqref{deftfrhosi}}}{=}W\lef(\varphi\rig)\lef(w\lef(s\rig)+\sum_{i=1}^{n}\lef(v_{F}\lef(\beta_{i}\rig)c\lef(s,i\rig)+\sum_{a\in\mathcal{A}}\beta_{i}{h\lef(c\lef(s,i\rig)\rig)}_{a}e\lef(1,a,\emptyset\rig)\rig)\rig)\\
&\overset{\hphantom{\eqref{deftfrhosi}}}{\overset{\eqref{xandrax}}{=}}W\lef(\varphi\rig)\lef(w\lef(s\rig)+\sum_{i=1}^{n}\lef(v_{F}\lef(\beta_{i}\rig)c\lef(s,i\rig)+\beta_{i}c\lef(s,i\rig)\rig)\rig)\\
&\overset{\eqref{csigammasi}}{=}W\lef(\varphi\rig)\lef(w\lef(s\rig)+\sum_{i=1}^{n}t_{F}\lef(\beta_{i}\rig)\rho\lef(s,i\rig)\rig)\\
&\overset{\eqref{deftfrhosi}}{=}W\lef(\varphi\rig)\lef(w\lef(s\rig)+\rho\lef(s\rig)\rig)\\
&\overset{\eqref{localipreudeux}}{=}w\text{.}
\end{split}
\end{equation*}

In other terms, we have condition \eqref{localipreudeux} at rank $s+1$. By \eqref{vfisinvw}, \eqref{localipreusix}, \eqref{csidansv} and proposition \ref{divisionbypsum}, we can prove that:
\begin{align*}
w\lef(s+1\rig)-w\lef(s\rig)&\in V^{\max\lef\{s,u\rig\}}\lef(W^{\dagger}\lef(k\lef[\underline{X},Y\rig]\rig)\rig)\text{,}\\
\rho\lef(s+1\rig)&\in V^{\max\lef\{s+1,u\rig\}}\lef(W^{\dagger}\lef(k\lef[\underline{X},Y\rig]\rig)\rig)\text{.}
\end{align*}

This implies conditions \eqref{localipreuquatre}, \eqref{localipreucinq} and \eqref{localipreusix} at rank $s+1$. So we have demonstrated my induction on $s\in\mathbb{N}$ all the properties needed.

By $V$-adic convergence, we shall define $\hat{w}\coloneqq\lim_{s\rightarrow+\infty}w\lef(s\rig)$, and by construction of our sequence, we have $\gamma_{\varepsilon,\operatorname{Id}_{k\lef[\underline{T}\rig]},b}\lef(\hat{w}\rig)\geqslant\gamma_{\varepsilon,\operatorname{Id}_{k\lef[\underline{T}\rig]},b}\lef(\rho\lef(0\rig)\rig)-\frac{\eta}{2}$, $W\lef(\varphi\rig)\lef(\hat{w}\rig)=w$ and $\hat{w}\in V^{u}\lef(W^{\dagger}\lef(k\lef[\underline{T}\rig]\rig)\rig)$, which gives us the third formula of the statement by proposition \ref{divisionbypsum}. Moreover, we can get a writing of the form:
\begin{equation*}
\hat{w}=\sum_{a\in\mathcal{A}}t_{F}\lef(h_{a}\rig)e\lef(1,a,\emptyset\rig)\text{,}
\end{equation*}
where all $h_{a}\in{W\lef(k\rig)\lef[\underline{T}\rig]}^{\dagger}$, and by construction:
\begin{equation*}
\gamma_{\varepsilon,\operatorname{Id}_{k\lef[\underline{T}\rig]},b}\lef(t_{F}\lef(h_{a}\rig)\rig)+\gamma_{\varepsilon,\operatorname{Id}_{k\lef[\underline{T}\rig]},b}\lef(e\lef(1,a,\emptyset\rig)\rig)\geqslant\gamma_{\varepsilon,\operatorname{Id}_{k\lef[\underline{T}\rig]},b}\lef(\rho\lef(0\rig)\rig)-\frac{\eta}{2}\text{.}
\end{equation*}
\end{proof}

\section{The general structure theorem}

In this section, we shall prove the main theorem of the article about the local structure of the overconvergent de Rham--Witt complex. Unless otherwise stated, $k$ will denote a perfect commutative $\mathbb{F}_{p}$-algebra such that $W\lef(k\rig)$ is a Noetherian ring, and for a fixed $n\in\mathbb{N}^{*}$ we consider $k\lef[\underline{X}\rig]=k\lef[X_{1},\ldots,X_{n}\rig]$.

We have seen that in this context, $k$ has a rebar cage given in example \ref{perfectrebarcage}. We have previously defined subsets $G\lef(t,m\rig)$ and $H\lef(t,m\rig)$ of $W\Omega^{t}_{k\lef[\underline{X}\rig]/k}$ for $t\in\mathbb{N}$ and $m\in\mathbb{N}$. We shall consider $G\lef(t\rig)\coloneqq\bigsqcup_{m\in\mathbb{N}}G\lef(t,m\rig)$ and $H\lef(t\rig)\coloneqq\bigsqcup_{m\in\mathbb{N}}H\lef(t,m\rig)$.

The rebar cage in this context yields that $H\lef(t,m\rig)=\emptyset$ when $m\neq0$, and lightens the definition of $G\lef(t,m\rig)$. Thus, one can show that:
\begin{align*}
G\lef(t\rig)&=\lef\{e\lef(1,\frac{a+p^{u}\chi_{J}}{p^{u}},I\cup J\rig)\mid\begin{array}{c}u\in\mathbb{N},\ \lef(a,I\rig)\in\mathcal{P},\\J\subset\lef\llbracket1,n\rig\rrbracket\smallsetminus\operatorname{Supp}\lef(a\rig),\\\operatorname{v}_{p}\lef(a\rig)=0,\ \forall i\in\lef\llbracket1,n\rig\rrbracket,\ a_{i}<p^{u},\\I_{0}\neq\emptyset,\ \#I+\#J=t\end{array}\rig\}\text{,}\\
H\lef(t\rig)&=\lef\{e\lef(1,\chi_{I},I\rig)\mid\begin{array}{c}\lef(\chi_{I},I\rig)\in\mathcal{P},\\\#I=t\end{array}\rig\}\text{.}
\end{align*}

We also have the convention:
\begin{equation*}
G\lef(-1\rig)\coloneqq\emptyset\text{.}
\end{equation*}

As in \eqref{enotation}, for every $e\in G\lef(t\rig)\sqcup H\lef(t\rig)$ we shall denote by $\lef(a\lef(e\rig),I\lef(e\rig)\rig)\in\mathcal{P}$ the weight function and partition satisfying:
\begin{equation*}
e=e\lef(1,a\lef(e\rig),I\lef(e\rig)\rig)\text{.}
\end{equation*}

As a first step, we show the theorem in the case of a polynomial algebra. Recall that we have introduced $\zeta_{\varepsilon}$ in \eqref{zetaepsilon} and $t_{F}$ in definition \ref{lazardmorphism}.

\begin{prop}\label{structurepolynomialodrw}
Let $t\in\mathbb{N}$. Identify canonically $\Omega^{\mathrm{sep},t}_{{W\lef(k\rig)\lef[\underline{X}\rig]}^{\dagger}/W\lef(k\rig)}$ as a sub-$W\lef(k\rig)$-module of $W^{\dagger}\Omega^{t}_{k\lef[\underline{X}\rig]/k}$ using proposition \ref{lazardbijectivity}. For every $x\in W^{\dagger}\Omega^{t}_{k\lef[\underline{X}\rig]/k}$ there exists a unique map:
\begin{equation*}
s\colon\begin{array}{rl}H\lef(t\rig)\sqcup G\lef(t\rig)\sqcup G\lef(t-1\rig)\to&{W\lef(k\rig)\lef[\underline{X}\rig]}^{\dagger}\\e\mapsto&s\lef(e\rig)\end{array}
\end{equation*}
such that:
\begin{gather*}
x=\sum_{e\in H\lef(t\rig)}t_{F}\lef(s\lef(e\rig)e\rig)+\sum_{e\in G\lef(t\rig)}t_{F}\lef(s\lef(e\rig)\rig)e+d\lef(\sum_{e\in G\lef(t-1\rig)}t_{F}\lef(s\lef(e\rig)\rig)e\rig)\text{,}\\
\forall l\in\mathbb{N},\ p^{l}\mid x\iff\lef(\forall e\in H\lef(t\rig)\sqcup G\lef(t\rig)\sqcup G\lef(t-1\rig),\ p^{l}\mid s\lef(e\rig)\rig)\text{,}\\
\begin{multlined}\forall u\in\mathbb{N},\ x\in\operatorname{Fil}^{u}W^{\dagger}\Omega^{t}_{k\lef[\underline{X}\rig]/k}\\\iff\lef(\forall e\in H\lef(t\rig)\sqcup G\lef(t\rig)\sqcup G\lef(t-1\rig),\ p^{\max\lef\{u-u\lef(a\lef(e\rig)\rig),0\rig\}}\mid s\lef(e\rig)\rig)\text{.}\end{multlined}
\end{gather*}

Moreover, there exists $\delta>0$ independently on the choice of $x$ satisfying:
\begin{equation*}
\forall\varepsilon\in\lef]0,\delta\rig],\ \forall e\in H\lef(t\rig)\sqcup G\lef(t\rig)\sqcup G\lef(t-1\rig),\ \zeta_{\varepsilon}\lef(t_{F}\lef(s\lef(e\rig)\rig)\rig)\geqslant\zeta_{\varepsilon}\lef(x\rig)-\zeta_{\varepsilon}\lef(e\rig)\text{.}
\end{equation*}
\end{prop}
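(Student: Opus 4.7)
The plan is to bootstrap from Theorem \ref{structuretheoremconvergent}, applied with $C = W\lef(k\rig)$, $L = W\lef(k\rig)\lef[\underline{X}\rig]$, and $F = \widetilde{\operatorname{Frob}}$, so that $\widehat{L} = \widehat{W\lef(k\rig)\lef[\underline{X}\rig]}$. That theorem directly yields the existence and uniqueness of a map $s$ into $\widehat{W\lef(k\rig)\lef[\underline{X}\rig]}$, and its filtration statement gives the third displayed equivalence of the proposition. The $p$-divisibility equivalence is then immediate from uniqueness: if $x = p y$, the expansion of $y$ multiplied by $p$ must equal that of $x$. Under Lemma \ref{isodegreezero} we have ${W\lef(k\rig)\lef[\underline{X}\rig]}^{\dagger} \subset \widehat{W\lef(k\rig)\lef[\underline{X}\rig]}$, so what remains is to show that overconvergence of $x$ forces each $s\lef(e\rig)$ to lie in ${W\lef(k\rig)\lef[\underline{X}\rig]}^{\dagger}$, together with the uniform $\zeta_{\varepsilon}$ bound.

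I would construct the coefficients directly by induction on the filtration level, then invoke uniqueness to identify them with those from Theorem \ref{structuretheoremconvergent}. Fix $x \in W^{\dagger}\Omega^{t}_{k\lef[\underline{X}\rig]/k}$ with $\zeta_{\varepsilon_{0}}\lef(x\rig) > -\infty$. Reducing modulo $p$ and applying Proposition \ref{truncatedderhamwittlocalstructure} with the perfect rebar cage of Example \ref{perfectrebarcage} produces a finite decomposition of $\overline{x}$, the coefficients being classes $\overline{\lef[P_{e}\rig]}$ with $P_{e} \in k\lef[\underline{X}\rig]$ satisfying $\deg\lef(P_{e}\rig) + \lef\lvert a\lef(e\rig)\rig\rvert$ bounded by an explicit quantity. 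This degree bound translates, via the very definition of $\zeta_{\varepsilon}$, into an inequality of the form $\zeta_{\varepsilon}\lef(\lef[P_{e}\rig]\rig) + \zeta_{\varepsilon}\lef(e\rig) \geqslant \zeta_{\varepsilon}\lef(x\rig)$ for every $\varepsilon \leqslant \varepsilon_{0}$. Teichmüller-lifting each $P_{e}$ to $W\lef(k\rig)\lef[\underline{X}\rig]$ gives stage-zero coefficients $s^{\lef(0\rig)}\lef(e\rig)$, and subtracting $\sum_{e \in H\lef(t\rig)} t_{F}\lef(s^{\lef(0\rig)}\lef(e\rig) e\rig) + \sum_{e \in G\lef(t\rig)} t_{F}\lef(s^{\lef(0\rig)}\lef(e\rig)\rig)e + d\lef(\sum_{e \in G\lef(t-1\rig)} t_{F}\lef(s^{\lef(0\rig)}\lef(e\rig)\rig) e\rig)$ leaves a remainder in $\operatorname{Fil}^{1}\lef(W^{\dagger}\Omega^{t}_{k\lef[\underline{X}\rig]/k}\rig)$. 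Iterating and using Propositions \ref{vactionone} and \ref{dactionone} to match the Verschiebung-levels with the indexing sets $G\lef(t,m\rig)$ and $G\lef(t-1,m\rig)$ yields a $p$-adically convergent series for each $s\lef(e\rig)$.

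The main obstacle is controlling the pseudovaluation uniformly through the iteration. The key input is Proposition \ref{vfgamma}, which shows that for $\varepsilon$ sufficiently small the error term $v_{F}\lef(s^{\lef(0\rig)}\lef(e\rig)\rig)$ contributing to the next stage satisfies $\zeta_{\varepsilon}\lef(v_{F}\lef(s^{\lef(0\rig)}\lef(e\rig)\rig)\rig) \geqslant \zeta_{\varepsilon}\lef(s^{\lef(0\rig)}\lef(e\rig)\rig) + 1 - \mu$ for any preassigned $\mu > 0$. Choosing $\delta$ small enough that $1 - \mu > 0$ ensures that the inductive remainder has $\zeta_{\varepsilon}$ value at least $\zeta_{\varepsilon}\lef(x\rig)$ at every stage, with the filtration level growing by one each time. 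Combined with the pseudovaluation properties of $\zeta_{\varepsilon}$ from \cite[theorem 3.17]{pseudovaluationsonthederhamwittcomplex} and the limiting inequality \eqref{padicoverconvergence}, this passes to the $p$-adic limit while preserving the bound $\zeta_{\varepsilon}\lef(t_{F}\lef(s\lef(e\rig)\rig)\rig) \geqslant \zeta_{\varepsilon}\lef(x\rig) - \zeta_{\varepsilon}\lef(e\rig)$ for every $e$ and every $\varepsilon \in \lef]0,\delta\rig]$. In particular, each $t_{F}\lef(s\lef(e\rig)\rig)$ is overconvergent, hence by Lemma \ref{isodegreezero} so is $s\lef(e\rig)$, and uniqueness from Theorem \ref{structuretheoremconvergent} identifies this inductively-constructed map with the one from the theorem.
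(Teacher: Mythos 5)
Your proposal is correct and follows essentially the same route as the paper's proof: decompose $x$ modulo $p$ via proposition \ref{truncatedderhamwittlocalstructure}, lift the polynomial coefficients by Teichmüller representatives, control the discrepancy between $t_{F}$ and the canonical lift with proposition \ref{vfgamma}, and iterate $p$-adically, the loss of $2n$ in $\zeta_{\varepsilon}$ incurred at each division by $p$ being exactly compensated by the factor $p^{l}$ through \eqref{multiplicationbypzeta}. One wording caveat: for the iteration to produce a $p$-adically convergent series the stage-zero remainder must actually be divisible by $p$ (which it is, via \eqref{vfisinvw} and \eqref{vxfyvxy}), not merely lie in $\operatorname{Fil}^{1}$, and the induction is on powers of $p$ rather than on the canonical filtration — your later appeals to $p$-adic convergence and to \eqref{padicoverconvergence} show this is what you intend.
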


\begin{proof}
Denote by $\mathcal{F}$ the set of weight functions $a\colon\lef\llbracket1,n\rig\rrbracket\to\mathbb{N}\lef[\frac{1}{p}\rig]$. Consider the following map:
\begin{equation*}
\sigma\colon\begin{array}{rl}k\lef[\underline{X}\rig]\to&{W\lef(k\rig)\lef[\underline{X}\rig]}^{\dagger}\\\sum_{\substack{a\in\mathcal{F}\\u\lef(a\rig)=0}}c_{a}\underline{X}^{a}\mapsto&\sum_{\substack{a\in\mathcal{F}\\u\lef(a\rig)=0}}\lef[c_{a}\rig]\underline{X}^{a}\end{array}\text{,}
\end{equation*}
where all $c_{a}\in k$. Notice that for every $P\in k\lef[\underline{X}\rig]$ and every $\varepsilon>0$ we have $\zeta_{\varepsilon}\lef(\sigma\lef(P\rig)\rig)\geqslant-\varepsilon\deg\lef(P\rig)$, where we identify ${W\lef(k\rig)\lef[\underline{X}\rig]}^{\dagger}$ as a subset of $W^{\dagger}\Omega_{k\lef[\underline{X}\rig]/k}$ using proposition \ref{lazardbijectivity}.

We derive from proposition \ref{vfgamma} that there exists $\delta>0$ such that:
\begin{equation*}
\forall\varepsilon\in\lef]0,\delta\rig],\ \forall P\in k\lef[\underline{X}\rig],\ \zeta_{\varepsilon}\lef(t_{F}\lef(\sigma\lef(P\rig)\rig)\rig)\geqslant-\varepsilon\deg\lef(P\rig)\text{.}
\end{equation*}

Let $x\in W^{\dagger}\Omega^{t}_{k\lef[\underline{X}\rig]/k}$. We get a map $P\colon\begin{array}{rl}H\lef(t\rig)\to&k\lef[\underline{X}\rig]\\e\mapsto&P_{e}\end{array}$ from propositions \ref{truncatedderhamwittlocalstructure} and \ref{lazardinjectivity} such that:
\begin{gather*}
y\coloneqq x-\sum_{e\in H\lef(t\rig)}t_{F}\lef(\sigma\lef(P_{e}\rig)e\rig)\in pW^{\dagger}\Omega^{\mathrm{int},t}_{k\lef[\underline{X}\rig]/k}+W^{\dagger}\Omega^{\mathrm{frac},t}_{k\lef[\underline{X}\rig]/k}\text{,}\\
\forall\varepsilon\in\lef]0,\delta\rig],\ \forall e\in H\lef(t\rig),\ -\varepsilon\lef(\deg\lef(P_{e}\rig)+\lef\lvert a\lef(e\rig)\rig\rvert\rig)\geqslant\zeta_{\varepsilon}\lef(x\rig)\text{.}
\end{gather*}

Of course, $P$ is the zero map when $x\in\operatorname{Fil}^{u}W^{\dagger}\Omega^{t}_{k\lef[\underline{X}\rig]/k}$ for some $u\in\mathbb{N}^{*}$.

After applying proposition \ref{truncatedderhamwittlocalstructure} for a second time, we can extend $P$ to a map $P\colon\begin{array}{rl}H\lef(t\rig)\sqcup G\lef(t\rig)\sqcup G\lef(t-1\rig)\to&k\lef[\underline{X}\rig]\\e\mapsto&P_{e}\end{array}$ such that:
\begin{gather*}
y\equiv\sum_{e\in G\lef(t\rig)}\lef[P_{e}\rig]e+d\lef(\sum_{e\in G\lef(t-1\rig)}\lef[P_{e}\rig]e\rig)\pmod{p}\text{,}\\
\forall\varepsilon\in\lef]0,\delta\rig],\ \forall e\in G\lef(t\rig)\sqcup G\lef(t-1\rig),\ \lef(t+1\rig)u\lef(a\lef(e\rig)\rig)-\varepsilon\lef(\deg\lef(P_{e}\rig)+\lef\lvert a\lef(e\rig)\rig\rvert\rig)\geqslant\zeta_{\varepsilon}\lef(y\rig)\text{,}\\
\begin{multlined}\forall u\in\mathbb{N},\ x\in\operatorname{Fil}^{u}W^{\dagger}\Omega^{t}_{k\lef[\underline{X}\rig]/k}\\\iff\lef(\forall e\in H\lef(t\rig)\sqcup G\lef(t\rig)\sqcup G\lef(t-1\rig),\ u\lef(a\lef(e\rig)\rig)<u\implies P_{e}=0\rig)\text{.}\end{multlined}
\end{gather*}

The second line can be reworded as follows:
\begin{equation*}
\forall\varepsilon\in\lef]0,\delta\rig],\ \forall e\in G\lef(t\rig)\sqcup G\lef(t-1\rig),\ \zeta_{\varepsilon}\lef(t_{F}\lef(\sigma\lef(P_{e}\rig)\rig)\rig)\geqslant\zeta_{\varepsilon}\lef(x\rig)-\zeta_{\varepsilon}\lef(e\rig)\text{.}
\end{equation*}

We deduce from \eqref{vxfyvxy} and \eqref{vfisinvw} that:
\begin{equation*}
\forall e\in G\lef(t\rig)\sqcup G\lef(t-1\rig),\ t_{F}\lef(\sigma\lef(P_{e}\rig)\rig)e\equiv\lef[P_{e}\rig]e\pmod{p}\text{.}
\end{equation*}

Since $k$ is perfect, there is no $p$-torsion in the de Rham--Witt complex. Now, as $\zeta_{\varepsilon}$ is a pseudovaluation \cite[theorem 3.17]{pseudovaluationsonthederhamwittcomplex}, we can apply \eqref{multiplicationbypzeta} and thus get by induction on $l\in\mathbb{N}$ a process, described above, which gives us a sequence of maps $\lef(s_{l}\rig)_{l\in\mathbb{N}}\in\lef({{W\lef(k\rig)\lef[\underline{X}\rig]}^{\dagger}}^{H\lef(t\rig)\sqcup G\lef(t\rig)\sqcup G\lef(t-1\rig)}\rig)^{\mathbb{N}}$ such that for any $l\in\mathbb{N}$ and any $e\in H\lef(t\rig)\sqcup G\lef(t\rig)\sqcup G\lef(t-1\rig)$ we find:
\begin{gather*}
\zeta_{\varepsilon}\lef(t_{F}\lef(s_{l}\lef(e\rig)\rig)\rig)\geqslant\zeta_{\varepsilon}\lef(x\rig)-\zeta_{\varepsilon}\lef(e\rig)-2nl\text{,}\\
\begin{multlined}x\equiv\\\sum_{l'=0}^{l}p^{l'}\lef(\sum_{e\in H\lef(t\rig)}t_{F}\lef(s_{l'}\lef(e\rig)e\rig)+\sum_{e\in G\lef(t\rig)}t_{F}\lef(s_{l'}\lef(e\rig)\rig)e+d\lef(\sum_{e\in G\lef(t-1\rig)}t_{F}\lef(s_{l'}\lef(e\rig)\rig)e\rig)\rig)\\\pmod{p^{l+1}}\text{.}\end{multlined}
\end{gather*}

We thus get the existence of the map $s$ by $p$-adic overconvergence, and use theorem \ref{structuretheoremconvergent} for the unicity.
\end{proof}

\begin{rema}
In the statement of proposition \ref{structurepolynomialodrw}, we see that when $F=\widetilde{\operatorname{Frob}}$, then after applying propositions \ref{zpcombination} and \ref{lazardbijectivity}, we end up with the direct sum decompositions \eqref{decomposeoverconvergent} and \eqref{decomposefracoverconvergent}.
\end{rema}

We now begin the proof of the main theorem. We will need to introduce many notations which are gathered below. First, we fix:
\begin{equation*}
P\in k\lef[\underline{X}\rig]\text{.}
\end{equation*}

We shall now assume given a finite free étale morphism of commutative $k$-algebras, as in proposition \ref{characteriserelativelyperfect}:
\begin{equation*}
{k\lef[\underline{X}\rig]}_{\lef\langle P\rig\rangle}\to\overline{R}\text{.}
\end{equation*}

We will consider $\lef(r_{i}\rig)_{i\in\lef\llbracket1,m\rig\rrbracket}$ a basis of $\overline{R}$ as a ${k\lef[\underline{X}\rig]}_{\lef\langle P\rig\rangle}$-module, where $m\in\mathbb{N}$ is the rank of $\overline{R}$. In what follows, we shall write:
\begin{align*}
k\lef[\underline{T}\rig]&\coloneqq k\lef[X_{1},\ldots,X_{n},Y,Z_{1},\ldots,Z_{m}\rig]\text{,}\\
k\lef[\underline{P}\rig]&\coloneqq k\lef[X_{1},\ldots,X_{n},Y^{p},{Z_{1}}^{p},\ldots,{Z_{m}}^{p}\rig]\text{,}\\
W\lef(k\rig)\lef[\underline{P}\rig]&\coloneqq W\lef(k\rig)\lef[X_{1},\ldots,X_{n},Y^{p},{Z_{1}}^{p},\ldots,{Z_{m}}^{p}\rig]\text{.}
\end{align*}

We put:
\begin{equation*}
\varphi\colon k\lef[\underline{T}\rig]\to\overline{R}\text{,}
\end{equation*}
the surjective morphism of $k\lef[\underline{X}\rig]$-algebras such that $\varphi\lef(Y\rig)=P^{-1}$ and $\varphi\lef(Z_{i}\rig)=r_{i}$ for all $i\in\lef\llbracket1,m\rig\rrbracket$. By proposition \ref{characteriserelativelyperfect}, the morphism $\varphi|_{k\lef[\underline{P}\rig]}$ is also surjective. We will also denote this restriction as $\varphi\colon k\lef[\underline{P}\rig]\to\overline{R}$, which will lead to no confusion in the context.

For any $i\in\lef\llbracket0,m\rig\rrbracket$ we choose once and for all elements:
\begin{equation*}
\alpha_{i}\in\Omega^{\mathrm{sep},1}_{{W\lef(k\rig)\lef[\underline{P}\rig]}^{\dagger}/W\lef(k\rig)}\text{,}
\end{equation*}
and we want them to satisfy the following properties:
\begin{align*}
W\lef(\varphi\rig)\lef(\alpha_{0}\rig)&=W\lef(\varphi\rig)\lef(F\lef(d\lef(Y\rig)\rig)\rig)\text{,}\\
\forall i\in\lef\llbracket1,m\rig\rrbracket,\ W\lef(\varphi\rig)\lef(\alpha_{i}\rig)&=W\lef(\varphi\rig)\lef(F\lef(d\lef(Z_{i}\rig)\rig)\rig)\text{.}
\end{align*}

Now, we fix:
\begin{equation*}
\delta>0\text{.}
\end{equation*}

It shall be small enough to satisfy proposition \ref{vfgamma} with $\mu=\frac{1}{2}$. That is, we have:
\begin{equation}\label{zetavftfrob}
\forall\varepsilon\in\lef]0,\delta\rig],\ \forall x\in{W\lef(k\rig)\lef[\underline{P}\rig]}^{\dagger},\ \zeta_{\varepsilon}\lef(v_{F}\lef(x\rig)\rig)\geqslant\zeta_{\varepsilon}\lef(t_{\widetilde{\operatorname{Frob}}}\lef(x\rig)\rig)+\frac{1}{2}\text{.}
\end{equation}

Moreover, we also want $\delta$ to be small enough to suit proposition \ref{structurepolynomialodrw}, and so that:
\begin{equation*}
\forall\varepsilon\in\lef]0,\delta\rig],\ \forall i\in\lef\llbracket0,m\rig\rrbracket,\ \zeta_{\varepsilon}\lef(t_{\widetilde{\operatorname{Frob}}}\lef(\alpha_{i}\rig)\rig)\geqslant\frac{1}{2}-2\lef(n+1+m\rig)\text{.}
\end{equation*}

For $\varepsilon\in\lef]0,\delta\rig]$, as $\zeta_{\varepsilon}$ is a pseudovaluation \cite[theorem 3.17]{pseudovaluationsonthederhamwittcomplex}, we get after applying \eqref{dfpfd}, \eqref{multiplicationbypzeta} and \eqref{dzetaepsilon} on the complex $W\Omega_{k\lef[\underline{P}\rig]/k}$ that:
\begin{align*}
\zeta_{\varepsilon}\lef(t_{\widetilde{\operatorname{Frob}}}\lef(p\alpha_{0}\rig)\rig)=\frac{1}{2}-\varepsilon&\geqslant\zeta_{\varepsilon}\lef(d\lef(\lef[Y^{p}\rig]\rig)\rig)=-\varepsilon\text{,}\\
\forall i\in\lef\llbracket1,m\rig\rrbracket,\ \zeta_{\varepsilon}\lef(t_{\widetilde{\operatorname{Frob}}}\lef(p\alpha_{i}\rig)\rig)=\frac{1}{2}-\varepsilon&\geqslant\zeta_{\varepsilon}\lef(d\lef(\lef[{Z_{i}}^{p}\rig]\rig)\rig)=-\varepsilon\text{,}\\
W\lef(\varphi\rig)\lef(p\alpha_{0}\rig)&=W\lef(\varphi\rig)\lef(d\lef(Y^{p}\rig)\rig)\text{,}\\
\forall i\in\lef\llbracket1,m\rig\rrbracket,\ W\lef(\varphi\rig)\lef(p\alpha_{i}\rig)&=W\lef(\varphi\rig)\lef(d\lef({Z_{i}}^{p}\rig)\rig)\text{.}
\end{align*}

By convention, except when otherwise indicated, all weight functions shall be defined on $\lef\llbracket1,n\rig\rrbracket$. That is, for every weight function $a$ and any partition $I$ of $a$, we shall have $e\lef(1,a,I\rig)\in W\Omega^{\dagger}_{k\lef[\underline{X}\rig]/k}$. In particular, we also have $H\lef(t\rig)\subset W^{\dagger}\Omega^{t}_{k\lef[\underline{X}\rig]/k}$ and $G\lef(t\rig)\subset W^{\dagger}\Omega^{t}_{k\lef[\underline{X}\rig]/k}$ for $t\in\mathbb{N}$.

\begin{deff}
We shall say that a weight function $a\colon\lef\llbracket1,n+1+m\rig\rrbracket\to\mathbb{N}\lef[\frac{1}{p}\rig]$ is \textbf{extended}. The supports and partitions on these weight functions are defined as usual.

These weight functions are related to the complex $W\Omega_{k\lef[\underline{P}\rig]/k}$. We will denote by $\mathcal{P}^{\underline{P}}$ the set of the couples $\lef(a,I\rig)$, where $a$ is an extended weight function, and $I$ is a partition of $a$.
\end{deff}

For any weight function $a$, extended or not, we will write:
\begin{equation*}
\underline{P}^{a}\coloneqq\prod_{j=1}^{n}{X_{j}}^{a_{j}}\times Y^{pa_{n+1}}\times\prod_{j=1}^{m}{Z_{j}}^{pa_{n+1+j}}\text{.}
\end{equation*}

So that, as usual, we can put:
\begin{align*}
g\lef(a\rig)&\coloneqq F^{u\lef(a\rig)+\operatorname{v}_{p}\lef(a\rig)}\lef(d\lef(V^{u\lef(a\rig)}\lef(\lef[\underline{P}^{p^{-\operatorname{v}_{p}\lef(a\rig)}a}\rig]\rig)\rig)\rig)\text{,}\\
u\lef(a\rig)&\coloneqq\min\lef\{-\operatorname{v}_{p}\lef(a\rig),0\rig\}\text{.}
\end{align*}

For every $t\in\mathbb{N}$, we shall also put:
\begin{align*}
G^{\underline{P}}\lef(t\rig)&\coloneqq\lef\{e\lef(1,\frac{a+p^{u}\chi_{J}}{p^{u}},I\cup J\rig)\mid\begin{array}{c}u\in\mathbb{N},\ \lef(a,I\rig)\in\mathcal{P}^{\underline{P}},\\J\subset\lef\llbracket1,n+1+m\rig\rrbracket\smallsetminus\operatorname{Supp}\lef(a\rig),\\\operatorname{v}_{p}\lef(a\rig)=0,\\\forall i\in\lef\llbracket1,n+1+m\rig\rrbracket,\ a_{i}<p^{u},\\I_{0}\neq\emptyset,\ \#I+\#J=t\end{array}\rig\}\text{,}\\
H^{\underline{P}}\lef(t\rig)&\coloneqq\lef\{e\lef(1,\chi_{I},I\rig)\mid\begin{array}{c}\lef(\chi_{I},I\rig)\in\mathcal{P}^{\underline{P}},\\\#I=t\end{array}\rig\}\text{.}
\end{align*}

We keep the convention:
\begin{equation*}
G^{\underline{P}}\lef(-1\rig)\coloneqq\emptyset\text{.}
\end{equation*}

For every $t\in\mathbb{N}$ we define the following ${W\lef(k\rig)\lef[\underline{P}\rig]}^{\dagger}$-modules:
\begin{align*}
W^{\dagger}\Omega^{\underline{X}-\mathrm{int},t}_{k\lef[\underline{P}\rig]/k}&\coloneqq\lef\{\sum_{e\in H\lef(t\rig)}t_{F}\lef(s_{e}e\rig)\mid\begin{array}{l}\exists C\in\mathbb{R},\ \exists\varepsilon>0,\ \forall e\in H\lef(t\rig),\\\lef(s_{e}\in W^{\dagger}\Omega^{\mathrm{int},0}_{k\lef[\underline{P}\rig]/k}\rig)\wedge\lef(\zeta_{\varepsilon}\lef(s_{e}\rig)+\zeta_{\varepsilon}\lef(e\rig)\geqslant C\rig)\end{array}\rig\}\text{,}\\
W^{\dagger}\Omega^{\underline{X}-\mathrm{frp},t}_{k\lef[\underline{P}\rig]/k}&\coloneqq\lef\{\sum_{e\in G\lef(t\rig)}t_{F}\lef(s_{e}\rig)e\mid\begin{array}{l}\exists C\in\mathbb{R},\ \exists\varepsilon>0,\ \forall e\in G\lef(t\rig),\\\lef(s_{e}\in W^{\dagger}\Omega^{\mathrm{int},0}_{k\lef[\underline{P}\rig]/k}\rig)\wedge\lef(\zeta_{\varepsilon}\lef(s_{e}\rig)+\zeta_{\varepsilon}\lef(e\rig)\geqslant C\rig)\end{array}\rig\}\text{.}
\end{align*}

And we let:
\begin{equation*}
W^{\dagger}\Omega^{\underline{X},t}_{k\lef[\underline{P}\rig]/k}\coloneqq W^{\dagger}\Omega^{\underline{X}-\mathrm{int},t}_{k\lef[\underline{P}\rig]/k}+W^{\dagger}\Omega^{\underline{X}-\mathrm{frp},t}_{k\lef[\underline{P}\rig]/k}+d\lef(W^{\dagger}\Omega^{\underline{X}-\mathrm{int},t-1}_{k\lef[\underline{P}\rig]/k}\rig)\text{.}
\end{equation*}

Our goal will be to prove that this complex surjects onto $W^{\dagger}\Omega^{t}_{\overline{R}/k}$. We begin with the integral part.

\begin{prop}\label{intyieldsxint}
Let $t\in\mathbb{N}$. For every $x\in W^{\dagger}\Omega^{\mathrm{int},t}_{k\lef[\underline{P}\rig]/k}$, there is $z\in W^{\dagger}\Omega^{\underline{X}-\mathrm{int},t}_{k\lef[\underline{P}\rig]/k}$ such that:
\begin{gather*}
W\lef(\varphi\rig)\lef(t_{F}\lef(x\rig)\rig)=W\lef(\varphi\rig)\lef(z\rig)\text{,}\\
\forall\varepsilon\in\lef]0,\delta\rig],\ \zeta_{\varepsilon}\lef(z\rig)\geqslant\zeta_{\varepsilon}\lef(t_{F}\lef(x\rig)\rig)\text{.}
\end{gather*}
\end{prop}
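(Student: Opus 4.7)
The plan is to apply the polynomial structure theorem (proposition \ref{structurepolynomialodrw}) to $k\lef[\underline{P}\rig]$ viewed as a polynomial ring in $n+1+m$ variables, and then iteratively eliminate differentials of the extra variables $Y^{p},Z_{1}^{p},\ldots,Z_{m}^{p}$ by trading each such differential against the corresponding $p\alpha_{i}$ modulo $\ker W\lef(\varphi\rig)$. The factor of $p$ gained at every round, combined with the uniform pseudovaluation bound on the $\alpha_{i}$'s recalled at the start of the section, will let the resulting series converge within $W^{\dagger}\Omega_{k\lef[\underline{P}\rig]/k}$ via the overconvergence criterion \eqref{padicoverconvergence}.

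First I would apply Lazard bijectivity (proposition \ref{lazardbijectivity}) to $k\lef[\underline{P}\rig]$ together with the integral part of proposition \ref{structurepolynomialodrw}, this time for the polynomial ring in the $n+1+m$ variables $X_{1},\ldots,X_{n},Y^{p},Z_{1}^{p},\ldots,Z_{m}^{p}$. Since $x$ is integral, the decomposition reduces to its $H^{\underline{P}}\lef(t\rig)$-part and writes $x=\sum_{I}t_{F}\lef(s_{I}e_{I}\rig)$, where $I$ ranges over $t$-element subsets of $\lef\llbracket1,n+1+m\rig\rrbracket$, $e_{I}\in H^{\underline{P}}\lef(t\rig)$ is the corresponding wedge of Teichmüller differentials, and each $s_{I}\in W^{\dagger}\Omega^{\mathrm{int},0}_{k\lef[\underline{P}\rig]/k}$ satisfies a uniform overconvergence estimate $\zeta_{\varepsilon}\lef(t_{F}\lef(s_{I}\rig)\rig)+\zeta_{\varepsilon}\lef(e_{I}\rig)\geqslant\zeta_{\varepsilon}\lef(t_{F}\lef(x\rig)\rig)$ for any $\varepsilon\in\lef]0,\delta\rig]$.

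For each such $I$, I would split $I=I_{X}\sqcup I_{U}$ along the partition of variables between the $X_{i}$'s (indices in $\lef\llbracket1,n\rig\rrbracket$) and the $Y^{p},Z_{i}^{p}$'s (indices in $\lef\llbracket n+1,n+1+m\rig\rrbracket$). Summands with $I_{U}=\emptyset$ already land in $W^{\dagger}\Omega^{\underline{X}-\mathrm{int},t}_{k\lef[\underline{P}\rig]/k}$ by definition and contribute directly to $z$. For summands with $I_{U}\neq\emptyset$, I would iteratively replace each differential appearing in $e_{I}$ corresponding to an index in $I_{U}$ by the associated $p\alpha_{i}$, and then expand $\alpha_{i}$ in the basis $\lef\{dX_{i}\rig\}\cup\lef\{d\lef(Y^{p}\rig),d\lef(Z_{1}^{p}\rig),\ldots,d\lef(Z_{m}^{p}\rig)\rig\}$. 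After one round, terms landing on wedges of pure $dX_{i}$'s accumulate into $z$, while the residual ``$I_{U}$-involving'' terms feed back into the iteration carrying an extra factor of $p$ together with the $\zeta_{\varepsilon}$-gain coming from the bound $\zeta_{\varepsilon}\lef(t_{\widetilde{\operatorname{Frob}}}\lef(p\alpha_{i}\rig)\rig)\geqslant\frac{1}{2}-\varepsilon$ fixed at the start of the section. Iterating and summing the accumulated contributions, the criterion \eqref{padicoverconvergence} assembles a well-defined element $z\in W^{\dagger}\Omega^{\underline{X}-\mathrm{int},t}_{k\lef[\underline{P}\rig]/k}$, equal to $t_{F}\lef(x\rig)$ modulo $\ker W\lef(\varphi\rig)$ by telescoping along the substitutions, and with the required estimate $\zeta_{\varepsilon}\lef(z\rig)\geqslant\zeta_{\varepsilon}\lef(t_{F}\lef(x\rig)\rig)$ inherited from the pseudovaluation bookkeeping.

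The main obstacle will be the convergence of this iteration: since the expansion of each $\alpha_{i}$ may itself involve differentials of the $Y^{p}$ or $Z_{l}^{p}$'s, the cardinality $\#I_{U}$ need not strictly decrease from one round to the next, so termination in finitely many steps is not guaranteed. The argument is saved by the gain of at least $\frac{1}{2}-\varepsilon$ in $\zeta_{\varepsilon}$ accumulated per round, which together with the uniform overconvergence of the $s_{I}$ from proposition \ref{structurepolynomialodrw} provides an unbounded lower bound on the $\zeta_{\varepsilon}$ of the residual part after $s$ iterations — this is precisely the purpose of the threshold $\delta$ carefully fixed at the beginning of the section, including the prescription $\mu=\frac{1}{2}$ in proposition \ref{vfgamma}, and it is the delicate step that requires the most care.
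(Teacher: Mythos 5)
Your proposal is correct and follows essentially the same route as the paper: decompose the integral form via proposition \ref{structurepolynomialodrw} into wedges of Teichmüller differentials, substitute each $d\lef(Y^{p}\rig)$ or $d\lef({Z_{i}}^{p}\rig)$ by the corresponding $p\alpha_{j}$ modulo $\operatorname{Ker}\lef(W\lef(\varphi\rig)\rig)$, and iterate, with convergence secured by the factor of $p$ gained at each round together with the uniform $\zeta_{\varepsilon}$-bounds and \eqref{padicoverconvergence}. The only bookkeeping point you leave implicit is that the iteration naturally yields the estimate against $\zeta_{\varepsilon}\lef(x\rig)$ (with $x$ identified via $t_{\widetilde{\operatorname{Frob}}}$) rather than $\zeta_{\varepsilon}\lef(t_{F}\lef(x\rig)\rig)$, which the paper reconciles by noting $\zeta_{\varepsilon}\lef(v_{F}\lef(x\rig)\rig)>\zeta_{\varepsilon}\lef(x\rig)$ from \eqref{zetavftfrob}, whence $\zeta_{\varepsilon}\lef(t_{F}\lef(x\rig)\rig)=\zeta_{\varepsilon}\lef(x\rig)$.
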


\begin{proof}
Using proposition \ref{structurepolynomialodrw}, we know that $x$ is a linear combination of products of the form $s\prod_{i=1}^{t}d\lef(x_{i}\rig)$, with all $x_{i}\in\lef\{X_{1},\ldots,X_{n},Y^{p},{Z_{1}}^{p},\ldots,{Z_{m}}^{p}\rig\}$ and $s\in W^{\dagger}\Omega^{\mathrm{int},0}_{k\lef[\underline{P}\rig]/k}$ with:
\begin{equation*}
\zeta_{\varepsilon}\lef(s\rig)\geqslant\zeta_{\varepsilon}\lef(x\rig)-\varepsilon t\text{.}
\end{equation*}

Assume there is $i\in\lef\llbracket1,t\rig\rrbracket$ such that $x_{i}\in\lef\{Y^{p},{Z_{1}}^{p},\ldots,{Z_{m}}^{p}\rig\}$. Then, we can replace $d\lef(x_{i}\rig)$ by the adequate $p\alpha_{j}$ where $j\in\lef\llbracket0,m\rig\rrbracket$, as defined in the beginning of this section. The new product $y$ will then satisfy $\zeta_{\varepsilon}\lef(y\rig)\geqslant\zeta_{\varepsilon}\lef(x\rig)$ since $\zeta_{\varepsilon}$ is a pseudovaluation \cite[theorem 3.17]{pseudovaluationsonthederhamwittcomplex}.

Hence, we found an element of the required form modulo $p$. We can thus repeat the process, so that by overconvergence we find an element $z'\in W^{\dagger}\Omega^{\mathrm{int},t}_{k\lef[\underline{P}\rig]/k}$ such that:
\begin{gather*}
t_{F}\lef(z'\rig)\in W^{\dagger}\Omega^{\underline{X}-\mathrm{int},t}_{k\lef[\underline{P}\rig]/k}\text{,}\\
W\lef(\varphi\rig)\lef(t_{F}\lef(x\rig)\rig)=W\lef(\varphi\rig)\lef(t_{F}\lef(z'\rig)\rig)\text{,}\\
\zeta_{\varepsilon}\lef(t_{F}\lef(z'\rig)\rig)\geqslant\zeta_{\varepsilon}\lef(x\rig)\text{.}
\end{gather*}

Applying proposition \ref{structurepolynomialodrw} to $x$ and \eqref{zetavftfrob}, we find with the pseudovaluation property that $\zeta_{\varepsilon}\lef(v_{F}\lef(x\rig)\rig)>\zeta_{\varepsilon}\lef(x\rig)$, so that $\zeta_{\varepsilon}\lef(x\rig)=\zeta_{\varepsilon}\lef(t_{F}\lef(x\rig)\rig)$ and we are done.
\end{proof}

The proof of the theorem works by demonstrating it modulo $p$ without losing too much overconvergence. The following technical lemma will allow us to do so.

\begin{lemm}\label{technicalgptrewrite}
There exists $\delta'\in\lef]0,\delta\rig]$ such that for all $e\in G^{\underline{P}}\lef(t\rig)$ we can find $e',x\in W^{\dagger}\Omega_{k\lef[\underline{P}\rig]/k}^{t}$ satisfying:
\begin{align*}
W\lef(\varphi\rig)\lef(e'+px\rig)&=W\lef(\varphi\rig)\lef(e\rig)\text{,}\\
\forall\varepsilon\in\lef]0,\delta'\rig],\ \zeta_{\varepsilon}\lef(e'\rig)&\geqslant\zeta_{\varepsilon}\lef(e\rig)-\frac{1}{2}\text{,}\\
\forall\varepsilon\in\lef]0,\delta\rig],\ \zeta_{\varepsilon}\lef(px\rig)&\geqslant\zeta_{\varepsilon}\lef(e\rig)\text{.}
\end{align*}

Moreover, $e'$ can be written as an overconvergent series of products of the form $wz$, with $w\in{W\lef(k\rig)\lef[\underline{P}\rig]}^{\dagger}$ and $z\in W^{\dagger}\Omega_{k\lef[\underline{X}\rig]/k}^{t}$, and this series is congruent modulo $p$ to an element in $W^{\dagger}\Omega_{k\lef[\underline{X}\rig]/k}^{\underline{X}-\mathrm{frp},t}$.
\end{lemm}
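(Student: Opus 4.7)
The plan is to decompose $e = e\lef(1, b, I\cup J\rig)$ using the product formula, apply the Leibniz rule \eqref{dproducts} to expand each differential $d\lef(\lef[\underline{P}^{\sigma}\rig]\rig)$ into a sum of terms involving differentials of single indeterminates, and then replace every $d\lef(\lef[Y^{p}\rig]\rig)$ by $p\alpha_{0}$ and every $d\lef(\lef[Z_{j}^{p}\rig]\rig)$ by $p\alpha_{j}$. The replacement is valid modulo the kernel of $W\lef(\varphi\rig)$ by the defining property of the $\alpha_{i}$'s, and each such substitution carries an extra factor of $p$, so the accumulated error is divisible by $p$ and yields the $px$ term of the conclusion.

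Because the order $\preceq$ on $\operatorname{Supp}\lef(b\rig) = \operatorname{Supp}\lef(a\rig)\cup J$ places every index in $\operatorname{Supp}\lef(a\rig)$ (all having strictly negative $\operatorname{v}_{p}$ on $b$) strictly before every index in $J$ (all with $\operatorname{v}_{p} = 0$), we have $L_{0} = I_{0}\neq\emptyset$ and the factorisation cleanly splits: the factors $g\lef(b|_{L_{l}}\rig)$ for $l > \#I$ are supported entirely on $J$ and contribute only differentials of $Y^{p}$ or $Z_{j}^{p}$, which are precisely the ones being replaced; while for $l \leqslant \#I$, together with the outer Teichmüller $V^{u}\lef(\lef[\underline{P}^{a|_{I_{0}}}\rig]\rig)$, Leibniz expansion produces only Teichmüller lifts and differentials $d\lef(\lef[X_{i}\rig]\rig)$. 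After gathering, each summand of $e'$ is a product $w \cdot z$ with $w\in{W\lef(k\rig)\lef[\underline{P}\rig]}^{\dagger}$ collecting all Teichmüller lifts and $\alpha_{i}$-contributions (using proposition \ref{lazardbijectivity} to identify ${W\lef(k\rig)\lef[\underline{P}\rig]}^{\dagger}$ with $W^{\dagger}\Omega^{\mathrm{int},0}_{k\lef[\underline{P}\rig]/k}$) and $z \in W^{\dagger}\Omega^{t}_{k\lef[\underline{X}\rig]/k}$ collecting the surviving $d\lef(\lef[X_{i}\rig]\rig)$'s.

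The main obstacle is the $\zeta_{\varepsilon}$-estimate. The Witt-differential formula gives $\zeta_{\varepsilon}\lef(e\rig) = \lef(t + 1\rig) u - \varepsilon\lef\lvert b\rig\rvert$, which exceeds the bare pseudovaluation bound from a product of its factors by the additional $u$ contributed by the $V^{u}$-Verschiebung. To retain this surplus in the bound on $e'$, the substitution must preserve the Verschiebung wrapper: rather than naively multiplying $V^{u}\lef(\lef[\underline{P}^{\sigma}\rig]\rig)$ by $p\alpha_{i}$, one uses \eqref{vxfyvxy} together with the identity $px = V\lef(F\lef(x\rig)\rig)$ to push the $p$ inside, obtaining an expression of the form $V^{u+1}\lef(F\lef(\lef[\underline{P}^{\sigma}\rig]\rig)\cdot F^{u+1}\lef(\alpha_{i}\rig)\rig)$ whose $\zeta_{\varepsilon}$ still carries the expected leading-order term. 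The residual loss of $\tfrac{1}{2}$ is then accounted for by the defining bound $\zeta_{\varepsilon}\lef(t_{\widetilde{\operatorname{Frob}}}\lef(\alpha_{i}\rig)\rig) \geqslant \tfrac{1}{2} - 2\lef(n+1+m\rig)$ combined with \eqref{multiplicationbypzeta}, and shrinking $\delta$ to some $\delta'\in\lef]0,\delta\rig]$ absorbs the residual $\varepsilon$-errors arising from the pseudovaluation-of-product inequalities.
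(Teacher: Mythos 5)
Your first half matches the paper: expand $e$ via \eqref{fdttdt} and \eqref{dproducts} into terms $\lef[\underline{P}^{\kappa}\rig]d\lef(\lef[\underline{P}^{\beta}\rig]\rig)$ with $\lef\lvert\beta\rig\rvert=1$, replace each $d\lef(\lef[Y^{p}\rig]\rig)$ and $d\lef(\lef[{Z_{j}}^{p}\rig]\rig)$ by the corresponding $t_{\widetilde{\operatorname{Frob}}}\lef(p\alpha_{i'}\rig)$, and send every term in which a replacement occurred into $px$. (One inaccuracy there: the blocks indexed by $I$ do \emph{not} produce only $d\lef(\lef[X_{i}\rig]\rig)$'s, since $\operatorname{Supp}\lef(a\rig)$ may meet $\lef\llbracket n+1,n+1+m\rig\rrbracket$; but the replacement handles those occurrences too, so this is harmless.)

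The genuine gap is in your treatment of the surviving terms. After the Leibniz expansion, a term with no replacement has the form $V^{u\lef(a\rig)}\lef(\lef[\underline{P}^{\gamma}\rig]\rig)\times\prod_{i}d\lef(V^{u\lef(a\rig)}\lef(\lef[\omega_{i}\rig]\rig)\rig)$ with $\omega_{i}\in\lef\{X_{1},\ldots,X_{n}\rig\}$, and the degree-zero factor $V^{u\lef(a\rig)}\lef(\lef[\underline{P}^{\gamma}\rig]\rig)$ is a \emph{fractional} Witt vector (generically $p^{u\lef(a\rig)}\nmid\gamma$) whose Teichmüller part involves $Y^{p}$ and the ${Z_{j}}^{p}$. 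It is therefore neither an element of ${W\lef(k\rig)\lef[\underline{P}\rig]}^{\dagger}$ (the integral part, via lemma \ref{isodegreezero}) nor an element of $W^{\dagger}\lef(k\lef[\underline{X}\rig]\rig)$, so your ``gathering'' into products $wz$ simply does not go through. The paper resolves this by applying proposition \ref{overconvergentwittvectorsstructure} to $V^{u\lef(a\rig)}\lef(\lef[\underline{P}^{\gamma}\rig]\rig)$, rewriting it modulo $\operatorname{Ker}\lef(W\lef(\varphi\rig)\rig)$ as an overconvergent series $\sum_{f\in H\lef(0\rig)\sqcup G\lef(0\rig)}s\lef(f\rig)f$ with $s\lef(f\rig)\in{W\lef(k\rig)\lef[\underline{P}\rig]}^{\dagger}$ and $f\in W^{\dagger}\lef(k\lef[\underline{X}\rig]\rig)$; this uses the finite free relatively perfect structure of $\overline{R}$ over ${k\lef[\underline{X}\rig]}_{\lef\langle P\rig\rangle}$ and is the heart of the lemma. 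It is also the true source of the loss of $\tfrac{1}{2}$ (a $\tfrac{1}{4}$ from the proposition, another $\tfrac{1}{4}$ from converting the $\gamma_{\varepsilon,\varphi,b}$-estimates to $\zeta_{\varepsilon}$-estimates via \eqref{equivalencesofgamma} and proposition \ref{gammaandzetawitt}) and of the need for the smaller radius $\delta'$. Your attribution of the $\tfrac{1}{2}$ to the $\alpha_{i}$-replacements is off: those terms are absorbed into $px$, which loses nothing and is bounded for all $\varepsilon\in\lef]0,\delta\rig]$; it is $e'$ alone that costs $\tfrac{1}{2}$ and only on $\lef]0,\delta'\rig]$. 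Without the appeal to the overconvergent Witt vector structure theorem, neither the ``Moreover'' clause nor the stated estimate on $e'$ can be obtained.
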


\begin{proof}
Let us write for simplicity $a\coloneqq a\lef(e\rig)$ and $I\coloneqq I\lef(e\rig)$. We must have $\#I=t$. Then, we have by definition:
\begin{multline*}
e=V^{u\lef(a\rig)}\lef(\lef[\underline{P}^{p^{u\lef(a\rig)}a|_{I_{0}}}\rig]\rig)\\\times\prod_{l=1}^{t}F^{u\lef(a|_{I_{l}}\rig)+\operatorname{v}_{p}\lef(a|_{I_{l}}\rig)}\lef(d\lef(V^{u\lef(a|_{I_{l}}\rig)}\lef(\lef[\underline{P}^{p^{-\operatorname{v}_{p}\lef(a|_{I_{l}}\rig)}a|_{I_{l}}}\rig]\rig)\rig)\rig)\text{.}
\end{multline*}

Applying \eqref{vxfyvxy} and \eqref{fdvisd} we thus get:
\begin{equation}\label{formulafore}
e=V^{u\lef(a\rig)}\lef(\lef[\underline{P}^{p^{u\lef(a\rig)}a|_{I_{0}}}\rig]\times\prod_{l=1}^{t}F^{u\lef(a\rig)+\operatorname{v}_{p}\lef(a|_{I_{l}}\rig)}\lef(d\lef(\lef[\underline{P}^{p^{-\operatorname{v}_{p}\lef(a|_{I_{l}}\rig)}a|_{I_{l}}}\rig]\rig)\rig)\rig)\text{.}
\end{equation}

Now, for every $l\in\lef\llbracket1,t\rig\rrbracket$ one can use \eqref{fdttdt} and \eqref{dproducts} to rewrite the factor $F^{u\lef(a\rig)+\operatorname{v}_{p}\lef(a|_{I_{l}}\rig)}\lef(d\lef(\lef[\underline{P}^{p^{-\operatorname{v}_{p}\lef(a|_{I_{l}}\rig)}a|_{I_{l}}}\rig]\rig)\rig)$ as a linear combination of elements of the form $\lef[\underline{P}^{\kappa}\rig]d\lef(\lef[\underline{P}^{\beta}\rig]\rig)$, where $\kappa$ and $\beta$ are weight functions taking values in $\mathbb{N}$, $\lef\lvert\beta\rig\rvert=1$ and $\lef\lvert\kappa+\beta\rig\rvert=p^{u\lef(a\rig)}\lef\lvert a|_{I_{l}}\rig\rvert$.

When $d\lef(\underline{P}^{\beta}\rig)=d\lef(X_{i}\rig)$ for some $i\in\lef\llbracket1,n\rig\rrbracket$, leave it as it is. Otherwise, replace it the adequate $t_{\widetilde{\operatorname{Frob}}}\lef(p\alpha_{i'}\rig)$ that we have introduced earlier in this section, where $i'\in\lef\llbracket0,m\rig\rrbracket$. Then, multiply all these combinations together with $\lef[\underline{P}^{p^{u\lef(a\rig)}a|_{I_{0}}}\rig]$, and apply $V^{u\lef(a\rig)}$ as in \eqref{formulafore}. This gives us an element of the form $V^{u\lef(a\rig)}\lef(y\rig)$ where $y\in W^{\dagger}\Omega_{k\lef[\underline{P}\rig]/k}$. By construction:
\begin{equation*}
W\lef(\varphi\rig)\lef(V^{u\lef(a\rig)}\lef(y\rig)\rig)=W\lef(\varphi\rig)\lef(e\rig)\text{.}
\end{equation*}

Fix $\varepsilon\in\lef]0,\delta\rig]$. Recall that $\zeta_{\varepsilon}$, which we introduced in \eqref{zetaepsilon}, is a pseudovaluation \cite[theorem 3.17]{pseudovaluationsonthederhamwittcomplex}, so we have:
\begin{equation*}
\zeta_{\varepsilon}\lef(y\rig)\geqslant-\varepsilon p^{u\lef(a\rig)}\lef\lvert a\rig\rvert\text{.}
\end{equation*}

Using proposition \ref{vactionone} and the definition of $\zeta_{\varepsilon}$, we deduce that:
\begin{equation*}
\zeta_{\varepsilon}\lef(V^{u\lef(a\rig)}\lef(y\rig)\rig)\geqslant\lef(t+1\rig)u\lef(a\rig)-\varepsilon\lef\lvert a\rig\rvert=\zeta_{\varepsilon}\lef(e\rig)\text{.}
\end{equation*}

Notice that if, in the above process, we have done a replacement with some $t_{\widetilde{\operatorname{Frob}}}\lef(p\alpha_{i'}\rig)$, then $V^{u\lef(a\rig)}\lef(y\rig)$ is divisible by $p$. The linear combination of these $V^{u\lef(a\rig)}\lef(y\rig)$ gives us the $x$ of the statement.

So let us now focus on the remaining terms of the linear combination; that is, the other $V^{u\lef(a\rig)}\lef(y\rig)$ we got without replacement. In that setting, we can use \cite[1.16]{derhamwittcohomologyforaproperandsmoothmorphism} to write:
\begin{equation*}
V^{u\lef(a\rig)}\lef(y\rig)=V^{u\lef(a\rig)}\lef(\lef[\underline{P}^{\gamma}\rig]\rig)\times\prod_{i=1}^{t}d\lef(V^{u\lef(a\rig)}\lef(\lef[\omega_{i}\rig]\rig)\rig)\text{,}
\end{equation*}
where $\gamma$ is a weight function with values in $\mathbb{N}$ such that $\lef\lvert\gamma\rig\rvert=p^{u\lef(a\rig)}\lef\lvert a\rig\rvert-t$, and where each $\omega_{i}=X_{i'}$ for some $i'\in\lef\llbracket1,n\rig\rrbracket$.

We know by proposition \ref{overconvergentwittvectorsstructure} that there exists $\delta'>0$ and $b\in{\lef]0,+\infty\rig[}^{n+1+m}$ independently on any of the above choices, as well as a map:
\begin{equation*}
s\colon\begin{array}{rl}H\lef(0\rig)\sqcup G\lef(0\rig)\to&{W\lef(k\rig)\lef[\underline{P}\rig]}^{\dagger}\\f\mapsto&s\lef(f\rig)\end{array}
\end{equation*}
such that:
\begin{align*}
W\lef(\varphi\rig)\lef(\sum_{f\in H\lef(0\rig)}s\lef(f\rig)f+\sum_{f\in G\lef(0\rig)}s\lef(f\rig)f\rig)&=W\lef(\varphi\rig)\lef(V^{u\lef(a\rig)}\lef(\lef[\underline{P}^{\gamma}\rig]\rig)\rig)\text{,}\\
\forall\varepsilon\in\lef]0,\delta'\rig],\ \forall f\in H\lef(0\rig)\sqcup G\lef(0\rig),\ \gamma_{\varepsilon,\operatorname{Id}_{k\lef[\underline{P}\rig]},b}\lef(s\lef(f\rig)f\rig)&\geqslant\gamma_{\varepsilon,\operatorname{Id}_{k\lef[\underline{P}\rig]},b}\lef(V^{u\lef(a\rig)}\lef(\lef[\underline{P}^{\gamma}\rig]\rig)\rig)-\frac{1}{4}\text{,}\\
\forall f\in H\lef(0\rig)\sqcup G\lef(0\rig),\ p^{\max\lef\{u\lef(a\rig)-u\lef(a\lef(f\rig)\rig),0\rig\}}&\mid s\lef(f\rig)\text{.}
\end{align*}

By \eqref{equivalencesofgamma}, we know that we can find $m,M\in\lef]0,+\infty\rig[$ such that $\displaystyle\frac{M}{m}\geqslant1$ and that for every $\varepsilon\in\lef]0,m\delta'\rig]$ and every $f\in H\lef(0\rig)\sqcup G\lef(0\rig)$ we have:
\begin{equation*}
\gamma_{\varepsilon,\operatorname{Id}_{k\lef[\underline{P}\rig]},\lef(1,\ldots,1\rig)}\lef(s\lef(f\rig)f\rig)\geqslant\gamma_{\frac{M}{m}\varepsilon,\operatorname{Id}_{k\lef[\underline{P}\rig]},\lef(1,\ldots,1\rig)}\lef(V^{u\lef(a\rig)}\lef(\lef[\underline{P}^{\gamma}\rig]\rig)\rig)-\frac{1}{4}\text{.}
\end{equation*}

So by virtue of proposition \ref{gammaandzetawitt}, we get that:
\begin{equation*}
\zeta_{\varepsilon}\lef(s\lef(f\rig)f\rig)\geqslant\gamma_{\frac{M}{m}\varepsilon,\operatorname{Id}_{k\lef[\underline{P}\rig]},\lef(1,\ldots,1\rig)}\lef(V^{u\lef(a\rig)}\lef(\lef[\underline{P}^{\gamma}\rig]\rig)\rig)-\frac{1}{4}\geqslant u\lef(a\rig)-\frac{M}{m}\varepsilon\lef(\lef\lvert a\rig\rvert-p^{-u\lef(a\rig)}t\rig)-\frac{1}{4}\text{.}
\end{equation*}

Notice that, since $e\in G^{\underline{P}}\lef(t\rig)$, we have $\lef\lvert a\rig\rvert\leqslant n+1+m$. So after reducing the constant $\delta'$ if needed, we may assume that: 
\begin{equation*}
-\frac{M}{m}\varepsilon\lef(\lef\lvert a\rig\rvert-p^{-u\lef(a\rig)}t\rig)\geqslant-\varepsilon\lef(\lef\lvert a\rig\rvert-p^{-u\lef(a\rig)}t\rig)-\frac{1}{4}\text{.}
\end{equation*}

Using the pseudovaluation property we are now in a position to see that for every $f\in H\lef(0\rig)\sqcup G\lef(0\rig)$ we have:
\begin{equation*}
\zeta_{\varepsilon}\lef(s\lef(f\rig)f\times\prod_{i=1}^{t}d\lef(V^{u\lef(a\rig)}\lef(\lef[\omega_{i}\rig]\rig)\rig)\rig)\geqslant\lef(t+1\rig)u\lef(a\rig)-\varepsilon\lef\lvert a\rig\rvert-\frac{1}{2}=\zeta_{\varepsilon}\lef(e\rig)-\frac{1}{2}\text{.}
\end{equation*}

We can conclude the proof of this lemma by letting $e'$ be the linear combination described above of the elements:
\begin{equation*}
\lef(\sum_{f\in H\lef(0\rig)}s\lef(f\rig)f+\sum_{f\in G\lef(0\rig)}s\lef(f\rig)f\rig)\times\prod_{i=1}^{t}d\lef(V^{u\lef(a\rig)}\lef(\lef[\omega_{i}\rig]\rig)\rig)\text{.}
\end{equation*}

We conclude using \cite[proposition 2.8]{pseudovaluationsonthederhamwittcomplex} as $k$ is perfect.
\end{proof}

\begin{thrm}\label{maintheorem}
Let $t\in\mathbb{N}$ and $x\in W^{\dagger}\Omega^{t}_{\overline{R}/k}$. Then, there exists a unique map:
\begin{equation*}
s\colon\begin{array}{rl}H\lef(t\rig)\sqcup G\lef(t\rig)\sqcup G\lef(t-1\rig)\to&R^{\dagger}\\e\mapsto&s_{e}\end{array}
\end{equation*}
such that:
\begin{equation*}
x=\sum_{e\in H\lef(t\rig)}t_{F}\lef(s_{e}e\rig)+\sum_{e\in G\lef(t\rig)}t_{F}\lef(s_{e}\rig)e+d\lef(\sum_{e\in G\lef(t-1\rig)}t_{F}\lef(s_{e}\rig)e\rig)\text{.}
\end{equation*}

In other terms, the map $W^{\dagger}\Omega^{\underline{X},t}_{k\lef[\underline{P}\rig]/k}\to W^{\dagger}\Omega^{t}_{\overline{R}/k}$ induced by $\varphi$ is surjective.
\end{thrm}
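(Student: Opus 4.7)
The plan is to prove existence by an iterative modulo-$p$ procedure and to deduce uniqueness from Theorem \ref{structuretheoremconvergent}. Uniqueness is the easier direction: since $R^{\dagger}$ embeds into the $p$-adic completion $\widehat{R}$ and the Lazard morphism $t_{F}$ is compatible with the one appearing in Theorem \ref{structuretheoremconvergent} (specialised to the rebar cage of Example \ref{perfectrebarcage}, where $H(t,m)=\emptyset$ whenever $m\neq 0$), any two decompositions of $x$ of the stated form give coefficients that agree in $\widehat{R}$ by the unicity clause of that theorem. Combined with Proposition \ref{lazardinjectivity}, this forces the coefficients in $R^{\dagger}$ to coincide.

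For existence, I first lift $x$ via Proposition \ref{overconvergentfunctor} to some $y_{0}\in W^{\dagger}\Omega^{t}_{k[\underline{P}]/k}$ with $W(\varphi)(y_{0})=x$. Proposition \ref{structurepolynomialodrw} applied inside $W^{\dagger}\Omega^{t}_{k[\underline{P}]/k}$ then decomposes $y_{0}$ as the sum of an integral part indexed by $H^{\underline{P}}(t)$, a pure fractional part indexed by $G^{\underline{P}}(t)$, and the differential of a pure fractional part indexed by $G^{\underline{P}}(t-1)$, with uniform control on $\zeta_{\varepsilon}$. Proposition \ref{intyieldsxint} converts the integral part into an element of $W^{\dagger}\Omega^{\underline{X}-\mathrm{int},t}_{k[\underline{P}]/k}$ without altering the image in $W^{\dagger}\Omega^{t}_{\overline{R}/k}$. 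Applied to each fractional generator $e\in G^{\underline{P}}(t)\sqcup G^{\underline{P}}(t-1)$, Lemma \ref{technicalgptrewrite} rewrites it as $e'+px$, where $e'$ is an overconvergent sum of products $wz$ with $w\in{W(k)[\underline{P}]}^{\dagger}$ and $z\in W^{\dagger}\Omega_{k[\underline{X}]/k}$, hence belongs to $W^{\dagger}\Omega^{\underline{X}-\mathrm{frp},t}_{k[\underline{P}]/k}$ after reindexing, and $px$ is $p$-divisible. Collecting these substitutions produces a decomposition of $x$ modulo $p$ of the required shape, with a $p$-divisible error $pw_{1}$.

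Iterating this construction on the well-defined element $w_{1}\in W^{\dagger}\Omega^{t}_{\overline{R}/k}$ (well-definedness uses that $W\Omega_{\overline{R}/\overline{k}}$ has no $p$-torsion since $k$ is perfect) produces a formal series whose partial sums converge $p$-adically. The main obstacle will be to ensure that the overconvergence survives the infinite iteration: at each step the loss in $\zeta_{\varepsilon}$ is bounded by a fixed constant (at worst $\tfrac{1}{2}$ by Lemma \ref{technicalgptrewrite}), while each passage to $w_{l+1}$ from $pw_{l+1}$ increases $\zeta_{\varepsilon}$ by $2(n+1+m)$ via \eqref{multiplicationbypzeta}. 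Since the net gain per iteration is strictly positive, the pseudovaluation of the $l$-th contribution grows linearly in $l$, and \eqref{padicoverconvergence} yields overconvergence of the assembled series. Choosing $\varepsilon$ in the common interval of validity of the estimates from all invoked propositions, and noting that the coefficients produced at each stage lie in ${W(k)[\underline{P}]}^{\dagger}$ and hence map to $R^{\dagger}$ via $\varphi$, the limiting coefficients assemble into a map $s\colon H(t)\sqcup G(t)\sqcup G(t-1)\to R^{\dagger}$ realising the decomposition. The surjectivity statement then follows immediately by repackaging.
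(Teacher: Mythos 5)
Your proposal is correct and follows essentially the same route as the paper: lift to $W^{\dagger}\Omega^{t}_{k\lef[\underline{P}\rig]/k}$, decompose via proposition \ref{structurepolynomialodrw}, convert the integral part with proposition \ref{intyieldsxint} and the fractional generators with lemma \ref{technicalgptrewrite}, iterate modulo $p$, and obtain uniqueness from theorem \ref{structuretheoremconvergent}. One bookkeeping remark: passing from $pw_{l+1}$ to $w_{l+1}$ \emph{lowers} $\zeta_{\varepsilon}$ by $2\lef(n+1+m\rig)$ by \eqref{multiplicationbypzeta}, and this is exactly restored when the $l$-th contribution is multiplied back by $p^{l}$, so the right conclusion is a uniform lower bound $\zeta_{\varepsilon}\lef(x\rig)-\tfrac{1}{2}$ on all contributions rather than linear growth --- which still suffices to apply \eqref{padicoverconvergence}.
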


\begin{proof}
Any element in $W^{\dagger}\Omega_{\overline{R}/k}^{t}$ is the image of some $x\in W^{\dagger}\Omega_{k\lef[\underline{P}\rig]/k}^{t}$. Let $\varepsilon>0$ be such that $\zeta_{\varepsilon}\lef(x\rig)\neq-\infty$.

By proposition \ref{structurepolynomialodrw}, and after reducing $\varepsilon$ if needed, we know that there exists a map:
\begin{equation*}
s\colon\begin{array}{rl}H^{\underline{P}}\lef(t\rig)\sqcup G^{\underline{P}}\lef(t\rig)\sqcup G^{\underline{P}}\lef(t-1\rig)\to&{W\lef(k\rig)\lef[\underline{P}\rig]}^{\dagger}\\e\mapsto&s\lef(e\rig)\end{array}
\end{equation*}
such that:
\begin{gather*}
x=\sum_{e\in H^{\underline{P}}\lef(t\rig)}t_{F}\lef(s\lef(e\rig)e\rig)+\sum_{e\in G^{\underline{P}}\lef(t\rig)}t_{F}\lef(s\lef(e\rig)\rig)e+d\lef(\sum_{e\in G^{\underline{P}}\lef(t-1\rig)}t_{F}\lef(s\lef(e\rig)\rig)e\rig)\text{,}\\
\forall e\in H^{\underline{P}}\lef(t\rig)\sqcup G^{\underline{P}}\lef(t\rig)\sqcup G^{\underline{P}}\lef(t-1\rig),\ \zeta_{\varepsilon}\lef(t_{F}\lef(s_{e}\rig)\rig)\geqslant\zeta_{\varepsilon}\lef(x\rig)-\zeta_{\varepsilon}\lef(e\rig)\text{.}
\end{gather*}

After reducing $\varepsilon$ once more if needed, we can apply proposition \ref{intyieldsxint} to find $w\in W^{\dagger}\Omega^{\underline{X}-\mathrm{int},t}_{k\lef[\underline{P}\rig]/k}$ such that:
\begin{align*}
W\lef(\varphi\rig)\lef(w\rig)&=W\lef(\varphi\rig)\lef(\sum_{e\in H^{\underline{P}}\lef(t\rig)}t_{F}\lef(s\lef(e\rig)e\rig)\rig)\text{,}\\
\zeta_{\varepsilon}\lef(w\rig)&\geqslant\zeta_{\varepsilon}\lef(x\rig)\text{.}
\end{align*}

Now fix $e\in G^{\underline{P}}\lef(t\rig)$. If we also have $e\in G\lef(t\rig)$ then in the above series the product $t_{F}\lef(s\lef(e\rig)\rig)e\in W^{\dagger}\Omega^{\underline{X}-\mathrm{frp},t}_{k\lef[\underline{P}\rig]/k}$. So let us focus on the case where $e\in G^{\underline{P}}\lef(t\rig)\smallsetminus G\lef(t\rig)$.

We have $p\mid v_{F}\lef(s\lef(e\rig)\rig)e$ because of \eqref{vfisinvw}. So after reducing $\varepsilon$ one last time if needed, we can apply lemma \ref{technicalgptrewrite} to $e$ and \eqref{zetavftfrob} to $s\lef(e\rig)$. Combined with proposition \ref{structurepolynomialodrw}, they tell us that there exists $x'\in W^{\dagger}\Omega_{k\lef[\underline{P}\rig]/k}^{t}$ and a map:
\begin{equation*}
s'\colon\begin{array}{rl}H^{\underline{P}}\lef(t\rig)\sqcup G^{\underline{P}}\lef(t\rig)\sqcup G^{\underline{P}}\lef(t-1\rig)\to&{W\lef(k\rig)\lef[\underline{P}\rig]}^{\dagger}\\f\mapsto&s'\lef(f\rig)\end{array}
\end{equation*}
such that:
\begin{multline*}
W\lef(\varphi\rig)\lef(t_{F}\lef(s\lef(e\rig)\rig)e\rig)\\=W\lef(\varphi\rig)\lef(\sum_{f\in H^{\underline{P}}\lef(t\rig)}s'\lef(f\rig)f+\sum_{f\in G^{\underline{P}}\lef(t\rig)}s'\lef(f\rig)f+d\lef(\sum_{f\in G^{\underline{P}}\lef(t-1\rig)}s'\lef(f\rig)f\rig)+px'\rig)\text{.}
\end{multline*}

Moreover:
\begin{align*}
\zeta_{\varepsilon}\lef(px'\rig)&\geqslant\zeta_{\varepsilon}\lef(x\rig)\text{,}\\
\forall f\in H^{\underline{P}}\lef(t\rig)\sqcup G^{\underline{P}}\lef(t\rig)\sqcup G^{\underline{P}}\lef(t-1\rig),\ \zeta_{\varepsilon}\lef(s'\lef(f\rig)\rig)+\zeta_{\varepsilon}\lef(f\rig)&\geqslant\zeta_{\varepsilon}\lef(x\rig)-\frac{1}{2}\text{,}\\
\forall f\in H^{\underline{P}}\lef(t\rig)\sqcup G^{\underline{P}}\lef(t\rig)\sqcup G^{\underline{P}}\lef(t-1\rig),\ u\lef(f|_{\lef\llbracket n+1,n+1+m\rig\rrbracket}\rig)>0&\implies s'\lef(f\rig)=0\text{,}\\
\forall f\in H^{\underline{P}}\lef(t\rig)\sqcup G^{\underline{P}}\lef(t\rig)\sqcup G^{\underline{P}}\lef(t-1\rig),\ p^{\max\lef\{u\lef(a\lef(e\rig)\rig)-u\lef(a\lef(f\rig)\rig),0\rig\}}&\mid s'\lef(f\rig)f\text{.}
\end{align*}

Here, we have used the fact that $\zeta_{\varepsilon}$ is a pseudovaluation \cite[theorem 3.17]{pseudovaluationsonthederhamwittcomplex}.

When an element $f$ in any of the above series is not in $H\lef(t\rig)\sqcup G\lef(t\rig)\sqcup G\lef(t-1\rig)$, this thus implies by our construction that $f$ is divisible by $d\lef(z\rig)$, where either $z=Y^{p}$ or $z={Z_{i}}^{p}$ for some $i\in\lef\llbracket1,m\rig\rrbracket$. We can thus replace this $d\lef(z\rig)$ by the adequate $t_{\widetilde{\operatorname{Frob}}}\lef(p\alpha_{i'}\rig)$ that we have introduced earlier in this section, for some $i'\in\lef\llbracket0,m\rig\rrbracket$, to obtain an element $pf'\in W^{\dagger}\Omega_{k\lef[\underline{P}\rig]/k}^{t}$. We have $\zeta_{\varepsilon}\lef(s'\lef(f\rig)pf'\rig)\geqslant\zeta_{\varepsilon}\lef(e\rig)$ because of the replacement. This means that we can in fact assume that $s'$ takes values in $H\lef(t\rig)\sqcup G\lef(t\rig)\sqcup G\lef(t-1\rig)$.

Furthermore, observe that:
\begin{multline*}
\zeta_{\varepsilon}\lef(\sum_{f\in H\lef(t\rig)}v_{F}\lef(s'\lef(f\rig)f\rig)+\sum_{f\in G\lef(t\rig)}v_{F}\lef(s'\lef(f\rig)\rig)f+d\lef(\sum_{f\in G\lef(t-1\rig)}v_{F}\lef(s'\lef(f\rig)\rig)f\rig)\rig)\\\geqslant\zeta_{\varepsilon}\lef(x\rig)\text{.}
\end{multline*}

This is again a consequence of \eqref{zetavftfrob} and of the pseudovaluation property. For the series indexed on $f\in H\lef(t\rig)$, use \eqref{productformulavf} to write $v_{F}\lef(s'\lef(f\rig)f\rig)$ as a linear combination of products between $t_{F}\lef(s'\lef(f\rig)\rig)$, $v_{F}\lef(s'\lef(f\rig)\rig)$, $d\lef(t_{F}\lef(X_{i}\rig)\rig)$ or $d\lef(v_{F}\lef(X_{i}\rig)\rig)$ where $i\in\lef\llbracket1,n\rig\rrbracket$, and conclude with \eqref{dzetaepsilon}.

Moreover, notice that these series are all divisible by $p$. So this means that there is $x''\in W^{\dagger}\Omega_{k\lef[\underline{P}\rig]/k}^{t}$ with $\zeta_{\varepsilon}\lef(px''\rig)\geqslant\zeta_{\varepsilon}\lef(x\rig)$ satisfying:
\begin{multline*}
W\lef(\varphi\rig)\lef(t_{F}\lef(s\lef(e\rig)\rig)e-px''\rig)=\\W\lef(\varphi\rig)\lef(\sum_{f\in H\lef(t\rig)}t_{F}\lef(s'\lef(f\rig)f\rig)+\sum_{f\in G\lef(t\rig)}t_{F}\lef(s'\lef(f\rig)\rig)f+d\lef(\sum_{f\in G\lef(t-1\rig)}t_{F}\lef(s'\lef(f\rig)\rig)f\rig)\rig)\text{.}
\end{multline*}

Of course, the same process works by taking $e\in G^{\underline{P}}\lef(t-1\rig)$ instead. So we get the theorem modulo $p$ by overconvergence in the canonical topology. Thus we can repeat this process, and conclude by overconvergence again, the unicity being given by theorem \ref{structuretheoremconvergent}.
\end{proof}

Notice that this theorem gives, as usual, a decomposition in three sub-$W\lef(k\rig)$-modules.

Before giving a first application of this theorem, recall that the association $\operatorname{Spec}\lef(A\rig)\to W^{\dagger}\Omega_{A/k}^{\bullet}$ on any affine scheme extends to a complex of Zariski sheaves $W^{\dagger}\Omega_{X/k}^{\bullet}$ on any variety $X$ smooth over a perfect field $k$ of characteristic $p$ \cite[theorem 1.7]{overconvergentderhamwittcohomology}.

This enables us to give a much shorter proof of \cite[theorem 1.8]{overconvergentderhamwittcohomology}.

\begin{coro}
Assume that $k$ is a perfect field of characteristic $p$. Let $X$ be a smooth $k$-variety. Then $W^{\dagger}\Omega_{X/k}^{\bullet}$ is a complex of étale sheaves on $X$.
\end{coro}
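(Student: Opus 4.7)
The plan is to leverage the structure theorem \ref{maintheorem} to reduce étale descent for $W^{\dagger}\Omega^{\bullet}$ to the classical étale descent for Monsky--Washnitzer weak completions.

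Since $W^{\dagger}\Omega_{X/k}^{\bullet}$ is already known to be a complex of Zariski sheaves, I would first reduce, by standard Čech-theoretic considerations, to verifying the sheaf axioms for a single finite étale cover $\operatorname{Spec}(\overline{S}) \to \operatorname{Spec}(\overline{R})$ of affine opens. After further Zariski-shrinking, I may assume that $\overline{R}$ is in the setup of theorem \ref{maintheorem}: finite free étale over a localisation $k\lef[\underline{X}\rig]_{\lef\langle P\rig\rangle}$ for some $P\in k\lef[\underline{X}\rig]$. Then $\overline{S}$ and $\overline{S}\otimes_{\overline{R}}\overline{S}$ inherit the same property, since finite étale is stable under composition and tensor product.

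Next I would lift $\overline{R}\to\overline{S}\to\overline{S}\otimes_{\overline{R}}\overline{S}$ to a diagram of weakly complete finitely generated $W\lef(k\rig)$-algebras $R^{\dagger}\to S^{\dagger}\to\lef(S\otimes_{R}S\rig)^{\dagger}$, and apply lemma \ref{compatibleetalefrobenii} to choose Frobenius lifts $F$, $G$, $H$ on these three rings that commute with the two structural arrows. Theorem \ref{maintheorem} then identifies each of $W^{\dagger}\Omega^{t}_{\overline{R}/k}$, $W^{\dagger}\Omega^{t}_{\overline{S}/k}$, $W^{\dagger}\Omega^{t}_{\overline{S}\otimes_{\overline{R}}\overline{S}/k}$ with the set of overconvergent tuples indexed by the purely combinatorial set $H\lef(t\rig)\sqcup G\lef(t\rig)\sqcup G\lef(t-1\rig)$, with values in $R^{\dagger}$, $S^{\dagger}$, $\lef(S\otimes_{R}S\rig)^{\dagger}$ respectively. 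By the compatibility of Frobenius lifts in lemma \ref{compatibleetalefrobenii}, the Lazard morphisms $t_{F}$, $t_{G}$, $t_{H}$ commute with the structural maps; hence the restriction maps between the three de Rham--Witt complexes correspond, under these identifications, to post-composition with the canonical maps $R^{\dagger}\to S^{\dagger}\rightrightarrows\lef(S\otimes_{R}S\rig)^{\dagger}$ on each coordinate.

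It then suffices to invoke the classical fact that, for a finite étale cover $\overline{R}\to\overline{S}$, the sequence $R^{\dagger}\to S^{\dagger}\rightrightarrows\lef(S\otimes_{R}S\rig)^{\dagger}$ is exact: this is étale descent for Monsky--Washnitzer weak completions, which follows from uniqueness of étale lifts and faithfully flat descent. Applied coordinate-by-coordinate in $H\lef(t\rig)\sqcup G\lef(t\rig)\sqcup G\lef(t-1\rig)$, this yields the desired sheaf property.

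The main obstacle I foresee is a bookkeeping issue rather than a conceptual one: I must check that the overconvergence condition on tuples, expressed via $\zeta_{\varepsilon}$, descends correctly under the étale map, i.e. a tuple in $\prod R^{\dagger}$ whose image in $\prod S^{\dagger}$ is overconvergent in the sense of theorem \ref{maintheorem} is already overconvergent over $R^{\dagger}$. This uniformity of overconvergence with respect to the base algebra is controlled by proposition \ref{structurepolynomialodrw} together with proposition \ref{overconvergentfunctor}, and the uniform estimates of the Lazard morphism in proposition \ref{vfgamma}; once these are combined, the descent statement becomes componentwise and reduces to the classical weak-completion fact above.
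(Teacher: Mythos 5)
Your strategy is genuinely different from the paper's. You aim to prove the Čech exactness of $W^{\dagger}\Omega^{\bullet}$ for a finite étale cover directly, by identifying each term with a set of coefficient tuples via theorem \ref{maintheorem} and invoking descent for Monsky--Washnitzer weak completions. The paper instead exploits the fact that the full de Rham--Witt complex $W\Omega^{\bullet}$ is already known to be an étale sheaf; this reduces the corollary to a pure membership statement --- if $\varphi^{*}(w)$ is overconvergent then $w$ is --- and no Čech exactness for $W^{\dagger}\Omega$ has to be verified at all. Concretely, the paper applies the convergent structure theorem \ref{structuretheoremconvergent} to $w$ to obtain coefficients $s_{e}\in\widehat{R}$, uses theorem \ref{maintheorem} together with the compatibility of Frobenius lifts from lemma \ref{compatibleetalefrobenii} to see that their images lie in $S^{\dagger}$, and then descends each coefficient to $R^{\dagger}$ by the detection of weak completeness under finite étale maps.

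Two points in your write-up are genuine gaps rather than bookkeeping. First, ``reduce by standard Čech-theoretic considerations to a single finite étale cover'' is not available for the étale topology: an arbitrary étale surjection of affines cannot in general be refined by a finite étale one. The paper needs Kedlaya's theorem just to put the base ring in the required form, and the passage from arbitrary étale covers to the finite étale situation goes precisely through the detection statement, not through refining covers; without the étale sheaf property of $W\Omega$ as a backstop, your reduction does not go through as stated. Second, the crux --- that a coefficient tuple in $\widehat{R}$ whose image is uniformly overconvergent over $S$ is uniformly overconvergent over $R$ --- is not delivered by propositions \ref{structurepolynomialodrw}, \ref{overconvergentfunctor} and \ref{vfgamma}, all of which concern the polynomial base. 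What is actually needed is the quantitative detection of weak completeness under a finite étale extension, namely \cite[proposition 2.16]{overconvergentwittvectors}, combined with the fact that theorem \ref{maintheorem} packages the uniform bound across the infinite family $H(t)\sqcup G(t)\sqcup G(t-1)$. Once these two inputs are substituted, your coefficientwise descent closes up, but as written the argument defers exactly the step that carries the content.
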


\begin{proof}
The question is Zariski local on $X$. So by a result of Kedlaya \cite[theorem 2]{moreetalecoversofaffinespacesinpositivecharacteristic}, we can assume that $X=\operatorname{Spec}\lef(\overline{R}\rig)$ is an affine scheme, finite étale over $\operatorname{Spec}\lef(k\lef[\underline{X}\rig]\rig)$. But $W\Omega_{X/k}$ is a complex of étale sheaves on $X$; see, for instance, \cite[proposition 1.11]{derhamwittcohomologyforaproperandsmoothmorphism}. So it is enough to prove that for any étale morphism $\varphi\colon Y\to\operatorname{Spec}\lef(\overline{R}\rig)$ and any $w\in W\Omega_{\overline{R}/k}^{\bullet}$, we have $w\in W^{\dagger}\Omega_{\overline{R}/k}^{\bullet}$ if and only if $\varphi^{*}\lef(w\rig)\in W^{\dagger}\Omega_{Y/k}^{\bullet}$. The implication is given by functoriality, so let us focus on the converse.

Let $\operatorname{Spec}\lef(\overline{S}\rig)$ be an open affine of $Y$. Combining \cite[03GU, 05DR and 02ML]{stacksproject}, we know that we can localise the étale morphism $\operatorname{Spec}\lef(\overline{S}\rig)\to\operatorname{Spec}\lef(k\lef[\underline{X}\rig]\rig)$ at some $P\in k\lef[\underline{X}\rig]$ to get a finite étale morphism. This morphism is also finitely presented and flat, so in particular $\overline{S}$ is a locally free $k\lef[\underline{X}\rig]$-module, so we can assume that $\overline{S}_{\lef\langle P\rig\rangle}$ is a free ${k\lef[\underline{X}\rig]}_{\lef\langle P\rig\rangle}$-module.

Apply theorem \ref{structuretheoremconvergent} to get a map:
\begin{equation*}
s\colon\begin{array}{rl}H\lef(t\rig)\sqcup G\lef(t\rig)\sqcup G\lef(t-1\rig)\to&\widehat{R}\\e\mapsto&s_{e}\end{array}
\end{equation*}
such that:
\begin{equation*}
x=\sum_{e\in H\lef(t\rig)}t_{\widehat{F}}\lef(s_{e}e\rig)+\sum_{e\in G\lef(t\rig)}t_{\widehat{F}}\lef(s_{e}\rig)e+d\lef(\sum_{e\in G\lef(t-1\rig)}t_{\widehat{F}}\lef(s_{e}\rig)e\rig)\text{.}
\end{equation*}

Now lift $\overline{S}_{\lef\langle P\rig\rangle}$ to a commutative smooth $W\lef(k\rig)$-algebra $S$. By lemma \ref{compatibleetalefrobenii}, we can lift the Frobenius morphism on $\overline{S}_{\lef\langle P\rig\rangle}$ to $G\colon S^{\dagger}\to S^{\dagger}$ and the étale map $\overline{R}\to\overline{S}_{\lef\langle P\rig\rangle}$ to a morphism $\phi\colon R^{\dagger}\to S^{\dagger}$ such that $G\circ\phi=\phi\circ F$.

Let $e\in H\lef(t\rig)\sqcup G\lef(t\rig)\sqcup G\lef(t-1\rig)$. Theorem \ref{maintheorem} implies that $\widehat{\phi}\lef(s_{e}\rig)\in S^{\dagger}$. By the same theorem, we are done if we show that this implies that $s_{e}\in R^{\dagger}$. But this is an easy consequence of \cite[proposition 2.16]{overconvergentwittvectors} and the theorem again.
\end{proof}

In a subsequent paper, we shall see how to use the structure theorem to give an interpretation of overconvergent $F$-isocrystals for the overconvergent de Rham--Witt complex.

\end{document}